\documentclass[10pt]{amsart}
\usepackage{amsfonts,amssymb,amsmath,layout,color}

\newcount\minute
\newcount\hour
\def\currenttime{%
	\minute\time
	\hour\minute
	\divide\hour60
	\the\hour:\multiply\hour60\advance\minute-\hour\the\minute}
\def\draftnote{{\it \today \quad  \currenttime \hfill  tex-file :   \jobname}}

\setcounter{secnumdepth}{5} 
\setcounter{tocdepth}{5} 

\catcode`@=11
\@addtoreset{equation}{section}

\catcode`@=12
\newtheorem{Theorem}{Theorem}[section]

\newtheorem{Proposition}{Proposition}[section]
\newtheorem{Lemma}{Lemma}[section]

\newtheorem{Hyp.}{Hyp.}[section]


\begin{document}

\title[]{Bilinear control of a degenerate hyperbolic equation}

\author{P. Cannarsa} 
\address{Dipartimento di Matematica, Universit\`a di Roma "Tor Vergata",
Via della Ricerca Scientifica, 00133 Roma, Italy}
\email{cannarsa@mat.uniroma2.it}

\author{P. Martinez} 
\address{Institut de Math\'ematiques de Toulouse; UMR 5219, Universit\'e de Toulouse; CNRS \\ 
UPS IMT F-31062 Toulouse Cedex 9, France} \email{patrick.martinez@math.univ-toulouse.fr}

\author{C. Urbani} 
\address{Dipartimento di Matematica, Universit\`a di Roma "Tor Vergata",
Via della Ricerca Scientifica, 00133 Roma, Italy}
\email{urbani@mat.uniroma2.it}

\subjclass{35L80, 93B03, 93B60, 33C10, 42C40}
\keywords{degenerate hyperbolic equations, bilinear control, Bessel functions, ground state, Riesz basis}
\thanks{This research was partly supported by the Institut Mathematique de Toulouse and Istituto Nazionale di Alta Matematica. Part of this work was done during the confrence "VIII Partial differential equations, optimal design and numerics 2019", held in Benasque and supported by LIA COPDESC. We thank the Centro de Ciencias Pedro Pascual, in Benasque, Spain, for this opportunity. Moreover, the first author acknowledges support by the MIUR Excellence Department Project awarded to the Department of Mathematics, University of Rome Tor Vergata, CUP E83C18000100006.}

\begin{abstract}
We consider the linear degenerate wave equation, on the interval $(0, 1)$
$$ w_{tt} - (x^\alpha w_x)_x = p(t) \mu (x) w, $$
with bilinear control $p$
and Neumann boundary conditions. We study the
controllability of this nonlinear control system, locally around a
constant reference trajectory, the $\lq\lq$ground state".

Under some classical and generic assumption on $\mu$, we prove that there exists a threshold value for time, $T_0= \frac{4}{2-\alpha}$, such that the reachable set is
\begin{itemize}
\item a neighborhood of the ground state if $T>T_0$,
\item contained in a $C^1$-submanifold of infinite codimension if $T<T_0$
\item a $C^1$-submanifold of codimension $1$ if $\alpha \in [0,1)$, and a neighborhood of the ground state
 if $\alpha \in (1,2)$ if $T=T_0$, the case $\alpha =1$ remaining open.
\end{itemize}

This extends to the degenerate case the work of Beauchard \cite{B-bi} concerning the bilinear control of the classical wave equation ($\alpha =0$), and adapts to bilinear controls the work of Alabau-Boussouira, Cannarsa and Leugering \cite{ACL} on the degenerate wave equation where additive control are considered. 
Our proofs are based on a careful analysis of the spectral problem, and on  Ingham type results, which are extensions of 
the Kadec's $\frac{1}{4}$ theorem.

\end{abstract}
\maketitle

\section{Introduction}

\subsection{The context and the problem we study} \hfill

Degenerate partial differential equations appear in many domains, in particular physics, climate dynamics, biology, economics (see, e.g., \cite{memoire,Epstein,Ghil76}). Control of degenerate parabolic equations is, by now, a fairly well-developed subject (see, for instance, \cite{sicon2008, memoire, CMV-MCRF, CMV-strong}),
but very few results are available in the case of degenerate hyperbolic equations. 
To our best knowledge, a class of degenerate wave equations has been studied from the point of view of control theory
in \cite{ACL}, where boundary control is studied using HUM and multiplier methods, 
and in \cite{ZG3,ZG2,ZG1}, where locally distributed control are considered.

On the other hand, in many applications, one is naturally led to use bilinear controls, as such controls are more realistic than additive ones to govern the evolution of certain systems (see \cite{acu-exa,acu-sup,CanFloKha,cu, Floridia, flo2, Flo-Trom} for parabolic equations.). For example, \cite{Khapalov} mentions in particular
\begin{itemize}
\item the linearized nuclear chain reaction
$$ u_t = a^2 \Delta u + v(x,t)u, $$
where $u(x, t)$ is the neutron density at point $x$ at time $t$ and $v$ is the bilinear control that modelizes the effect of the "control rods";

\item the approximate controllability of the rod made of a \emph{smart material}
$$ u_{tt} + u_{xxxx} + v(t) u_{xx} =0, $$
with hinged ends, where $u$ is the displacement of the beam, and $v$ is related to the magnitude of the electric field
that heats the beam in order to control the vibrations.
\end{itemize}


This is why, in this paper, we address a bilinear control problem for the equation

\begin{equation}
\label{CMU-eq-ctr}
\begin{cases}
w_{tt} - (x^\alpha w_x)_x = p(t) \mu (x) w , \quad &x\in (0,1), t \in (0,T), 
\\
(x^\alpha w_x)(x=0)=0 , \quad &t \in (0,T) ,\\
w_x (x=1)=0 , \quad &t \in (0,T) , \\
w(x,0)=w_0 (x) ,  \quad &x\in (0,1), 
\\
w_t (x,0)=w_1 (x) , \quad &x\in (0,1) .
\end{cases}
\end{equation}
Here, 
\begin{itemize}
\item $\alpha \in [0,2)$ is the degereracy parameter ($\alpha =0$ for the classical wave equation and $\alpha \in (0,2)$ in the degenerate case), 
\item $p \in L^2 (0,T)$ is a multiplicative control, 
\item $\mu$ is an admissible potential (and a key feature will be to analyze to which class $\mu$ has to belong in order to prove a controllability result.)
\end{itemize}

Let us recall that the action of bilinear controls is weaker than the one of additive controls, in the sense that, with bilinear controls, one cannot expect the same kind of controllability results that can be proved with additive controls. This fact is described by the negative result obtained by Ball-Marsden-Slemrod in \cite{BMS}, where it is shown that the attainable set of any abstract linear system, subject to a bilinear control, has a dense complement.

For the Schr\"odinger equation, and then for the classical wave equation, attainability results with bilinear controls were obtained by Beauchard and Laurent in \cite{B-L} and Beauchard in \cite{B-bi}. In particular, it is proved in \cite{B-bi} that, 
\begin{itemize}
\item starting from the$\lq\lq$ground state" (which is the constant state associated to the first eigenvalue, $0$), the solution is more regular than expected, namely
$$ (w(T), w_t (T)) \in H^3  (0,1) \times H^2 (0,1) ,$$
\item generically with respect to $\mu$ (under a suitable condition relating $\mu$ and the eigenvalues
and eigenfunctions of the Laplacian operator with Neumann boundary conditions), the wave equation is locally controllable along the ground state with respect to the $H^3  (0,1) \times H^2 (0,1)$ topology, in time $T>2$ 
with controls in $L^2((0, T ),\mathbb R)$.
\end{itemize}

The goal of our paper is to extend these results to the degenerate case $\alpha \in (0,2)$. We obtain the following results:
\begin{itemize}
\item we show that  the solution starting from the ground state  is more regular than expected, that is, $$ (w(T), w_t (T)) \in H^3 _\alpha (0,1) \times H^2 _\alpha (0,1) ,$$ where the spaces $H^2 _\alpha (0,1)$ and $H^3 _\alpha (0,1)$ are the classical weighted Sobolev spaces associated with the degenerate elliptic operator that appears in \eqref{CMU-eq-ctr} (see Proposition \ref{prop-"thm3"-red});
\item generically with respect to $\mu$ in some suitable Banach space, we prove that any target, which is close to the ground state in the $H^3 _\alpha (0,1) \times H^2 _\alpha (0,1)$ topology, is reachable in time 
$$ T > \frac{4}{2-\alpha} $$
with controls in $L^2((0, T ),\mathbb R)$ (see Theorem \ref{thm-ctr});
\item when
$$ 0< T < \frac{4}{2-\alpha} ,$$
the set of reachable targets close to the ground state in the $H^3 _\alpha (0,1) \times H^2 _\alpha (0,1)$ topology
is contained in a $C^1$-manifold of infinite dimension and of infinite codimension (see Theorem \ref{thm-ctr<}),
\item and when 
$$ T= \frac{4}{2-\alpha} ,$$
we prove that the situation is different for $\alpha \in [0,1)$ and $\alpha \in [1,2)$ (see Theorem \ref{thm-ctr=}):
\begin{itemize}
\item if $\alpha \in [0,1)$, the set of reachable targets close to the ground state in the $H^3 _\alpha (0,1) \times H^2 _\alpha (0,1)$ topology
is a $C^1$-manifold of codimension $1$,

\item and if $\alpha \in (1,2)$ (except for a countable set of $\alpha$, where a more precise analysis would have to be performed), any target close to the ground state in the $H^3 _\alpha (0,1) \times H^2 _\alpha (0,1)$ topology is reachable, as in the case $T>\frac{4}{2-\alpha}$. Note that in particular the case $\alpha =1$ remains an open and interesting case.
\end{itemize}
\end{itemize}

Our approach follows the strategy proposed by Beauchard \cite{B-bi} in the nondegenerate case.
However, it is worth noting that several new difficulties appear in the degenerate case:
\begin{itemize}
\item the spectral analysis: as often happens concerning degenerate elliptic operator, the eigenvalues and eigenfunctions of the associated spectral problem are given in terms of Bessel functions, see Propositions \ref{prop-vp-w} when $\alpha \in [0,1)$ and Proposition \ref{prop-vp-s} when $\alpha \in [1,2)$. The spectral analysis here is entirely new;

\item the unboundness of the eigenfunctions: in the work by Beauchard \cite{B-bi}, a key point  is that the eigenfunctions (cosine functions) are bounded. We will prove that for  $\alpha >0$, the eigenfunctions are no longer bounded (see Lemmas \ref{lem-prop-phi-n-0-1-w} and \ref{lem-prop-phi-n-0-1-s}) and this fact brings new difficulties that need to be overcome;

\item the role of the potential $\mu$ since the solutions of degenerate wave equations are less regular with respect to those of the classical wave equation, the potential $\mu$ will play a crucial role and this explains the extra conditions this function has to satisfy;

\item the moment problem: we obtain
\begin{itemize}
\item positive local controllability results when
$$ T > \frac{4}{2-\alpha} ,$$
using Ingham's arguments,

\item $\lq\lq$negative" local controllability results when
$$ 0 < T \leq \frac{4}{2-\alpha} $$
combining general results on the families of exponentials $(e^{i\omega _n t})_{n\in \mathbb Z}$ in $L^2(0,T)$ (see \cite{Avdonin-Ivanov}) with the gap property satisfied by the eigenvalues of the problem.
\end{itemize}

As for the critical case $T=T_0$ is concerned, we prove some elementary but useful extensions of the classical Kadec's $\frac{1}{4}$ Theorem (\cite{Young} Theorem 1.14 p. 42), that allow us to treat this special case (see Lemmas \ref{lem-Riesz-T0} and \ref{lem-Riesz-T0-1-2}).

\end{itemize}
To conclude, let us observe that, as the reader may have noticed, we have assumed the degeneracy exponent $\alpha$ to be in the interval $[0,2)$: this restriction is partly due to our method, but, on the other hand, it is  known that problem \eqref{CMU-eq-ctr}, with  (additive) boundary control, fails to be controllable for $\alpha\ge 2$ (see \cite{ACL}).

\subsection{Plan of the paper}

\begin{itemize}
\item In section \ref{sec-wp}, we recall the functional setting and the well-posedness results for the degenerate wave equation.
\item In section \ref{sec-results}, we state our main results:
\begin{itemize}
\item the analysis of the eigenvalue problem associated to \eqref{CMU-eq-ctr}, see Proposition \ref{prop-vp-w} for $\alpha \in [0,1)$ and Proposition \ref{prop-vp-s} for $\alpha \in [1,2)$,
\item a hidden regularity result of the value map, which is the fundamental observation before studying the bilinear control problem, see Proposition \ref{prop-"thm3"-red},
\item a positive bilinear control result, see Theorem \ref{thm-ctr}, when $T>\frac{4}{2-\alpha}$,
\item a bilinear control result, see Theorem \ref{thm-ctr=}, when $T=\frac{4}{2-\alpha}$,
\item a $\lq\lq$negative" bilinear control result, see Theorem \ref{thm-ctr<}, when $T<\frac{4}{2-\alpha}$.
\end{itemize}
\item In section \ref{sec-wellposedness}, we prove the well posedness results.
\item In section \ref{sec-spectral}, we prove Propositions \ref{prop-vp-w} and \ref{prop-vp-s}.
\item In section \ref{sec-hidden}, we prove Proposition \ref{prop-"thm3"-red}.
\item In section \ref{sec-pr-thm}, we prove Theorem \ref{thm-ctr}.
\item In section \ref{sec-pr-thm=}, we prove Theorem \ref{thm-ctr=}.
\item In section \ref{sec-pr-thm<}, we prove Theorem \ref{thm-ctr<}.
\end{itemize}



\section{Functional setting and well-posedness} 
\label{sec-wp}

\subsection{Functional setting for $\alpha \in [0,1)$}\hfill

For  $0 \leq \alpha < 1$, we consider
\begin{multline}
\label{*H^1_a-w}
 H^1_{\alpha} (0,1):=\Bigl \{ u \in L^2 (0,1),\\
 u \text{ absolutely continuous in } [0,1],
 x^{\alpha /2} u_x \in L^2 (0,1) \Bigr \},
\end{multline}
and
\begin{equation}
\label{H2alpha-w}
H^2_{\alpha} (0,1):=   \Bigl \{ u \in H^1_{\alpha }(0,1), x^\alpha u_x \in H^1 (0,1)  \Bigr \} .
\end{equation}
$H^1_{\alpha} (0,1)$ is endowed with the natural scalar product
$$ \forall f,g \in  H^1_{\alpha} (0,1), \quad (f,g) = 
\int _{0} ^1 \bigl(  x ^\alpha f_x g_x + fg \bigr) \, dx .$$
The operator $A:D(A)\subset L^2(0,1) \to L^2(0,1)$ will be  defined by
\begin{equation}
\label{D(A)-w}
\begin{cases}
\forall u \in D(A), \quad  Au:= (x ^\alpha  u_x)_x , \\
D(A) :=  \{ u \in H^2_{\alpha} (0,1) , (x^\alpha u_x) (0)=0, u_x (1)=0 \} .
\end{cases} 
\end{equation}
Then, the following results hold:
\begin{Proposition}
\label{Prop-A-w}

Let $\alpha \in [0,1)$. Then,

a) $H^1_{\alpha} (0,1)$ is a Hilbert space,

b) $A: D(A) \subset L^2 (0,1) \to L^2 (0,1)$ is a self-adjoint negative operator with dense domain.

\end{Proposition}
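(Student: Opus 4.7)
\medskip

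\noindent\textbf{Proof plan.}

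For part (a), the plan is to verify completeness of $H^1_\alpha(0,1)$ with respect to the stated inner product. Given a Cauchy sequence $(u_n)$, by completeness of $L^2(0,1)$ there exist $u,v\in L^2(0,1)$ with $u_n\to u$ and $x^{\alpha/2}(u_n)_x\to v$ in $L^2$. The crucial use of the hypothesis $\alpha<1$ is that $x^{-\alpha}$ is integrable near $0$, so Cauchy--Schwarz yields
$$ \int_0^1 |(u_n)_x - (u_m)_x|\,dx \,\leq\, \Bigl(\int_0^1 x^{-\alpha}\,dx\Bigr)^{1/2}\Bigl(\int_0^1 x^\alpha |(u_n)_x - (u_m)_x|^2\,dx\Bigr)^{1/2}, $$
so $(u_n)_x$ is Cauchy in $L^1(0,1)$ and thus converges to some $w$; this forces $v = x^{\alpha/2} w$ and, after passing to the limit in $u_n(x)=u_n(0)+\int_0^x (u_n)_x$, shows that $u$ is absolutely continuous with $u_x=w$. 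Hence $u\in H^1_\alpha(0,1)$ and $u_n\to u$ in $H^1_\alpha$. Note the same computation yields $H^1_\alpha(0,1)\hookrightarrow C([0,1])$ when $\alpha<1$, a fact I will reuse below.

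For part (b), density of $D(A)$ in $L^2(0,1)$ is immediate because $C^\infty_c(0,1)\subset D(A)$. Symmetry follows by integration by parts: for $u,v\in D(A)$,
$$ (Au,v)_{L^2} = \bigl[x^\alpha u_x v\bigr]_0^1 - \int_0^1 x^\alpha u_x v_x\,dx = -\int_0^1 x^\alpha u_x v_x\,dx, $$
where the boundary contribution at $x=1$ vanishes by $u_x(1)=0$ and the contribution at $x=0$ vanishes because $(x^\alpha u_x)(0)=0$ while $v$ is continuous on $[0,1]$ by the embedding above. The same identity with $u=v$ proves negativity.

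To upgrade symmetry to self-adjointness, the plan is the standard Lax--Milgram route. Consider the continuous symmetric bilinear form $a(u,v):=\int_0^1\bigl(x^\alpha u_x v_x+uv\bigr)\,dx$ on $H^1_\alpha(0,1)$, which is coercive by definition of the norm. For $f\in L^2(0,1)$, Lax--Milgram yields a unique $u\in H^1_\alpha(0,1)$ with $a(u,v)=(f,v)_{L^2}$ for all $v\in H^1_\alpha(0,1)$. Testing against $v\in C^\infty_c(0,1)$ gives $-(x^\alpha u_x)_x+u=f$ in $\mathcal{D}'(0,1)$, so $x^\alpha u_x\in H^1(0,1)$ and $u\in H^2_\alpha(0,1)$; then integration by parts for \emph{arbitrary} $v\in H^1_\alpha$ produces the free boundary term $[x^\alpha u_x v]_0^1$ which, by varying $v(0)$ and $v(1)$ independently, must vanish, forcing $(x^\alpha u_x)(0)=0$ and $u_x(1)=0$. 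Hence $u\in D(A)$ and $(I-A)u=f$; the surjectivity of $I-A$ combined with symmetry of $A$ gives self-adjointness by the standard criterion.

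The main delicate point is the boundary analysis at $x=0$ in the degenerate regime: one has to justify carefully that $x^\alpha u_x$ has a trace at $0$ (which follows from $x^\alpha u_x\in H^1(0,1)$) and that it can be forced to vanish by independent variation of $v(0)$, even though $v$ itself is only a $H^1_\alpha$-function and no Dirichlet-type trace theorem is needed beyond continuity. The integrability $\int_0^1 x^{-\alpha}\,dx<\infty$ that underpins both $H^1_\alpha\hookrightarrow C([0,1])$ and the existence of a meaningful trace $(x^\alpha u_x)(0)$ is precisely what breaks down for $\alpha\geq 1$, and is the reason the functional setting in Section \ref{sec-wp} must be treated separately in the strongly degenerate case.
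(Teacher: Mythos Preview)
Your proof is correct and follows essentially the same route as the paper: density via $C^\infty_c(0,1)\subset D(A)$, symmetry and negativity via the weighted integration-by-parts formula, and self-adjointness via surjectivity of $I-A$ obtained by Riesz/Lax--Milgram on $H^1_\alpha(0,1)$ followed by testing against smooth functions to recover the boundary conditions. One small inaccuracy in your closing remarks: the trace $(x^\alpha u_x)(0)$ exists simply because $x^\alpha u_x\in H^1(0,1)$ for $u\in H^2_\alpha(0,1)$, independently of $\alpha<1$; what the integrability of $x^{-\alpha}$ buys you is only the embedding $H^1_\alpha\hookrightarrow C([0,1])$ needed to vary $v(0)$ freely.
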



\subsection{Functional setting for $\alpha \in [1,2)$} \hfill

For  $1 \leq \alpha < 2$, we consider the following spaces :
\begin{multline}
\label{*H^1_a-s}
 H^1_{\alpha} (0,1):= \Bigl \{ u \in L^2(0,1), \\
 u \text{ locally absolutely continuous in } (0,1],
 x^{\alpha/2} u_x \in  L^2(0,1)  \Bigr \} ,
\end{multline}
and
\begin{equation}
\label{H2alpha-s}
 H^2_{\alpha} (0,1):=\{ u \in H^1_{\alpha} (0,1) \ \mid 
 x^\alpha u_x \in H^1(0,1) \}  .
\end{equation}
The operator $A:D(A)\subset L^2(0,1)\to L^2(0,1)$ will be defined by
\begin{equation*}
\begin{cases}
\forall u \in D(A), \quad    Au:= (x^\alpha  u_x)_x,
 \\
D(A) :=   \{ u \in H^2_{\alpha}(0,1), (x^\alpha  u_x) (0) =0 , u_x (1)=0 \} .
\end{cases}
\end{equation*}
Then, the following results hold.

\begin{Proposition}
\label{Prop-A-s}
Let $\alpha \in [1,2)$. Then,

a) $H^1_{\alpha} (0,1)$ is a Hilbert space,

b) $A: D(A) \subset L^2 (0,1) \to L^2 (0,1)$ is a self-adjoint negative operator with dense domain.

\end{Proposition}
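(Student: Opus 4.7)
The plan is to follow the Friedrichs-extension scheme standard for degenerate elliptic operators, but with every boundary manipulation at $x=0$ replaced by a limiting argument, since in the strongly degenerate regime $\alpha\in[1,2)$ functions in $H^1_\alpha$ need not admit a trace at $0$. The overall strategy is the same as in Proposition \ref{Prop-A-w}, with the crucial novelty that the global absolute continuity on $[0,1]$ available for $\alpha<1$ must be replaced everywhere by \emph{local} absolute continuity on $(0,1]$.

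For part (a), I would first check that $(f,g):=\int_0^1(x^\alpha f_xg_x+fg)\,dx$ is an inner product (positive definiteness coming from the $L^2$ term), then prove completeness. Given a Cauchy sequence $(u_n)\subset H^1_\alpha$, completeness of $L^2(0,1)$ produces $u,w\in L^2$ with $u_n\to u$ and $x^{\alpha/2}(u_n)_x\to w$. On every subinterval $[a,1]$ with $a\in(0,1)$ the weight $x^\alpha$ is bounded below by $a^\alpha>0$, so $(u_n)$ is Cauchy in the classical $H^1(a,1)$; the limit $u$ therefore lies in $H^1(a,1)$ for every such $a$, which is precisely what local absolute continuity on $(0,1]$ means, and $x^{\alpha/2}u_x=w$ a.e. Hence $u\in H^1_\alpha$ and $u_n\to u$ in $H^1_\alpha$.

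For part (b), density of $D(A)$ in $L^2(0,1)$ is immediate since $C_c^\infty(0,1)\subset D(A)$. Negativity will come from integration by parts on $[\epsilon,1]$: for $u\in D(A)$, using $u_x(1)=0$,
$$\int_\epsilon^1(x^\alpha u_x)_x\,u\,dx=-\int_\epsilon^1 x^\alpha u_x^2\,dx-\epsilon^\alpha u_x(\epsilon)\,u(\epsilon).$$
Setting $g:=x^\alpha u_x\in H^1(0,1)$ with $g(0)=0$, the Sobolev embedding gives $|g(\epsilon)|\lesssim \epsilon^{1/2}$, while the crude upper bound obtained by integrating $u_x=x^{-\alpha}g$ from $\epsilon$ to $1$ grows at worst like $\epsilon^{3/2-\alpha}$ (or logarithmically, at the threshold $\alpha=3/2$); since $\alpha<2$, the product $g(\epsilon)u(\epsilon)$ tends to $0$. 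Passing to the limit yields $(Au,u)_{L^2}=-\int_0^1 x^\alpha u_x^2\,dx\le 0$. For self-adjointness I would identify $A$ with the Friedrichs extension of the symmetric, nonnegative, bounded, coercive bilinear form $a(u,v):=\int_0^1 x^\alpha u_xv_x\,dx$ defined on $H^1_\alpha$, viewed as a dense subspace of $L^2(0,1)$. Standard theory produces a unique self-adjoint operator $\widetilde A$ on $L^2$ characterized by $a(u,v)=-(\widetilde Au,v)_{L^2}$ for $u\in D(\widetilde A)$, $v\in H^1_\alpha$. The inclusion $A\subset\widetilde A$ follows from the same integration-by-parts argument. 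For the reverse inclusion, testing the identity against $v\in C_c^\infty(0,1)$ first shows $(x^\alpha u_x)_x=\widetilde Au\in L^2$ distributionally, whence $u\in H^2_\alpha$; testing then against $v\in H^1_\alpha$ supported near each endpoint recovers the boundary conditions $(x^\alpha u_x)(0)=0$ and $u_x(1)=0$.

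The main obstacle throughout is the rigorous control of the boundary term at $x=0$: because $H^1_\alpha$-functions carry no trace there when $\alpha\ge 1$, every integration by parts must be obtained as a limit $\epsilon\to 0^+$, and one must use the quantitative decay $|x^\alpha u_x(x)|=O(x^{1/2})$ coming from $x^\alpha u_x\in H^1(0,1)$ with $(x^\alpha u_x)(0)=0$. The same limiting device reappears in the identification $A=\widetilde A$, and is the only substantive difference with respect to the weakly degenerate case treated in Proposition \ref{Prop-A-w}.
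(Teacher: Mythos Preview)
Your proposal is correct and follows essentially the same route as the paper, with minor differences in packaging. The paper first isolates an integration-by-parts lemma valid for all $f,g\in H^2_\alpha(0,1)$ (not just $D(A)$), whose proof hinges on exactly the limiting argument you describe: from $x^\alpha f'\in H^1(0,1)$ one gets $|x^\alpha f'(x)|\le C\sqrt{x}$, and then the product $(x^\alpha f')g$ must vanish at $0$ because otherwise $g\notin L^2$. With this in hand it shows $A$ is symmetric, dissipative, and $I-A$ surjective via the Riesz theorem on $H^1_\alpha$, which is the Friedrichs construction you invoke under another name.

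One point worth making explicit in your write-up: when $\alpha\ge 1$, the boundary condition $(x^\alpha u_x)(0)=0$ is \emph{automatic} for any $u\in H^2_\alpha(0,1)$ (if the limit $\ell$ of $x^\alpha u_x$ were nonzero, then $x^{\alpha/2}u_x\sim \ell x^{-\alpha/2}\notin L^2$). So in identifying $A=\widetilde A$ you need not ``recover'' it by testing against $v\in H^1_\alpha$ supported near $x=0$; it comes for free once you know $u\in H^2_\alpha$, and in fact such testing would be delicate since $H^1_\alpha$-functions carry no trace at $0$ in this regime.
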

We deduce that, for any $\alpha\in[0,2)$, $A$ is the infinitesimal generator of an analytic semigroup of contractions $e^{tA}$
on $L^2(0,1)$. 


\subsection{Well posedness of the problem} \hfill

Consider the non-homogeneous problem

\begin{equation}
\label{CMU-eq-ctr-f}
\begin{cases}
w_{tt} - (x^\alpha w_x)_x = p(t) \mu (x) w  + f(x,t), \quad &x\in (0,1), t \in (0,T), 
\\
(x^\alpha w_x)(x=0)=0 , \quad &t \in (0,T) ,\\
w_x (x=1)=0 , \quad &t \in (0,T) , \\
w(x,0)=w_0 (x) ,  \quad &x\in (0,1), 
\\
w_t (x,0)=w_1 (x) , \quad &x\in (0,1) .
\end{cases}
\end{equation}
In order to recast \eqref{CMU-eq-ctr-f} into the a first order problem, we introduce
$$ \mathcal{W} := \left( \begin{array}{c} w \\ w_t \end{array} \right),
\quad
\mathcal{W}_0 := \left( \begin{array}{c} w_0 \\ w_1 \end{array} \right),
\quad
\mathcal{F} (x,t) := \left( \begin{array}{c} 0 \\ f(x,t) \end{array} \right),
$$
the state space
$$ \mathcal{X} := H^1 _\alpha (0,1) \times L^2 (0,1) ,$$
and the operators
\begin{equation}
\label{def-mathcalA}
\mathcal{A} := \left( \begin{array}{cc} 0 & \text{Id} \\ A & 0 \end{array} \right), \quad
D(\mathcal{A}) := D(A) \times H^1 _\alpha (0,1) ,
\end{equation}
and
\begin{equation}
\label{def-mathcalB}
\mathcal{B} := \left( \begin{array}{cc} 0 & 0 \\ \mu & 0 \end{array} \right), \quad
D(\mathcal{B}) := H^1 _\alpha (0,1) \times L^2 (0,1).
\end{equation}
So, problem \eqref{CMU-eq-ctr-f} can be rewritten as
\begin{equation}
\label{CMU-eq-ctr-f-ordre1}
\begin{cases}
\mathcal{W}'(t) = \mathcal{A}\mathcal{W}(t) + p(t) \mathcal{B} \mathcal{W} (t) + \mathcal{F} (t) ,
\\
\mathcal{W} (0) = \mathcal{W}_0 .
\end{cases}
\end{equation}
We also introduce the space
\begin{equation}
\label{space-muV1infty}
V^{(1, \infty)} _{\alpha} := 
\{ \mu \in H^1 _\alpha (0,1), x^{\alpha /2} \mu _x \in L^\infty (0,1) \} .
\end{equation}
We can now state the well-posedness result for problem \eqref{CMU-eq-ctr-f-ordre1}.

\begin{Proposition}
\label{prop-"prop2"}

Let $T>0$, $p\in L^2 (0,T)$ and $f \in L^2 ((0,T), H^1 _\alpha (0,1))$. 
Assume that
\begin{equation}
\label{eq-space-mu-wp}
\mu \in V^1 _\alpha :=
\begin{cases}
H^1 _\alpha (0,1) \quad &\text{ if } \alpha \in [0,1), \\
V^{(1, \infty)} _{\alpha}  \quad &\text{ if } \alpha \in [1,2).
\end{cases}
\end{equation}
Then, for all $\mathcal{W}_0 \in D(\mathcal{A})$, there exists a unique classical solution of \eqref{CMU-eq-ctr-f-ordre1}, i.e. a function
$$  \mathcal{W} \in C^0 ([0,T], D(\mathcal{A})) ,$$
such that the following equality holds in $D(\mathcal{A})$: for every $t\in [0,T]$,
\begin{equation}
\label{eq-duhamel}
\mathcal W (t) = e^{\mathcal{A}t} \mathcal{W} _0 + \int _0 ^t e^{\mathcal{A}(t-s)} (\mathcal{B} \mathcal{W} (s) + \mathcal{F}(s)) \, ds .
\end{equation}
Moreover, there exists $C=C(\alpha,T,p) >0$ such that $\mathcal{W}$ satisfies
\begin{equation}\label{estimW}
\Vert \mathcal{W} \Vert _{C^0([0,T],D(\mathcal{A}))}
\leq C \, \Bigl( \Vert \mathcal{W}_0\Vert _{D(\mathcal{A})} + \Vert F\Vert _{L^2(0,T;D(\mathcal{A}))}\Bigr).
\end{equation} 
\end{Proposition}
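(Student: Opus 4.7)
The plan is to apply semigroup theory: recast \eqref{CMU-eq-ctr-f-ordre1} as a perturbation of the free degenerate wave evolution on $\mathcal{X}$, and solve the Duhamel formula \eqref{eq-duhamel} by a fixed-point argument in $C^0([0,T], D(\mathcal{A}))$. Propositions \ref{Prop-A-w} and \ref{Prop-A-s} tell us that $A$ is self-adjoint and negative on $L^2(0,1)$, hence $\mathcal{A}$ with domain $D(A)\times H^1_\alpha(0,1)$ is skew-adjoint on $\mathcal{X}$ for the natural energy inner product, and by Stone's theorem it generates a $C^0$-group $(e^{t\mathcal{A}})_{t\in\mathbb{R}}$ of isometries on $\mathcal{X}$ that restricts to a group of isometries on $D(\mathcal{A})$. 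Since the second component of $\mathcal{F}$ lies in $L^2((0,T),H^1_\alpha(0,1))$, we have $\mathcal{F}\in L^2((0,T),D(\mathcal{A}))$, so the forcing contributes a continuous $D(\mathcal{A})$-valued term.

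The crucial point is a multiplier lemma for $\mu$: under \eqref{eq-space-mu-wp}, the map $w\mapsto \mu w$ is bounded on $H^1_\alpha(0,1)$, so that $\mathcal{B}$ is bounded on $\mathcal{X}$ and on $D(\mathcal{A})$ (recall $D(\mathcal{A})=D(A)\times H^1_\alpha$, so we only need $\mu w \in H^1_\alpha$ when $w \in D(A)$, which reduces to the $H^1_\alpha$-multiplier property). For $\alpha\in[0,1)$ this follows from the weighted embedding $H^1_\alpha(0,1)\hookrightarrow L^\infty(0,1)$, itself a consequence of $x^{-\alpha/2}\in L^2(0,1)$ (Cauchy--Schwarz on $u_x = x^{-\alpha/2}\cdot x^{\alpha/2}u_x$ shows $u_x\in L^1$), combined with the product rule
\[
 x^{\alpha/2}(\mu w)_x = (x^{\alpha/2}\mu_x)\,w + \mu\,(x^{\alpha/2}w_x).
\]
For $\alpha\in[1,2)$ the embedding fails and $w$ may blow up at $0$, which is precisely why $V^{(1,\infty)}_\alpha$ requires $x^{\alpha/2}\mu_x\in L^\infty(0,1)$; integrating this pointwise bound from $x=1$ yields $\mu\in L^\infty(0,1)$ because $\int_0^1 x^{-\alpha/2}\,dx<\infty$ when $\alpha<2$, and the same decomposition keeps $x^{\alpha/2}(\mu w)_x$ in $L^2(0,1)$ for every $w\in H^1_\alpha(0,1)$.

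With these boundedness properties in hand, I would apply the Banach fixed point theorem to the affine map
\[
 \Phi(\mathcal{W})(t) := e^{t\mathcal{A}}\mathcal{W}_0 + \int_0^t e^{(t-s)\mathcal{A}}\bigl(p(s)\mathcal{B}\mathcal{W}(s)+\mathcal{F}(s)\bigr)\,ds
\]
on $C^0([0,T],D(\mathcal{A}))$ endowed with the equivalent norm $\sup_{t\in[0,T]} e^{-\lambda t}\Vert \mathcal{W}(t)\Vert_{D(\mathcal{A})}$, choosing $\lambda$ large enough (depending on $\Vert \mathcal{B}\Vert_{\mathcal{L}(D(\mathcal{A}))}$ and $\Vert p\Vert_{L^2(0,T)}$) to obtain a strict contraction; the time integral involving $p$ is handled by Cauchy--Schwarz in $t$, which is why $p\in L^2$ suffices. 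The unique fixed point is the desired solution in $C^0([0,T],D(\mathcal{A}))$, and the estimate \eqref{estimW} follows either directly from the fixed-point bound or from a Gronwall argument on $t\mapsto \Vert \mathcal{W}(t)\Vert_{D(\mathcal{A})}$ using \eqref{eq-duhamel}, with $C$ depending on $\alpha$, $T$ and $\Vert p\Vert_{L^2(0,T)}$.

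The main obstacle is the multiplier step for $\alpha\in[1,2)$, where the loss of the $L^\infty$ embedding makes the naive product rule fail; it is precisely the weighted $L^\infty$ control on $\mu_x$ built into $V^{(1,\infty)}_\alpha$ that rescues the estimate, and this motivates the hypothesis \eqref{eq-space-mu-wp}. All remaining steps are a routine application of semigroup theory.
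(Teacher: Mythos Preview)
Your proposal is correct and follows essentially the same approach as the paper: first establish that $\mathcal{B}\in\mathcal{L}(D(\mathcal{A}))$ via the multiplier property $w\mapsto\mu w:H^1_\alpha\to H^1_\alpha$ (your case analysis for $\alpha\in[0,1)$ versus $\alpha\in[1,2)$ matches the paper's Lemma~\ref{lem-wellposed} almost verbatim), then run a Banach fixed-point argument on the Duhamel map in $C^0([0,T],D(\mathcal{A}))$ and conclude with Gronwall. The only cosmetic difference is that the paper obtains the contraction under the smallness condition $C_1C_{\mathcal{B}}\Vert p\Vert_{L^1(0,T)}<1$ and then patches over subintervals, whereas you use the equivalent exponentially weighted norm; both are standard and interchangeable.
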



\section{Main results}
\label{sec-results}


\subsection{Preliminary result: Spectral problem}\label{se:main} \hfill

We investigate the eigenvalues and eigenfunctions of the operator
$$ - Au :=  - (x^\alpha u_x)_x , \quad x\in (0,1)$$
with Neumann boundary conditions:
$$ (x^\alpha u_x)(x=0) = 0 , \quad \text{ and } \quad u_x (x=1) .$$
Hence, we look for solutions $(\lambda, \Phi)$ of the problem
\begin{equation}
\label{vp}
\begin{cases}
- (x^\alpha \Phi ') ' = \lambda \Phi, \quad &x \in (0,1) , \\
x^\alpha \Phi ' (x) = 0 , \quad &x = 0 , \\
\Phi ' (1)=0 .
\end{cases}
\end{equation}
The difference with the spectral analysis of \cite{CMV-MCRF, CMV-strong} is in the boundary condition at the point $x=1$ which leads to new difficulties.


\subsubsection{Eigenvalues and eigenfunctions when $\alpha \in [0,1)$} \hfill

\begin{Proposition}
\label{prop-vp-w}
Given $\alpha \in [0,1)$, set
$$ \kappa _\alpha := \frac{2-\alpha}{2}, \quad \nu_\alpha := \frac{1-\alpha}{2-\alpha},$$
and consider the Bessel function $J_{-\nu_\alpha}$ of negative order $-\nu_\alpha$ and of first kind, and the positive zeros $(j_{-\nu_\alpha + 1, n}) _{n\geq 1}$ of the Bessel function $J_{-\nu_\alpha +1}$. 

Then, the set of solutions $(\lambda,\Phi)$ of problem \eqref{vp} is
$$ \mathcal S = \{(\lambda_{\alpha,n}, \rho \, \Phi _{\alpha,n}), n \in \mathbb N, \rho \in \mathbb R \},$$
where
\begin{itemize}
\item for $n=0$,
\begin{equation}
\label{eq-vp-w-vp0}
\lambda _{\alpha,0} = 0, \quad \Phi _{\alpha,0} (x)=1 ,
\end{equation}

\item 
for $n\geq 1$,
\begin{equation}
\label{eq-vp-w-vpm} 
\lambda _{\alpha,n} = \kappa _\alpha ^2 \, j_{-\nu_\alpha + 1, n} ^2 ,
\quad \Phi _{\alpha,n} (x) = K_{\alpha,n} \, x^{\frac{1-\alpha}{2}} \, J_{-\nu_\alpha} \Bigl( j_{-\nu_\alpha + 1, n}\,  x^{\frac{2-\alpha}{2}}\Bigr),
\end{equation}
where the positive constant $K_{\alpha,n}$ is chosen such that $\Vert \Phi _{\alpha,n} \Vert _{L^2 (0,1)} = 1$.
\end{itemize}
Moreover, the sequence $(\Phi _{\alpha,n})_{n\geq 0}$ forms an orthonormal basis of $L^2 (0,1)$. Additionally, the sequence $(\sqrt{\lambda _{\alpha,n+1}}  - \sqrt{\lambda _{\alpha,n}})_{n\geq 1}$ is decreasing and 
\begin{equation}
\label{gap-sqrt-weak}
\sqrt{\lambda _{\alpha,n+1}}  - \sqrt{\lambda _{\alpha,n}} \to \frac{2-\alpha}{2} \,  \pi \quad \text{ as } n\to \infty .
\end{equation}

\end{Proposition}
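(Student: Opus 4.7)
The plan is to reduce the boundary value problem \eqref{vp} to the Bessel equation, identify the correct branch of solutions via the singular boundary condition at $x=0$, impose the condition at $x=1$ to quantize $\lambda$, and finally use self-adjointness together with classical facts on zeros of Bessel functions.

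\textbf{Step 1 (reduction to Bessel).} I would introduce the change of variables $t = x^{\kappa_\alpha}$ together with the ansatz $\Phi(x) = x^{(1-\alpha)/2}\, v(t)$. A direct computation shows that $-(x^\alpha \Phi')' = \lambda\Phi$ is equivalent to
\begin{equation*}
t^2 v''(t) + t\, v'(t) + \Bigl(\frac{\lambda}{\kappa_\alpha^2}\, t^2 - \nu_\alpha^2\Bigr) v(t) = 0,
\end{equation*}
which is the Bessel equation of order $\nu_\alpha$ in the variable $s = \sqrt{\lambda}\, t/\kappa_\alpha$. Since $\nu_\alpha = (1-\alpha)/(2-\alpha)\in(0,1/2]$ for $\alpha\in[0,1)$ is never an integer, $J_{\nu_\alpha}$ and $J_{-\nu_\alpha}$ form a fundamental system, so any nontrivial solution of \eqref{vp} with $\lambda>0$ is a linear combination of $x^{(1-\alpha)/2}J_{\pm\nu_\alpha}(c\,x^{\kappa_\alpha})$ with $c = \sqrt{\lambda}/\kappa_\alpha$. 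The case $\lambda=0$ is handled directly and yields $\Phi\equiv\mathrm{const}$.

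\textbf{Step 2 (condition at $x=0$).} Using $J_{\pm\nu_\alpha}(s) \sim s^{\pm\nu_\alpha}/(2^{\pm\nu_\alpha}\Gamma(1\pm\nu_\alpha))$ as $s\to 0^+$, I would show that $x^{(1-\alpha)/2}J_{\nu_\alpha}(c x^{\kappa_\alpha})\sim C\, x^{1-\alpha}$ while $x^{(1-\alpha)/2}J_{-\nu_\alpha}(c x^{\kappa_\alpha})\sim C'$ near $0$. Computing $x^\alpha\Phi'$ term by term, the first option gives a nonzero limit $(1-\alpha)C\neq 0$, while the second option gives limit zero. Hence the singular boundary condition $x^\alpha\Phi'(0)=0$ forces $\Phi(x) = K x^{(1-\alpha)/2} J_{-\nu_\alpha}(c x^{\kappa_\alpha})$.

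\textbf{Step 3 (condition at $x=1$).} Using the identity
\begin{equation*}
\frac{d}{ds}\bigl[s^{\nu_\alpha} J_{-\nu_\alpha}(s)\bigr] = -s^{\nu_\alpha} J_{-\nu_\alpha+1}(s),
\end{equation*}
which follows from the standard recurrences $J_{\nu-1}+J_{\nu+1}=(2\nu/s)J_\nu$ and $2J_\nu' = J_{\nu-1}-J_{\nu+1}$, I would obtain
\begin{equation*}
\Phi'(x) = -K\, c\,\kappa_\alpha\, x^{\frac{1}{2}-\alpha}\, J_{-\nu_\alpha+1}(c\, x^{\kappa_\alpha}).
\end{equation*}
The condition $\Phi'(1)=0$ therefore reads $J_{-\nu_\alpha+1}(c)=0$, so $c=j_{-\nu_\alpha+1,n}$ for some $n\geq 1$, and $\lambda_{\alpha,n}=\kappa_\alpha^2 j_{-\nu_\alpha+1,n}^2$ as claimed. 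Normalization to unit $L^2$ norm fixes $K_{\alpha,n}$.

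\textbf{Step 4 (basis and gap).} Proposition~\ref{Prop-A-w} tells us that $A$ is self-adjoint and negative on $L^2(0,1)$; the compact embedding $H^1_\alpha(0,1)\hookrightarrow L^2(0,1)$ (Rellich type) gives that $A$ has compact resolvent, so $(\Phi_{\alpha,n})_{n\geq 0}$ is automatically an orthonormal basis. For the asymptotics, I would invoke McMahon's expansion $j_{\mu,n}=(n+\mu/2-1/4)\pi+O(1/n)$, yielding $j_{-\nu_\alpha+1,n+1}-j_{-\nu_\alpha+1,n}\to \pi$ and hence \eqref{gap-sqrt-weak}. Monotonicity of $(\sqrt{\lambda_{\alpha,n+1}}-\sqrt{\lambda_{\alpha,n}})$ reduces to the classical monotonicity of the gap between consecutive positive zeros of $J_\mu$ for $\mu>-1$ (a consequence of the Sturm comparison theorem applied to the equation satisfied by $\sqrt{s}\,J_\mu(s)$), which applies here since $-\nu_\alpha+1>1/2>-1$.

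\textbf{Main obstacle.} The only delicate point is the Bessel reduction in Step 1, which requires bookkeeping of the powers in the ansatz $\Phi(x)=x^{(1-\alpha)/2}v(x^{\kappa_\alpha})$, and the correct handling of the singular boundary condition at $x=0$ in Step 2 (one must distinguish between membership in the natural energy space and satisfaction of $x^\alpha\Phi'(0)=0$ in the trace sense, the latter being strictly more restrictive and what rules out the $J_{\nu_\alpha}$ branch).
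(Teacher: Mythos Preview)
Your proposal is correct and follows essentially the same route as the paper: the same change of variables to Bessel's equation, the same selection of the $J_{-\nu_\alpha}$ branch via the boundary condition $x^\alpha\Phi'(0)=0$, and the same quantization at $x=1$ (you use the equivalent identity $\tfrac{d}{ds}[s^{\nu_\alpha}J_{-\nu_\alpha}(s)]=-s^{\nu_\alpha}J_{-\nu_\alpha+1}(s)$ where the paper invokes $zJ_\nu'(z)-\nu J_\nu(z)=zJ_{\nu+1}(z)$). Your treatment of the gap is also in line with the paper, which simply cites the known fact that the zero gaps of $J_\mu$ are nonincreasing for $\mu\ge 1/2$; just note that the threshold for \emph{decreasing} gaps is $\mu\ge 1/2$, not merely $\mu>-1$, which you in fact acknowledge when you point out $-\nu_\alpha+1\ge 1/2$.
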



\subsubsection{Eigenvalues and eigenfunctions when $\alpha \in [1,2)$} \hfill

\begin{Proposition}
\label{prop-vp-s}

Given $\alpha \in [1,2)$, set
$$ \kappa _\alpha := \frac{2-\alpha}{2}, \quad \nu_\alpha := \frac{\alpha -1}{2-\alpha},$$
and consider the Bessel function $J_{\nu_\alpha}$ of positive order $\nu_\alpha$ and of first kind, and the positive zeros $(j_{\nu_\alpha + 1, n}) _{n\geq 1}$ of the Bessel function $J_{\nu_\alpha +1}$. 

Then, the set of solutions $(\lambda,\Phi)$ of problem \eqref{vp} is
$$ \mathcal S = \{(\lambda_{\alpha,n}, \rho \, \Phi _{\alpha,n}), n \in \mathbb N, \rho \in \mathbb R \},$$
where
\begin{itemize}
\item for $n=0$,
\begin{equation}
\label{eq-vp-s-vp0}
\lambda _{\alpha,0} = 0, \quad \Phi _{\alpha,0} (x)=1 ,
\end{equation}

\item for $n\geq 1$,
\begin{equation}
\label{eq-vp-s-vpm} 
\lambda _{\alpha,n} = \kappa _\alpha ^2 \, j_{\nu_\alpha + 1, n} ^2 ,
\quad \Phi _{\alpha,n} (x) = K_{\alpha,n} \, x^{\frac{1-\alpha}{2}} \, J_{\nu_\alpha} \Bigl( j_{\nu_\alpha + 1, n}\,  x^{\frac{2-\alpha}{2}}\Bigr) ,
\end{equation}
where the positive constant $K_{\alpha,n}$ is chosen such that $\Vert \Phi _{\alpha,n} \Vert _{L^2 (0,1)} = 1$.
\end{itemize}
Moreover, the sequence $(\Phi _{\alpha,n})_{n\geq 0}$ forms an orthonormal basis of $L^2 (0,1)$. Additionally, the sequence $(\sqrt{\lambda _{\alpha,n+1}}  - \sqrt{\lambda _{\alpha,n}})_{n\geq 1}$ is decreasing and 
\begin{equation}
\label{gap-sqrt-strong}
\sqrt{\lambda _{\alpha,n+1}}  - \sqrt{\lambda _{\alpha,n}} \to \frac{2-\alpha}{2} \,  \pi \quad \text{ as } n\to \infty  .
\end{equation}

\end{Proposition}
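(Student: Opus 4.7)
The plan is to mirror the analysis of Proposition~\ref{prop-vp-w}, but with the Bessel index adapted to the strong degeneracy regime $\alpha \in [1,2)$, where now $\nu_\alpha = (\alpha-1)/(2-\alpha) \geq 0$. First, for $\lambda > 0$, I would divide the ODE in \eqref{vp} by $x^\alpha$ to get $\Phi'' + (\alpha/x)\Phi' + \lambda x^{-\alpha}\Phi = 0$, and perform the Liouville-type change of variables $z = b\, x^{\kappa_\alpha}$ with $b = \sqrt{\lambda}/\kappa_\alpha$, setting $v(z) := x^{(\alpha-1)/2}\Phi(x)$. A direct calculation reduces the equation to Bessel's equation
\[
v''(z) + \tfrac{1}{z}v'(z) + \Bigl(1 - \tfrac{\nu_\alpha^2}{z^2}\Bigr) v(z) = 0
\]
of order $\nu_\alpha$, so that every classical solution has the form
\[
\Phi(x) = x^{(1-\alpha)/2}\bigl[A\, J_{\nu_\alpha}(bx^{\kappa_\alpha}) + B\, Y_{\nu_\alpha}(bx^{\kappa_\alpha})\bigr].
\]

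Next I would impose the boundary conditions. Using the identity $\frac{d}{dz}[z^{-\nu} Z_\nu(z)] = -z^{-\nu}Z_{\nu+1}(z)$ for $Z_\nu \in \{J_\nu, Y_\nu\}$, together with the useful relation $(1-\alpha)/2 = -\nu_\alpha \kappa_\alpha$, a computation yields
\[
\Phi'(x) = -b\kappa_\alpha\, x^{\kappa_\alpha(1-\nu_\alpha)-1}\,\bigl[A\, J_{\nu_\alpha+1}(bx^{\kappa_\alpha}) + B\, Y_{\nu_\alpha+1}(bx^{\kappa_\alpha})\bigr].
\]
Plugging in the small-argument asymptotics of Bessel functions gives $x^\alpha \Phi'(x) \sim C_A\, x + C_B$ as $x \to 0^+$, with $C_B \neq 0$ whenever $B \neq 0$ (the case $\alpha = 1$ being handled separately via the logarithmic expansion of $Y_0$). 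The boundary condition $(x^\alpha \Phi')(0)=0$ therefore forces $B=0$, and imposing $\Phi'(1)=0$ then reduces to $J_{\nu_\alpha+1}(b)=0$, i.e.\ $b = j_{\nu_\alpha+1, n}$ for some $n \geq 1$, producing the claimed $\lambda_{\alpha,n}$ and $\Phi_{\alpha,n}$ after normalization. The case $\lambda = 0$ is immediate: integrating $(x^\alpha \Phi')' = 0$ against the two Neumann conditions gives $\Phi = \mathrm{const}$, which yields $\Phi_{\alpha,0}=1$.

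For the orthonormal basis property, orthonormality of the family $(\Phi_{\alpha,n})_{n\geq 0}$ follows from the self-adjointness of $A$ (Proposition~\ref{Prop-A-s}), and completeness is a consequence of the compactness of the resolvent of $A$, which in turn relies on the compact embedding $H^1_\alpha(0,1) \hookrightarrow L^2(0,1)$ for $\alpha \in [0,2)$ (a weighted Rellich--Kondrachov argument). For the asymptotic gap, I would invoke the McMahon expansion $j_{\nu,n} = (n + \nu/2 - 1/4)\pi + O(1/n)$ to obtain $\sqrt{\lambda_{\alpha,n+1}} - \sqrt{\lambda_{\alpha,n}} = \kappa_\alpha(j_{\nu_\alpha+1,n+1} - j_{\nu_\alpha+1,n}) \to \kappa_\alpha\pi = (2-\alpha)\pi/2$. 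The decreasing monotonicity of these gaps follows from the classical concavity of $n \mapsto j_{\nu,n}$ valid for $\nu \geq -1/2$ (see Watson's treatise on Bessel functions), which applies here since $\nu_\alpha + 1 = 1/(2-\alpha) \geq 1$.

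The main obstacle is Step~2: in the strong case the term $x^{(1-\alpha)/2}Y_{\nu_\alpha}(bx^{\kappa_\alpha})$ may belong to $L^2(0,1)$ and even to $H^1_\alpha(0,1)$ for certain $\alpha \in (1,3/2)$, so its exclusion is not automatic and must be enforced by a careful computation of the finite, nonzero limit of $x^\alpha \Phi'$ at $0$, with the borderline value $\alpha = 1$ requiring the logarithmic expansion of $Y_0$ separately. A secondary technical point is the compactness of the resolvent in the degenerate weighted setting, which is routine but worth a brief justification from the functional framework of Section~\ref{sec-wp}.
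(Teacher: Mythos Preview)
Your argument is correct and follows the same overall scheme as the paper (change of variables to Bessel's equation, eliminate the second solution, impose $\Phi'(1)=0$ via the identity $\tfrac{d}{dz}[z^{-\nu}Z_\nu]=-z^{-\nu}Z_{\nu+1}$, and invoke monotonicity of Bessel-zero gaps for $\nu_\alpha+1\geq 1>\tfrac12$). The differences are minor but worth noting.

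\textbf{Elimination of the second solution.} You eliminate the $Y_{\nu_\alpha}$ contribution by computing the limit of $x^\alpha\Phi'$ at $0$ and invoking the Neumann condition there. The paper instead works with the pair $\{J_{\nu_\alpha},J_{-\nu_\alpha}\}$ (or $\{J_{\nu_\alpha},Y_{\nu_\alpha}\}$ when $\nu_\alpha\in\mathbb N$) and discards the second solution by the \emph{functional setting}: since $\Phi_-(x)\sim c\,x^{1-\alpha}$ near $0$, one has $x^{\alpha/2}\Phi_-'(x)\sim c(1-\alpha)x^{-\alpha/2}\notin L^2(0,1)$ for every $\alpha\in[1,2)$, so $\Phi_-\notin H^1_\alpha(0,1)$ and hence cannot lie in $D(A)$. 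The boundary condition at $x=0$ is then automatically satisfied by $\Phi_+$. This corrects your ``main obstacle'': the $Y_{\nu_\alpha}$ contribution \emph{never} belongs to $H^1_\alpha(0,1)$ for $\alpha\in[1,2)$ (your claim that it may for $\alpha\in(1,3/2)$ is false; it is only in $L^2$ there). So the exclusion is in fact automatic from the domain, and the paper's route is slightly shorter than yours.

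\textbf{The case $\alpha=1$.} In your own computation the limit of $x^\alpha\Phi'$ involves $Y_{\nu_\alpha+1}=Y_1$, whose leading singularity is the power $z^{-1}$, not a logarithm. Hence no separate treatment is needed there; the formula $Y_{\nu_\alpha+1}(z)\sim -\tfrac{\Gamma(\nu_\alpha+1)}{\pi}(z/2)^{-(\nu_\alpha+1)}$ covers $\nu_\alpha=0$ as well. Your caveat about the logarithmic expansion of $Y_0$ is unnecessary for this step.
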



\subsection{Hidden regularity} \hfill

\subsubsection{Some notations} \hfill

Let us start by introducing some notation which will be used in the proofs of our results. To avoid possible problems generated by the eigenvalue $0$, we define
 \begin{equation}
\label{notation}
\lambda _{\alpha ,n} ^* := 
\begin{cases} 
1 & \text{ for } n=0, \\ 
\lambda _{\alpha ,n} & \text{ for } n \geq 1 .
\end{cases}
\end{equation}
It will be useful to introduce the following intermediate Sobolev spaces for any $s >0$:
\begin{equation}
\label{def-Hs}
H^s _{(0)} := D((-A) ^{s/2})
= \Bigl \{ \psi \in L^2 (0,1),\, \sum _{k=0} ^\infty (\lambda ^* _{\alpha,k} ) ^s \langle \psi, \Phi _{\alpha,k} \rangle _{L^2(0,1)} ^2 < \infty \Bigr \},
\end{equation}
equipped with the norm
$$ \Vert \psi \Vert _{H^s _{(0)}} := \Bigl( \sum _{k=0} ^\infty (\lambda ^* _{\alpha,k} ) ^s \langle \psi, \Phi _{\alpha,k} \rangle _{L^2(0,1)} ^2 \Bigr) ^{1/2} .$$
We also define the following spaces
\begin{equation}
\label{space-muV1}
V^{(2, \infty)} _{\alpha} := 
\{ \mu \in H^2 _\alpha (0,1), x^{\alpha /2} \mu _x \in L^\infty (0,1) \} , 
\end{equation}
and
\begin{equation}
\label{space-muV2}
V^{(2, \infty, \infty)} _{\alpha} := 
\{ \mu \in H^2 _\alpha (0,1), x^{\alpha /2} \mu _x \in L^\infty (0,1), (x^\alpha \mu_x)_x \in L^\infty (0,1) \} ,
\end{equation}
and the closed subspace of $H^2 _\alpha (0,1)$
\begin{equation}
\label{space-muV10}
V^{(2, 0)} _{\alpha} := 
\{ w \in H^2 _\alpha (0,1), (x^{\alpha } w _x) (0)=0 \} .
\end{equation}
Given $(w_0,w_1) \in H^1 _\alpha (0,1) \times L^2 (0,1)$ and $p\in L^2 (0,T)$, we will denote by
$w^{(w_0,w_1;p)}$ the solution of \eqref{CMU-eq-ctr} associated to the initial conditions $w_0,w_1$ and control $p$. In particular, when $(w_0, w_1)=(1,0)$ and $p=0$, we note that
the constant function equal to $1$ satisfies \eqref{CMU-eq-ctr}, hence
$$ w^{(1,0;0)} = 1, \quad w_t ^{(1,0;0)} = 0 .$$
In the following, we will be interested in the regularity of the solution of \eqref{CMU-eq-ctr} starting from the ground state $(1,0)$, that is, the solution $w^{(1,0;p)}$ (or, more simply, $w^{(p)}$) of
\begin{equation}
\label{CMU-eq-ctr-10}
\begin{cases}
w_{tt} - (x^\alpha w_x)_x = p(t) \mu (x) w , \quad &x\in (0,1), t \in (0,T), 
\\
(x^\alpha w_x)(x=0)=0 , \quad &t \in (0,T) ,\\
w_x (x=1)=0 , \quad &t \in (0,T) , \\
w(x,0)=1 ,  \quad &x\in (0,1), 
\\
w_t (x,0)=0 , \quad &x\in (0,1) .
\end{cases}
\end{equation}


\subsubsection{A hidden regularity result} \hfill

We consider the solution $w$ of \eqref{CMU-eq-ctr-10} and, if $p\in L^2(0,T)$ and $\mu\in V^1_\alpha$, thanks to Proposition \ref{prop-"prop2"} we know that
$$ (w(T),w_t (T)) \in D(A) \times H^1 _\alpha (0,1)) .$$
Before stating Theorem \ref{thm-ctr}, we will prove the following result, which extends the regularity result of \cite[Theorem 3]{B-bi} to the degenerate case:

\begin{Proposition}
\label{prop-"thm3"-red}
Let $T>0$ and
\begin{equation}
\label{eq-space-mu}
\mu \in V^2 _\alpha :=
\begin{cases}
V^{(2, \infty)} _{\alpha} \quad &\text{ if } \alpha \in [0,1), \\
V^{(2, \infty, \infty)} _{\alpha}  \quad &\text{ if } \alpha \in [1,2) .
\end{cases}
\end{equation}
Then, for all $p \in L^2 (0,T)$, the solution $w^{(p)}$ of \eqref{CMU-eq-ctr-10} has the following additional regularity
\begin{equation}
\label{reg-wT-red}
(w^{(p)} (T), w^{(p)} _t (T))  \in H^3 _{(0)} \times D(A) .
\end{equation}
Moreover, the map
\begin{equation}
\label{def-thetaT-red}
\Theta _T: L^2 (0,T) \to H^3 _{(0)} \times D(A), \quad \Theta _T (p) := (w^{(p)} (T), w^{(p)} _t (T)) 
\end{equation}
is of class $C^1$.

\end{Proposition}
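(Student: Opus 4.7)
The plan is to expand $w^{(p)}(T)$ in the orthonormal eigenbasis $(\Phi_{\alpha,n})_{n\ge 0}$ of $-A$ given by Propositions~\ref{prop-vp-w}-\ref{prop-vp-s} and translate \eqref{reg-wT-red} into summability statements for its Fourier coefficients. Projecting \eqref{eq-duhamel} on $\Phi_{\alpha,n}$ and using $(w^{(p)}(0),w^{(p)}_t(0))=(\Phi_{\alpha,0},0)$, for $n\ge 1$ one gets
$$
c_n(T):=\langle w^{(p)}(T),\Phi_{\alpha,n}\rangle=\int_0^T p(t)\,G_n(t)\,\frac{\sin(\omega_n(T-t))}{\omega_n}\,dt,\quad G_n(t):=\langle\mu w^{(p)}(t),\Phi_{\alpha,n}\rangle,
$$
with $\omega_n:=\sqrt{\lambda_{\alpha,n}}$, and an analogous formula for $c_n'(T):=\langle w_t^{(p)}(T),\Phi_{\alpha,n}\rangle$ with $\cos(\omega_n(T-t))$ replacing $\sin(\omega_n(T-t))/\omega_n$. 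The zero mode being trivially bounded, \eqref{reg-wT-red} reduces to $\sum_{n\ge 1}\lambda_{\alpha,n}^{3}|c_n(T)|^2<+\infty$ and $\sum_{n\ge 1}\lambda_{\alpha,n}^{2}|c_n'(T)|^2<+\infty$.

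The decisive step is to extract a factor $\lambda_{\alpha,n}^{-1}$ from $G_n(t)$ by integrating by parts twice against $\Phi_{\alpha,n}$, exploiting $-(x^{\alpha}\Phi_{\alpha,n}')'=\lambda_{\alpha,n}\Phi_{\alpha,n}$ together with $\Phi_{\alpha,n}'(1)=0$ and $(x^\alpha\Phi_{\alpha,n}')(0)=0$. A direct computation yields
$$
\lambda_{\alpha,n}\,G_n(t)=-\bigl\langle A(\mu w^{(p)})(t),\Phi_{\alpha,n}\bigr\rangle+\mu_x(1)\,w^{(p)}(t,1)\,\Phi_{\alpha,n}(1),
$$
where $A(\mu w)=(x^{\alpha}\mu_x)_x\,w+2\,x^{\alpha}\mu_x\,w_x+\mu\,Aw$ belongs to $L^\infty(0,T;L^2(0,1))$ precisely thanks to the conditions defining $V^2_\alpha$ (the $L^\infty$-control of $x^{\alpha/2}\mu_x$ and, when $\alpha\in[1,2)$, of $(x^\alpha\mu_x)_x$) combined with $w^{(p)}\in C^0([0,T];D(A))$ from Proposition~\ref{prop-"prop2"}, and the boundary contribution at $x=0$ vanishes thanks to the degeneracy and $(x^\alpha w_x)(0)=0$. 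Substituting in $c_n(T)$ produces two contributions whose summability is handled separately: the volume term is controlled by Parseval in $x$ combined with Cauchy-Schwarz in $t$, via $\sum_{n\ge 1}\int_0^T|\langle A(\mu w^{(p)}(t)),\Phi_{\alpha,n}\rangle|^2\,dt\le\|A(\mu w^{(p)})\|^2_{L^2(0,T;L^2(0,1))}$.

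For the boundary contribution, the key observation is that $|\Phi_{\alpha,n}(1)|=\sqrt{2-\alpha}$ is \emph{uniformly} bounded in $n$ --- this follows from $\|\Phi_{\alpha,n}\|_{L^2(0,1)}=1$ combined with the Lommel identity $\int_0^j y\,J_{\nu}(y)^2\,dy=\tfrac{j^{2}}{2}J_{\nu}(j)^{2}$ at zeros of $J_{\nu+1}$, and is also a consequence of Lemmas~\ref{lem-prop-phi-n-0-1-w}-\ref{lem-prop-phi-n-0-1-s}. One then factors $\Phi_{\alpha,n}(1)$ out of the $n$-dependent integral and invokes the Bessel inequality for $(\sin(\omega_n(T-\cdot)))_{n\ge 1}$ in $L^2(0,T)$, valid for every $T>0$ thanks to the uniform gap~\eqref{gap-sqrt-weak}-\eqref{gap-sqrt-strong}, applied to $t\mapsto p(t)\,w^{(p)}(t,1)$, whose $L^2$-regularity comes from the trace of $w^{(p)}\in C^0([0,T];D(A))$. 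Combining yields $\sum_{n\ge 1}\lambda_{\alpha,n}^{3}|c_n(T)|^2\le C(1+\|p\|_{L^2}^2)\|p\|_{L^2}^2$, and the same scheme on $c_n'(T)$ gives the $D(A)$-estimate on $w_t^{(p)}(T)$. Finally, $\Theta_T$ is of class $C^1$ because $p\mapsto w^{(p)}$ is analytic, as seen by iterating \eqref{eq-duhamel} into a convergent Volterra-type series (each iterate gaining a factor $\|p\|_{L^2}$), and the Fr\'echet differential $d\Theta_T(p)\cdot q$ is identified with the value at time $T$ of the linearised system $\tilde w_{tt}-A\tilde w=q\,\mu\,w^{(p)}+p\,\mu\,\tilde w$ with zero initial data, which satisfies exactly the same Fourier estimates. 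The main obstacle is precisely the handling of the boundary term: since $\sum_{n}\Phi_{\alpha,n}(1)^2$ diverges, a termwise estimate is unavailable and the decay must be recovered entirely from the oscillatory integration against $\sin(\omega_n(T-\cdot))$; this is also why the stronger hypothesis $(x^\alpha\mu_x)_x\in L^\infty$ is imposed for $\alpha\in[1,2)$, where it is needed to keep $A(\mu w^{(p)})$ in $L^2(0,1)$ despite the blow-up of the eigenfunctions at $x=0$.
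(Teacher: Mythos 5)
Your proof follows essentially the same route as the paper's: the double integration by parts against $-(x^\alpha\Phi_{\alpha,n}')'=\lambda_{\alpha,n}\Phi_{\alpha,n}$, the vanishing of the $x=0$ boundary terms via the structure of $\mu w^{(p)}\in V^{(2,0)}_\alpha$, the control of the $x=1$ boundary term through the uniform bound $\vert\Phi_{\alpha,n}(1)\vert=\sqrt{2-\alpha}$ combined with an Ingham/Bessel inequality valid for every $T>0$, and the Parseval bound on the volume term are exactly the content of Lemmas \ref{lem-reg-muw} and \ref{lem-Sn1-l2}--\ref{lem-S3-h2}, and your identification of $D\Theta_T(p)\cdot q$ with the linearized system is also the paper's argument. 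The one point left implicit in your sketch is that the $o(\Vert q\Vert)$ remainder estimate must be obtained in the strong $H^3_{(0)}\times D(A)$ norm by re-running the same Fourier estimates on the equation satisfied by $w^{(p+q)}-w^{(p)}-W^{(p,q)}$ (whose source term is quadratic in $q$), since analyticity of $p\mapsto w^{(p)}$ in $C^0([0,T],D(\mathcal A))$ only yields differentiability into the weaker space.
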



\subsection{Main controllability results} \hfill

Because of the negative result contained in \cite{BMS}, one could not expect any controllability
property to hold in the spaces $H^2 _{(0)}(0,1) \times H^1 _\alpha (0,1)$. However, since the
multiplication operator $Bu := \mu u$ does not preserve the space $H^3 _{(0)}(0,1)$,
the chance
to achieve controllability results in $H^3 _{(0)}(0,1) \times H^2 _{(0)}(0,1)$
is still open. For this purpose, we will need additional assumptions on the admissible potential $\mu$. Furthermore, we
observe that controllability properties will depend on a threshold value for the controllability time because of the finite speed of propagation, as it always happens for hyperbolic equations.

\subsubsection{Threshold value of $T$ and the admissible potentials $\mu$} \hfill

We will show that the value
\begin{equation}
\label{def-T0}
T_0 := \frac{4}{2-\alpha} 
\end{equation}
is the threshold time for controllability. Let us define the following subclass of admissible potentials $\mu$
\begin{equation}
\label{hyp-mu}
V^{(adm)} := \{ \mu \in V^2 _\alpha, \text{ such that } \exists c>0, \forall n \geq 0, \quad \vert \langle \mu, \Phi _{\alpha,n} \rangle _{L^2 (0,1)} \vert \geq \frac{c}{\lambda ^* _n } \} .
\end{equation}
We observe that the space $V^{(adm)}$ is non empty. Indeed, in the proposition that follows
we exhibit an admissible potential $\mu \in V^{(adm)}$:

\begin{Proposition}
\label{prop-mu-ex-dense}
The function $\mu: \mu (x) = x^{2-\alpha}$ belongs to $V^{(adm)}$. Moreover the space $ V^{(adm)}$ is dense in  $V^2 _\alpha$.
\end{Proposition}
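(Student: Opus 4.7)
My plan is to prove the two claims separately: first verifying that $\mu(x) = x^{2-\alpha}$ lies in $V^{(adm)}$, then establishing density by perturbing any $\mu \in V^2_\alpha$ with a multiple of $x^{2-\alpha}$.

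For the first claim, membership in $V^2_\alpha$ is a direct computation: $\mu_x = (2-\alpha)x^{1-\alpha}$, so $x^{\alpha/2}\mu_x = (2-\alpha)x^{1-\alpha/2}$ is bounded, and $(x^\alpha \mu_x)_x = 2-\alpha$ is constant, which handles both $V^{(2,\infty)}_\alpha$ (for $\alpha<1$) and $V^{(2,\infty,\infty)}_\alpha$ (for $\alpha\ge 1$). For the lower bound on Fourier coefficients, I would use two successive integrations by parts against the eigenvalue equation $-(x^\alpha \Phi_{\alpha,n}')' = \lambda_{\alpha,n}\Phi_{\alpha,n}$ combined with the boundary conditions $(x^\alpha\Phi_{\alpha,n}')(0) = 0$ and $\Phi_{\alpha,n}'(1)=0$. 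The first IBP has no boundary contribution; for the second, the fact that $x^\alpha\mu_x\to 0$ as $x\to 0$ (when $\alpha>0$, using $\mu_x(x) = (2-\alpha)x^{1-\alpha}$) kills the left endpoint, while $\Phi_{\alpha,n}'(1)=0$ eliminates the derivative factor there. Since $(x^\alpha\mu_x)_x = 2-\alpha$ is constant and $\langle 1,\Phi_{\alpha,n}\rangle = 0$ for $n\ge 1$, one arrives at
\begin{equation*}
\langle \mu,\Phi_{\alpha,n}\rangle = \frac{(2-\alpha)\,\Phi_{\alpha,n}(1)}{\lambda_{\alpha,n}}, \qquad n\ge 1.
\end{equation*}

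The crucial input is the explicit value $|\Phi_{\alpha,n}(1)| = \sqrt{2-\alpha}$. I would establish this by computing the $L^2$-normalization constant $K_{\alpha,n}$. After the change of variables $u = x^{(2-\alpha)/2}$ the integral becomes (a multiple of) $\int_0^1 u\,[J_{\pm\nu_\alpha}(j_{\pm\nu_\alpha+1,n}\,u)]^2\,du$. Using the Bessel recurrence $J_{\nu}'(z) = \frac{\nu}{z}J_\nu(z) - J_{\nu+1}(z)$ evaluated at $z = j_{\pm\nu_\alpha+1,n}$ (where $J_{\pm\nu_\alpha+1}$ vanishes), together with the classical formula $\int_0^1 u[J_\nu(au)]^2 du = \tfrac12\bigl((J_\nu'(a))^2+(1-\nu^2/a^2)J_\nu(a)^2\bigr)$, the cross-terms cancel and one finds the integral equals $\tfrac12 J_{\pm\nu_\alpha}(j_{\pm\nu_\alpha+1,n})^2$. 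Consequently $K_{\alpha,n}|J_{\pm\nu_\alpha}(j_{\pm\nu_\alpha+1,n})| = \sqrt{2-\alpha}$, which is precisely $|\Phi_{\alpha,n}(1)|$. Combining with $\langle \mu,\Phi_{\alpha,0}\rangle = 1/(3-\alpha)$ and $\lambda_{\alpha,0}^* = 1$, membership in $V^{(adm)}$ follows with $c = \min\bigl(1/(3-\alpha),\,(2-\alpha)^{3/2}\bigr)$.

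For density, given $\mu \in V^2_\alpha$, I will show $\mu_\varepsilon := \mu + \varepsilon x^{2-\alpha} \in V^{(adm)}$ for all but finitely many small $\varepsilon$. The same two integrations by parts applied to $\mu_\varepsilon$ give, for $n\ge 1$,
\begin{equation*}
\lambda_{\alpha,n}\,\langle \mu_\varepsilon,\Phi_{\alpha,n}\rangle = (\mu'(1)+\varepsilon(2-\alpha))\,\Phi_{\alpha,n}(1) - \int_0^1 (x^\alpha \mu_x)_x\,\Phi_{\alpha,n}\,dx
\end{equation*}
(plus an extra boundary term $-\mu'(0)\Phi_{\alpha,n}(0)$ when $\alpha=0$). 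Since $(x^\alpha\mu_x)_x \in L^2(0,1)$ and $(\Phi_{\alpha,n})$ is orthonormal, Bessel's inequality forces the integral to tend to $0$; together with $|\Phi_{\alpha,n}(1)| = \sqrt{2-\alpha}$ this yields $\liminf_n |\lambda_{\alpha,n}\langle \mu_\varepsilon,\Phi_{\alpha,n}\rangle| \ge \sqrt{2-\alpha}\,|\mu'(1)+\varepsilon(2-\alpha)|>0$ provided $\varepsilon \ne -\mu'(1)/(2-\alpha)$ (and similarly avoiding one extra value when $\alpha = 0$). This delivers a uniform lower bound $\ge c/\lambda_{\alpha,n}^*$ for all $n \ge N(\varepsilon)$. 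For the finitely many indices $n = 0,\dots,N-1$, each $\langle \mu_\varepsilon,\Phi_{\alpha,n}\rangle$ is a nonconstant affine function of $\varepsilon$, hence vanishes for at most one $\varepsilon$; excluding these finitely many exceptional values, $\mu_\varepsilon \in V^{(adm)}$. Letting $\varepsilon \to 0$ along admissible values gives the desired approximation in the $V^2_\alpha$ topology.

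The main technical obstacle is the identification $|\Phi_{\alpha,n}(1)| = \sqrt{2-\alpha}$: it requires handling the two Bessel-function cases ($\alpha<1$ with $J_{-\nu_\alpha}$ and $\alpha\ge 1$ with $J_{\nu_\alpha}$) in parallel and invoking the right normalization identity. Once this universal endpoint value is in hand, both the explicit computation for $x^{2-\alpha}$ and the asymptotic argument for density fall into place simultaneously.
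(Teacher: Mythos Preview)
Your approach to the first claim matches the paper's: both use two integrations by parts to reduce $\langle x^{2-\alpha}, \Phi_{\alpha,n}\rangle$ to $(2-\alpha)\Phi_{\alpha,n}(1)/\lambda_{\alpha,n}$, and both rely on the endpoint identity $|\Phi_{\alpha,n}(1)| = \sqrt{2-\alpha}$, which the paper establishes separately in Lemmas \ref{lem-prop-phi-n-0-1-w} and \ref{lem-prop-phi-n-0-1-s} via exactly the normalization computation you sketch.

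For density the routes differ. The paper invokes Baire's theorem: it sets $\mathcal{V}_n = \{\mu : \langle\mu, \Phi_{\alpha,n}\rangle \neq 0\}$ for $n \geq 0$ and $\mathcal{V}_{-1} = \{\mu : \mu'(1) \neq 0\}$, shows each is open and dense in $V^2_\alpha$ (perturbing by $\varepsilon x^{2-\alpha}$, as you do), and concludes that $\bigcap_{n\ge -1} \mathcal{V}_n$ is a dense $G_\delta$ contained in $V^{(adm)}$. Your direct one-parameter perturbation is more hands-on but needs a small repair: the claim ``all but finitely many small $\varepsilon$'' is not correct when $\mu'(1) = 0$, since then $N(\varepsilon) \to \infty$ as $\varepsilon \to 0$ and the excluded values $\varepsilon_n = -\langle\mu, \Phi_{\alpha,n}\rangle / \langle x^{2-\alpha}, \Phi_{\alpha,n}\rangle$ accumulate at $0$. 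The fix is immediate: the bad set of $\varepsilon$ is merely \emph{countable}, hence has dense complement in $\mathbb R$, and you can let $\varepsilon \to 0$ through that complement. With this adjustment your argument is complete; the Baire formulation in the paper simply sidesteps this bookkeeping by treating all the conditions $\langle\mu,\Phi_{\alpha,n}\rangle\neq 0$ on an equal footing.
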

We refer to the recent work of Urbani \cite{Cristina-these} (Chap 5) for sufficient conditions for building polynomials functions that fulfill the last condition in \eqref{hyp-mu} are given.


\subsubsection{Controllability result for $T > T_0$} \hfill

\begin{Theorem}
\label{thm-ctr}

Given $\alpha \in [0,2)$, let $\mu \in V ^{(adm)} $ (defined in \eqref{hyp-mu}) and
\begin{equation}
\label{hyp-T0}
T > T_0 .
\end{equation}
Then, there exists a neighbourhood $\mathcal V (1,0)$ of $(1,0)$ in $H^3 _{(0)} (0,1) \times D(A)$ such that,
for all $(w_0 ^f, w_1 ^f) \in \mathcal V (1,0)$, there exists a unique $p^f \in L^2 (0,T)$ close to $0$ such that
$$ (w^{(p^f)} (T), w^{(p^f)} _t (T)) = (w_0 ^f, w_1 ^f) .$$
Moreover, the application 
$$ \Gamma _{\alpha,T} : \mathcal V (1,0) \to L^2 (0,T), \quad (w_0 ^f, w_1 ^f) \mapsto p^f $$
is of class $C^1$.
\end{Theorem}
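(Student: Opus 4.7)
The plan is to apply the inverse function theorem to the $C^1$ map $\Theta_T$ from Proposition \ref{prop-"thm3"-red}. Since the constant function $w\equiv 1$ is a stationary solution of \eqref{CMU-eq-ctr-10} when $p=0$, one has $\Theta_T(0)=(1,0)$. It then suffices to establish that the Fr\'echet derivative $d\Theta_T(0)$ is a continuous bijection from $L^2(0,T)$ onto $H^3_{(0)}(0,1)\times D(A)$, from which the local existence, uniqueness and $C^1$ regularity of $\Gamma_{\alpha,T}$ follow at once.

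To identify $d\Theta_T(0)$, linearize \eqref{CMU-eq-ctr-10} around $(w,p)=(1,0)$: one gets $d\Theta_T(0)\cdot P=(W(T),W_t(T))$, where $W$ solves $W_{tt}-(x^\alpha W_x)_x=P(t)\mu(x)$ with Neumann-type boundary conditions and vanishing initial data. Decomposing $W(\cdot,t)$ in the orthonormal eigenbasis $(\Phi_{\alpha,n})$ of Propositions \ref{prop-vp-w}--\ref{prop-vp-s}, the coefficients $w_n(t)=\langle W(\cdot,t),\Phi_{\alpha,n}\rangle$ satisfy $w_n''+\lambda_{\alpha,n}w_n=\mu_n P(t)$ with $w_n(0)=w_n'(0)=0$, where $\mu_n:=\langle \mu,\Phi_{\alpha,n}\rangle$. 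Solving these ODEs gives, for $n\geq 1$,
\begin{equation*}
w_n(T) \;=\; \frac{\mu_n}{\sqrt{\lambda_{\alpha,n}}}\int_0^T \sin\!\bigl(\sqrt{\lambda_{\alpha,n}}(T-s)\bigr)P(s)\,ds,\qquad w_n'(T) \;=\; \mu_n\int_0^T \cos\!\bigl(\sqrt{\lambda_{\alpha,n}}(T-s)\bigr)P(s)\,ds,
\end{equation*}
with the analogous polynomial formula $w_0(t)=\mu_0\int_0^t(t-s)P(s)\,ds$ for the zero eigenvalue. Writing a target deviation $(w_0^f-1,w_1^f)=\sum_n(a_n\Phi_{\alpha,n},b_n\Phi_{\alpha,n})$, the problem $d\Theta_T(0)\cdot P=(w_0^f-1,w_1^f)$ becomes a moment problem for $P\in L^2(0,T)$ against the family $\{1,t\}\cup\{e^{\pm i\sqrt{\lambda_{\alpha,n}}\,t}\}_{n\geq 1}$, with prescribed moments proportional to $a_n\sqrt{\lambda_{\alpha,n}}/\mu_n$ and $b_n/\mu_n$.

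The key step is to show this family is a Riesz basis of $L^2(0,T)$. By \eqref{gap-sqrt-weak}/\eqref{gap-sqrt-strong}, the asymptotic spacing of $\sqrt{\lambda_{\alpha,n}}$ is $(2-\alpha)\pi/2=2\pi/T_0$, and the hypothesis $T>T_0$ yields strict slack $T\cdot(2-\alpha)\pi/2>2\pi$. A classical Ingham-type inequality (after correcting for the finitely many low-index deviations from the asymptotic arithmetic progression and for the double mode at $\lambda_0=0$, which contributes the test functions $1$ and $t$ instead of two exponentials) then furnishes the Riesz basis property. Finally, one checks that the topology matches: the condition $(a_n)\in H^3_{(0)}$, i.e.\ $\sum\lambda_{\alpha,n}^3 a_n^2<\infty$, and $(b_n)\in D(A)$, i.e.\ $\sum\lambda_{\alpha,n}^2 b_n^2<\infty$, combined with the admissibility bound $|\mu_n|\geq c/\lambda_{\alpha,n}^*$ from \eqref{hyp-mu}, is exactly what is needed so that the moment coefficients $a_n\sqrt{\lambda_{\alpha,n}}/\mu_n$ and $b_n/\mu_n$ are square summable. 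Thus $d\Theta_T(0)$ is a continuous bijection, and the inverse function theorem concludes the proof.

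The main obstacle is establishing the Riesz basis property with nonuniform frequencies given by Bessel zeros, which approach their asymptotic spacing at an explicit rate; one must ensure that the strict gap $T>T_0$ absorbs all finite-range deviations so that Ingham's inequality applies without remainder. This is precisely the point that forces a finer Kadec-type analysis at the borderline $T=T_0$ (handled in Theorem \ref{thm-ctr=}), whereas here the slack provided by $T>T_0$ allows a direct, unperturbed application.
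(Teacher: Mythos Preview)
Your overall strategy matches the paper's, but there is a genuine gap in the key step. You claim that the family $\{1,t\}\cup\{e^{\pm i\sqrt{\lambda_{\alpha,n}}\,t}\}_{n\geq 1}$ is a Riesz basis of $L^2(0,T)$ and hence that $d\Theta_T(0)$ is a bijection from \emph{all} of $L^2(0,T)$ onto $H^3_{(0)}\times D(A)$. This is false precisely because $T>T_0$: Ingham-type inequalities give the frame (norm-equivalence) inequalities, but they do not give completeness, and in fact the density of the frequencies is $T_0/(2\pi)<T/(2\pi)$, so the family spans only a proper closed subspace of $L^2(0,T)$. Consequently $d\Theta_T(0)$ is surjective but has a nontrivial kernel on $L^2(0,T)$, and the inverse function theorem cannot be applied to $\Theta_T$ on the full space.

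The paper repairs this exactly where your argument breaks down. First, it applies the Haraux--Ingham inequality to show that $\{1,\cos\sqrt{\lambda_{\alpha,n}}t,\sin\sqrt{\lambda_{\alpha,n}}t\}$ is a Riesz basis of its closed linear span $E_\alpha\subsetneq L^2(0,T)$. Second, it treats the function $t$ (which comes from the \emph{generalized} eigenvector at $\lambda_0=0$, not from any exponential) separately: one must prove $t\notin E_\alpha$, which the paper does by an indirect argument exploiting the slack $T>T_0$ to exhibit two distinct solutions of the reduced moment problem. Only then is $D\Theta_T(0)$ shown to be an isomorphism from the closed subspace $F_\alpha:=\overline{\mathrm{Vect}\,\{1,t,\cos\sqrt{\lambda_{\alpha,n}}t,\sin\sqrt{\lambda_{\alpha,n}}t\}}$ onto $H^3_{(0)}\times D(A)$, and the inverse function theorem is applied to the \emph{restriction} $\Theta_T|_{F_\alpha}$. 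Your sketch conflates the Riesz sequence property with being a Riesz basis of the whole space and folds the polynomial term $t$ into the Ingham argument without justification; both points need the treatment the paper provides.
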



\subsubsection{Controllability result when $T = T_0$} \hfill

\begin{Theorem}
\label{thm-ctr=}
Let $\mu \in V ^{(adm)} $ (defined in \eqref{hyp-mu}) and
\begin{equation}
 T= T_0 . 
\end{equation}
Then,

\begin{itemize}
\item for $\alpha \in [0,1)$, the reachable set is locally a $C^1$-submanifold of $H^3 _{(0)} (0,1) \times D(A)$ of codimension 1,
\item for $\alpha \in (1,2)$ and $\frac{1}{2-\alpha} \notin \mathbb N$, the reachable set is a whole neighborhood of $(0,1)$ in $H^3_{(0)}(0,1)\times D(A)$.
\end{itemize}
\end{Theorem}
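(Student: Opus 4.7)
The plan is to follow the same scheme as for Theorem \ref{thm-ctr}: apply the inverse function theorem to the $C^1$ map
$\Theta_{T_0}:L^2(0,T_0)\to H^3_{(0)}(0,1)\times D(A)$
provided by Proposition \ref{prop-"thm3"-red}, with reference control $p\equiv 0$ (whose associated trajectory is the ground state). The whole analysis reduces to the differential $d\Theta_{T_0}(0)$: if it is surjective, the image of $\Theta_{T_0}$ covers a neighborhood of $(1,0)$; if its range is closed with finite codimension $k$, then by a standard rank argument the image of $\Theta_{T_0}$ is locally a $C^1$-submanifold of $H^3_{(0)}(0,1)\times D(A)$ of codimension $k$.

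Linearizing \eqref{CMU-eq-ctr-10} at the ground state, a perturbation $\tilde p\in L^2(0,T_0)$ produces $\tilde w$ solving
\begin{equation*}
\tilde w_{tt}-(x^\alpha \tilde w_x)_x=\tilde p(t)\mu(x),\qquad \tilde w(\cdot,0)=\tilde w_t(\cdot,0)=0,
\end{equation*}
with the boundary conditions of \eqref{CMU-eq-ctr-10}. Decomposing $\tilde w(\cdot,t)=\sum_{n\geq 0}a_n(t)\Phi_{\alpha,n}$ and using Duhamel's formula, the final data $(\tilde w(T_0),\tilde w_t(T_0))$ are encoded by the moments of $\tilde p$ against $e^{\pm i\sqrt{\lambda_{\alpha,n}}\,t}$ for $n\geq 1$, together with $1$ and $t$ (the double contribution from the kernel eigenvalue $\lambda_{\alpha,0}=0$), each weighted by $\langle\mu,\Phi_{\alpha,n}\rangle$. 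The admissibility lower bound $|\langle\mu,\Phi_{\alpha,n}\rangle|\geq c/\lambda_{\alpha,n}^{*}$ built into $V^{(adm)}$ is calibrated precisely so that these moment weights match the $H^3_{(0)}(0,1)\times D(A)$ norm on the target side. Consequently $d\Theta_{T_0}(0)$ is surjective (respectively, has closed range of codimension $k$) if and only if the family
\begin{equation*}
\mathcal F_{T_0}:=\bigl\{e^{\pm i\sqrt{\lambda_{\alpha,n}}\,t}:n\geq 1\bigr\}\cup\{1,t\}
\end{equation*}
is a Riesz basis of $L^2(0,T_0)$ (respectively, a Riesz basis of a closed subspace of codimension $k$).

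By \eqref{gap-sqrt-weak}--\eqref{gap-sqrt-strong} the asymptotic gap of the $\sqrt{\lambda_{\alpha,n}}$ is $\kappa_\alpha\pi=2\pi/T_0$, so $T_0$ is exactly the Ingham critical time and the classical Ingham and Kadec $\tfrac14$-theorems do not apply. I would instead use the refined Lemmas \ref{lem-Riesz-T0} and \ref{lem-Riesz-T0-1-2}, which give Riesz basis criteria at the critical time in terms of the precise position of $\sqrt{\lambda_{\alpha,n}}$ modulo the lattice $\kappa_\alpha\pi\mathbb Z$. Using the standard Bessel zero asymptotic $j_{\beta,n}=(n+\beta/2-\tfrac14)\pi+O(1/n)$, a short computation shows the following. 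For $\alpha\in[0,1)$ (with $\beta=-\nu_\alpha+1$), the offset lies at a position where, at $T=T_0$, exactly one element of $\mathcal F_{T_0}$ becomes asymptotically redundant; Lemma \ref{lem-Riesz-T0} then identifies $\overline{\mathrm{span}}\,\mathcal F_{T_0}$ as a closed hyperplane of $L^2(0,T_0)$, so $d\Theta_{T_0}(0)$ has codimension-$1$ closed range and the announced $C^1$-submanifold of codimension~$1$ follows. For $\alpha\in(1,2)$ with $1/(2-\alpha)\notin\mathbb N$, one has $\beta=\nu_\alpha+1=1/(2-\alpha)$ non-integer, the offset falls strictly inside the admissible range of Lemma \ref{lem-Riesz-T0-1-2}, $\mathcal F_{T_0}$ is a full Riesz basis of $L^2(0,T_0)$, $d\Theta_{T_0}(0)$ is surjective, and the inverse function theorem yields a whole neighborhood of $(1,0)$ in the reachable set.

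The main obstacle is the borderline Ingham analysis at $T=T_0$: the standard gap argument degenerates, and one must combine the refined Lemmas \ref{lem-Riesz-T0}--\ref{lem-Riesz-T0-1-2} with sharp information on the Bessel zeros (not merely the asymptotic gap, but their position modulo the critical lattice) to separate the two regimes. The integrality condition $1/(2-\alpha)\in\mathbb N$ is exactly where this asymptotic position resonates with the lattice and the Kadec-type criteria fail, which is why $\alpha=1$ and the exceptional countable subset of $\alpha\in(1,2)$ must be excluded from the statement.
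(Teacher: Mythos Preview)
Your overall strategy is exactly that of the paper: linearize $\Theta_{T_0}$ at $p=0$, reduce surjectivity of $d\Theta_{T_0}(0)$ to a moment problem against the family $\mathcal F_{T_0}=\{e^{i\omega_{\alpha,n}t}:n\in\mathbb Z\}\cup\{t\}$ (note $1=e^{i\omega_{\alpha,0}t}$ is already in the first set), and analyze this family via Kadec-type results. However, the equivalence you assert---``$d\Theta_{T_0}(0)$ is surjective, resp.\ has range of codimension $k$, iff $\mathcal F_{T_0}$ is a Riesz basis of $L^2(0,T_0)$, resp.\ of a codimension-$k$ subspace''---is backwards, and this leads you to a wrong description in both regimes.

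For the moment map $Q\mapsto(\langle Q,f\rangle)_{f\in\mathcal F_{T_0}}$, the codimension of $\overline{\mathrm{span}}\,\mathcal F_{T_0}$ governs the \emph{kernel} (non-uniqueness of $Q$), not the cokernel; the codimension of the \emph{range} is governed by the \emph{excess} of $\mathcal F_{T_0}$, i.e.\ how many elements are redundant. Concretely:
\begin{itemize}
\item For $\alpha\in[0,1)$, the asymptotic offset is $\frac{\alpha}{4(2-\alpha)}<\frac14$, so Kadec's $\frac14$-theorem \emph{does} apply directly (this is precisely Lemma~\ref{lem-Riesz-T0}): the exponentials $\{e^{i\omega_{\alpha,n}t}\}_{n\in\mathbb Z}$ already form a Riesz basis of the \emph{whole} space $L^2(0,T_0)$. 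Hence $\overline{\mathrm{span}}\,\mathcal F_{T_0}=L^2(0,T_0)$, not a hyperplane. The point is that the additional function $t$ lies in this span, so $\mathcal F_{T_0}$ is \emph{overcomplete by one}: the reduced moment problem (exponentials only) determines $Q$ uniquely, and the extra equation $\langle Q,t\rangle=B^f_{\alpha,0}$ becomes a single linear compatibility condition on the target. This is how the codimension-$1$ range arises (Lemma~\ref{lem-non-solv}).
\item For $\alpha\in(1,2)$ with $\frac{1}{2-\alpha}\notin\mathbb N$, the offset is $\frac{\alpha}{4(2-\alpha)}=\frac{k_\alpha}{2}+\frac{2\theta_\alpha-1}{4}$ with $k_\alpha=[\frac{1}{2-\alpha}]\geq 1$, and Lemma~\ref{lem-Riesz-T0-1-2}/Lemma~\ref{cor-Riesz} show that $\{e^{i\omega_{\alpha,n}t}\}$ has \emph{deficiency} $k_\alpha$ in $L^2(0,T_0)$ (it must be complemented by $k_\alpha$ extra exponentials to become a Riesz basis). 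Thus $\mathcal F_{T_0}$ is \emph{not} in general a full Riesz basis (only when $k_\alpha=1$). What matters is that $t\notin\overline{\mathrm{span}}\{e^{i\omega_{\alpha,n}t}\}$, so $\mathcal F_{T_0}$ is a Riesz \emph{sequence}: the moment problem is solvable for all $\ell^2$ data (uniquely within $F_\alpha=\overline{\mathrm{span}}\,\mathcal F_{T_0}$, Lemma~\ref{lem-Riesz-T0-moment-1-2}), and $d\Theta_{T_0}(0)$ is surjective.
\end{itemize}
So your conclusions are correct, but the mechanism is inverted: the two regimes are distinguished by whether the exponential family is \emph{complete} (forcing $t$ to be redundant, hence one lost dimension in the target) or has a \emph{deficiency} (leaving room for $t$, hence full surjectivity).
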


\noindent What happens for $\alpha = 2 - \frac{1}{k}$, $k\in \mathbb N ^*$ (in particular for $\alpha =1$) is still an open problem. These values are the points where the nature of the set $\{e^{i\omega _{\alpha,n}t}, n\in \mathbb Z \}$ changes. Indeed,
\begin{itemize}
\item for $\alpha \in [0,1)$, $\{e^{i\omega _{\alpha,n}t}, n\in \mathbb Z \}$ is a Riesz basis of $L^2(0,T_0)$ , 
\item for $\alpha \in (1,\frac{3}{2})$, $\{e^{i\omega _{\alpha,n}t}, n\in \mathbb Z \}$ has a deficiency equal to $1$ in $L^2(0,T_0)$, 
\item for $\alpha \in (\frac{3}{2}, \frac{5}{3})$, $\{e^{i\omega _{\alpha,n}t}, n\in \mathbb Z \}$ has a deficiency equal to $2$ in $L^2(0,T_0)$,
\end{itemize} 
and so on. A detailed analysis is given in Lemma \ref{cor-Riesz} that derives from the Kadec's $\frac{1}{4}$ Theorem (see Lemma \ref{lem-Riesz-T0-1-2}).

We would like to draw the attention to the different nature of the reachable set for the weak ($\alpha \in [0,1)$) and the strong ($\alpha\in[1,2)$) degeneracy: while in the first case it is a submanifold of codimension 1, in the latter case it is a complete neighborhood of $(1,0)$ (except for the aforementioned particular values of $\alpha$).


\subsubsection{The controllability result when $T < \frac{4}{2-\alpha}$} \hfill

\begin{Theorem}
\label{thm-ctr<}
Let $\mu \in V ^{(adm)} $ (defined in \eqref{hyp-mu}) and 
\begin{equation}
T< T_0  . 
\end{equation}
Then the reachable set is locally contained in a $C^1$-submanifold of $H^3 _{(0)} (0,1) \times D(A)$ of infinite dimension and of infinite codimension. 
\end{Theorem}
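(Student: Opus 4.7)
The plan is to show that $L := d\Theta_T(0): L^2(0,T) \to H^3_{(0)}(0,1) \times D(A)$ is an injective continuous linear map with closed range of infinite codimension, and then to invoke the local immersion theorem. Linearizing \eqref{CMU-eq-ctr-10} around the ground-state trajectory $w \equiv 1$, the first variation $\Psi := L \cdot P$ produced by a control perturbation $P \in L^2(0,T)$ solves $\Psi_{tt} - (x^\alpha \Psi_x)_x = P(t)\mu(x)$ with vanishing Cauchy data and the Neumann conditions of \eqref{CMU-eq-ctr-10}. Expanding $\Psi = \sum_n \psi_n(t) \Phi_{\alpha,n}$ and $\mu = \sum_n \mu_n \Phi_{\alpha,n}$ with $\mu_n := \langle \mu, \Phi_{\alpha,n}\rangle_{L^2(0,1)}$, each mode satisfies $\psi_n'' + \lambda_{\alpha,n}\psi_n = \mu_n P$ with zero initial data, and Duhamel yields, for $n \geq 1$,
\begin{equation*}
\sqrt{\lambda_{\alpha,n}}\,\psi_n(T) \pm i\,\psi_n'(T) \;=\; \pm i\,\mu_n\,e^{\mp i\omega_{\alpha,n}T}\int_0^T e^{\pm i\omega_{\alpha,n} s}P(s)\,ds,
\end{equation*}
with $\omega_{\alpha,n} := \sqrt{\lambda_{\alpha,n}}$, while the mode $n=0$ contributes $\int_0^T P\,ds$ and $\int_0^T (T-s)P(s)\,ds$. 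Hence $L(P)$ is entirely determined by the moments of $P$ against the family $\mathcal{E}_T := \{e^{i\omega_{\alpha,n}t}\}_{n\in\mathbb{Z}}$ (with the convention $\omega_{\alpha,-n} := -\omega_{\alpha,n}$) in $L^2(0,T)$.

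The next step exploits the spectral gap from Propositions \ref{prop-vp-w} and \ref{prop-vp-s}: the asymptotic value $\kappa_\alpha \pi = (2-\alpha)\pi/2$ identifies the Ingham critical time as $T_0 = 2\pi/(\kappa_\alpha \pi)$. Since $T < T_0$, the nonharmonic Fourier series theory of \cite{Avdonin-Ivanov} shows that $\mathcal{E}_T$ is complete in $L^2(0,T)$ and that the moment map $\mathcal{M}: L^2(0,T) \to \ell^2(\mathbb{Z})$ is injective with closed image of infinite codimension. A transparent implementation for $\alpha \in [0,1)$ is to extend $P \in L^2(0,T)$ by zero to $L^2(0,T_0)$ and invoke the Riesz basis property of $\mathcal{E}_{T_0}$ recalled in Lemma \ref{cor-Riesz}: the image of $L^2(0,T)$ in $\ell^2$ is then a closed subspace isomorphic to the codimension-$\infty$ closed subspace $L^2(0,T) \hookrightarrow L^2(0,T_0)$, whose orthogonal complement corresponds to functions supported in $(T, T_0)$. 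For $\alpha \in [1,2)$, one argues similarly, adjusting for the finite deficiency of $\mathcal{E}_{T_0}$ described in Lemma \ref{cor-Riesz}.

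The admissibility bound $|\mu_n| \geq c/\lambda_{\alpha,n}^*$ provided by $\mu \in V^{(adm)}$, together with the matching upper bound $|\mu_n| \leq \|A\mu\|_{L^2}/\lambda_{\alpha,n}^*$ coming from $A\mu \in L^2$ (guaranteed by $\mu \in V^2_\alpha$) and Cauchy-Schwarz, yields $\lambda_{\alpha,n}^{*2}|\mu_n|^2 \asymp 1$, so that the $H^3_{(0)} \times D(A)$-norm of $L(P)$ is equivalent, mode by mode, to the $\ell^2$-norm of the moment sequence. Pulling back the conclusion above, $L$ is continuous, injective, with closed range $E$ of infinite codimension in $H^3_{(0)}(0,1) \times D(A)$. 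Since $E$ is closed in a Hilbert space it is complemented, and $L$ admits a continuous left inverse on $E$; applying the local immersion theorem to the $C^1$-map $\Theta_T$ (Proposition \ref{prop-"thm3"-red}) produces a neighborhood $V \ni 0$ in $L^2(0,T)$ such that $\Theta_T(V)$ is locally a $C^1$-submanifold of $H^3_{(0)}(0,1) \times D(A)$ tangent to $E$ at $(1,0)$, of infinite dimension and infinite codimension. This is the submanifold that locally contains the reachable set. The main obstacle is precisely the infinite-codimension assertion of the second step: the case $\alpha \in [0,1)$ is clean thanks to the Riesz basis at $T_0$, but $\alpha \in [1,2)$ requires a more delicate analysis of $\mathcal{E}_T$ based on the Kadec--Avdonin--Ivanov techniques referenced in Section \ref{sec-results}.
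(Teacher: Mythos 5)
Your strategy is sound and, for the key step, genuinely different from the paper's. Both arguments reduce the theorem to showing that the range of $D\Theta_T(0)$ is a closed subspace of $H^3_{(0)}(0,1)\times D(A)$ of infinite dimension and infinite codimension, and both then conclude by the same graph/inverse-mapping construction (identical to the one used for $T=T_0$ in section \ref{sec-concl-T=T0}). The paper reaches the key fact by extracting, via Horv\'ath--Jo\'o and the counting-function estimates of section \ref{sec-overd}, a subfamily $\{e^{i\omega_{\alpha,\varphi(n)}t}\}$ that is a Riesz basis of $L^2(0,T)$, expanding the discarded exponentials and $\tilde s_{\alpha,0}$ on it, and describing the range explicitly as the intersection of kernels $H^f_\alpha$ (Lemmas \ref{lem-descr-imageDtheta} and \ref{lem-Halpha-codim}). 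You instead extend $P$ by zero to $(0,T_0)$ and transport the closed, infinitely codimensional subspace $L^2(0,T)\subset L^2(0,T_0)$ through the analysis isomorphism of the Riesz basis at the critical time. For $\alpha\in[0,1)$ this is a clean shortcut (the relevant statement is Lemma \ref{lem-Riesz-T0}, not Lemma \ref{cor-Riesz}); what it gives up is the explicit description of the tangent space that the paper's route provides.

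Three points need repair before this is a complete proof. First, the case $\alpha\in[1,2)$ is not merely ``similar'': there $\mathcal M_{T_0}$ is surjective with a $k_\alpha$-dimensional kernel rather than an isomorphism, so one must check that the image of $L^2(0,T)+\ker\mathcal M_{T_0}$ is still closed of infinite codimension (true, because the kernel is finite dimensional, but it has to be said); more seriously, Lemma \ref{cor-Riesz} is only available when $\frac{1}{2-\alpha}\notin\mathbb N$, whereas Theorem \ref{thm-ctr<} carries no such restriction, so the values $\alpha=2-\frac{1}{k}$ (including $\alpha=1$) require the separate density argument based on \eqref{lim-Nxr} and \cite[Theorem II.4.18]{Avdonin-Ivanov} given at the end of section \ref{sec-pr-thm<}. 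Second, your upper bound $\vert\mu_n\vert\le\Vert A\mu\Vert_{L^2}/\lambda^*_{\alpha,n}$ is not justified as stated: it presupposes $\langle\mu,A\Phi_{\alpha,n}\rangle=\langle A\mu,\Phi_{\alpha,n}\rangle$, i.e.\ $\mu\in D(A)$, while admissible potentials generically satisfy $\mu'(1)\neq0$ (e.g.\ $\mu=x^{2-\alpha}$). The correct integration by parts (section \ref{sec-exemple-mu}) produces the boundary term $\mu'(1)\Phi_{\alpha,n}(1)$, and it is precisely this term that makes $\lambda_{\alpha,n}\mu_n$ bounded and, for admissible $\mu$, bounded away from zero; the two-sided bound you need is true, but for this reason. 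Third, the moment against $\tilde s_{\alpha,0}(t)=t$ drops out of your second step; it is harmless---once $\mathcal M$ is injective with closed range, $\langle P,\tilde s_{\alpha,0}\rangle$ is a continuous functional of the exponential moments, so the augmented range is the graph of that functional and remains closed of infinite codimension---but it must be accounted for, since it contributes the coordinate $Y^f_{\alpha,0}$ of the target.
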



\section{Functional setting: proof of Propositions \ref{Prop-A-w}, \ref{Prop-A-s} and \ref{prop-"prop2"}} 
\label{sec-wellposedness}

\subsection{Proof of Propositions \ref{Prop-A-w}} \hfill

\subsubsection{Integration by parts} \hfill

Let us prove the following integration by parts formula.

\begin{Lemma}
\label{lem-IPP-w}
Let $\alpha\in[0,1)$, then
\begin{equation}
\label{IPP-w}
\forall\, f,g \in H^2 _\alpha (-1,1), \quad
\int _{0} ^1 (x ^\alpha f'(x) )' \, g(x) \, dx 
= - \int _{0} ^1 x ^\alpha \, f'(x) \, g '(x) \, dx .
\end{equation}
\end{Lemma}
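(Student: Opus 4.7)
The natural strategy is a \emph{limiting integration by parts}: perform the classical computation on the non-degenerate subinterval $[\varepsilon,1]$ and then pass to the limit $\varepsilon\to 0^+$. For $\alpha\in[0,1)$ the whole difficulty is concentrated at the degenerate endpoint $x=0$, because away from that point $x^\alpha$ is smooth and bounded below.

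First I would collect the regularity information encoded in $H^2_\alpha(0,1)$. By definition \eqref{*H^1_a-w}, any $g\in H^1_\alpha(0,1)$ is absolutely continuous on the closed interval $[0,1]$ (this is precisely where the restriction $\alpha<1$ is used), hence bounded there with well-defined traces $g(0)$ and $g(1)$. By definition \eqref{H2alpha-w} applied to $f$, the product $x^\alpha f'$ belongs to $H^1(0,1)\hookrightarrow C([0,1])$, so it too admits classical values at both endpoints. Moreover, $x^{\alpha/2}f',x^{\alpha/2}g'\in L^2(0,1)$ by definition. Equipped with these facts, for each $\varepsilon>0$ I would apply the classical integration by parts on $[\varepsilon,1]$:
\begin{equation*}
\int_{\varepsilon}^{1} (x^\alpha f'(x))'\, g(x)\, dx
\;=\; \bigl[x^\alpha f'(x)\, g(x)\bigr]_{\varepsilon}^{1}
\;-\; \int_{\varepsilon}^{1} x^\alpha f'(x)\, g'(x)\, dx .
\end{equation*}

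Then I would send $\varepsilon\to 0^+$. The left-hand side converges by dominated convergence since $(x^\alpha f')'\in L^2(0,1)$ and $g\in L^\infty(0,1)$; the remaining integral on the right converges absolutely, because
\begin{equation*}
\int_0^1 \bigl|x^\alpha f'(x)\, g'(x)\bigr|\, dx
\;\leq\; \Vert x^{\alpha/2}f'\Vert_{L^2(0,1)}\,\Vert x^{\alpha/2}g'\Vert_{L^2(0,1)}
\;<\; \infty
\end{equation*}
by Cauchy--Schwarz; and the boundary evaluation $[x^\alpha f'\, g](\varepsilon)$ tends to $[x^\alpha f'\, g](0)$ by continuity of $x^\alpha f'$ and $g$ at $0$. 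This yields the identity
\begin{equation*}
\int_0^1 (x^\alpha f')'\, g\, dx
\;=\; \bigl[x^\alpha f'\, g\bigr]_0^1 - \int_0^1 x^\alpha f'\, g'\, dx,
\end{equation*}
from which \eqref{IPP-w} follows once the two boundary contributions are seen to vanish---which is automatic under the Neumann-type conditions $(x^\alpha f')(0)=0=f'(1)$ implicit in the setting in which this lemma will be applied (e.g.\ for $f\in D(A)$).

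\textbf{Main obstacle.} The delicate point is the trace $\lim_{x\to 0^+}(x^\alpha f')(x)$. A priori $f'$ may blow up like a negative power of $x$, and the argument would collapse if $x^\alpha f'$ were only in $L^2$; it is precisely the upgrade to $x^\alpha f'\in H^1(0,1)\hookrightarrow C([0,1])$, built into the very definition of $H^2_\alpha$, that tames the singularity and makes the boundary value a well-defined number (not necessarily zero, as the example $f(x)=x^{1-\alpha}$ shows). Establishing and then exploiting this pointwise continuity at $0$ is what turns an otherwise formal computation into a rigorous integration-by-parts formula.
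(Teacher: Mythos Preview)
Your approach is essentially the same as the paper's: integrate by parts on $[\varepsilon,1]$, then let $\varepsilon\to 0^+$, using that $x^\alpha f'\in H^1(0,1)\hookrightarrow C([0,1])$ and that $g$ is absolutely continuous on $[0,1]$ for $\alpha\in[0,1)$.

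You are in fact more careful than the paper on one point. The paper's proof disposes of the boundary terms by writing ``because of Neumann boundary condition at $1$'' and ``$F(\varepsilon)\to 0$ as $\varepsilon\to 0$'' without comment; but neither $f'(1)=0$ nor $(x^\alpha f')(0)=0$ follows from $f\in H^2_\alpha(0,1)$ alone, as your example $f(x)=x^{1-\alpha}$ shows. You correctly isolate the identity \emph{with} boundary terms and then observe that those terms vanish precisely under the conditions defining $D(A)$, which is the only context in which the lemma is invoked. That is the right reading of the statement.
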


\begin{proof}[Proof of Lemma \ref{lem-IPP-w}.]
If $f \in H^2 _\alpha (0,1)$, then 
$$ F(x):= x ^\alpha f'(x) \in H^1 (0,1) .$$
Let $g \in H^2 _\alpha (0,1)$, and $\varepsilon \in (0,1)$.
Decompose
$$ \int _{0} ^1 F'(x) g(x) \, dx  = 
\int _{0} ^\varepsilon F'(x) g(x) \, dx
+ \int _{\varepsilon} ^1  F'(x) g(x) \, dx .$$
Then, since $g \in H^2 _\alpha (0,1) \subset H^1 (\varepsilon,1)$, the classical integration by parts formula gives
\begin{multline*}
\int _{\varepsilon} ^1 F'(x) g(x) \, dx
= [F(x) g(x) ] _{\varepsilon} ^1 - \int _{\varepsilon} ^1 F(x) g'(x) \, dx
\\
=  [F(x) g(x) ] _{\varepsilon} ^1 - \int _{\varepsilon} ^1 (x^{\alpha/2} f'(x)) \,  (x^{\alpha/2} g'(x)) \, dx .
\end{multline*}
Now, since  $x^{\alpha/2} f'$ and $x^{\alpha/2} g'$ belong to $L^2 (0,1)$, we have
$$ 
\int _{\varepsilon} ^1 F(x) g'(x) \, dx \to \int _{0} ^1 F(x) g'(x) \, dx
\quad \text{ as } \varepsilon \to 0 ,$$
and since $F'$ and $g$ belong to $L^2 (0,1)$, we get
$$  \int _{0} ^\varepsilon F'(x) g(x) \, dx \to 0 
\quad \text{ as } \varepsilon \to 0 .$$
It remains to study the boundary terms:
first, because of Neumann boundary condition at $1$, we have
$$ [F(x) g(x) ] _{\varepsilon} ^1 = - F(\varepsilon) g(\varepsilon) .$$
We note that $F(\varepsilon) \to 0$ as $\varepsilon \to 0$, and $g$ is absolutely continuous on $[0,1]$, hence
$$ [F(x) g(x) ] _{\varepsilon} ^1 =  - F(\varepsilon) g(\varepsilon)
\to 0 \text{ as } \varepsilon \to  0 . $$
\end{proof}


\subsubsection{Proof of Proposition \ref{Prop-A-w}} \hfill

Part a) of Proposition \ref{Prop-A-w} is well known, see e.g. \cite{CMV-MCRF}. Let us prove Part b).

First, we note that $D(A)$ is dense in $X$, since it contains
all the functions of class $C^\infty$, compactly supported in $(0,1)$.

We derive from Lemma \ref{lem-IPP-w} that 
$$ \forall\, f \in D(A), \quad 
\langle Af , f \rangle = \int _{0} ^1 ( \vert x \vert ^\alpha f'(x))' f(x) \, dx 
= - \int _{0} ^1  x ^\alpha f'(x) ^2 \, dx \leq 0 ,$$
therefore $A$ is dissipative.

In order to show that $A$ is symmetric, we apply Lemma \ref{lem-IPP-w} twice to obtain that
\begin{multline*}
\forall\, f,g \in D(A), \quad 
\langle Af , g \rangle 
= \int _{0} ^1 ( x ^\alpha f'(x))' g(x) \, dx 
= - \int _{0} ^1  x ^\alpha f'(x) g'(x) \, dx 
\\
= - \int _{0} ^1  (x ^\alpha g'(x)) f'(x) \, dx 
= \int _{0} ^1 ( x ^\alpha g'(x))' f(x) \, dx 
= \langle f , Ag \rangle .
\end{multline*}

Finally, we check that $I-A$ is surjective. Let $f\in L^2 (0,1)$. Then, by Riesz theorem,
there exists one and only one $u\in H^1 _\alpha (0,1)$ such that 
$$\forall\, v \in H^1 _\alpha (0,1), \quad  
\int _{0} ^1 \bigl( uv+ x ^\alpha u' v' \bigr) = \int _{0} ^1 fv .$$
In particular, the above relation holds true for all $v$ of class $C^\infty$, compactly supported in $(0,1)$. Thus, $x\mapsto x ^\alpha u'$
has a weak derivative given by
$$ \Bigl( x ^\alpha u' \Bigr) '= - (f-u) .$$
Since $f-u \in L^2 (0,1)$, we obtain that $( x ^\alpha u')'
\in L^2 (0,1)$. Hence, $u\in H^2 _\alpha (0,1)$. Now, choosing first $v$ of class $C^\infty$ compactly supported in $[\frac{1}{2}, 1]$, but not equal to $0$ at the point $x=1$, we derive that
\begin{multline*}
 \int _{0} ^1 fv 
 = \int _{0} ^1 \bigl( uv+ x ^\alpha u' v' \bigr)
\\
= \int _{0} ^1  uv + [x ^\alpha u' v] _0 ^1 - \int _{0} ^1 (x ^\alpha u')' v
= [x ^\alpha u' v] _0 ^1 + \int _{0} ^1  ( u-(x ^\alpha u')' )v ,
\end{multline*}
therefore $u'(1) v(1)=0$ that implies $u'(1)=0$. In the same way, by choosing $v$ of class $C^\infty$ compactly supported in $[0,\frac{1}{2}]$, but not equal to $0$ at the point $x=0$, we obtain that $(x^\alpha u' (0)=0$. Thus $u\in D(A)$, and 
$(I-A)u=f$. So, the operator $I-A$ is surjective. This concludes the proof of Proposition \ref{Prop-A-w}, part b). \qed 


\subsection{Proof of Propositions \ref{Prop-A-s}} \hfill

\subsubsection{Integration by parts} \hfill

Let us prove the following integration by parts formula:

\begin{Lemma}
\label{lem-IPP-s}
Let $\alpha\in[1,2)$, then
\begin{equation}
\label{IPP-s}
\forall\, f,g \in H^2 _\alpha (0,1), \quad
\int _{0} ^1 (x ^\alpha f'(x) )' \, g(x) \, dx 
= - \int _{0} ^1 x ^\alpha \, f'(x) \, g '(x) \, dx .
\end{equation}
\end{Lemma}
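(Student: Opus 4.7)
The plan is to mirror the structure of the weak-degeneracy proof (Lemma \ref{lem-IPP-w}): perform the classical integration by parts on the truncated interval $[\varepsilon,1]$ and pass to the limit $\varepsilon\to 0^+$. The genuinely new ingredient is that, under strong degeneracy, a function $g\in H^1_\alpha(0,1)$ is only \emph{locally} absolutely continuous in $(0,1]$, so the pointwise value $g(\varepsilon)$ may blow up as $\varepsilon\to 0$; the core task is therefore to show that the boundary contribution $F(\varepsilon)g(\varepsilon)$, with $F(x):=x^\alpha f'(x)$, nevertheless vanishes in the limit.

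First I would collect the structural information. By the definition of $H^2_\alpha(0,1)$ the functions $F(x)=x^\alpha f'(x)$ and $G(x)=x^\alpha g'(x)$ both belong to $H^1(0,1)\subset C([0,1])$, so the four traces $F(0),F(1),G(0),G(1)$ make sense. The key new observation, which has no analogue in the proof of Lemma \ref{lem-IPP-w}, is that under strong degeneracy the values at the origin are automatically zero: if $F(0)=c\neq 0$, then $|F(x)|\ge |c|/2$ on some $[0,\delta]$, forcing
$$|x^{\alpha/2}f'(x)|^2=\frac{F(x)^2}{x^\alpha}\ge \frac{c^2}{4\,x^\alpha}\quad \text{on }[0,\delta],$$
and $\int_0^\delta x^{-\alpha}\,dx=+\infty$ for $\alpha\ge 1$, contradicting $x^{\alpha/2}f'\in L^2(0,1)$. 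The same argument gives $G(0)=0$.

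Next, for $\varepsilon\in(0,1)$ I would apply the classical integration by parts on $[\varepsilon,1]$, where $F\in H^1(\varepsilon,1)$ and $g$ is absolutely continuous, to obtain
$$\int_\varepsilon^1 F'(x)g(x)\,dx = F(1)g(1)-F(\varepsilon)g(\varepsilon)-\int_\varepsilon^1\bigl(x^{\alpha/2}f'(x)\bigr)\bigl(x^{\alpha/2}g'(x)\bigr)\,dx.$$
The first integral converges to its analogue on $(0,1)$ because $F'\in L^2(0,1)$ and $g\in L^2(0,1)$; the last one converges likewise, since $x^{\alpha/2}f',x^{\alpha/2}g'\in L^2(0,1)$, by Cauchy–Schwarz and dominated convergence. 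The term $F(1)g(1)$ is dealt with by the Neumann condition at $x=1$ exactly as in the weak case.

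The main obstacle is thus to prove $F(\varepsilon)g(\varepsilon)\to 0$. From $F(0)=0$ and Cauchy–Schwarz,
$$|F(\varepsilon)|=\Bigl|\int_0^\varepsilon F'(s)\,ds\Bigr|\le \sqrt{\varepsilon}\,\|F'\|_{L^2},$$
and analogously $|G(x)|\le\sqrt{x}\,\|G'\|_{L^2}$, whence $|g'(x)|=|G(x)|/x^\alpha\le C\,x^{1/2-\alpha}$. Integrating from $\varepsilon$ to $1$ gives
$$|g(\varepsilon)|\le |g(1)|+C\int_\varepsilon^1 s^{1/2-\alpha}\,ds,$$
which is $O(1)$ for $\alpha\in[1,3/2)$, $O(\log(1/\varepsilon))$ for $\alpha=3/2$, and $O(\varepsilon^{3/2-\alpha})$ for $\alpha\in(3/2,2)$. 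In every case
$$|F(\varepsilon)g(\varepsilon)|=O\bigl(\varepsilon^{2-\alpha}\bigr),$$
up to a logarithmic factor when $\alpha=3/2$, and the assumption $\alpha<2$ guarantees that this tends to $0$. Letting $\varepsilon\to 0^+$ in the identity above yields \eqref{IPP-s}. I expect this last balancing act—between the $O(\sqrt{\varepsilon})$ decay of $F(\varepsilon)$ forced by $F(0)=0$ and the possible blow-up of $g(\varepsilon)$ controlled via the trace $G(0)=0$—to be the essential difficulty of the proof; the borderline $\alpha=3/2$ (where a logarithm appears) is of note, and the whole argument collapses precisely at $\alpha=2$, consistent with the restriction imposed in the paper.
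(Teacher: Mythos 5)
Your proof is correct and follows the same skeleton as the paper's: truncate to $[\varepsilon,1]$, integrate by parts classically, pass to the limit, and reduce everything to showing $F(\varepsilon)g(\varepsilon)\to 0$ with $F=x^\alpha f'$. The only genuine difference is in how that last boundary term is killed. The paper first observes that $(Fg)'=F'g+(x^{\alpha/2}f')(x^{\alpha/2}g')\in L^1(0,1)$, so that $Fg$ has a limit $L$ at the origin, and then rules out $L\neq 0$ by contradiction: since $|F(x)|\le C\sqrt{x}$ (from $F(0)=0$ and Cauchy--Schwarz, exactly as in your argument), $L\neq 0$ would force $|g(x)|\gtrsim 1/\sqrt{x}$ near $0$, contradicting $g\in L^2(0,1)$. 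You instead exploit the trace $G(0)=0$ of $G=x^\alpha g'$ to get the pointwise bound $|g'(x)|\le C x^{1/2-\alpha}$, integrate it to control the possible blow-up of $g(\varepsilon)$, and multiply by $|F(\varepsilon)|=O(\sqrt{\varepsilon})$ to obtain the explicit rate $O(\varepsilon^{2-\alpha})$ (with a logarithm at $\alpha=3/2$). Your route is slightly more quantitative --- it exhibits the precise decay rate and makes transparent why the argument fails exactly at $\alpha=2$ --- while the paper's is marginally shorter because it never needs to estimate $g$ pointwise. One remark that applies equally to both proofs: the vanishing of the boundary term at $x=1$ uses the Neumann condition $f'(1)=0$, which is not part of the hypothesis $f\in H^2_\alpha(0,1)$ as stated in the lemma; the paper makes the same implicit assumption, consistent with the fact that the lemma is only ever applied to elements of $D(A)$.
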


\begin{proof}[Proof of Lemma \ref{lem-IPP-s}.]
If $f \in H^2 _\alpha (0,1)$, then 
$$ F(x):=  x  ^\alpha f'(x) \in H^1 (0,1) .$$
Let $g \in H^2 _\alpha (0,1)$, and $\varepsilon \in (0,1)$.
Decompose
$$ \int _{0} ^1 F'(x) g(x) \, dx  = 
\int _{0} ^\varepsilon F'(x) g(x) \, dx
+ \int _{\varepsilon} ^1  F'(x) g(x) \, dx .$$
Since $g \in H^2 _\alpha (0,1) \subset H^1 (0,1)$, the classical integration by parts formula gives
$$ 
\int _{0} ^1 F'(x) g(x) \, dx  = 
\int _{0} ^\varepsilon F'(x) g(x) \, dx
+ [F(x) g(x) ] _{\varepsilon} ^1 - \int _{\varepsilon} ^1 F(x) g'(x) \, dx .$$
To prove equation \eqref{IPP-s}, we have to let $\varepsilon \to 0$ in the above identity. First, note that
$$ \int _{\varepsilon} ^1 F(x) g'(x) \, dx
= \int _{\varepsilon} ^1 ( x^\alpha f'(x) ) \,  g'(x) \, dx
= \int _{\varepsilon} ^1 (x  ^{\alpha /2} f'(x) ) \, (x  ^{\alpha /2}g'(x) ) \, dx ,$$
and since $x\mapsto x^{\alpha /2} f'(x)$ and $x\mapsto x^{\alpha /2} g'(x)$ belong to $L^2 (0,1)$, we have that
$$ \int _{\varepsilon} ^1 (x  ^{\alpha /2} f'(x) ) \, (x  ^{\alpha /2}g'(x) ) \, dx
\to \int _{0} ^1 (x  ^{\alpha /2} f'(x) ) \, (x  ^{\alpha /2}g'(x) ) \, dx \quad \text{ as } \varepsilon \to 0. $$
Hence,
$$ \int _{\varepsilon} ^1 F(x) g'(x) \, dx \to \int _{0} ^1 F(x) g'(x) \, dx \quad \text{ as } \varepsilon \to 0.$$
Moreover, since $F'$ and $g$ belong to $L^2 (0,1)$, we get that
$$  \int _{0} ^\varepsilon F'(x) g(x) \, dx \to 0 
\quad \text{ as } \varepsilon \to 0 .$$
It remains to study the boundary terms:
first, because of Neumann boundary conditions at $1$, we have $F(1)=0$ and $g$ has a finite limit as $x\to 1$. Therefore,
$$ [F(x) g(x) ] _{\varepsilon} ^1 = - F(\varepsilon) g(\varepsilon) .$$
Now, we note that 
\begin{multline*}
 \forall\, x \in (0,1), \quad (F(x)g(x))' = F'(x) g(x) + F(x) g'(x) 
\\
= F'(x) g(x) + (x^{\alpha /2} f'(x)) (x^{\alpha /2} g'(x)),
\end{multline*}
and, since $F'$, $g$, $x^{\alpha /2} f'$, $x^{\alpha /2} g'$ belong to $L^2(0,1)$, 
we obtain that $(Fg)' \in L^1(0,1)$. Thus, $Fg$ is absolutely continuous on $(0,1]$ and it has a limit as $x\to 0$. This means that there exists $L$ such that
$$ F(x) g(x) \to L \text{ as } x \to 0 ^+.$$
We claim that $L=0$. Indeed:
\begin{itemize}
\item function $x\mapsto x^\alpha f'(x)$ belongs to $H^1(0,1)$, hence it has a limit as $x\to 0^+$:
$$ x^\alpha f'(x) \to \ell \quad \text{ as } x \to 0 ^+ ;$$
\item if $\ell \neq 0$,  
$$ x^{\alpha /2} f'(x) \sim \frac{\ell}{x^{\alpha /2}} \quad \text{ as } x \to 0 ^+ .$$
However, since $\alpha \geq 1$, we have that $\frac{\ell}{x^{\alpha /2}} \notin L^2 (0,1)$, so $\ell =0$;
\item moreover,
$$ \forall\, x \in (0,1), \quad x^\alpha f'(x) = \int _0 ^x (s^\alpha f'(s))' \, ds $$
and using the Cauchy-Schwarz inequality, one has
$$ \forall\, x\in (0,1), \quad \vert x ^\alpha f'(x) \vert \leq C \sqrt{ x } ;$$
\item finally, 
$$ \forall\, x\in (0,1), \quad \vert x ^\alpha f'(x) g(x) \vert \leq C \sqrt{ x } \vert g(x) \vert,$$
thus,
$$\forall\, x\in (0,1), \quad \vert F(x) g(x) \vert \leq C \sqrt{ x } \vert g(x) \vert.$$
If $L\neq 0$, then for $x$ sufficiently close to $0$ we have
$$ \vert g(x) \vert \geq \frac{CL}{2 \sqrt{x} },$$
which is in contradiction with $g\in L^2(0,1)$. Therefore, $L=0$.
\end{itemize}
This implies that
$$ [F(x) g(x) ] _{\varepsilon} ^1
=  - F(\varepsilon) g(\varepsilon) 
\to 0 \quad \text{ as } \varepsilon \to 0^+ .$$
This concludes the proof of Lemma \ref{lem-IPP-s}. 
\end{proof}


\subsubsection{Proof of Proposition \ref{Prop-A-s} } \hfill

Part a) of Proposition \ref{Prop-A-s} is well known, see e.g. \cite{CMV-strong}.
Therefore, we prove Part b).
The strategy of the proof is similar to the one of Proposition \ref{Prop-A-w}, part b),
and relies on the integration by parts formula given in Lemma \ref{lem-IPP-w}. We only need to check the boundary conditions.
As already noted, since $u\in H^2 _\alpha (0,1)$, this implies that $x^\alpha u'(x) \to 0$ as $x\to 0$. Hence the boundary condition is satisfied at $x=0$. Taking now $v$ of class $C^\infty$, not equal to $0$ at the point $x=1$, we derive that
\begin{multline*}
\int _{0} ^1 fv 
= \int _{0} ^1 \bigl( uv+ x ^\alpha u' v' \bigr)
\\
= \int _{0} ^1  uv + [x ^\alpha u' v] _0 ^1 - \int _{0} ^1 (x ^\alpha u')' v
= [x ^\alpha u' v] _0 ^1 + \int _{0} ^1  ( u-(x ^\alpha u')' )v ,
\end{multline*}
thus  $u'(1) v(1)=0$, and therefore $u'(1)=0$. We obtain that $u\in D(A)$, and 
$(I-A)u=f$. So, the operator $I-A$ is surjective. This concludes the proof of Proposition \ref{Prop-A-s}, part b). \qed


\subsection{Proof of Proposition \ref{prop-"prop2"}} \hfill

First, let us prove the following regularity result.

\begin{Lemma}
\label{lem-wellposed}
Let $\mu\in V^1_\alpha$. Then, the operator $\mathcal B$ defined in \eqref{def-mathcalB} satisfies
$$\mathcal B \in \mathcal L_c (D(\mathcal A),D(\mathcal A)) .$$

\end{Lemma}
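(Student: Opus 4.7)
The plan is to verify that, for every $(w,v)^\top\in D(\mathcal{A})=D(A)\times H^1_\alpha(0,1)$, the image $\mathcal{B}(w,v)^\top=(0,\mu w)^\top$ again lies in $D(\mathcal{A})$ with a continuous estimate. Since the first component is trivially in $D(A)$, the whole statement reduces to showing that the multiplication operator $w\mapsto \mu w$ sends $H^1_\alpha(0,1)$ continuously into itself; then, a fortiori, it sends $D(A)\hookrightarrow H^1_\alpha(0,1)$ continuously into $H^1_\alpha(0,1)$, and the continuity constant depends only on $\mu$.

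The key preliminary step is to establish the bound $\mu\in L^\infty(0,1)$ in both degeneracy regimes. In the weak case $\alpha\in[0,1)$, elements of $H^1_\alpha(0,1)$ are absolutely continuous on $[0,1]$, and writing
\[
|u(y)|^2\le 2|u(x)|^2+2\Bigl(\int_x^y t^{-\alpha/2}\cdot t^{\alpha/2}|u_t|\,dt\Bigr)^2
\]
and integrating in $x\in(0,1)$, together with $t^{-\alpha/2}\in L^2(0,1)$ (which requires $\alpha<1$), yields the continuous embedding $H^1_\alpha(0,1)\hookrightarrow L^\infty(0,1)$, so in particular $\mu\in L^\infty$. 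In the strong case $\alpha\in[1,2)$, the stronger assumption $\mu\in V^{(1,\infty)}_\alpha$ gives $|\mu_x(x)|\le \|x^{\alpha/2}\mu_x\|_\infty\, x^{-\alpha/2}$; since $\alpha/2<1$ the right-hand side lies in $L^1(0,1)$, so $\mu$ extends continuously up to $x=0$, and an analogous pointwise integration together with $\mu\in L^2$ controls $\|\mu\|_\infty$ by the $V^{(1,\infty)}_\alpha$-norm of $\mu$.

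With $\mu\in L^\infty$ in hand, one checks the two requirements that define $H^1_\alpha(0,1)$ for $\mu w$. First, $\mu w\in L^2$ follows from $\mu\in L^\infty$ and $w\in L^2$. Second, since $\mu$ and $w$ are both (locally) absolutely continuous, so is $\mu w$, with weak derivative $(\mu w)_x=\mu_x w+\mu w_x$; one then bounds $x^{\alpha/2}(\mu w)_x$ in $L^2$ via
\[
\|\mu\, x^{\alpha/2}w_x\|_{L^2}\le\|\mu\|_\infty\,\|x^{\alpha/2}w_x\|_{L^2}
\]
for the first term in both regimes, and for the cross term $x^{\alpha/2}\mu_x w$ either by $\|w\|_\infty\|x^{\alpha/2}\mu_x\|_{L^2}$ in the weak case (using the embedding $H^1_\alpha\hookrightarrow L^\infty$ now applied to $w$) or by $\|x^{\alpha/2}\mu_x\|_\infty\|w\|_{L^2}$ in the strong case. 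Assembling these estimates gives $\|\mu w\|_{H^1_\alpha}\le C_\mu\|w\|_{H^1_\alpha}\le C_\mu\|w\|_{D(A)}$, which is exactly the required continuity.

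The only real subtlety is the $L^\infty$-control of $\mu$ in the strong regime: the $L^2$-bound $x^{\alpha/2}\mu_x\in L^2$ inherited from $H^1_\alpha$ alone is insufficient to conclude $\mu\in L^\infty$ when $\alpha\in[1,2)$, which is precisely why the definition of $V^{(1,\infty)}_\alpha$ in \eqref{space-muV1infty} upgrades that integrability to an $L^\infty$ control of $x^{\alpha/2}\mu_x$. Once this is in place, the remaining content of the proof is routine product-estimate bookkeeping.
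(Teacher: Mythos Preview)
Your proof is correct and follows essentially the same approach as the paper: both arguments reduce to showing $\mu w\in H^1_\alpha(0,1)$ with a continuous estimate, establish $\mu\in L^\infty(0,1)$ in each regime (via the embedding $H^1_\alpha\hookrightarrow L^\infty$ when $\alpha\in[0,1)$, and via $\mu_x\in L^1$ from the $L^\infty$ bound on $x^{\alpha/2}\mu_x$ when $\alpha\in[1,2)$), and then bound $x^{\alpha/2}(\mu w)_x=(x^{\alpha/2}\mu_x)w+\mu(x^{\alpha/2}w_x)$ term by term exactly as you do. The only cosmetic difference is that you phrase the result as boundedness of multiplication on all of $H^1_\alpha$, whereas the paper states the estimate with $\|w\|_{D(A)}$ on the right; since the paper's argument also only uses $w\in H^1_\alpha$, this is not a substantive distinction.
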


\begin{proof}[Proof of Lemma \ref{lem-wellposed}] We have to prove that
$$ w_0 \in D(A) \implies \mu w_0 \in H^1 _\alpha (0,1)$$
and that there exists $C >0$ such that
\begin{equation}
\label{eq-well-posed}
 \Vert \mu w_0 \Vert _{H^1 _\alpha (0,1)} \leq C \Vert  w_0 \Vert _{D(A)} .
 \end{equation}
We distinguish the cases $\alpha \in [0,1)$ and $\alpha \in [1,2)$.
\begin{itemize}
\item $\alpha \in [0,1)$:
we can decompose $w'_0$ as follows $w_0 ' = (x^{\alpha /2} w_0 ') (x^{-\alpha /2})$. Since $w_0 \in H^1 _\alpha (0,1)$, we deduce that $w_0 ' \in L^1 (0,1)$ and thus $w_0 \in L^\infty (0,1)$. The same holds for $\mu$ because $V^1_\alpha=H^1_\alpha(0,1)$ for $\alpha\in[0,1)$. Hence, $(\mu w_0)' = \mu ' w_0 + \mu w_0 ' \in L^1 (0,1)$ and therefore $\mu w_0$ is absolutely continuous on $[0,1]$. Furthermore, we have that $x^{\alpha /2} (\mu w_0)' = (x^{\alpha /2} \mu ') w_0 + \mu (x^{\alpha /2} w_0 ' ) \in L^2 (0,1)$ and so we infer that $\mu w_0\in H^1_\alpha(0,1)$. Finally, there exists $C >0$ such that
$$ \forall\, w \in H^1 _\alpha (0,1), \quad \Vert w \Vert _{L^\infty (0,1)} \leq C \Vert  w \Vert _{H^1 _\alpha (0,1)} ,$$
and this implies that \eqref{eq-well-posed} holds.

\item $\alpha \in [1,2)$:
first, we observe that $\mu \in V^{1, \infty} _\alpha (0,1)$ implies that $\vert \mu _x \vert \leq \frac{C}{x^{\alpha /2}}$. Therefore, we get that $\mu_ x \in L^1 (0,1)$, and so $\mu \in L^\infty (0,1)$ and $\mu w_0 \in L^2 (0,1)$. Moreover, $x^{\alpha /2} (\mu w_0)' = (x^{\alpha /2} \mu _x) w_0 + (x^{\alpha /2} w_0 ') \mu$ and since $x^{\alpha /2} \mu _x \in L^\infty (0,1)$ and $w_0 \in L^2 (0,1)$, we have that $(x^{\alpha /2} \mu _x) w_0 \in L^2 (0,1)$. Furthermore, since $x^{\alpha /2} w_0 ' \in L^2 (0,1)$ and $\mu \in L^\infty (0,1)$, we deduce that $(x^{\alpha /2} w_0 ') \mu \in L^2 (0,1)$ and hence $x^{\alpha /2} (\mu w_0)' \in L^2 (0,1)$. By reasoning as in the case $\alpha\in[0,1)$, we deduce that \eqref{eq-well-posed} is verified. 
\end{itemize}
\end{proof}

\begin{proof}[Proof of Proposition \ref{prop-"prop2"}]
We prove the existence and uniqueness of the solution of problem \eqref{CMU-eq-ctr-f-ordre1} by a fixed point argument. We consider the map 
$$ \mathcal K: C^0 ([0,T], D(\mathcal A)) \to C^0 ([0,T], D(\mathcal A)) $$
defined by
\begin{equation}
\forall\, t\in [0,T], \quad \mathcal K (\mathcal{W}) (t):=e^{\mathcal{A}t}\mathcal{W}_0+\int_0^te^{\mathcal{A}(t-s)}(p(s)\mathcal{B}\mathcal{W}(s)+\mathcal{F}(s)) \, ds.
\end{equation}
We first prove that $\mathcal K$ is well-defined, which means that it maps $C^0([0,T], D(\mathcal{A}))$ into itself. We observe that, for any $\mathcal{W}\in C^0([0,T], D(\mathcal{A}))$, $\mathcal{B}\mathcal{W}\in C^0([0,T], D(\mathcal{A}))$ and thus $p\mathcal{B}\mathcal{W}\in L^2((0,T), D(\mathcal{A}))$. Hence, it is possible to apply the classical result of existence of strict solutions (see, for instance,  \cite[Proposition 3.3]{BdPDM}) and deduce that $\mathcal{K} (\mathcal{W})\in C^0([0,T], D(\mathcal{A}))$.

Moreover, for any $\mathcal{W}_1,\mathcal{W}_2\in  C^0([0,T], D(\mathcal{A}))$, it holds that
\begin{equation*}
\begin{split}
\Vert \mathcal{K} (\mathcal{W}_1)(t) - \mathcal{K} (\mathcal{W}_2)(t)\Vert _{D(\mathcal{A})}
&=\Bigl \Vert \int_0 ^t e^{\mathcal{A}(t-s)} p(s) \mathcal{B} \left(\mathcal{W}_1(s)-\mathcal{W}_2(s)\right) \, ds \Bigr \Vert _{D(\mathcal{A})}
\\
&\leq \int_0 ^t \vert p(s)\vert \Vert  e^{\mathcal{A}(t-s)}\mathcal{B}\left(\mathcal{W}_1(s)-\mathcal{W}_2(s)\right)\Vert _{D(\mathcal{A})} \, ds
\\
&\leq C_1 \int_0 ^t \vert  p(s)\vert \Vert \mathcal{B}\left(\mathcal{W}_1(s)-\mathcal{W}_2(s)\right) \Vert \, ds
\\
&\leq C_1 C_{\mathcal{B}} \Vert p\Vert _{L^1(0,T)}\Vert \mathcal{W}_1-\mathcal{W}_2\Vert _{C^0([0,T],D(\mathcal{A}))}.
\end{split}
\end{equation*}
Suppose $C_1 C_{\mathcal{B}}\Vert p\Vert_{L^1(0,T)}<1$. Then $\mathcal{K}$ is a contraction and therefore has a unique fixed point. 
Furthermore, we have that
\begin{equation*}
\begin{split}
\Vert \mathcal{W}&\Vert _{C^0([0,T],D(\mathcal{A}))}
\leq \sup_{t\in [0,T]}\Vert e^{\mathcal{A}t} \mathcal{W}_0 
+ \int_0 ^t e^{\mathcal{A}(t-s)}(p(s)\mathcal{B}\mathcal{W}(s) + \mathcal{F}(s)) \, ds \Vert _{D(\mathcal{A})}
\\
&\leq C_1\Bigl( \Vert \mathcal{W}_0 \Vert _{D(\mathcal{A})} + \int_0 ^T \vert p(s)\vert \Vert \mathcal{B}\mathcal{W}(s)\Vert _{D(\mathcal{A})} + \Vert F(s)\Vert _{D(\mathcal{A})} \, ds \Bigr)
\\
&\leq C_1 \Bigl( \Vert \mathcal{W}_0\Vert _{D(\mathcal{A})} + C_{\mathcal{B}} \Vert \mathcal{W}\Vert _{C^0([0,T],D(\mathcal{A}))}\Vert p\Vert _{L^1(0,T)} + \sqrt{T} \Vert F\Vert _{L^2(0,T;D(\mathcal{A}))}\Bigr).
\end{split}
\end{equation*}
Therefore,
\begin{equation}
\Vert \mathcal{W}\Vert _{C^0([0,T],D(\mathcal{A}))}
\leq \frac{C_1}{1-C_1 C_{\mathcal{B}} \Vert p\Vert _{L^1(0,T)}}(\Vert \mathcal{W}_0\Vert_{D(\mathcal{A})}+\sqrt{T}\Vert F\Vert_{L^2(0,T;D(\mathcal{A}))}).
\end{equation}
We have thus obtained the conclusion under the extra hypothesis that $p$ satisfies $C_1 C_{\mathcal{B}}\Vert p\Vert_{L^1(0,T)}<1$. In the general case, it is sufficient to represent $[0,T]$ as the union of a finite family of sufficiently small subintervals where we can repeat the above argument in each one.

Equivalently, \eqref{estimW} can be proved by Gronwall's Lemma, obtaining:
\begin{equation}
\Vert \mathcal{W}\Vert_{C^0([0,T],D(\mathcal{A}))}
\leq C_1 \Bigl( \Vert \mathcal{W}_0\Vert_{D(\mathcal{A})}+\sqrt{T}\Vert F\Vert _{L^2(0,T;D(\mathcal{A}))}\Bigr) \, e^{C_1\Vert p\Vert_{L^1(0,T)}} .
\end{equation}
\end{proof}


\section{Spectral problem: proof of Propositions \ref{prop-vp-w} and \ref{prop-vp-s} }
\label{sec-spectral}

\subsection{A classical change of variables} \hfill

First, we note that if $(\lambda, \Phi)$ solves \eqref{vp}, then $\lambda \geq 0$: indeed, multiplying by $\Phi$, we obtain
$$ \lambda \int _0 ^1 \Phi ^2 = \int _0 ^1 - (x^\alpha \Phi ') ' \Phi
= [ - (x^\alpha \Phi ')  \Phi ] _0 ^1 + \int _0 ^1 x^\alpha (\Phi ') ^2 
= \int _0 ^1 x^\alpha (\Phi ') ^2 .$$
Moreover, if $\lambda =0$, then $x\mapsto x^\alpha \Phi '$ is constant and by imposing the boundary conditions we find that it is actually equal to $0$. Thus, the constant functions are the ones and only ones associated to the eigenvalue $\lambda =0$.

We now investigate the positive eigenvalues: if $\lambda >0$, we introduce the function $\psi$ defined by the relation
$$ \Phi (x) = x^{\frac{1-\alpha}{2}} \psi \Bigl( \frac{2}{2-\alpha} \sqrt{\lambda} x^{\frac{2-\alpha}{2}}\Bigr)  ,$$
and the associated new space variable
$$ y = \frac{2}{2-\alpha} \sqrt{\lambda} x^{\frac{2-\alpha}{2}},$$
(see, e.g., \cite{CMV-strong,Gueye}). After some classical computations, we obtain that $\psi$ satisfies the following problem:

\begin{equation}
\label{vp-psi}
\begin{cases}
y^2 \psi '' (y) + y \psi '(y) + \Bigl( y^2 - (\frac{1-\alpha}{2-\alpha} )^2 \Bigr) \psi (y) = 0 , \quad y\in (0, \frac{2}{2-\alpha} \sqrt{\lambda}) , \\
 y^{\frac{1}{2-\alpha}} \psi ' (y) + \frac{1-\alpha}{2-\alpha}  y^{\frac{\alpha -1}{2-\alpha}} \psi  (y) \to 0 \text{ as } y \to 0 , \\
\sqrt{\lambda} \psi ' (\frac{2}{2-\alpha} \sqrt{\lambda}) + \frac{1-\alpha}{2} \psi ' (\frac{2}{2-\alpha} \sqrt{\lambda})  =0 .
\end{cases}
\end{equation}
The first equation in \eqref{vp-psi} is the Bessel equation of order $\nu=\Bigl\vert \frac{1-\alpha}{2-\alpha}\Bigr \vert$ (see \cite{Lebedev} or \cite{Watson}). 

In the following, we solve \eqref{vp} using \eqref{vp-psi} and distinguishing several cases
\begin{itemize}
\item $\alpha \in [0,1)$,
\item $\alpha \in (1,2)$ and $\nu_\alpha = \frac{\alpha -1}{2-\alpha} \notin \mathbb N$,
\item $\alpha \in (1,2)$ and $\nu_\alpha = \frac{\alpha -1}{2-\alpha} \in \mathbb N^*$,
\item $\alpha =1$.
\end{itemize}
Our study will be based on well-known properties of Bessel functions, and it is similar to the one in \cite{CMV-strong}, the only difference, which makes the analysis interesting, lying in the boundary condition at point $x=1$. In every case, the strategy is
\begin{itemize}
\item to exhibit a basis of the vector space of dimension $2$ of the solution of the first equation \eqref{vp-psi}, and this is where Bessel funtions appear, and where the distinction of the different cases is necessary,
\item to take into account the functional setting of the problem, in order to eliminate some possible solutions,
\item finally, to impose the boundary conditions at points $x=0$ and $x=1$, in order to determine the eigenvalues and the associated eigenfunctions.
\end{itemize}


\subsection{The case $\alpha \in [0,1)$} \hfill

\subsubsection{The study of the ODE} \hfill

In this section we assume that $\alpha \in [0,1)$. In this case, we have 
$$ \nu _\alpha := \frac{1-\alpha}{2-\alpha} \in \left(0, \frac{1}{2}\right] .$$
The ODE we need to solve is
\begin{equation}
\label{*eq-bessel-ordre-nu}
y^2 \psi ''(y) + y \psi'(y) +(y^2-\nu ^2) \psi(y)=0 , y \in I \subset (0,+\infty)
\end{equation}
with $\nu = \nu_\alpha$ and $I = \left(0, \frac{2}{2-\alpha} \sqrt{\lambda}\right)$.
The above equation is called {\it Bessel's equation for functions of order $\nu$}. 
The fundamental theory of ordinary differential equations establishes that the solutions of \eqref{*eq-bessel-ordre-nu} generate a vector space $S_\nu$ of dimension 2. Consider the Bessel function of order $\nu$ and {\it of the first kind} $J_\nu$:
\begin{equation}
\label{*def-Jnu}
 J_\nu (y)
:= \sum_{m = 0} ^\infty  \frac{(-1)^m}{m! \ \Gamma (m+\nu+1) } \left( \frac{y}{2}\right) ^{2m+\nu}
=\sum_{m = 0} ^\infty  c_{\nu,m} ^+ y^{2m+\nu} , \qquad y \geq 0
\end{equation}
and $J_{-\nu}$:
\begin{equation}
\label{*def-J-nu}
 J_{-\nu} (y)
:= \sum_{m = 0} ^\infty  \frac{(-1)^m}{m! \ \Gamma (m-\nu+1) } \left( \frac{y}{2}\right) ^{2m-\nu}
=\sum_{m = 0} ^\infty  c_{\nu,m} ^- y^{2m-\nu} , \qquad y > 0 .
\end{equation}
Since $\nu \not \in  \mathbb N$, the two functions $J_\nu$ and $J_{-\nu}$ are linearly independent and therefore the pair $(J_\nu,J_{-\nu})$ forms a fundamental system of solutions of  \eqref{*eq-bessel-ordre-nu},
(see \cite[section 3.1, (8), p. 40]{Watson}, \cite[section 3.12, eq. (2), p. 43]{Watson} or \cite[eq. (5.3.2), p. 102]{Lebedev})). Hence,
\begin{multline}
\label{sol-gene-EDO-psi}
\begin{cases}
y^2 \psi ''(y) + y \psi'(y) +(y^2-\nu _\alpha ^2) \psi(y)=0 , 
\\
y \in I
\end{cases}
\\ 
\quad  \implies \quad 
\exists \ C_+, C_- \in \mathbb R, \quad \begin{cases}
\psi (y) = C_+ J_{\nu _\alpha} (y) + C_- J_{-\nu _\alpha} (y) , \\
 y\in I .
 \end{cases}
\end{multline}
Thus, going back to the original variables, we obtain that

\begin{multline}
\label{sol-gene-EDO-phi}
\begin{cases}
- (x^\alpha \Phi ') ' = \lambda \Phi , 
\\
 x \in (0,1)
\end{cases}
\\ 
\quad  \implies \quad 
\exists \ C_+, C_- \in \mathbb R, \quad \begin{cases}
\Phi (x) = C_+ \Phi _+ (x) + C_- \Phi _- (x) , \\
x \in (0,1) ,
 \end{cases}
\end{multline}
with
\begin{equation}
\label{phi+-w}
\begin{split}
\Phi _+ (x) &= x^{\frac{1-\alpha}{2}} J_{\nu_\alpha} \Bigl( \frac{2}{2-\alpha} \sqrt{\lambda} x^{\frac{2-\alpha}{2}}\Bigr)
= x^{\frac{1-\alpha}{2}} \sum_{m = 0} ^\infty  c_{\nu_\alpha,m} ^+ \Bigl( \frac{2}{2-\alpha} \sqrt{\lambda} x^{\frac{2-\alpha}{2}} \Bigr) ^{2m+\nu_\alpha}
\\
&= \sum_{m = 0} ^\infty  c_{\nu_\alpha,m} ^+ \Bigl( \frac{2}{2-\alpha} \sqrt{\lambda}  \Bigr) ^{2m+\nu_\alpha} x^{1-\alpha + (2-\alpha)m} 
= \sum_{m = 0} ^\infty  \tilde c_{\alpha,\lambda,m} ^+  x^{1-\alpha + (2-\alpha)m} 
\end{split}
\end{equation}
and
\begin{equation}
\label{phi--w}
\begin{split}
\Phi _- (x) &= x^{\frac{1-\alpha}{2}} J_{-\nu_\alpha} \Bigl( \frac{2}{2-\alpha} \sqrt{\lambda} x^{\frac{2-\alpha}{2}}\Bigr)
= x^{\frac{1-\alpha}{2}} \sum_{m = 0} ^\infty  c_{\nu_\alpha,m} ^- \Bigl( \frac{2}{2-\alpha} \sqrt{\lambda} x^{\frac{2-\alpha}{2}} \Bigr) ^{2m-\nu_\alpha}
\\
&= \sum_{m = 0} ^\infty  c_{\nu_\alpha,m} ^- \Bigl( \frac{2}{2-\alpha} \sqrt{\lambda}  \Bigr) ^{2m-\nu_\alpha} x^{(2-\alpha)m} 
= \sum_{m = 0} ^\infty  \tilde c_{\alpha,\lambda,m} ^-  x^{(2-\alpha)m} .
\end{split}
\end{equation}


\subsubsection{Information given by the functional setting} \hfill

Note that
$$ \Phi _+ (x) \to 0 \quad \text{ as } x \to 0^+,$$
hence $\Phi _+ \in L^2 (0,1)$. Moreover,
$$ \Phi ' _+ (x) \sim \tilde c_{\alpha,\lambda,0} ^+ \frac{1-\alpha}{x^{\alpha}} \quad \text{ as } x \to 0^+,$$
therefore, $\Phi _+$ is absolutely continuous on $[0,1]$. Furthermore,
$$ x^{\alpha/2} \Phi ' _+ (x) \sim \tilde c_{\alpha,\lambda,0} ^+ \frac{1-\alpha}{x^{\alpha /2}} \quad \text{ as } x \to 0^+,$$
thus, $\Phi _+ \in H^1 _\alpha (0,1)$. Finally,
$$ (x^\alpha \Phi ' _+)' (x) \to 0 \quad \text{ as } x \to 0^+,$$ 
and we deduce that $\Phi _+ \in H^2 _\alpha (0,1)$.

In the same way, one easily checks that $\Phi _- \in H^2 _\alpha (0,1)$.

\subsubsection{Information given by the boundary condition at $x=0$} \hfill

An eigenfunction must satisfy additionally the boundary conditions. In particular, we should have that $x^\alpha \Phi '(x) \to 0$ as $x\to 0$. We observe that
$$ x^\alpha \Phi ' _- (x) \to 0 \quad \text{ as } x \to 0, $$
while
$$ x^\alpha \Phi ' _+ (x) \to \tilde c_{\alpha,\lambda,0} ^+ (1-\alpha) \neq 0 \quad \text{ as } x \to 0. $$
Therefore, we conclude that
$$
\begin{cases}
- (x^\alpha \Phi ') ' = \lambda \Phi , \quad  x \in (0,1)
\\
(x^\alpha \Phi ')(0)=0
\end{cases}
\quad  \implies \quad 
\exists \ C_- \in \mathbb R, \quad \begin{cases}
\Phi (x) = C_- \Phi _- (x) , \\
x \in (0,1) ,
 \end{cases}
$$

\subsubsection{Information given by the boundary condition at $x=1$} \hfill

As regards the boundary condition at $x=1$, $\Phi$ has to solve $\Phi '(1)=0$. We compute $\Phi'_{-}$:
\begin{multline*}
\Phi _- ' (x) = \frac{1-\alpha}{2} x^{\frac{-1-\alpha}{2}} J_{-\nu_\alpha} \Bigl( \frac{2}{2-\alpha} \sqrt{\lambda} x^{\frac{2-\alpha}{2}}\Bigr)
\\
+ x^{\frac{1-\alpha}{2}} \sqrt{\lambda} x^{-\alpha /2} J' _{-\nu_\alpha} \Bigl( \frac{2}{2-\alpha} \sqrt{\lambda} x^{\frac{2-\alpha}{2}}\Bigr) .
\end{multline*}
and we deduce that the following relation must hold
\begin{equation}
\label{eq-vp-w1}
 \frac{1-\alpha}{2}  J_{-\nu_\alpha} \Bigl( \frac{2}{2-\alpha} \sqrt{\lambda} \Bigr)
+  \sqrt{\lambda}  J' _{-\nu_\alpha} \Bigl( \frac{2}{2-\alpha} \sqrt{\lambda} \Bigr)  = 0 .
\end{equation}
This is the equation that characterizes the eigenvalues $\lambda$. Multiplying by $\frac{2}{2-\alpha}$, 
\eqref{eq-vp-w1} becomes
\begin{equation}
\label{eq-vp-w2}
\frac{2}{2-\alpha} \frac{1-\alpha}{2}  J_{-\nu_\alpha} \Bigl( \frac{2}{2-\alpha} \sqrt{\lambda} \Bigr)
+ \frac{2}{2-\alpha} \sqrt{\lambda}  J' _{-\nu_\alpha} \Bigl( \frac{2}{2-\alpha} \sqrt{\lambda} \Bigr)  = 0 .
\end{equation}
Introducing the variable
$$ X_\lambda =  \frac{2}{2-\alpha} \sqrt{\lambda} ,$$
we have
\begin{equation}
\label{eq-vp-w3}
\nu _\alpha  \, J_{-\nu_\alpha} (X_\lambda)
+ X_\lambda  \, J' _{-\nu_\alpha} (X_\lambda)  = 0 .
\end{equation}
We now consider the following well-known relation (see \cite[p. 45, formula (4)]{Watson})
\begin{equation}
\label{Watson-}
z \, J_\nu ' (z) - \nu \, J_\nu (z) = z \, J_{\nu + 1} (z) .
\end{equation}
from which we deduce that
\begin{equation*}
\nu _\alpha  J_{-\nu_\alpha} (X_\lambda)
+ X_\lambda  J' _{-\nu_\alpha} (X_\lambda)
= X_\lambda  J' _{-\nu_\alpha} (X_\lambda)
- (-\nu_\alpha) J_{-\nu_\alpha} (X_\lambda)
= X_\lambda J_{-\nu_\alpha + 1} (X_\lambda).
\end{equation*}
Thus, equation \eqref{eq-vp-w2} is equivalent to
\begin{equation}
\label{eq-vp-w4}
X_\lambda J_{-\nu_\alpha + 1} (X_\lambda)  = 0 ,
\end{equation}
which implies that
\begin{equation}
\label{eq-vp-w5}
J_{-\nu_\alpha + 1} (X_\lambda)  = 0 .
\end{equation}
Thus, the possible values for $X_\lambda$ are the positive zeros of $J_{-\nu_\alpha + 1}$:
$$ \exists\,n\geq1\,:\, \frac{2}{2-\alpha} \sqrt{\lambda} = X_\lambda = j_{-\nu_\alpha + 1, n}.$$
We obtain that the eigenvalues of \eqref{vp} have the following form:
$$ \exists\,n\geq1\,:\,\lambda_n = \kappa _\alpha ^2 j_{-\nu_\alpha + 1, n} ^2.$$

Vice-versa, given $n\geq 1$,
consider
$$ \lambda _n := \kappa _\alpha ^2 \, j_{-\nu_\alpha + 1, n} ^2
\quad \text{ and } \quad 
\Phi _n (x) = x^{\frac{1-\alpha}{2}} \, J_{-\nu_\alpha} \Bigl( j_{-\nu_\alpha + 1, n} x^{\frac{2-\alpha}{2}}\Bigr) .$$
From the previous analysis we deduce that $\Phi _n \in H^2 _\alpha (0,1)$ and that $(\lambda _n, \Phi _{n})$ solves \eqref{vp}.

Finally, the proof of \eqref{gap-sqrt-weak} follows directly from \cite[p. 135]{Kom-Lor} : since
$-\nu_\alpha + 1 \geq \frac{1}{2}$, the sequence $(j_{-\nu_\alpha + 1, n+1} - j_{-\nu_\alpha + 1, n})_{n\geq 1}$ is nonincreasing and
$$ j_{-\nu_\alpha + 1, n+1} - j_{-\nu_\alpha + 1, n} \to \pi \quad \text{ as } n \to \infty .$$
This concludes the proof of Proposition \ref{prop-vp-w}. \qed 

\subsubsection{Additional information on the eigenvalues and eigenfunctions} \hfill
\label{sec-info-w}

We are going to prove useful properties of the eigenfuncions for $\alpha\in[0,1)$.

\begin{Lemma}
\label{lem-prop-phi-n-0-1-w}
Let $\alpha \in [0,1)$ and $n\geq 1$. Then, $\Phi _{\alpha,n}$ has finite limits as $x\to 0^+$ and $x\to 1^-$, and satisfies
\begin{equation}
\label{phin1-}
\vert \Phi _{\alpha,n} (1) \vert  = \sqrt{2-\alpha} ,
\end{equation}
and
\begin{equation}
\label{Leb-asymp6}
\Phi _{\alpha,n} (0) \sim  c_{\nu_\alpha,0} ^- \, \sqrt{\frac{(2-\alpha)\pi}{2}} \, (j_{-\nu_\alpha + 1, n}) ^{\frac{1}{2}-\nu_\alpha}
\quad \text{as } n \to +\infty 
\end{equation}
(where the coefficient $c_{\nu_\alpha,0} ^-$ is defined in \eqref{*def-J-nu}). In particular, the sequence $(\Phi _{\alpha,n} (0))_{n\geq 1}$ is bounded if and only if $\alpha =0$.

\end{Lemma}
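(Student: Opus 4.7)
My plan is to extract everything from the explicit formula $\Phi_{\alpha,n}(x) = K_{\alpha,n}\, x^{(1-\alpha)/2}\, J_{-\nu_\alpha}(j_n\, x^{(2-\alpha)/2})$, where I write $j_n := j_{-\nu_\alpha+1,n}$. The existence of finite limits is immediate: at $x=1$ the formula is manifestly continuous, while as $x\to 0^+$ I substitute the leading term of the series $J_{-\nu_\alpha}(y)= c_{\nu_\alpha,0}^-\, y^{-\nu_\alpha}+O(y^{2-\nu_\alpha})$ and observe that the total power of $x$ is $\frac{1-\alpha}{2}-\nu_\alpha\,\frac{2-\alpha}{2}=0$. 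This immediately yields
$$\Phi_{\alpha,n}(0)\;=\;K_{\alpha,n}\, c_{\nu_\alpha,0}^-\, j_n^{-\nu_\alpha}.$$

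The core computation is then pinning down the normalization constant $K_{\alpha,n}$. I would perform the change of variable $y=j_n x^{(2-\alpha)/2}$ in $\int_0^1 x^{1-\alpha}J_{-\nu_\alpha}(j_n x^{(2-\alpha)/2})^2\,dx$ to reduce it to $\frac{2}{(2-\alpha)j_n^2}\int_0^{j_n}y\, J_{-\nu_\alpha}(y)^2\,dy$, and then apply the classical Lommel identity
$$\int_0^R y\, J_\nu(y)^2\,dy\;=\;\frac{R^2}{2}\Bigl[J_\nu'(R)^2+\bigl(1-\tfrac{\nu^2}{R^2}\bigr)J_\nu(R)^2\Bigr]$$
with $\nu=-\nu_\alpha$ and $R=j_n$. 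The key simplification is that since $J_{-\nu_\alpha+1}(j_n)=0$, the recurrence $J_{\nu-1}(z)+J_{\nu+1}(z)=\frac{2\nu}{z}J_\nu(z)$ (applied with $\nu=-\nu_\alpha$, $z=j_n$) combined with $J_{-\nu_\alpha}'(z)=\tfrac{1}{2}[J_{-\nu_\alpha-1}(z)-J_{-\nu_\alpha+1}(z)]$ gives $J_{-\nu_\alpha}'(j_n)=-\tfrac{\nu_\alpha}{j_n}J_{-\nu_\alpha}(j_n)$. Substituting, the $\nu_\alpha^2/j_n^2$ contributions cancel, leaving exactly $J_{-\nu_\alpha}(j_n)^2$. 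So the $L^2$-normalization yields $K_{\alpha,n}=\sqrt{2-\alpha}/|J_{-\nu_\alpha}(j_n)|$, and therefore $|\Phi_{\alpha,n}(1)|=K_{\alpha,n}|J_{-\nu_\alpha}(j_n)|=\sqrt{2-\alpha}$, proving \eqref{phin1-}.

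For the asymptotic \eqref{Leb-asymp6}, I insert the expression for $K_{\alpha,n}$ into the formula for $\Phi_{\alpha,n}(0)$ and need the behavior of $|J_{-\nu_\alpha}(j_n)|$ for large $n$. Here I invoke the standard Bessel asymptotic $J_\nu(z)=\sqrt{\tfrac{2}{\pi z}}\cos(z-\tfrac{\nu\pi}{2}-\tfrac{\pi}{4})+O(z^{-3/2})$ together with the McMahon expansion $j_n=(n+\tfrac{-\nu_\alpha+1}{2}-\tfrac{1}{4})\pi+o(1)$. Evaluating the phase $j_n-\tfrac{(-\nu_\alpha)\pi}{2}-\tfrac{\pi}{4}$ gives $n\pi+o(1)$, whence $|J_{-\nu_\alpha}(j_n)|\sim\sqrt{2/(\pi j_n)}$. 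Substituting produces exactly $\Phi_{\alpha,n}(0)\sim c_{\nu_\alpha,0}^-\,\sqrt{\tfrac{(2-\alpha)\pi}{2}}\, j_n^{1/2-\nu_\alpha}$. The boundedness claim then reduces to computing $\tfrac{1}{2}-\nu_\alpha=\tfrac{\alpha}{2(2-\alpha)}$, which vanishes iff $\alpha=0$ and is strictly positive on $(0,1)$; since $j_n\to\infty$, this gives boundedness precisely for $\alpha=0$. The main technical subtlety is propagating the McMahon correction carefully through the phase, but since those corrections are $o(1)$ they are absorbed by the continuity of cosine and produce no asymptotic contribution.
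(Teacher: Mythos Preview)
Your proof is correct and follows essentially the same path as the paper: compute $K_{\alpha,n}$ via the change of variable and the Lommel integral, simplify using the Bessel recurrence at $j_n$, and then invoke the large-argument asymptotic of $J_{-\nu_\alpha}$. The only minor difference is in obtaining $|J_{-\nu_\alpha}(j_n)|\sim\sqrt{2/(\pi j_n)}$: you evaluate the phase via McMahon's expansion, whereas the paper uses the identity $z\bigl(J_\nu(z)^2+J_{\nu+1}(z)^2\bigr)\to 2/\pi$ evaluated at a zero of $J_{\nu+1}$, which sidesteps McMahon entirely.
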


\begin{proof}[Proof of Lemma \ref{lem-prop-phi-n-0-1-w}] First, we note that $ j_{-\nu_\alpha + 1, n}$ is not a zero of $J_{-\nu_\alpha}$:
\begin{equation}
\label{pas-zero}
\forall\, \alpha \in [0,1), \forall\, n\geq 1, \quad J_{-\nu_\alpha} (j_{-\nu_\alpha + 1, n}) \neq 0 .
\end{equation}
Indeed, if $J_{-\nu_\alpha} (j_{-\nu_\alpha + 1, n}) =0$, we derive from \eqref{Watson-} that $J' _{-\nu_\alpha} (j_{-\nu_\alpha + 1, n}) =0$, and then the Cauchy problem satisfied by $J_{-\nu_\alpha}$ would imply that $J_{-\nu_\alpha}$ is constantly equal to zero. 

We also derive from \eqref{Watson-} that

\begin{equation}
\label{lien-J'J-}
J' _{-\nu_\alpha} (j_{-\nu_\alpha + 1, n}) =   \frac{\nu_\alpha}{j_{-\nu_\alpha + 1, n}} J _{-\nu_\alpha} (j_{-\nu_\alpha + 1, n}) .
\end{equation}
We compute the value of the constants $K_{\alpha,n}$ that appear in \eqref{eq-vp-w-vpm}
$$
 1 = K_{\alpha,n} ^2 \int _0 ^1 x^{1-\alpha} \, J_{-\nu_\alpha} \Bigl( j_{-\nu_\alpha + 1, n}\,  x^{\frac{2-\alpha}{2}}\Bigr) ^2 \, dx .
$$
Thanks to the change of variables $y= x^{\frac{2-\alpha}{2}}$, we get
$$ 1 = K_{\alpha,n} ^2 \, \frac{2}{2-\alpha} \, \int _0 ^1 y \, J_{-\nu_\alpha} \Bigl( j_{-\nu_\alpha + 1, n}\, y \Bigr) ^2 \, dy ,$$
and applying \cite[formula (5.14.5) p.129]{Lebedev}, we obtain
$$ 1 = K_{\alpha,n} ^2 \, \frac{1}{2-\alpha} \,  \left( J' _{-\nu_\alpha} \left( j_{-\nu_\alpha + 1, n}\right) ^2 + \left(1 - \frac{\nu_\alpha ^2}{j_{-\nu_\alpha + 1, n} ^2}\right) J _{-\nu_\alpha} \left( j_{-\nu_\alpha + 1, n}\right) ^2 \right) .$$
Therefore,
$$  K_{\alpha,n} = \left( \frac{2-\alpha}{ J' _{-\nu_\alpha} ( j_{-\nu_\alpha + 1, n}) ^2 + \left(1 - \frac{\nu_\alpha ^2}{j_{-\nu_\alpha + 1, n} ^2}\right) J _{-\nu_\alpha} ( j_{-\nu_\alpha + 1, n}) ^2} \right) ^{1/2} ,$$
and using \eqref{lien-J'J-}, we obtain a simple expression for $K_{\alpha,n}$
\begin{equation}
\label{form-Kn-}
\forall\, \alpha \in [0,1), \forall\, n\geq 1, \quad K_{\alpha,n} = \frac{\sqrt{2-\alpha}}{\vert J _{-\nu_\alpha} ( j_{-\nu_\alpha + 1, n}) \vert } .
\end{equation}

Thus, from \eqref{eq-vp-w-vpm} we deduce
the value of $\vert \Phi _{\alpha,n} (1) \vert $ given in \eqref{phin1-}, and also the value of $\Phi _{\alpha,n} (0)$. Indeed, from \eqref{*def-J-nu}, we obtain that for all $n\geq1$, $\Phi _{\alpha,n}$ has a finite limit as $x\to 0^+$, and that
\begin{equation}
\label{phin0-1iere}
\Phi _{\alpha,n} (0) = \frac{\sqrt{2-\alpha}}{\vert J _{-\nu_\alpha} ( j_{-\nu_\alpha + 1, n}) \vert }
\, c_{\nu_\alpha,0} ^- \, (j_{-\nu_\alpha + 1, n}) ^{-\nu_\alpha} .
\end{equation}
Moreover, using the following classical asymptotic development (\cite[formula (5.11.6) p. 122]{Lebedev}):
\begin{equation}
\label{Leb-asymp1}
J_\nu (z) = \sqrt{\frac{2}{\pi z}} \left[ \cos(z-\frac{\nu\pi}{2}-\frac{\pi}{4}) (1+ O(\frac{1}{z^2})) + O(\frac{1}{z}) \right] \quad \text{as } z\to \infty ,
\end{equation}
we obtain that
\begin{equation}
\label{Leb-asymp3}
J_\nu (z)^2  = \frac{2}{\pi z} \cos^2 (z-\frac{\nu\pi}{2}-\frac{\pi}{4})+ O(\frac{1}{z^{2}}) \quad \text{as } z\to \infty .
\end{equation}
Applying the latter formula with $\nu +1$, we get
$$  z J_{\nu +1} (z)^2  = \frac{2}{\pi} \cos^2 (z-\frac{(\nu+1)\pi}{2}-\frac{\pi}{4})+ O(\frac{1}{z})
= \frac{2}{\pi} \sin^2 (z-\frac{\nu\pi}{2}-\frac{\pi}{4})+ O(\frac{1}{z}) .$$
Therefore,
$$ z J_{\nu } (z)^2 + z J_{\nu +1} (z)^2 = \frac{2}{\pi} + O(\frac{1}{z}) ,$$
which gives that
\begin{equation}
\label{Leb-asymp4}
z J_{\nu } (z)^2 + z J_{\nu +1} (z)^2 \to \frac{2}{\pi} \quad \text{as } z \to +\infty .
\end{equation}
So, we deduce that
$$ j_{-\nu_\alpha + 1, n} J_{-\nu _\alpha} ( j_{-\nu_\alpha + 1, n}) ^2 + j_{-\nu_\alpha + 1, n} J_{-\nu _\alpha +1} ( j_{-\nu_\alpha + 1, n}) ^2 \to  \frac{2}{\pi} \quad \text{as } n \to +\infty .$$
Hence,
\begin{equation}
\label{Leb-asymp5}
J_{-\nu _\alpha } ( j_{-\nu_\alpha + 1, n}) ^2 \sim  \frac{2}{\pi j_{-\nu_\alpha + 1, n}}  \quad \text{as } n \to +\infty ,
\end{equation}
and then, combining \eqref{Leb-asymp5} with \eqref{phin0-1iere} we finally obtain \eqref{Leb-asymp6}. 
\end{proof}


\subsection{The case $\alpha \in [1,2)$} \hfill

\subsubsection{Analysis of the ODE for $\alpha \in (1,2)$ and $\nu_\alpha = \frac{\alpha -1}{2-\alpha} \notin \mathbb N$} \hfill

In this section we want to solve problem \eqref{vp}, hence \eqref{vp-psi}, for $\nu_\alpha = \frac{\alpha -1}{2-\alpha} \notin \mathbb N$.
As recalled before, when $\nu_\alpha \notin \mathbb N$, $J_{\nu_\alpha}$ and $J_{-\nu_\alpha}$ form a fundamental system of solutions of \eqref{*eq-bessel-ordre-nu} (with $\nu = \nu_\alpha$). Therefore, \eqref{sol-gene-EDO-psi} and \eqref{sol-gene-EDO-phi} still hold. However, the difference lies in the functions $\Phi _+$ and $\Phi _-$: indeed, in this case we have
\begin{equation}
\label{phi+-s}
\begin{split}
\Phi _+ (x) &= x^{\frac{1-\alpha}{2}} J_{\nu_\alpha} \Bigl( \frac{2}{2-\alpha} \sqrt{\lambda} x^{\frac{2-\alpha}{2}}\Bigr)
= x^{\frac{1-\alpha}{2}} \sum_{m = 0} ^\infty  c_{\nu_\alpha,m} ^+ \Bigl( \frac{2}{2-\alpha} \sqrt{\lambda} x^{\frac{2-\alpha}{2}} \Bigr) ^{2m+\nu_\alpha}
\\
&= \sum_{m = 0} ^\infty  c_{\nu_\alpha,m} ^+ \Bigl( \frac{2}{2-\alpha} \sqrt{\lambda}  \Bigr) ^{2m+\nu_\alpha} x^{(2-\alpha)m} 
= \sum_{m = 0} ^\infty  \tilde c_{\alpha,\lambda,m} ^+  x^{(2-\alpha)m} 
\end{split}
\end{equation}
and
\begin{equation}
\label{phi--s}
\begin{split}
\Phi _- (x) &= x^{\frac{1-\alpha}{2}} J_{-\nu_\alpha} \Bigl( \frac{2}{2-\alpha} \sqrt{\lambda} x^{\frac{2-\alpha}{2}}\Bigr)
= x^{\frac{1-\alpha}{2}} \sum_{m = 0} ^\infty  c_{\nu_\alpha,m} ^- \Bigl( \frac{2}{2-\alpha} \sqrt{\lambda} x^{\frac{2-\alpha}{2}} \Bigr) ^{2m-\nu_\alpha}
\\
&= \sum_{m = 0} ^\infty  c_{\nu_\alpha,m} ^- \Bigl( \frac{2}{2-\alpha} \sqrt{\lambda}  \Bigr) ^{2m-\nu_\alpha} x^{(2-\alpha)m} 
= \sum_{m = 0} ^\infty  \tilde c_{\alpha,\lambda,m} ^-  x^{1-\alpha + (2-\alpha)m} .
\end{split}
\end{equation}  

\subsubsection{Information given by the functional setting for $\nu_\alpha = \frac{\alpha -1}{2-\alpha} \notin \mathbb N$} \hfill

We note that 
$$ \Phi _+ (x) \to \tilde c_{\alpha,\lambda,0} ^+ \quad \text{ as } x \to 0^+,$$
hence, $\Phi _+ \in L^2 (0,1)$. Moreover,
$$ x^{\alpha/2} \Phi ' _+ (x) \sim \tilde c_{\alpha,\lambda,1} ^+ (2-\alpha) x^{1-\frac{\alpha}{2}}  \quad \text{ as } x \to 0^+,$$
which implies that $\Phi _+ \in H^1 _\alpha (0,1)$. Furthermore, we have that
$$ (x^\alpha \Phi ' _+)' (x) \to \tilde c_{\alpha,\lambda,1} ^+ (2-\alpha) \quad \text{ as } x \to 0^+,$$ 
thus, $\Phi _+ \in H^2 _\alpha (0,1)$. However, for $\Phi_-$ it holds that
$$ x^{\alpha/2} \Phi ' _- (x) \sim \tilde c_{\alpha,\lambda,1} ^- (1-\alpha) x^{-\alpha/2}  \quad \text{ as } x \to 0^+,$$ 
and we deduce that $\Phi _- \notin H^1 _\alpha (0,1)$, and, in particular, $\Phi _- \notin H^2 _\alpha (0,1)$. Therefore, $C_-=0$ and \eqref{sol-gene-EDO-phi} yields
\begin{equation}
\label{sol-gene-EDO-phi-s1}
\begin{cases}
- (x^\alpha \Phi ') ' = \lambda \Phi , 
\\
 x \in (0,1)
\end{cases}
\quad  \implies \quad 
\exists \ C_+ \in \mathbb R, \quad \begin{cases}
\Phi (x) = C_+ \Phi _+ (x) , \\
x \in (0,1) .
 \end{cases}
\end{equation}

\subsubsection{Information given by the boundary condition at $x=0$ for $\nu_\alpha = \frac{\alpha -1}{2-\alpha} \notin \mathbb N$} \hfill

Observe that in this case $ (x^\alpha \Phi _+ ') (0) = 0$, and therefore the boundary condition at $x=0$ is automatically satisfied.

\subsubsection{Information given by the boundary condition at $x=1$ for $\nu_\alpha = \frac{\alpha -1}{2-\alpha} \notin \mathbb N$} \hfill

In order to be an eigenfunction, $\Phi$ has to solve the second boundary condition: $\Phi '(1)=0$. We recall that
\begin{multline*}
\Phi _+ ' (x) = \frac{1-\alpha}{2} x^{\frac{-1-\alpha}{2}} J_{\nu_\alpha} \Bigl( \frac{2}{2-\alpha} \sqrt{\lambda} x^{\frac{2-\alpha}{2}}\Bigr)
+ x^{\frac{1-\alpha}{2}} \sqrt{\lambda} x^{-\alpha /2} J' _{\nu_\alpha} \Bigl( \frac{2}{2-\alpha} \sqrt{\lambda} x^{\frac{2-\alpha}{2}}\Bigr) .
\end{multline*}
Hence, if $\Phi$ is an eigenfunction, $C_+ \neq 0$ and $\Phi ' _+ (1)=0$, and so
\begin{equation}
\label{eq-vp-s11}
 \frac{1-\alpha}{2}  J_{\nu_\alpha} \Bigl( \frac{2}{2-\alpha} \sqrt{\lambda} \Bigr)
+  \sqrt{\lambda}  J' _{\nu_\alpha} \Bigl( \frac{2}{2-\alpha} \sqrt{\lambda} \Bigr)  = 0 .
\end{equation}
This is the equation that characterizes the eigenvalues $\lambda$. Multiplying by $\frac{2}{2-\alpha}$, 
\eqref{eq-vp-s11} becomes
\begin{equation}
\label{eq-vp-s12}
\frac{2}{2-\alpha} \frac{1-\alpha}{2}  J_{\nu_\alpha} \Bigl( \frac{2}{2-\alpha} \sqrt{\lambda} \Bigr)
+ \frac{2}{2-\alpha} \sqrt{\lambda}  J' _{\nu_\alpha} \Bigl( \frac{2}{2-\alpha} \sqrt{\lambda} \Bigr)  = 0 .
\end{equation}
Introducing once again the variable
$$ X_\lambda =  \frac{2}{2-\alpha} \sqrt{\lambda} ,$$
equation \eqref{eq-vp-s12} can be rewritten as
\begin{equation}
\label{eq-vp-s13}
- \nu _\alpha  \, J_{\nu_\alpha} (X_\lambda)
+ X_\lambda  \, J' _{\nu_\alpha} (X_\lambda)  = 0 .
\end{equation}
Using again \eqref{Watson-}, we have
$$ - \nu _\alpha  \, J_{\nu_\alpha} (X_\lambda)
+ X_\lambda  \, J' _{\nu_\alpha} (X_\lambda) = X_\lambda J_{\nu_\alpha + 1} (X_\lambda) .$$
Thus, \eqref{eq-vp-s12} implies
\begin{equation}
\label{eq-vp-s14}
X_\lambda J_{\nu_\alpha + 1} (X_\lambda)  = 0 ,
\end{equation}
or, equivalently
\begin{equation}
\label{eq-vps15}
J_{\nu_\alpha + 1} (X_\lambda)  = 0 .
\end{equation}
The possible values for $X_\lambda$ are the positive zeros of $J_{\nu_\alpha + 1}$:
$$ \exists\, n \geq 1, \quad \frac{2}{2-\alpha} \sqrt{\lambda} = X_\lambda = j_{\nu_\alpha + 1, n}.$$
The above identity provides the following expression for the eigenvalues
$$ \exists\, n \geq 1, \quad \lambda_n = \kappa _\alpha ^2 j_{\nu_\alpha + 1, n} ^2 .$$

Vice-versa, given $n\geq 1$,
consider
$$ \lambda _n := \kappa _\alpha ^2 \, j_{\nu_\alpha + 1, n} ^2
\quad \text{ and } \quad 
\Phi _n (x) = x^{\frac{1-\alpha}{2}} \, J_{\nu_\alpha} \Bigl( j_{\nu_\alpha + 1, n} x^{\frac{2-\alpha}{2}}\Bigr) .$$
From the previous analysis we deduce that $\Phi _n \in H^2 _\alpha (0,1)$ and that $(\lambda _n, \Phi _n)$ solves \eqref{vp}.

Finally, the proof of \eqref{gap-sqrt-strong} follows directly from \cite[ p. 135]{Kom-Lor}: since
$\nu_\alpha + 1 \geq 1 > \frac{1}{2}$, the sequence $(j_{\nu_\alpha + 1, n+1} - j_{\nu_\alpha + 1, n})_{n\geq 1}$ is decreasing 
$$ j_{\nu_\alpha + 1, n+1} - j_{\nu_\alpha + 1, n} \to \pi \quad \text{ as } n \to \infty .$$
This concludes the proof of the related part of Proposition \ref{prop-vp-s}. \qed


\subsubsection{The main changes for $\nu_\alpha = \frac{\alpha -1}{2-\alpha} \in \mathbb N$} \hfill

In this case, it has been proved in \cite{CMV-strong} that \eqref{sol-gene-EDO-phi-s1} remains true (with $\Phi _+$ defined in \eqref{phi+-s}, the only difference is that now the fundamental system of the solutions of \eqref{*eq-bessel-ordre-nu} (with $\nu = \nu_\alpha$) involves $J_\nu$ and $Y_\nu$, the Bessel's function of order $\nu$ and of second kind (see \cite[section 3.54, eq. (1)-(2), p. 64]{Watson} or \cite[eq. (5.4.5)-(5.4.6), p. 104]{Lebedev}). Studying the behavior as $x\to 0$, one obtains once again \eqref{sol-gene-EDO-phi-s1} (we refer to \cite{CMV-strong} for the details). Then, one can conclude the proof of Proposition \ref{prop-vp-s} by reasoning as for the case $\nu_\alpha\notin \mathbb{N}$. \qed


\subsubsection{Additional information on the eigenvalues and eigenfunctions} \hfill
\label{sec-info-s}


We are going to prove the following results, that will be useful in the following:

\begin{Lemma}
\label{lem-prop-phi-n-0-1-s}
Let $\alpha \in [1,2)$. Then, the $\Phi _{\alpha,n}$ has finite limit as $x\to 0^+$ and $x\to 1^-$, and satisfies
\begin{equation}
\label{phin1+}
\forall\, n \geq 1, \quad \vert \Phi _{\alpha,n} (1) \vert  = \sqrt{2-\alpha} ,
\end{equation}
and
\begin{equation}
\label{Leb-asymp6+}
\Phi _{\alpha,n} (0) \sim  c_{\nu_\alpha,0} ^+ \, \sqrt{\frac{(2-\alpha)\pi}{2}} \, (j_{\nu_\alpha + 1, n}) ^{\frac{1}{2} + \nu_\alpha }
\quad \text{as } n \to +\infty 
\end{equation}
(where the coefficient $c_{\nu_\alpha,0} ^+$ is defined in \eqref{*def-Jnu}). In particular, the sequence $(\Phi _{\alpha,n} (0))_{n\geq 1}$ is unbounded.

\end{Lemma}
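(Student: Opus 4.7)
The strategy mirrors the argument used in Lemma \ref{lem-prop-phi-n-0-1-w} for the weakly degenerate case, with the role of $J_{-\nu_\alpha}$ replaced by $J_{\nu_\alpha}$ and the recurrence relation \eqref{Watson-} applied with the opposite sign. First, I would check that $j_{\nu_\alpha+1,n}$ is not a zero of $J_{\nu_\alpha}$: if both $J_{\nu_\alpha}(j_{\nu_\alpha+1,n})=0$ and $J_{\nu_\alpha+1}(j_{\nu_\alpha+1,n})=0$, then from the relation $z J'_{\nu_\alpha}(z)=\nu_\alpha J_{\nu_\alpha}(z)-z J_{\nu_\alpha+1}(z)$ (namely \eqref{Watson-} rewritten for the recurrence in the other direction) one obtains $J'_{\nu_\alpha}(j_{\nu_\alpha+1,n})=0$, which by uniqueness of the Cauchy problem for the Bessel ODE forces $J_{\nu_\alpha}\equiv 0$, a contradiction. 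In addition, the same identity gives
\[
J'_{\nu_\alpha}(j_{\nu_\alpha+1,n})=\frac{\nu_\alpha}{j_{\nu_\alpha+1,n}}\,J_{\nu_\alpha}(j_{\nu_\alpha+1,n}).
\]

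Next I would compute $K_{\alpha,n}$ by normalizing $\|\Phi_{\alpha,n}\|_{L^2}=1$. Substituting the expression \eqref{eq-vp-s-vpm} and performing the change of variables $y=x^{(2-\alpha)/2}$ reduces the integral to $\frac{2}{2-\alpha}\int_0^1 y\, J_{\nu_\alpha}(j_{\nu_\alpha+1,n}y)^2\,dy$, which is evaluated by the standard Bessel identity \cite[formula (5.14.5), p.~129]{Lebedev}. Using $J_{\nu_\alpha+1}(j_{\nu_\alpha+1,n})=0$ together with the relation above between $J'_{\nu_\alpha}$ and $J_{\nu_\alpha}$ at $j_{\nu_\alpha+1,n}$, the result collapses to
\[
K_{\alpha,n}=\frac{\sqrt{2-\alpha}}{|J_{\nu_\alpha}(j_{\nu_\alpha+1,n})|}.
\]
Evaluating \eqref{eq-vp-s-vpm} at $x=1$ directly gives $|\Phi_{\alpha,n}(1)|=K_{\alpha,n}\,|J_{\nu_\alpha}(j_{\nu_\alpha+1,n})|=\sqrt{2-\alpha}$, proving \eqref{phin1+}.

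For the behavior at $x=0^+$, I would use the power series expansion \eqref{*def-Jnu}: since $J_{\nu_\alpha}(y)\sim c_{\nu_\alpha,0}^+\, y^{\nu_\alpha}$ as $y\to 0^+$, and since the exponent $\frac{1-\alpha}{2}+\frac{2-\alpha}{2}\nu_\alpha$ vanishes for $\nu_\alpha=\frac{\alpha-1}{2-\alpha}$, the function $\Phi_{\alpha,n}$ admits a \emph{finite} limit at $0$:
\[
\Phi_{\alpha,n}(0)=K_{\alpha,n}\,c_{\nu_\alpha,0}^+\,(j_{\nu_\alpha+1,n})^{\nu_\alpha}
=\frac{\sqrt{2-\alpha}\,c_{\nu_\alpha,0}^+\,(j_{\nu_\alpha+1,n})^{\nu_\alpha}}{|J_{\nu_\alpha}(j_{\nu_\alpha+1,n})|}.
\]

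Finally, for the asymptotics \eqref{Leb-asymp6+}, I would reuse the consequence of the classical expansion \eqref{Leb-asymp1}, namely $z J_\nu(z)^2+z J_{\nu+1}(z)^2\to \frac{2}{\pi}$ as $z\to\infty$, exactly as in \eqref{Leb-asymp4}. Applied at $z=j_{\nu_\alpha+1,n}$, and remembering that $J_{\nu_\alpha+1}(j_{\nu_\alpha+1,n})=0$, this yields $|J_{\nu_\alpha}(j_{\nu_\alpha+1,n})|\sim\sqrt{2/(\pi j_{\nu_\alpha+1,n})}$, hence
\[
K_{\alpha,n}\sim\sqrt{\tfrac{(2-\alpha)\pi}{2}}\,(j_{\nu_\alpha+1,n})^{1/2},
\]
from which \eqref{Leb-asymp6+} follows. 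Since $\nu_\alpha\geq 0$ on $[1,2)$, the exponent $\tfrac{1}{2}+\nu_\alpha\geq\tfrac{1}{2}>0$ and $j_{\nu_\alpha+1,n}\to\infty$, so $\Phi_{\alpha,n}(0)\to\infty$, which gives the unboundedness. The main obstacle is not conceptual but bookkeeping: one has to verify that the exponents at $x=0$ cancel (so that a finite limit exists at all), which is specific to the strong degeneracy case and explains why here the limit is finite but grows in $n$, in contrast with the boundedness $\Leftrightarrow \alpha=0$ obtained in Lemma \ref{lem-prop-phi-n-0-1-w}.
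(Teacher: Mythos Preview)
Your proof is correct and follows essentially the same approach as the paper's own argument: verify $J_{\nu_\alpha}(j_{\nu_\alpha+1,n})\neq 0$, compute $K_{\alpha,n}$ via the change of variables and Lebedev's formula (5.14.5), use the recurrence to simplify, evaluate at $x=1$ and $x=0$ (the latter via the leading term of the power series), and finish with the asymptotic \eqref{Leb-asymp4}. Your added remark on the exponent cancellation at $x=0$ makes explicit what the paper leaves implicit, but the overall route is identical.
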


\noindent {\it Proof of Lemma \ref{lem-prop-phi-n-0-1-s}.} First, we note that $ j_{\nu_\alpha + 1, n}$ is not a zero of $J_{\nu_\alpha}$:
\begin{equation}
\label{pas-zero+}
\forall\, \alpha \in [1,2), \forall\, n\geq 1, \quad J_{\nu_\alpha} (j_{\nu_\alpha + 1, n}) \neq 0 .
\end{equation}
Indeed, if $J_{\nu_\alpha} (j_{\nu_\alpha + 1, n}) =0$, we derive from \eqref{Watson-} that $J' _{\nu_\alpha} (j_{\nu_\alpha + 1, n}) =0$, and then the Cauchy problem satisfied by $J_{\nu_\alpha}$ would imply that $J_{\nu_\alpha}$ is constantly equal to zero. 

We also deduce from \eqref{Watson-} that

\begin{equation}
\label{lien-J'+}
J' _{\nu_\alpha} (j_{\nu_\alpha + 1, n}) =  \frac{\nu_\alpha}{j_{\nu_\alpha + 1, n}} J _{\nu_\alpha} (j_{\nu_\alpha + 1, n}),\quad \forall\,n\geq1 .
\end{equation}

With the same strategy adopted in Lemma \ref{lem-prop-phi-n-0-1-w}, we compute the value of $K_{\alpha,n}$ that appears
in \eqref{eq-vp-s-vpm} and we find that
\begin{equation}
\label{form-Kn+}
\forall\, \alpha \in [1,2), \forall\, n\geq 1, \quad K_{\alpha,n} = \frac{\sqrt{2-\alpha}}{\vert J _{\nu_\alpha} ( j_{\nu_\alpha + 1, n}) \vert } .
\end{equation}

Therefore, we obtain from \eqref{eq-vp-s-vpm}
the value of $\vert \Phi _{\alpha,n} (1) \vert $ given in \eqref{phin1+}, and the value of $\Phi _{\alpha,n} (0)$. Indeed, using \eqref{*def-Jnu}, we have
\begin{equation}
\label{phin0-1iere+}
\Phi _{\alpha,n} (0) = \frac{\sqrt{2-\alpha}}{\vert J _{\nu_\alpha} ( j_{\nu_\alpha + 1, n}) \vert }
\, c_{\nu_\alpha,0} ^+ \, (j_{\nu_\alpha + 1, n}) ^{\nu_\alpha} ,
\end{equation}
and, in particular, function $\Phi _{\alpha,n}$ has a finite limit as $x\to 0$. Moreover, using \eqref{Leb-asymp4}, we obtain
$$ j_{\nu_\alpha + 1, n} J_{\nu _\alpha} ( j_{\nu_\alpha + 1, n}) ^2 + j_{\nu_\alpha + 1, n} J_{\nu _\alpha +1} ( j_{\nu_\alpha + 1, n}) ^2 \to  \frac{2}{\pi} \quad \text{as } n \to +\infty .$$
Hence,
\begin{equation}
\label{Leb-asymp5+}
J_{\nu _\alpha } ( j_{\nu_\alpha + 1, n}) ^2 \sim  \frac{2}{\pi j_{\nu_\alpha + 1, n}}  \quad \text{as } n \to +\infty ,
\end{equation}
and, combining \eqref{Leb-asymp5+} with \eqref{phin0-1iere}, we deduce \eqref{Leb-asymp6+}. \qed



\section{Proof of Proposition \ref{prop-"thm3"-red}}
\label{sec-hidden}

Let us give a more precise formulation of Proposition \ref{prop-"thm3"-red}:

\begin{Proposition}
\label{prop-"thm3"}
Let $\mu \in V^2 _\alpha(0,1)$ (defined in \eqref{eq-space-mu}). Then, 

a) for all $p \in L^2 (0,T)$, the solution $w^{(p)}$ of \eqref{CMU-eq-ctr-10} has the following additional regularity
\begin{equation}
\label{reg-wT}
(w^{(p)} (T), w^{(p)} _t (T))  \in H^3 _{(0)}(0,1) \times D(A) ,
\end{equation}
so, the map 
\begin{equation}
\label{def-thetaT}
\Theta _T: L^2 (0,T) \to H^3 _{(0)}(0,1) \times D(A), \quad \Theta _T (p) := (w^{(p)} (T), w^{(p)} _t (T)) 
\end{equation}
is well-defined,

b) given $p \in L^2 (0,T)$, $\Theta_T$ is differentiable at $p$, and $D\Theta _T (p): L^2 (0,T) \to H^3 _{(0)}(0,1) \times D(A)$ is a continuous linear application and satisfies
$$ D\Theta _T (p)\cdot q = (W^{(p,q)} (T), W^{(p,q)} _t (T)),$$
where $W^{(p,q)}$ is the solution of 
\begin{equation}
\label{CMU-eq-ctr-10-W}
\begin{cases}
W^{(p,q)}_{tt} - (x^\alpha W^{(p,q)} _x)_x = p(t) \mu (x) W^{(p,q)} + q(t) \mu (x) w^{(p)} , \quad &x\in (0,1), t \in (0,T), 
\\
(x^\alpha W^{(p,q)}_x)(x=0,t)=0 , \quad &t \in (0,T) ,\\
W^{(p,q)}_x (x=1,t)=0 , \quad &t \in (0,T) , \\
W^{(p,q)}(x,0)=0 ,  \quad &x\in (0,1), 
\\
W^{(p,q)} _t (x,0)=0 , \quad &x\in (0,1) ,
\end{cases}
\end{equation}

c) the map $\Theta _T$ is of class $C^1$.

\end{Proposition}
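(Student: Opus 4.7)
The plan is to follow Beauchard's strategy from \cite{B-bi}, adapted to the weighted Sobolev calculus of the degenerate operator. Decompose
\begin{equation*}
w^{(p)}(t,x) = \Phi_{\alpha,0}(x) + \sum_{n\geq 1} a_n(t)\,\Phi_{\alpha,n}(x)
\end{equation*}
in the orthonormal basis of eigenfunctions provided by Propositions \ref{prop-vp-w} and \ref{prop-vp-s}. The coefficients $a_n(t)=\langle w^{(p)}(t),\Phi_{\alpha,n}\rangle_{L^2}$ satisfy $a_n''+\lambda_{\alpha,n}a_n = p(t)\beta_n(t)$ with $a_n(0)=a_n'(0)=0$, where $\beta_n(t):=\langle \mu\,w^{(p)}(t),\Phi_{\alpha,n}\rangle_{L^2}$. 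Duhamel then gives
\begin{equation*}
a_n(T) = \int_0^T \frac{\sin(\sqrt{\lambda_{\alpha,n}}(T-s))}{\sqrt{\lambda_{\alpha,n}}}\,p(s)\beta_n(s)\,ds,\qquad a_n'(T)=\int_0^T \cos(\sqrt{\lambda_{\alpha,n}}(T-s))\,p(s)\beta_n(s)\,ds.
\end{equation*}
Establishing \eqref{reg-wT} thus reduces to proving $\sum_n (\lambda^*_{\alpha,n})^3|a_n(T)|^2<\infty$ and $\sum_n (\lambda^*_{\alpha,n})^2|a_n'(T)|^2<\infty$.

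The gain of regularity beyond Proposition \ref{prop-"prop2"} comes from transferring the factor $\lambda_{\alpha,n}$ onto $\mu w^{(p)}$. Using $A\Phi_{\alpha,n}=-\lambda_{\alpha,n}\Phi_{\alpha,n}$ and two applications of Lemmas \ref{lem-IPP-w}--\ref{lem-IPP-s},
\begin{equation*}
\lambda_{\alpha,n}\beta_n(s) = -\langle (x^\alpha(\mu w^{(p)})_x)_x(s),\Phi_{\alpha,n}\rangle_{L^2}
\end{equation*}
provided $\mu w^{(p)}(s)\in D(A)$. The class $V^2_\alpha$ is tailored exactly for this: expanding $(x^\alpha(\mu w)_x)_x=(x^\alpha\mu_x)_x\,w+2x^\alpha\mu_x\,w_x+\mu(x^\alpha w_x)_x$, the $L^\infty$ control on $x^{\alpha/2}\mu_x$ (and, for $\alpha\in[1,2)$, on $(x^\alpha\mu_x)_x$) combined with $w^{(p)}\in C^0([0,T];D(A))$ from Proposition \ref{prop-"prop2"} yields $\|\lambda_{\alpha,n}\beta_n\|_{L^\infty(0,T)}\leq C(T,p)$, uniformly in $n$. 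Differentiating $\beta_n$ once in time and using the equation similarly gives a uniform bound on $\|\lambda_{\alpha,n}\beta_n'\|_{L^2(0,T)}$.

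Ingham-type arguments then close the estimate. The gap relations \eqref{gap-sqrt-weak}--\eqref{gap-sqrt-strong} imply that $\{e^{\pm i\sqrt{\lambda_{\alpha,n}}s}\}_{n\geq 1}$ is a Bessel sequence in $L^2(0,T)$, so $\bigl(\int_0^T e^{\pm i\sqrt{\lambda_{\alpha,n}}s}p(s)\,ds\bigr)_n\in\ell^2$ for every $p\in L^2(0,T)$. Writing $p(s)\beta_n(s) = p(s)\beta_n(T) - p(s)\int_s^T \beta_n'(\sigma)\,d\sigma$, applying Bessel to the first term and Cauchy--Schwarz with Fubini to the second, then inserting the decay $|\beta_n|,|\int\beta_n'|\leq C/\lambda^*_{\alpha,n}$, yields $\sum_n (\lambda^*_{\alpha,n})^3|a_n(T)|^2<\infty$ and the analogous bound for $a_n'(T)$, proving (a). For (b)--(c), the very same argument applies to the linearized problem \eqref{CMU-eq-ctr-10-W} (with forcing $q(t)\mu w^{(p)}$ and zero data), giving the hidden regularity of $W^{(p,q)}$ and continuity of $(p,q)\mapsto(W^{(p,q)}(T),W^{(p,q)}_t(T))$ from $L^2(0,T)^2$ into $H^3_{(0)}\times D(A)$. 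Applying the same estimates to $R:=w^{(p+q)}-w^{(p)}-W^{(p,q)}$, which satisfies an analogous equation with forcing of order $O(\|q\|_{L^2})\cdot\mu R$ plus quadratic terms in $q$, produces $\|(R(T),R_t(T))\|_{H^3_{(0)}\times D(A)}=O(\|q\|_{L^2}^2)$, so $\Theta_T$ is Fréchet differentiable with derivative as claimed; continuity in $p$ follows from the same stability bounds.

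The main obstacle is the boundary analysis at $x=0$ needed to legitimize the twofold integration by parts that produces $\lambda_{\alpha,n}\beta_n$: one must check not only that $(x^\alpha(\mu w)_x)_x\in L^2$ but also that $\mu w$ belongs to $D(A)$, i.e.\ $(x^\alpha(\mu w)_x)(0)=0$. This is genuinely delicate for $\alpha\in[1,2)$, since the eigenfunctions $\Phi_{\alpha,n}$ themselves are unbounded at $0$ by Lemma \ref{lem-prop-phi-n-0-1-s}, so every boundary contribution has to be traced with care and cancelled using the explicit behavior of $\mu w$ near the degeneracy. This is precisely the reason for the extra $L^\infty$ condition on $(x^\alpha\mu_x)_x$ in the definition of $V^{(2,\infty,\infty)}_\alpha$.
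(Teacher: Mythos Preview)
Your overall strategy is right, but there is a genuine gap in the core estimate for part (a), and it propagates to (b)--(c).

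First, the claim that $\mu w^{(p)}(s)\in D(A)$ is false. The definition of $V^2_\alpha$ is designed so that $\mu w\in V^{(2,0)}_\alpha$, i.e.\ $(x^\alpha(\mu w)_x)(0)=0$, but $(\mu w)_x(1)\neq 0$ in general (nothing forces $\mu'(1)w(1)=0$). Hence the double integration by parts produces a nonvanishing boundary term at $x=1$:
\[
\lambda_{\alpha,n}\beta_n(s)
= (\mu w^{(p)})_x(1,s)\,\Phi_{\alpha,n}(1)\;-\;\langle (x^\alpha(\mu w^{(p)})_x)_x(s),\Phi_{\alpha,n}\rangle.
\]
Your bound $|\lambda_{\alpha,n}\beta_n|\le C$ is still correct, since $|\Phi_{\alpha,n}(1)|=\sqrt{2-\alpha}$ and the bulk term is bounded; but the formula you wrote (with no boundary term) is wrong, and this matters for what follows.

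Second, even granting $|\beta_n|\le C/\lambda^*_{\alpha,n}$, your time-splitting argument does not close. The first piece $\beta_n(T)\int_0^T p(s)\frac{\sin\sqrt{\lambda_{\alpha,n}}(T-s)}{\sqrt{\lambda_{\alpha,n}}}ds$ is indeed handled by the Bessel/Ingham inequality. But for the second piece, any combination of Cauchy--Schwarz and Fubini using only $|\int_s^T\beta_n'|\le C/\lambda^*_{\alpha,n}$ yields at best $\lambda_{\alpha,n}^3|a_n^{(2)}(T)|^2\le C$, which is \emph{not} summable. Moreover, your claimed bound $\|\lambda_{\alpha,n}\beta_n'\|_{L^2(0,T)}\le C$ is unjustified: $\beta_n'(t)=\langle\mu w_t^{(p)}(t),\Phi_{\alpha,n}\rangle$ and $w_t^{(p)}$ lives only in $C^0([0,T];H^1_\alpha)$, so $\mu w_t^{(p)}\in H^1_\alpha$ and you cannot integrate by parts twice.

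The paper avoids this by \emph{not} splitting in time. It keeps the boundary term explicit and observes that since $|\Phi_{\alpha,n}(1)|=\sqrt{2-\alpha}$ is \emph{constant in $n$}, the boundary contribution to $\lambda_{\alpha,n}\int_0^T p(s)\beta_n(s)e^{i\sqrt{\lambda_{\alpha,n}}s}ds$ equals $\sqrt{2-\alpha}\int_0^T p(s)(\mu w^{(p)})_x(1,s)\,e^{i\sqrt{\lambda_{\alpha,n}}s}ds$, an oscillatory integral whose amplitude $p(s)(\mu w^{(p)})_x(1,s)$ is \emph{independent of $n$}. The Bessel/Ingham inequality then gives $\ell^2$-summability of this sequence directly. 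The bulk term $\int_0^T p(s)\langle (x^\alpha(\mu w)_x)_x,\Phi_{\alpha,n}\rangle e^{i\sqrt{\lambda_{\alpha,n}}s}ds$ is handled by Cauchy--Schwarz and Parseval. This is the missing idea: the boundary term at $x=1$ is the obstruction, and its summability comes from its $n$-independent amplitude, not from any extra decay in $n$.
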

The proof of Proposition \ref{prop-"thm3"} is based on several steps, the first one consists in analyzing the eigenvalues and eigenfunctions of the operator $\mathcal A$.


\subsection{Eigenvalues and eigenfunctions of $\mathcal A$} \hfill

Let us give the following preliminary result.

\begin{Lemma}
\label{lem-vpAA} 
Consider, for all $n\in\mathbb{Z}$,
\begin{equation}
\label{lem-eq-vpAA}
\omega _{\alpha,n} = 
\begin{cases}
-  \sqrt{\lambda _{\alpha,\vert n \vert }}, \quad & n\leq -1 ,
\\
0, \quad & n=0 ,
\\
  \sqrt{\lambda _{\alpha,n}}, \quad & n\geq 1 ,
\end{cases}
\end{equation}
and
\begin{equation}
\label{lem-eq-fpAA}
\Psi _{\alpha,n} =  
\begin{cases}
\left( \begin{array}{c} \Phi _{\alpha,\vert n \vert}  \\ - i \sqrt{\lambda _{\alpha,\vert n \vert}} \Phi _{\alpha,\vert n \vert} \end{array} \right), \quad & n\leq -1 ,
\\
\ 
\\
\left( \begin{array}{c} \Phi _{\alpha,0} =1 \\ 0 \end{array} \right), \quad & n=0 ,
\\
\ 
\\
\left( \begin{array}{c} \Phi _{\alpha,n}  \\ i \sqrt{\lambda _{\alpha,n}} \Phi _{\alpha,n} \end{array} \right), \quad & n\geq 1 .
\end{cases}
\end{equation}
Then, $\{\omega_{\alpha,n}\}_{n\in\mathbb Z}$ and $\{\Psi_{\alpha,n}\}_{n\in\mathbb Z}$ fulfill
\begin{equation}
\label{eq-obs-vpAA}
\forall\, n \in \mathbb Z, \quad \mathcal A  \Psi _{\alpha,n} = i \, \omega _{\alpha,n} \, \Psi _{\alpha,n}  .
\end{equation}
\end{Lemma}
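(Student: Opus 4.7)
The plan is to verify \eqref{eq-obs-vpAA} by direct computation, handling the three cases $n \geq 1$, $n = 0$, and $n \leq -1$ separately. The key input is the spectral information from Propositions \ref{prop-vp-w} and \ref{prop-vp-s}: for every $n \geq 0$, $\Phi_{\alpha,n} \in D(A)$ and $A\Phi_{\alpha,n} = -\lambda_{\alpha,n}\Phi_{\alpha,n}$ (since the propositions deal with $-A$). These imply in particular that $\Psi_{\alpha,n} \in D(\mathcal{A}) = D(A) \times H^1_\alpha(0,1)$ in every case, because each component of $\Psi_{\alpha,n}$ is a scalar multiple of $\Phi_{\alpha,|n|}$.

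For $n \geq 1$, I would compute
\begin{equation*}
\mathcal{A}\Psi_{\alpha,n} = \begin{pmatrix} 0 & \mathrm{Id} \\ A & 0 \end{pmatrix}\begin{pmatrix} \Phi_{\alpha,n} \\ i\sqrt{\lambda_{\alpha,n}}\,\Phi_{\alpha,n} \end{pmatrix} = \begin{pmatrix} i\sqrt{\lambda_{\alpha,n}}\,\Phi_{\alpha,n} \\ A\Phi_{\alpha,n} \end{pmatrix} = \begin{pmatrix} i\sqrt{\lambda_{\alpha,n}}\,\Phi_{\alpha,n} \\ -\lambda_{\alpha,n}\,\Phi_{\alpha,n} \end{pmatrix},
\end{equation*}
and observe that this coincides with $i\omega_{\alpha,n}\Psi_{\alpha,n} = i\sqrt{\lambda_{\alpha,n}}\,\Psi_{\alpha,n}$ after factoring. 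The symmetric case $n \leq -1$ is obtained by replacing $\sqrt{\lambda_{\alpha,n}}$ with $-\sqrt{\lambda_{\alpha,|n|}}$ in both the eigenvalue and the second component of the eigenvector; the verification reproduces the same computation up to a global sign.

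For $n = 0$, since $\Phi_{\alpha,0} \equiv 1$ is in $D(A)$ with $A(1) = (x^\alpha \cdot 0)_x = 0 = -\lambda_{\alpha,0}\cdot 1$, we get
\begin{equation*}
\mathcal{A}\Psi_{\alpha,0} = \begin{pmatrix} 0 & \mathrm{Id} \\ A & 0 \end{pmatrix}\begin{pmatrix} 1 \\ 0 \end{pmatrix} = \begin{pmatrix} 0 \\ A(1) \end{pmatrix} = \begin{pmatrix} 0 \\ 0 \end{pmatrix} = i\omega_{\alpha,0}\,\Psi_{\alpha,0},
\end{equation*}
so \eqref{eq-obs-vpAA} holds trivially.

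This lemma is a routine bookkeeping step rather than a deep result: there is no real obstacle, only the need to track signs and the two branches of the index $n$. The only thing to be careful about is the convention $A = (x^\alpha \cdot)_x$ versus $-A$ used in \eqref{vp}, which accounts for the factor $-\lambda_{\alpha,n}$ appearing in the computation. Once \eqref{eq-obs-vpAA} is established, the pairs $(i\omega_{\alpha,n}, \Psi_{\alpha,n})_{n\in\mathbb Z}$ furnish the full spectral decomposition of $\mathcal{A}$, which will serve in the subsequent sections to write the solution of \eqref{CMU-eq-ctr-f-ordre1} as a Fourier series and to apply Ingham-type estimates. \qed
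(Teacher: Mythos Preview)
Your proof is correct and follows essentially the same approach as the paper: a direct matrix computation using $A\Phi_{\alpha,n}=-\lambda_{\alpha,n}\Phi_{\alpha,n}$, treating positive and negative indices separately. The only cosmetic difference is that the paper groups the case $n=0$ together with $n\geq 1$, while you treat it on its own; your additional remarks on the domain $D(\mathcal A)$ and the sign convention for $A$ are extra context not present in the paper but harmless.
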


\begin{proof}[Proof of Lemma \ref{lem-vpAA}]
A direct computation shows that for all $n\geq 0$ we have that
\begin{equation*}
\begin{split}
\mathcal A  \Psi _{\alpha,n}
&= \left( \begin{array}{cc} 0 & \text{Id} \\ A & 0 \end{array} \right) \left( \begin{array}{c} \Phi _{\alpha,n}  \\ i \sqrt{\lambda _{\alpha,n}} \Phi _{\alpha,n} \end{array} \right)
= \left( \begin{array}{c} i \sqrt{\lambda _{\alpha,n}} \Phi _{\alpha,n} \\ A \Phi _{\alpha,n}  \end{array} \right)= \left( \begin{array}{c} i \sqrt{\lambda _{\alpha,n}} \Phi _{\alpha,n} \\ - \lambda _{\alpha,n} \Phi _{\alpha,n}  \end{array} \right)\\
&= i \, \sqrt{\lambda _{\alpha,n}} \, \left( \begin{array}{c}  \Phi _{\alpha,n} \\ i \sqrt{\lambda _{\alpha,n}} \Phi _{\alpha,n}  \end{array} \right)
= i \, \omega _{\alpha,n} \, \Psi _{\alpha,n},
\end{split}
\end{equation*}
and, for $n\leq -1$,
\begin{equation*}
\begin{split}
\mathcal A  \Psi _{\alpha,n}
&= \left( \begin{array}{cc} 0 & \text{Id} \\ A & 0 \end{array} \right) \left( \begin{array}{c} \Phi _{\alpha,\vert n \vert}  \\ - i \sqrt{\lambda _{\alpha,\vert n \vert}} \Phi _{\alpha,\vert n \vert} \end{array} \right)
= \left( \begin{array}{c} - i \sqrt{\lambda _{\alpha, \vert n \vert}} \Phi _{\alpha,\vert n \vert} \\ A \Phi _{\alpha,\vert n \vert}  \end{array} \right)\\
&= \left( \begin{array}{c} - i \sqrt{\lambda _{\alpha, \vert n \vert}} \Phi _{\alpha,\vert n \vert} \\ - \lambda _{\alpha,\vert n \vert } \Phi _{\alpha, \vert n \vert}  \end{array} \right)
= - i \, \sqrt{\lambda _{\alpha,\vert n \vert }} \, \left( \begin{array}{c}  \Phi _{\alpha, \vert n \vert} \\- i \sqrt{\lambda _{\alpha,\vert n \vert }} \Phi _{\alpha,\vert n \vert }  \end{array} \right)
= i \, \omega _{\alpha,n} \, \Psi _{\alpha,n},
\end{split}
\end{equation*}
which are exactly the identities in \eqref{eq-obs-vpAA}.
\end{proof}

In order to rigorously compute the eigenvalues and eigenfuntions of $\mathcal{A}$, one has to introduce the natural extension of $\mathcal A$ to the complex valued functions:
$$ \forall\, (f,h), (g,k) \in D(\mathcal A), \quad \tilde{ \mathcal A } \left( \begin{array}{c} f (x)  + i g(x) \\ h(x) + i k(x)  \end{array} \right)
:= \mathcal A  \left( \begin{array}{c} f (x)  \\ h(x)  \end{array} \right)
+ i \mathcal A  \left( \begin{array}{c} g (x)  \\ k(x)  \end{array} \right) ,$$
and then we can investigate the spectral problem

\begin{equation}
\label{eq-vpAA}
\tilde{ \mathcal A } \tilde \Psi = \tilde \omega \tilde \Psi \quad \text{ with } \quad 
\tilde \Psi (x) = \left( \begin{array}{c} f (x)  + i g(x) \\ h(x) + i k(x)  \end{array} \right), 
\quad (f,h), (g,k) \in D(\mathcal A).
\end{equation}

\begin{Lemma}
\label{lem-vpAA2}
The set of solutions $(\tilde\omega, \tilde \Psi)$ of problem \eqref{eq-vpAA} is
$$\tilde {\mathcal S } = \{ (i \omega _{\alpha,n}, \tilde \rho \, \Psi _{\alpha,n}), n\in \mathbb Z, \tilde \rho \in \mathbb C \}. $$
where $\omega _{\alpha,n}$ is defined in \eqref{lem-eq-vpAA}, and $\Psi _{\alpha,n}$
is defined in \eqref{lem-eq-fpAA}.
\end{Lemma}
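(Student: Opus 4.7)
The plan is to prove the two inclusions separately. The inclusion $\{(i\omega_{\alpha,n}, \tilde\rho\,\Psi_{\alpha,n})\} \subset \tilde{\mathcal S}$ is essentially immediate: the linearity of $\tilde{\mathcal A}$ and the identity \eqref{eq-obs-vpAA} from Lemma \ref{lem-vpAA} show that $\tilde\rho\,\Psi_{\alpha,n}$ is an eigenvector of $\tilde{\mathcal A}$ for the eigenvalue $i\omega_{\alpha,n}$ for any $\tilde\rho\in\mathbb C$. One only needs to observe that $\Psi_{\alpha,n}\in D(\mathcal A)\subset D(\tilde{\mathcal A})$ (or more precisely its real/imaginary parts are in $D(\mathcal A)$), which is guaranteed by the spectral propositions.

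For the converse inclusion, I would start by writing $\tilde\Psi = (u, v)^T$ with $u = f + ig$ and $v = h + ik$, and unfolding the equation $\tilde{\mathcal A}\tilde\Psi = \tilde\omega \tilde\Psi$ into its two components
\begin{equation*}
v = \tilde\omega\,u, \qquad A u = \tilde\omega\,v.
\end{equation*}
Substituting the first relation into the second yields $A u = \tilde\omega^2 u$, so $u$ is an eigenfunction of the (complex extension of the) operator $A$ with eigenvalue $\tilde\omega^2$. Since $A$ is self-adjoint and negative (Propositions \ref{Prop-A-w} and \ref{Prop-A-s}), its spectrum is real and nonpositive, so $\tilde\omega^2 = -\lambda_{\alpha,m}$ for some $m\geq 0$, and hence $\tilde\omega = \pm i\sqrt{\lambda_{\alpha,m}}$.

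Next I would invoke Propositions \ref{prop-vp-w} and \ref{prop-vp-s}, applied to the real and imaginary parts of $u$, to conclude that the eigenspace of $A$ associated to $-\lambda_{\alpha,m}$ is one-dimensional over $\mathbb R$ (spanned by $\Phi_{\alpha,m}$), hence one-dimensional over $\mathbb C$: there exists $\tilde\rho\in\mathbb C$ such that $u = \tilde\rho\,\Phi_{\alpha,m}$. The first relation then gives $v = \tilde\omega\, \tilde\rho\,\Phi_{\alpha,m}$, and comparing with \eqref{lem-eq-fpAA} yields $\tilde\Psi = \tilde\rho\,\Psi_{\alpha,n}$ with $n = m$ if $\tilde\omega = i\sqrt{\lambda_{\alpha,m}}$ (or $n = 0$ if $m=0$) and $n = -m$ otherwise, which is exactly the claimed form.

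The main obstacle is really the simplicity of each eigenvalue of $A$ in the complex setting; but this is not a serious difficulty because the real-valued spectral analysis of Propositions \ref{prop-vp-w} and \ref{prop-vp-s} already describes every solution of the ODE satisfying both boundary conditions as a real scalar multiple of $\Phi_{\alpha,m}$, and the argument extends verbatim to complex scalars since the ODE and boundary conditions are linear with real coefficients. Everything else reduces to bookkeeping between the index conventions $n\geq 1$, $n=0$, $n\leq -1$ in the definition \eqref{lem-eq-vpAA}--\eqref{lem-eq-fpAA} and the sign of $\tilde\omega$.
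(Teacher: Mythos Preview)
Your argument is correct and is in fact more streamlined than the paper's. The paper writes $\tilde\omega = a+ib$ and $\tilde\Psi = (f+ig, h+ik)^T$, expands the eigenvalue equation into a real $4\times 4$ system, and then goes through a case distinction ($\tilde\omega=0$; $f=0$; $g=0$; $g$ colinear to $f$; $f,g$ linearly independent) to rule out the possibilities one by one, each time reducing to the real spectral problem for $A$ and the non-positivity of its eigenvalues. Your route exploits the block structure of $\mathcal A$ directly: from $v=\tilde\omega u$ and $Au=\tilde\omega v$ you get $Au=\tilde\omega^2 u$, and then self-adjointness forces $\tilde\omega^2\in\mathbb R_{\le 0}$, after which the real and imaginary parts of $u$ decouple and the simplicity of the eigenspaces of $A$ (Propositions \ref{prop-vp-w}--\ref{prop-vp-s}) finishes the job. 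This avoids the case analysis entirely; the only point worth making explicit is that $u\neq 0$ (if $u=0$ then $v=\tilde\omega u=0$ as well, so $\tilde\Psi=0$), so that $\tilde\omega^2$ is genuinely an eigenvalue of the complexified $A$. Both arguments use the same ingredients (self-adjointness, non-positivity, simplicity of the eigenvalues of $A$); yours packages them more efficiently.
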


\begin{proof}[Proof of Lemma \ref{lem-vpAA2}]
Let $ \tilde \omega = a+ib $, Then, we have
$$ \tilde{ \mathcal A }\left( \begin{array}{c} f   + i g \\ h + i k  \end{array} \right)
= \left( \begin{array}{c} h+ik \\ Af + i Ag  \end{array} \right) , 
\quad 
\tilde \omega \left( \begin{array}{c} f   + i g \\ h + i k  \end{array} \right)
= \left( \begin{array}{c} (af-bg) + i (ag+bf) \\ (ah-bk)+i(ak+bh) \end{array} \right).$$ 
Thus, \eqref{eq-vpAA} reduces to the following system
$$\begin{cases}
h= af-bg  \\
k= ag+bf  \\
Af= ah-bk  \\
Ag= ak+bh .
\end{cases}$$
We deduce that
$$ \begin{cases}
Af = a(af-bg) -b(ag+bf) = (a^2 - b^2) f -2abg\\
Ag = a(ag+bf)+b(af-bg) = (a^2 - b^2) g +2ab f .
\end{cases}$$
To solve the above system, we have to distinguish several cases:
\begin{itemize}
\item $\tilde \omega =0$: in this case we have that $Af=0=Ag$. Thus, $f$ and $g$ are constant: $f=c_1$, $g=c_2$ (with $c_1,c_2 \in \mathbb R$) and $h=0=k$. Therefore,
$$ \tilde \omega = 0, \text{ and } \tilde \Psi = \left( \begin{array}{c} f+ig \\ 0 \end{array} \right) = (c_1+ic_2) \left( \begin{array}{c} 1 \\ 0 \end{array} \right) ;$$

\item {\it$\tilde \omega \neq 0$ and  $f=0$: we must have }$g\neq 0$ (otherwise it would imply $h=0=k$). So, we get that $ab=0$ and $g$ is an eigenfunction of $A$ associated to the eigenvalue $a^2-b^2$. However, since it must hold that $a=0$ or $b=0$, and $A$ is non-positive, the only possibility is that $a=0$, and therefore for any $n\geq 1$, $b^2= \lambda _{\alpha,n}$, and $g=c \Phi _{\alpha,n}$ (for some $c \in \mathbb R$). Moreover, we have that $h=-bg$ and $k=0$. Thus, in this case we have two possible sets of solutions:
$$ \tilde \omega = a+ib = i \sqrt{\lambda _{\alpha,n}}, 
\text{ and } 
\tilde \Psi = \left( \begin{array}{c} i c \Phi _{\alpha,n} \\  - \sqrt{\lambda _{\alpha,n}} c \Phi _{\alpha,n}  \end{array} \right)
= ic \left( \begin{array}{c} \Phi _{\alpha,n} \\ i \sqrt{\lambda _{\alpha,n}}  \Phi _{\alpha,n}  \end{array} \right) ,$$
or
$$ \tilde \omega = a+ib = -i \sqrt{\lambda _{\alpha,n}}, \text{ and } \tilde \Psi = \left( \begin{array}{c} ic \Phi _{\alpha,n} \\  \sqrt{\lambda _{\alpha,n}} c \Phi _{\alpha,n}  \end{array} \right)
= ic \left( \begin{array}{c} \Phi _{\alpha,n} \\ -i \sqrt{\lambda _{\alpha,n}}  \Phi _{\alpha,n}  \end{array} \right); $$

\item {\it$\tilde \omega \neq 0$ and  $g=0$: similarly to the previous case, we deduce that} $a=0$, $b^2= \lambda _{\alpha,n}$  and $f=c \Phi _{\alpha,n}$ for every $n\geq1$ (for some $c \in \mathbb R$). Furthermore, $h=0$ and $k=bf$. Hence, also in this case, we have two possible sets of solutions:
$$ \tilde \omega = a+ib = i \sqrt{\lambda _{\alpha,n}}, 
\text{ and } 
\tilde \Psi = \left( \begin{array}{c}  c \Phi _{\alpha,n} \\  i \sqrt{\lambda _{\alpha,n}} c \Phi _{\alpha,n}  \end{array} \right)
= c \left( \begin{array}{c} \Phi _{\alpha,n} \\ i \sqrt{\lambda _{\alpha,n}}  \Phi _{\alpha,n}  \end{array} \right) ,$$
or
$$ \tilde \omega = a+ib = -i \sqrt{\lambda _{\alpha,n}}, \text{ and } \tilde \Psi = \left( \begin{array}{c} c \Phi _{\alpha,n} \\  -i\sqrt{\lambda _{\alpha,n}} c \Phi _{\alpha,n}  \end{array} \right)
= c \left( \begin{array}{c} \Phi _{\alpha,n} \\ -i \sqrt{\lambda _{\alpha,n}}  \Phi _{\alpha,n}  \end{array} \right); $$

\item {\it$\tilde \omega \neq 0$, $f\neq 0 \neq g$ and $g$ is colinear with $f$: let} $g=r f$ (with some $r\in \mathbb R)$. Then $f$ is an eigenfunction of $A$, and 
$$ (a^2 - b^2)  -2abr = (a^2 - b^2)  + \frac{2ab}{r} .$$
Thus,
$$ 2ab(r+ \frac{1}{r})=0.$$
Since $r\in \mathbb R$, then $ab=0$, and we can reason as in previous cases: so, we get that $a=0$ ( from the non-positivity of $A$), and for every $n\geq1$, $b^2= \lambda _{\alpha,n}$, and $f=c \Phi _{\alpha,n}$ (for some $c \in \mathbb R$). Moreover, we obtain that $g= c \Phi _{\alpha,n}$, $h=0-bg$ and $k=bf$, and this leads to two possibilities: 
$$ \tilde \omega = i \sqrt{\lambda _{\alpha,n}}, 
\text{ and } 
\tilde \Psi 
= c(1+ir) \left( \begin{array}{c} \Phi _{\alpha,n} \\ i \sqrt{\lambda _{\alpha,n}}  \Phi _{\alpha,n}  \end{array} \right) ,$$
or
$$ \tilde \omega = -i \sqrt{\lambda _{\alpha,n}}, \text{ and } \tilde \Psi 
= c(1+ir) \left( \begin{array}{c} \Phi _{\alpha,n} \\ -i \sqrt{\lambda _{\alpha,n}}  \Phi _{\alpha,n}  \end{array} \right); $$

\item {\it$\tilde \omega \neq 0$ and  $f$ and $g$ are free}: in this case we note that the plane generated by $f$ and $g$ is stable under the action of $A$. Let us denote by $P_{f,g}$ the plane generated by $f$ and $g$, and by $A_{f,g}$ the restriction of $A$ to this plane. $A_{f,g}$ is an endomorphism, and it is symmetric (and non-positive). Since
$$ Af = (a^2 - b^2) f -2abg, \quad Ag = (a^2 - b^2) g +2ab f ,$$
the matrix that defines the action of $A_{f,g}$ on the basis composed by $f$ and $g$ is
$$\text{Mat} (A_{f,g}; f,g) = \left( \begin{array}{cc} a^2-b^2 & -2ab \\ 2ab & a^2-b^2  \end{array} \right)
= (a^2-b^2)\,  \text{Id}_{P_{f,g}} \, + \, 2ab \left( \begin{array}{cc} 0 & 1 \\ -1 & 0  \end{array} \right) .$$
So, since $A_{f,g}$ is symmetric, $\text{Mat} (A_{f,g}; f,g)$ is diagonalizable. This would imply that
$$ \left( \begin{array}{cc} 0 & 1 \\ -1 & 0  \end{array} \right)$$
would also be diagonalizable, which is a contradiction.
\end{itemize}
Finally, one easily verifies that these necessary conditions are also sufficient. This concludes the proof of Lemma \ref{lem-vpAA2}.
\end{proof}




\subsection{Two integral expressions for the solution of \eqref{CMU-eq-ctr-10}} \hfill
\label{sec-form-gamman}

Since the family $\{\Phi_{\alpha,n}\}_{n\in\mathbb{N}}$ is an orthonormal basis of $L^2(0,1)$, we can decompose the solution $w^{(p)}$ of \eqref{CMU-eq-ctr-10} under the form
$$ w^{(p)}(x,t) = \sum _{n=0} ^\infty w_n (t) \Phi _{\alpha,n} (x) .$$
We decompose in the same way the nonlinear term
$$ r(x,t) := p(t) \mu (x) w^{(p)} (x,t) = \sum _{n=0} ^\infty r_n (t) \Phi _{\alpha,n} (x) $$
with 
\begin{equation}
\label{def-r_n}
r_n (t) = \langle p(t) \mu(\cdot)  w^{(p)} (\cdot ,t) , \Phi _{\alpha,n} \rangle _{L^2 (0,1)}.
\end{equation}
So, \eqref{CMU-eq-ctr-10} implies that the sequence $(w_n(t))_{n\geq 0}$ satisfies
$$ \begin{cases}
w_0 '' (t) = r_0 (t) , \\ 
w _0 (0)=1, \\ 
w_0 ' (0) = 0 , 
\end{cases}
\quad \text{ and } \quad 
\forall\, n\geq 1, \quad
\begin{cases}
w _n '' (t) + \lambda _{\alpha,n} w _n (t) = r_n (t) , \\ 
w_n (0)=0, \\ 
w_n ' (0) = 0 .
\end{cases}$$
We obtain that
$$ w _0 (t) = 1 + \int _0 ^t r_0 (s) (t-s) \, ds 
\quad \text{ and } \quad w _n (t) = \int _0 ^t r_n (s) \frac{\sin \sqrt{\lambda _{\alpha,n}} (t-s)}{\sqrt{\lambda _{\alpha,n}}} \, ds .$$
Hence, the solution of \eqref{CMU-eq-ctr-10} can be written as
\begin{multline}
\label{formule-sin}
w^{(p)} (x,t) = \left( 1 + \int _0 ^t r_0 (s) (t-s) ds  \right)\\
+ \sum _{n=1} ^\infty \left( \int _0 ^t r_n (s) \frac{\sin \sqrt{\lambda _{\alpha,n}} (t-s)}{\sqrt{\lambda _{\alpha,n}}} ds \right) \Phi _{\alpha,n} (x) ,
\end{multline}
and
\begin{multline}
\label{formule-sin'}
w^{(p)} _t (x,t) = \left( \int _0 ^t r_0 (s) ds  \right) 
\\
+ \sum _{n=1} ^\infty  \left( \sqrt{\lambda _{\alpha,n}} \int _0 ^t r_n (s) \frac{\cos \sqrt{\lambda _{\alpha,n}} (t-s)}{\sqrt{\lambda _{\alpha,n}}} \, ds \right) \Phi _{\alpha,n} (x) ,
\end{multline}
or, equivalently,
$$ \left( \begin{array}{c} w^{(p)}(x,t)  \\ w^{(p)} _t (x,t) \end{array} \right)
= \left( \begin{array}{c} w _0(t)  \\ w' _0 (t) \end{array} \right)
+ \sum _{n=1} ^\infty \left( \begin{array}{c} \Bigl( \int _0 ^t r_n (s) \frac{\sin \sqrt{\lambda _{\alpha,n}} (t-s)}{\sqrt{\lambda _{\alpha,n}}} \, ds  \Bigr) \Phi _{\alpha,n} (x)  \\ \Bigl( \sqrt{\lambda _{\alpha,n}} \int _0 ^t r_n (s) \frac{\cos \sqrt{\lambda _{\alpha,n}} (t-s)}{\sqrt{\lambda _{\alpha,n}}} \, ds \Bigr) \Phi _{\alpha,n} (x) \end{array} \right).
$$
Now, manipulating the above formula and we get
\begin{equation*}
\begin{split}
&\left( \begin{array}{c} w^{(p)}(x,t)  \\ w^{(p)} _t (x,t) \end{array} \right)
- \left( \begin{array}{c} w _0(t)  \\ w' _0 (t) \end{array} \right)=
\\
&\quad= \sum _{n=1} ^\infty \frac{1}{2i \sqrt{\lambda _{\alpha,n}}}  
\left( \begin{array}{c} \left( \int _0 ^t r_n (s)  ( e^{i\sqrt{\lambda _{\alpha,n}} (t-s)}-e^{-i\sqrt{\lambda _{\alpha,n}} (t-s)}) \, ds  \right) \Phi _{\alpha,n} (x)  
\\ 
\left( \int _0 ^t r_n (s) ( e^{i\sqrt{\lambda _{\alpha,n}} (t-s)} + e^{-i\sqrt{\lambda _{\alpha,n}} (t-s)}) \, ds \right) i \sqrt{\lambda _{\alpha,n}}  \Phi _{\alpha,n} (x) \end{array} \right)
\\
&\quad= \sum _{n=1} ^\infty \frac{1}{2i \sqrt{\lambda _{\alpha,n}}}  
\left( \int _0 ^t r_n (s) \, e^{-i\sqrt{\lambda _{\alpha,n}} s} \, ds \right) 
\left( \begin{array}{c} \Phi _{\alpha,n} (x)   \\ i \sqrt{\lambda _{\alpha,n}} \Phi _{\alpha,n} (x)  \end{array} \right)
e^{i\sqrt{\lambda _{\alpha,n}} t}
\\
&\quad\quad- \frac{1}{2i \sqrt{\lambda _{\alpha,n}}}  
\left( \int _0 ^t r_n (s)\, e^{i\sqrt{\lambda _{\alpha,n}} s} \, ds \right) 
\left( \begin{array}{c} \Phi _{\alpha,n} (x)   \\ -i \sqrt{\lambda _{\alpha,n}} \Phi _{\alpha,n} (x)  \end{array} \right)
e^{-i\sqrt{\lambda _{\alpha,n}} t}
\\
&\quad= \sum _{n\in \mathbb Z, n\neq 0} \frac{1}{2i \omega _{\alpha,n}}   \left( \int _0 ^t r_{\vert n \vert} (s)\, e^{-i \omega _{\alpha,n} s} \, ds \right) \left( \begin{array}{c} \Phi _{\alpha,\vert n \vert} (x)   \\ i \omega_{\alpha,n} \Phi _{\alpha,\vert n \vert} (x)  \end{array} \right)
e^{i\omega_{\alpha,n} t} ,
\end{split}
\end{equation*}
that can be expressed more compactly as
\begin{equation}
\begin{split}
\label{formule-exp}
\left( \begin{array}{c} w^{(p)}(x,t)  \\ w^{(p)} _t (x,t) \end{array} \right)
&= \left( \begin{array}{c} 1 + \int _0 ^t r_0 (s) (t-s) \, ds   \\ \int _0 ^t r_0 (s) \, ds  \end{array} \right)
\\
&\quad+ \sum _{n\in \mathbb Z, n\neq 0} \frac{1}{2i \omega _{\alpha,n}}   \left( \int _0 ^t r_{\vert n \vert} (s)\, e^{-i \omega _{\alpha,n} s} \, ds \right) \Psi _{\alpha,n} (x) e^{i\omega_{\alpha,n} t} .
\end{split}
\end{equation}
To lighten the notation, we rewrite \eqref{formule-exp} as
\begin{equation}
\label{formule-exp2}
\left( \begin{array}{c} w^{(p)}(x,T)  \\ w^{(p)} _t (x,T) \end{array} \right)
= \Gamma ^{(p)} _0 (T)
+ \sum _{n\in \mathbb Z^*} \frac{1}{2i \omega _{\alpha,n}}  \, \gamma ^{(p)} _n (T) \, \Psi _{\alpha,n} (x) \, e^{i\omega_{\alpha,n} T} ,
\end{equation}
with 
\begin{equation}
\label{def-gamma0T}
\Gamma ^{(p)} _0 (T) = \left( \begin{array}{c} 1 + \int _0 ^T r_0 (s) (T-s) \, ds   \\ \int _0 ^T r_0 (s) \, ds  \end{array} \right) = \left( \begin{array}{c} \gamma ^{(p)} _{00} (T)   \\ \gamma ^{(p)} _{01} (T)  \end{array} \right) ,
\end{equation}
and
\begin{equation}
\label{def-gammanT}
\forall\, n \in \mathbb Z ^*, \quad 
\gamma ^{(p)} _n (T) = \int _0 ^T r_{\vert n \vert} (s)\, e^{-i \omega _{\alpha,n} s} \, ds ,
\end{equation}
where we recall that $r_{\vert n \vert }$ is defined in \eqref{def-r_n}.

Formula \eqref{formule-exp2} shows the role of the functions $(x,t) \mapsto \Psi _{\alpha,n} (x) \, e^{i\omega_{\alpha,n} t}$,
which are solution of the homogeneous equation
$$ (\Psi _{\alpha,n} (x) \, e^{i\omega_{\alpha,n} t}) _t = \mathcal A (\Psi _{\alpha,n} (x) \, e^{i\omega_{\alpha,n} t}) .$$


\subsection{A sufficient condition to prove Proposition \ref{prop-"thm3"}, part a)} \hfill
\label{sec-suff-cond}

From Proposition \ref{prop-"prop2"}, we already know that $(w(T), w_t(T)) \in D(A) \times H^1 _\alpha (0,1)$. To prove the hidden regularity result, it is useful to consider the expression \eqref{formule-exp2}. We have that
$$ w^{(p)} (T) - \gamma ^{(p)} _{00} (T) = \sum _{n=1} ^\infty 
\frac{1}{2i \omega _{\alpha,n}}  \left( \gamma ^{(p)} _n (T) \, e^{i\omega_{\alpha,n} T} 
- \gamma ^{(p)} _{-n} (T) \, e^{-i\omega_{\alpha,n} T} \right) \Phi _{\alpha,n} (x) ,$$
hence, $w^{(p)}(T) \in H^3 _{(0)} (0,1)$ if and only if
$$ \sum _{n=1} ^\infty \lambda _{\alpha ,n} ^3 \left \vert \frac{1}{2i \omega _{\alpha,n}}  \left( \gamma ^{(p)} _n (T) \, e^{i\omega_{\alpha,n} T} 
- \gamma ^{(p)} _{-n} (T) \, e^{-i\omega_{\alpha,n} T} \right) \right \vert  ^2  < \infty .$$
Moreover,
$$ w^{(p)} _t (T) - \gamma ^{(p)} _{01} (T) = \sum _{n=1} ^\infty 
\frac{1}{2}  \left( \gamma ^{(p)} _n (T) \, e^{i\omega_{\alpha,n} T} 
+ \gamma ^{(p)} _{-n} (T) \, e^{-i\omega_{\alpha,n} T} \right) \Phi _{\alpha,n} (x) ,$$
thus, $w^{(p)}_t (T) \in H^2 _{(0)} (0,1)$ if and only if
$$ \sum _{n=1} ^\infty \lambda _{\alpha ,n} ^2 \left \vert \frac{1}{2}  \left( \gamma ^{(p)} _n (T) \, e^{i\omega_{\alpha,n} T} 
+ \gamma ^{(p)} _{-n} (T) \, e^{-i\omega_{\alpha,n} T} \right) \right \vert  ^2  < \infty .$$
Therefore,
\begin{multline}
\label{condition-H3H2}
\sum _{n \in \mathbb Z} \lambda _{\alpha , \vert n \vert } ^2 \, \vert \gamma ^{(p)} _n (T) \vert  ^2  < \infty
\\
\quad \implies \quad (w^{(p)}(T), w^{(p)}_t(T)) \in H^3 _{(0)} (0,1) \times D(A) .
\end{multline}
In what follows, we prove that
\begin{equation}
\label{eq-cond-suff}
\sum _{n \in \mathbb Z} \lambda _{\alpha , \vert n \vert } ^2 \, \vert \gamma ^{(p)} _n (T) \vert  ^2  < \infty ,
\end{equation}
or, equivalently, using definition \eqref{def-gammanT} of $\gamma ^{(p)} _n (T)$ and definition \eqref{def-r_n} of $r_n$, that
$$ \sum _{n \in \mathbb Z} \lambda _{\alpha , \vert n \vert } ^2 \, \Bigl\vert \, \int _0 ^T \langle p(s) \mu (x) w^{(p)} (\cdot ,s) , \Phi _{\alpha,\vert n \vert} \rangle _{L^2 (0,1)} \, e^{-i \omega _{\alpha,n} s} \, ds \,\Bigr\vert  ^2  < \infty .$$
For this purpose, we will prove
\begin{itemize}
\item a general convergence result (see Lemma \ref{lem-cond-h2} in section \ref{sec-gen-reg}):
$$ \sum _{n =1} ^\infty \lambda _{\alpha ,n} ^2 \, \Bigl\vert \, \int _0 ^T \langle p(s) \, g (\cdot,s), \Phi _{\alpha,n} \rangle _{L^2 (0,1)} \, e^{i\omega _{\alpha,n} s} \, ds \, \Bigr\vert  ^2  < \infty $$
if $g\in C^0 ([0,T], V^{2,0} _\alpha (0,1))$,
\item a regularity result (see Lemma \ref{lem-reg-muw} in section \ref{sec-reg-muw}):
$$ \mu \in V^2 _\alpha(0,1), w\in C^0 ([0,T], D(A)) \quad \implies \quad \mu w \in C^0 ([0,T], V^{(2,0)} _\alpha (0,1)) .$$
\end{itemize}
Then, \eqref{eq-cond-suff} will easily follow, see section \ref{sec-reg-H3-ok}.
(The intermediate lemmas \ref{lem-Sn1-l2}-\ref{lem-S3-h2} are useful to prove Lemma \ref{lem-cond-h2}.)

\subsection{A general regularity result} \hfill
\label{sec-gen-reg}

In this section a fundamental role will be played by the space $V^{(2,0)} _\alpha (0,1)$ defined in \eqref{space-muV10}.

\begin{Lemma}
\label{lem-cond-h2}
Let $T>0$, $p\in L^2 (0,T)$, $g\in C^0 ([0,T], V^{(2,0)} _\alpha (0,1))$.
Consider the sequence
$(S^{(p,g)} _n)_{n\geq 1}$ defined by
\begin{equation}
\label{form-Sn}
\forall \,n \geq 1, \quad  S^{(p,g)} _n = \int _0 ^T p(s) \, \langle g (\cdot,s), \Phi _{\alpha,n} \rangle _{L^2 (0,1)} \, e^{i\sqrt{\lambda _{\alpha,n}} s} \, ds .
\end{equation}
Then, $(S^{(p,g)} _n)_{n\geq 1}$ satisfies
\begin{equation}
\label{prop-Sn-1}
\sum _{n =1} ^\infty \lambda _{\alpha ,n} ^2 \, \vert S^{(p,g)} _n \vert  ^2  < \infty ,
\end{equation}
and moreover, there exists a constant $C(\alpha,T) >0$ independent of $p \in L^2 (0,T)$ and of $g\in C^0 ([0,T], V^{(2,0)} _\alpha (0,1))$ such that
\begin{equation}
\label{prop-Sn-2}
\Bigl( \sum _{n =1} ^\infty \lambda _{\alpha ,n} ^2 \, \vert S^{(p,g)} _n \vert  ^2  \Bigr) ^{1/2}
\leq C(\alpha,T) \, \Vert p \Vert _{L^2(0,T)}  \,  \Vert g \Vert _{C^0 ([0,T], V^{(2,0)} _\alpha (0,1))}  .
\end{equation}
\end{Lemma}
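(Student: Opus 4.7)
The factor $\lambda_{\alpha,n}^2$ in the target estimate must be absorbed by gaining a spatial derivative through the eigenfunction equation. The natural idea is to use $-A\Phi_{\alpha,n}=\lambda_{\alpha,n}\Phi_{\alpha,n}$, replace $\lambda_{\alpha,n}\langle g(\cdot,s),\Phi_{\alpha,n}\rangle_{L^2}$ by $\langle g(\cdot,s),-A\Phi_{\alpha,n}\rangle_{L^2}$, and integrate by parts twice via Lemmas \ref{lem-IPP-w}/\ref{lem-IPP-s}. At $x=0$, both boundary contributions vanish: the term involving $x^\alpha\Phi_{\alpha,n}'$ because $\Phi_{\alpha,n}\in D(A)$, and the term involving $(x^\alpha g_x)(0)$ because $g(\cdot,s)\in V^{(2,0)}_\alpha$. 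At $x=1$, the $\Phi_{\alpha,n}'(1)=0$ condition kills one term, so only the $g_x(1,s)\Phi_{\alpha,n}(1)$ contribution survives. This yields the key identity
\begin{equation*}
\lambda_{\alpha,n}\langle g(\cdot,s),\Phi_{\alpha,n}\rangle_{L^2}=g_x(1,s)\,\Phi_{\alpha,n}(1)-\langle Ag(\cdot,s),\Phi_{\alpha,n}\rangle_{L^2}.
\end{equation*}

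\textbf{Splitting and estimation.} Multiplying by $p(s)e^{i\sqrt{\lambda_{\alpha,n}}s}$ and integrating in time gives $\lambda_{\alpha,n}S^{(p,g)}_n=\Phi_{\alpha,n}(1)\,I_{1,n}-I_{2,n}$ with
\begin{equation*}
I_{1,n}=\int_0^T p(s)g_x(1,s)e^{i\sqrt{\lambda_{\alpha,n}}s}\,ds,\qquad I_{2,n}=\int_0^T p(s)\langle Ag(\cdot,s),\Phi_{\alpha,n}\rangle_{L^2}e^{i\sqrt{\lambda_{\alpha,n}}s}\,ds,
\end{equation*}
so it suffices to bound $\sum_n|\Phi_{\alpha,n}(1)|^2|I_{1,n}|^2$ and $\sum_n|I_{2,n}|^2$. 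For the bulk term, Cauchy--Schwarz in $s$ followed by Parseval in $n$ gives
\begin{equation*}
\sum_{n\ge 1}|I_{2,n}|^2\le \Vert p\Vert_{L^2(0,T)}^2\int_0^T\Vert Ag(\cdot,s)\Vert_{L^2}^2\,ds\le T\,\Vert p\Vert_{L^2(0,T)}^2\,\Vert g\Vert_{C^0([0,T],V^{(2,0)}_\alpha)}^2.
\end{equation*}

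\textbf{The boundary term and the main obstacle.} The delicate piece is $\sum_n|\Phi_{\alpha,n}(1)|^2|I_{1,n}|^2$, because in general the trace $g_x(1,\cdot)$ is nonzero (no Neumann condition at $1$ is imposed on $g$), and the exponentials do not give $n$-decay. The crucial observation is Lemmas \ref{lem-prop-phi-n-0-1-w}--\ref{lem-prop-phi-n-0-1-s}: the explicit Bessel-function description yields $|\Phi_{\alpha,n}(1)|^2=2-\alpha$, \emph{uniformly} in $n$. It then remains to prove the upper-frame (Bessel-type) inequality
\begin{equation*}
\sum_{n\ge 1}\Big|\int_0^T h(s)\,e^{i\sqrt{\lambda_{\alpha,n}}s}\,ds\Big|^2\le C(\alpha,T)\,\Vert h\Vert_{L^2(0,T)}^2
\end{equation*}
for the family $\{e^{i\sqrt{\lambda_{\alpha,n}}s}\}_{n\ge 1}$, with $h(s)=p(s)g_x(1,s)\in L^2(0,T)$ (of norm controlled by $\Vert p\Vert_{L^2}\Vert g\Vert_{C^0([0,T],V^{(2,0)}_\alpha)}$, since $g(\cdot,s)\in H^2_\alpha(0,1)$ has a well defined trace $g_x(1,s)$ bounded in $s$). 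This upper bound is a standard Ingham-type consequence of the uniform spectral gap $\inf_n(\sqrt{\lambda_{\alpha,n+1}}-\sqrt{\lambda_{\alpha,n}})\ge\frac{(2-\alpha)\pi}{2}>0$ coming from Propositions \ref{prop-vp-w}--\ref{prop-vp-s}. Assembling the two bounds produces \eqref{prop-Sn-2}. The main obstacle is hence conceptual rather than computational: one must recognise that the $x=1$ boundary term, which would a priori destroy the estimate, is tamed exactly because of the uniform boundedness of $\Phi_{\alpha,n}(1)$, a property specific to the Neumann spectrum and closely tied to the identity \eqref{Watson-} used in the spectral analysis.
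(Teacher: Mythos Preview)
Your proof is correct and follows essentially the same approach as the paper. The paper organizes the argument into three sublemmas (one for each of the boundary terms $[g\,x^\alpha\Phi_{\alpha,n}']_0^1$, $[x^\alpha g_x\,\Phi_{\alpha,n}]_0^1$, and the bulk term $\langle (x^\alpha g_x)_x,\Phi_{\alpha,n}\rangle$), whereas you condense the boundary analysis into a single identity; but the ingredients are identical---integration by parts against $-A\Phi_{\alpha,n}$, the vanishing at $x=0$ via $\Phi_{\alpha,n}\in D(A)$ and $g\in V^{(2,0)}_\alpha$, the uniform value $|\Phi_{\alpha,n}(1)|=\sqrt{2-\alpha}$, the Ingham upper-frame inequality from the spectral gap, and Parseval for the bulk piece. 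The only place where the paper is more explicit than your sketch is the boundary analysis at $x=0$ for the first term (their Lemma for $S_n^{(1)}$), which in the strongly degenerate case $\alpha\in[1,2)$ requires showing $xg(x,s)\to 0$ rather than merely invoking $(x^\alpha\Phi_{\alpha,n}')(0)=0$, since $g$ need not be bounded there; this is a minor technical point and does not affect the validity of your argument.
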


\begin{proof}[Proof of Lemma \ref{lem-cond-h2}]
We proceed as in \cite{B-bi}, and the properties of the space $V^{(2,0)} _\alpha (0,1)$ will be crucial to overcome some new difficulties. (Note that $V^{(2,0)} _\alpha (0,1) = H^2 _\alpha (0,1)$ for $\alpha \in [1,2)$.) 

First, we note that
\begin{equation}\label{Spgn}
\begin{split}
S^{(p,g)} _n& = \int _0 ^T p(s) \, \langle g (\cdot,s), \Phi _{\alpha,n} \rangle _{L^2 (0,1)} \, e^{i\sqrt{\lambda _{\alpha,n}} s} \, ds
\\
&= \int _0 ^T p(s) \, \langle g (\cdot,s), \frac{1}{\lambda _{\alpha,n}} (-A\Phi _{\alpha,n} )\rangle _{L^2 (0,1)} \, e^{i\sqrt{\lambda _{\alpha,n}} s} \, ds
\\
&= \frac{-1}{\lambda _{\alpha,n}} \int _0 ^T p(s) \, \langle g (\cdot,s), (x^\alpha \Phi _{\alpha,n} ' )' \rangle _{L^2 (0,1)} \, e^{i\sqrt{\lambda _{\alpha,n}} s} \, ds .
\end{split}
\end{equation}
Then, integrating by parts, we have
\begin{equation*}
\begin{split}
&\langle g(\cdot,s) , (x^\alpha \Phi _{\alpha,n} ' )' \rangle _{L^2 (0,1)}
= \int _0 ^1 g (x,s) (x^\alpha \Phi _{\alpha,n} '(x) )' \, dx 
\\
&\qquad\qquad\qquad= [ g(x,s) x^\alpha \Phi _{\alpha,n} '(x) ]_0 ^1 - \int _0 ^1 g'(x,s) x^\alpha \Phi _{\alpha,n} '(x) \, dx 
\\
&\qquad\qquad\qquad= [ g(x,s) x^\alpha \Phi _{\alpha,n} '(x) ]_0 ^1 - [ x^\alpha g'(x,s) \Phi _{\alpha,n} (x)] _0 ^1 
+ \int _0 ^1 (x^\alpha g'(x,s))' \Phi _{\alpha,n}(x) \, dx .
\end{split}
\end{equation*}
Using the above expression of the scalar product in \eqref{Spgn}, we get
\begin{equation}
\label{eq-Sn-Sn123}
- \lambda _{\alpha,n} \, S^{(p,g)} _n = S^{(1)} _n - S^{(2)} _n + S^{(3)} _n ,
\end{equation}
with
\begin{equation}
\label{eq-Sni-l2}
\forall\, i\in \{1,2,3\}, \quad S^{(i)} _n = \int _0 ^T h^{(i)} _n (s) \, e^{i\sqrt{\lambda _{\alpha,n}} s} \, ds ,
\end{equation}
and the associated functions
\begin{equation}
\label{def-h1}
h^{(1)} _n (s) = p(s) \, [ g(x,s) x^\alpha \Phi _{\alpha,n} '(x) ]_{x=0} ^{x=1},
\end{equation}
\begin{equation}
\label{def-h2}
\,\,\,h^{(2)} _n (s) = p(s) \, [ x^\alpha g_x(x,s) \Phi _{\alpha,n} (x)]_{x=0} ^{x=1},
\end{equation}
\begin{equation}
\label{def-h3}
h^{(3)} _n (s) = p(s) \, \langle (x^\alpha g_x)_x , \Phi _{\alpha,n} \rangle _{L^2 (0,1)} .
\end{equation}
To conclude the proof we appeal to the following results.


\subsubsection{The term $S^{(1)} _n$ associated to $h^{(1)}$} \hfill
\begin{Lemma}
\label{lem-Sn1-l2}
Let $T>0$, $p\in L^2 (0,T)$ and $g\in C^0 ([0,T], V^{(2,0)} _\alpha (0,1))$. Then, function
$h^{(1)} _n$ defined in \eqref{def-h1} satisfies
$$ \forall\, s \in [0,T], \quad h^{(1)} _n (s) = 0 . $$
\end{Lemma}

\begin{proof}[Proof of Lemma \ref{lem-Sn1-l2}] Since $g(\cdot,s) \in H^2 _\alpha (0,1)$, then $g(\cdot,s) \in H^1 (\frac{1}{2},1)$ and has a finite limit as $x\to 1$. Hence, thanks to the Neumann boundary condition at $x=1$ for $\Phi _{\alpha,n}$, we have
$$ g(x,s) x^\alpha \Phi _{\alpha,n} '(x) \to 0 \quad \text{ as } x \to 1 .$$

When $x\to 0$, we have to distinguish the cases of weak and strong degeneracy:
\begin{itemize}
\item $\alpha \in [0,1)$: first, we notice that $g(\cdot, s)$ has a finite limit as $x\to 0$. Indeed,
$$ g_x (x,s) = (x^{\alpha /2} g_x (x,s)) \, x^{-\alpha /2} ,$$
and since $x\mapsto x^{\alpha /2} g_x (x,s)$ and $x\mapsto x^{-\alpha /2}$ belong to $L^2 (0,1)$, then $g_x (\cdot, s) \in L^1 (0,1)$,  which implies that $g(\cdot, s)$ has a finite limit as $x\to 0$. Therefore,
$$ g(x,s) x^\alpha \Phi _{\alpha,n} '(x) \to 0 \quad \text{ as } x \to 0 $$

\item $\alpha \in [1,2)$: observe that $g$ can be unbounded as $x\to 0$. However, the series of $\Phi _{\alpha,n}$ obtained thanks to 
\eqref{phi+-s} gives that
$$\exists\, C_{\alpha,n}\,:\, \vert x^\alpha \Phi _{\alpha,n} '(x) \vert \leq C_{\alpha,n} x,\quad  \forall\, x \in (0,1).$$
We claim that $x\mapsto x g(x,s) $ has a finite limit as $x\to 0$. Indeed,
$$ (x g(x,s)) _x =  g(x,s) + x g_x(x,s) = g(x,s) + x^{\alpha /2}  g_x(x,s) x^{1-\alpha /2} ,$$
and since $g(\cdot,s)\in L^2 (0,1)$, $x\mapsto x^{\alpha /2}  g_x(x,s) \in L^2 (0,1)$ and $x\mapsto x^{1-\alpha /2} \in L^\infty (0,1)$, we have that $(x g(x,s)) _x \in L^1 (0,1)$. Therefore $x\mapsto x g(x,s) $ has a finite limit as $x\to 0$:
$$ \exists\, \ell ^{(s)}, \quad x g(x,s) \to \ell ^{(s)} \quad \text{ as } x \to 0 .$$
However, since $g(\cdot,s) \in L^2 (0,1)$, we get that $x\mapsto \frac{\ell ^{(s)}}{x} \in L^2 (0,1)$, which is possible only if $\ell ^{(s)} =0$. Thus,
$$ x g(x,s) \to 0 \quad \text{ as } x \to 0 ,$$
and so
$$ g(x,s) x^\alpha \Phi _{\alpha,n} '(x) \to 0 \quad \text{ as } x \to 0 .$$
\end{itemize}
\end{proof}


\subsubsection{The term $S^{(2)} _n$ associated to $h^{(2)}$} \hfill
\begin{Lemma}
\label{lem-Sn2-l2}
Let $T>0$, $p\in L^2 (0,T)$ and $g\in C^0 ([0,T], V^{(2,0)} _\alpha (0,1))$. Then, function
$h^{(2)} _n$ defined in \eqref{def-h2} 
belongs to $L^2 (0,T)$  and there exists $C (\alpha,T)>0$ independent of $p \in L^2 (0,T)$ and of $g\in C^0 ([0,T], V^{(2,0)} _\alpha (0,1))$ and of $n\geq 1$ such that
\begin{equation}
\label{eq-Sn2-h2}
\Vert h^{(2)} _n \Vert _{L^2 (0,T)} \leq C (\alpha,T) \, \Vert p \Vert _{L^2(0,T)}  \,  \Vert g \Vert _{C^0 ([0,T], V^{2,0} _\alpha (0,1))} .
\end{equation}
Furthermore, 
\begin{equation}
\label{Sn123-l2}
\sum _{n=1} ^\infty \vert S^{(2)} _n \vert ^2  < \infty ,
\end{equation}
and there exists $C_2(\alpha,T)>0$ independent of $p \in L^2 (0,T)$ and of $g\in C^0 ([0,T], V^{(2,0)} _\alpha (0,1))$ such that
\begin{equation}
\label{eq-Sn2-l2}
\sum _{n=1} ^\infty \vert S^{(2)} _n \vert ^2 
 \leq C_2 (\alpha,T) ^2 \, \Vert p \Vert _{L^2(0,T)} ^2  \,  \Vert g \Vert _{C^0 ([0,T], V^{(2,0)} _\alpha (0,1))} ^2 .
\end{equation}
\end{Lemma}

\begin{proof}[Proof of Lemma \ref{lem-Sn2-l2}]
We recall that
\begin{multline}
\label{h2-dev}
h^{(2)} _n (s) = \left.p(s) \, (x^\alpha g_x(x,s) \Phi _{\alpha,n} (x))\right|_{x=1}
-\left.p(s) \, (x^\alpha g_x(x,s) \Phi _{\alpha,n} (x))\right|_{x=0}.
\end{multline}
Using the definition of $V^{(2,0)}_{\alpha} (0,1)$ and respectively \eqref{Leb-asymp6} when $\alpha \in [0,1)$ and \eqref{Leb-asymp6+} when $\alpha \in [1,2)$ in \eqref{h2-dev}, we obtain that 
$$  \left.(x^\alpha g_x(x,s) \Phi _{\alpha,n} (x))\right|_{x=0} = 0.$$
Therefore, from \eqref{phin1-} and \eqref{phin1+}, we have
\begin{equation*}
\vert h^{(2)} _n (s) \vert = \vert \left.p(s) \, (x^\alpha g_x(x,s) \Phi _{\alpha,n} (x))\right|_{x=1} \vert
= \sqrt{2-\alpha} \vert \left.p(s) \, (x^\alpha g_x(x,s)\right|_{x=1} \vert .
\end{equation*}
Moreover, since $g(\cdot,s) \in H^2 _\alpha (0,1)$, then  $x\mapsto x^\alpha g_x (x,s)$ belongs to $H^1 (0,1)$. By the continuous injection of $H^1 (0,1)$ into $C^0 ([0,1])$, 
there exists a positive constant $C_\infty$ such that
$$
 \vert \left.(x^\alpha g_x(x,s) )\right|_{x=1} \vert \leq C_\infty \Vert g(\cdot, s )\Vert _{H^2 _\alpha (0,1)} 
 \leq C_\infty \Vert g \Vert _{C^0 ([0,T], V^{(2,0)} _\alpha (0,1))} .
$$
Therefore, we get
$$\vert h^{(2)} _n (s) \vert \leq  C_\infty \, \sqrt{2-\alpha} \, \vert p (s) \vert \, \Vert g \Vert _{C^0 ([0,T], V^{(2,0)} _\alpha (0,1))},\quad \forall \,n \geq 1,
$$
hence, $h^{(2)} _n \in L^2 (0,T)$ and
\begin{equation}
\label{h2-conc}
\exists\, C' _\infty >0\,\,: \,\, \Vert h^{(2)} _n \Vert _{L^2 (0,T)} \leq C' _\infty \, \Vert p \Vert _{L^2 (0,T)} \, \Vert g \Vert _{C^0 ([0,T], V^{(2,0)} _\alpha (0,1))},\, \forall\, n \geq 1.
\end{equation}
This proves \eqref{eq-Sn2-h2}. 

Now, we prove \eqref{Sn123-l2} and \eqref{eq-Sn2-l2}.
These results follow from \eqref{eq-Sn2-h2} and from classical results of Ingham type (we refer, in particular, 
to \cite[Proposition 19, Theorem 6 and Corollary 4]{B-L}).
We have seen in Proposition \ref{prop-vp-w}, when $\alpha \in [0,1)$, and in Proposition \ref{prop-vp-s}, when $\alpha \in [1,2)$, that 
$$ \sqrt{\lambda _{\alpha, n+1}} - \sqrt{\lambda _{\alpha, n}} \to \frac{2-\alpha}{2} \pi \quad \text{ as } n \to \infty .$$
Furthermore, a stronger gap condition holds
$$ \forall\, \alpha \in [0,2), \quad \sqrt{\lambda _{\alpha, n+1}} - \sqrt{\lambda _{\alpha, n}} \geq \frac{2-\alpha}{2} \pi ,$$
hence we are allowed to apply a general result of Ingham (see, e.g., \cite[Theorem 4.3]{Kom-Lor}, generalized by Haraux \cite{Haraux}, see also \cite[Theorem 6]{B-L}), and we derive that given 
$$ T_1 > T_0 := \frac{2\pi}{\frac{2-\alpha}{2} \pi} = \frac{4}{2-\alpha},$$
there exist $C_1 (\alpha,T_1), C_2 (\alpha,T_1) >0$ such that, for for every sequence
$(c_n)_{n\geq 1}$ with finite support and complex values, it holds that  
\begin{equation}
\label{Ingham1}
C_1 \sum _{n=1} ^\infty \vert c_n \vert ^2 
\leq \int _0 ^{T_1} \Bigl \vert \sum _{n=1} ^\infty  c_n e^{i\sqrt{\lambda _{\alpha, n}} t} \Bigr \vert ^2 \, dt  
\leq C_2 \sum _{n=1} ^\infty \vert c_n \vert ^2 .
\end{equation}
Therefore, if $T_1 > T_0$, \eqref{Ingham1} implies that the sequence $(e^{i\sqrt{\lambda _{\alpha, n}} t})_{n\geq 1}$ is a Riesz basis of 
$\overline{ \text{Vect } \{ e^{i\sqrt{\lambda _{\alpha, n}} t}, n\geq 1 \}} \subset L^2 (0,T_1)$ (see \cite[Proposition 19, point (2)]{B-L} ). So, for all $T>0$, there exists a positive constant $C_I(\alpha,T)$ such that
\begin{equation}
\label{Ingham2}
\forall\, f \in L^2 (0,T), \quad \sum _{n=1} ^\infty \left\vert \int _0 ^T f(t) e^{i\sqrt{\lambda _{\alpha, n}}t} \, dt \right \vert ^2 
\leq C_I (\alpha, T) \Vert f \Vert _{L^2 (0,T)} ^2 
\end{equation}
by applying \cite[Proposition 19, point (3)]{B-L} for $T> T_0$, or by extending $f$ by $0$ on $(T,T_0)$ for $T\leq T_0$, see also \cite[Corollary 4]{B-L}).
We can now conclude the proof of Lemma \ref{lem-Sn2-l2}. First, we note from \eqref{h2-dev} that
\begin{equation*}
\begin{split}
\left\vert \int _0 ^T h^{(2)} _n (t)\,  e^{i\sqrt{\lambda _{\alpha, n}}t} \, dt \right \vert
&= \left\vert \int _0 ^T \left.p(t) \, (x^\alpha g_x(x,t) \Phi _{\alpha,n} (x))\right|_{x=1}  \,  e^{i\sqrt{\lambda _{\alpha, n}}t} \, dt \right \vert 
\\
&= \sqrt{2-\alpha} \, \left\vert \int _0 ^T p(t) \, g_x(1,t)  \,  e^{i\sqrt{\lambda _{\alpha, n}}t} \, dt \right \vert ,
\end{split}
\end{equation*}
and then we can apply \eqref{Ingham2} to the function $t \mapsto p(t) \, g_x(1,t)$ (which is independent of $n$), and we obtain that 
\begin{equation*}
\begin{split}
\sum _{n=1} ^\infty \vert S^{(2)} _n \vert ^2
&= \sum _{n=1} ^\infty \Bigl \vert \int _0 ^T h^{(2)} _n (s) \, e^{i\sqrt{\lambda _{\alpha,n}} s} \, ds \Bigr \vert ^2 = (2-\alpha) \sum _{n=1} ^\infty \Bigl \vert \int _0 ^T p(t) \, g_x(1,t)  \,  e^{i\sqrt{\lambda _{\alpha, n}}t} \, dt \Bigr \vert  ^2
\\
&\leq (2-\alpha) \, C_I (\alpha, T) \Vert p(\cdot) \, g_x(1,\cdot) \Vert _{L^2 (0,T)} ^2
\\
&\leq (2-\alpha) \, C_I (\alpha, T) C_\infty ^2 \, \Vert p \Vert _{L^2(0,T)}^2  \,  \Vert g \Vert _{C^0 ([0,T], V^{(2,0)} _\alpha (0,1))} ^2.
\end{split}
\end{equation*}
This concludes the proof of Lemma \ref{lem-Sn2-l2}.
\end{proof}


\subsubsection{The term $S^{(3)} _n$ associated to $h^{(3)}$} \hfill

Finally, we analyze $h^{(3)} _n$, and we prove the following

\begin{Lemma}
\label{lem-S3-h2}
Let $T>0$, $p\in L^2 (0,T)$ and $g\in C^0 ([0,T], V^{(2,0)} _\alpha (0,1))$. Then, the sequence  $(S^{(3)} _n) _{n\geq 1}$ satisfies
\begin{equation}
\label{Sn123-h3}
\sum _{n=1} ^\infty \vert S^{(3)} _n \vert ^2 < \infty ,
\end{equation}
and there exists $C_3 (\alpha,T)>0$ independent of $p \in L^2 (0,T)$ and of $g\in C^0 ([0,T], V^{(2,0)} _\alpha (0,1))$ such that
\begin{equation}
\label{eq-Sn3-l2}
\sum _{n=1} ^\infty \vert S^{(3)} _n \vert ^2 
 \leq C_3 (\alpha,T) ^2 \, \Vert p \Vert _{L^2(0,T)} ^2  \,  \Vert g \Vert _{C^0 ([0,T], V^{(2,0)} _\alpha (0,1))} ^2 .
\end{equation}
\end{Lemma}

\noindent {\it Proof of Lemma \ref{lem-S3-h2}.} First, let us prove that $\sum _n \vert S^{(3)} _n \vert ^2 < \infty$. Notice that
$$
\vert S^{(3)} _n \vert ^2 \leq  \Bigl( \int _0 ^T \vert h^{(3)} _n (s)\vert  \, ds \Bigr) ^2
\leq \Vert p \Vert _{L^2(0,T)} ^2 \Bigl( \int _0 ^T \vert \langle (x^\alpha g_x)_x , \Phi _{\alpha,n} \rangle _{L^2 (0,1)} \vert ^2 \, ds \Bigr) ,
$$
hence,
\begin{equation*}
\begin{split}
\sum _{n=1} ^\infty \vert S^{(3)} _n \vert ^2
&\leq \Vert p \Vert _{L^2(0,T)} ^2 \Bigl( \int _0 ^T \sum _{n=1} ^\infty \vert \langle (x^\alpha g_x)_x , \Phi _{\alpha,n} \rangle _{L^2 (0,1)} \vert ^2 \, ds \Bigr) 
\\
&\leq \Vert p \Vert _{L^2(0,T)} ^2 \Bigl( \int _0 ^T \Vert (x^\alpha g_x)_x \Vert _{L^2 (0,1)} ^2 \, ds \Bigr)\\
&\leq \Vert p \Vert _{L^2(0,T)} ^2 T \Vert g \Vert _{C^0 ([0,T], V^{(2,0)} _\alpha (0,1))} ^2 .
\end{split}
\end{equation*}
This concludes the proof of Lemma \ref{lem-S3-h2}. \qed

The proof of Lemma \ref{lem-cond-h2} follows directly from \eqref{eq-Sn-Sn123} and Lemmas \ref{lem-Sn1-l2}, \ref{lem-Sn2-l2} and \ref{lem-S3-h2}.
\end{proof}


\subsection{Proof of Proposition \ref{prop-"thm3"}, part a)} \hfill
\label{sec-pr-prop-a}

In this section we prove that Lemma \ref{lem-cond-h2} implies that \eqref{eq-cond-suff} holds true,
and then from \eqref{condition-H3H2} we deduce that $(w^{(p)}(T), w^{(p)} _t(T)) \in H^3 _{(0)} (0,1) \times D(A)$, which is the aim in point a) of Proposition \ref{prop-"thm3"}.

\subsubsection{A regularity result} \hfill
\label{sec-reg-muw}

\begin{Lemma}
\label{lem-reg-muw}
If $\mu \in V^2 _\alpha(0,1)$ (defined in \eqref{eq-space-mu}) and $w\in C^0 ([0,T], D(A))$, then $\mu w \in C^0 ([0,T], V^{(2,0)} _\alpha (0,1))$. Moreover, there exists $C(\alpha,T) >0$, independent of $\mu \in V^2 _\alpha$ and $w\in C^0 ([0,T], D(A))$, such that
\begin{equation}
\label{eq-mu-w-muw}
\Vert \mu w \Vert _{C^0 ([0,T], V^{(2,0)} _\alpha (0,1))} 
\leq C(\alpha,T) \,  \Vert \mu \Vert _{V^2 _\alpha}  \, \Vert  w \Vert _{C^0 ([0,T], D(A))} .
\end{equation}
\end{Lemma}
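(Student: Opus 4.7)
The plan is to verify, pointwise in $t\in[0,T]$, the four defining properties of $V^{(2,0)}_\alpha(0,1)$ for the product $\mu w(\cdot,t)$: namely $\mu w \in L^2$, $\mu w \in H^1_\alpha$, $\mu w \in H^2_\alpha$, and the vanishing of the flux at the degenerate endpoint $(x^\alpha(\mu w)_x)(0)=0$. The starting identities are
\[
x^\alpha(\mu w)_x = \mu\,(x^\alpha w_x) + (x^\alpha\mu_x)\,w,
\]
and, by differentiating once more,
\[
\bigl(x^\alpha(\mu w)_x\bigr)_x = \mu\,(x^\alpha w_x)_x + 2\,\mu_x\,(x^\alpha w_x) + (x^\alpha\mu_x)_x\,w,
\]
which will reduce every verification to a product estimate on the building blocks $\mu$, $x^{\alpha/2}\mu_x$, $(x^\alpha\mu_x)_x$, $w$, $x^{\alpha/2}w_x$, $(x^\alpha w_x)_x=Aw$.

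I would split the analysis into the two regimes $\alpha\in[0,1)$ and $\alpha\in[1,2)$, mirroring the proof of Lemma \ref{lem-wellposed}. In both cases the hypotheses on $\mu$ force $\mu\in L^\infty(0,1)$, hence $\mu w\in L^2$, and the $H^1_\alpha$ regularity of $\mu w$ together with the product estimate $\Vert \mu w\Vert_{H^1_\alpha}\le C\Vert\mu\Vert_{V^1_\alpha}\Vert w\Vert_{D(A)}$ is exactly the content of Lemma \ref{lem-wellposed}. For the $H^2_\alpha$ regularity I would bound each of the three summands in the expansion of $(x^\alpha(\mu w)_x)_x$: the first by $\Vert\mu\Vert_{L^\infty}\Vert Aw\Vert_{L^2}$; the second by rewriting $2\mu_x(x^\alpha w_x)=2(x^{\alpha/2}\mu_x)(x^{\alpha/2}w_x)$ and using $x^{\alpha/2}\mu_x\in L^\infty$; and the third -- the delicate one -- using $(x^\alpha\mu_x)_x\in L^2$ and $w\in L^\infty$ when $\alpha\in[0,1)$, while for $\alpha\in[1,2)$ the function $w$ may blow up near $0$ (as it does for the eigenfunctions, cf.\ Lemma \ref{lem-prop-phi-n-0-1-s}) so that only $w\in L^2$ is available, and the extra $L^\infty$-assumption on $(x^\alpha\mu_x)_x$ built into $V^{(2,\infty,\infty)}_\alpha$ is precisely what is needed to close the estimate.

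The main obstacle is the boundary condition $(x^\alpha(\mu w)_x)(0)=0$. I would argue it by taking the limit $x\to 0^+$ in the first identity above. The summand $\mu\,(x^\alpha w_x)$ vanishes at $0$ because $w\in D(A)$ gives $(x^\alpha w_x)(0)=0$ and $\mu$ is bounded. For the second summand I would write $x^\alpha\mu_x = x^{\alpha/2}\cdot(x^{\alpha/2}\mu_x)$ and exploit $x^{\alpha/2}\mu_x\in L^\infty$ to conclude $x^\alpha\mu_x(x)\to 0$ as $x\to 0^+$ when $\alpha>0$, combined with the fact that $w$ remains bounded in a neighborhood of $0$ (by the $H^1_\alpha\hookrightarrow L^\infty$ embedding for $\alpha\in[0,1)$, and for $\alpha\in[1,2)$ by the argument already used in the proof of Lemma \ref{lem-wellposed}, where $(x^\alpha w_x)\in H^1(0,1)$ vanishing at $0$ is used to control $w$ via an integration from $0$).

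Finally, collecting the bounds obtained in the previous steps yields a pointwise-in-time product estimate of the form $\Vert \mu w(\cdot,t)\Vert_{V^{(2,0)}_\alpha}\le C(\alpha)\,\Vert\mu\Vert_{V^2_\alpha}\,\Vert w(\cdot,t)\Vert_{D(A)}$, and taking the supremum over $t\in[0,T]$ gives \eqref{eq-mu-w-muw}. Continuity of $t\mapsto\mu w(\cdot,t)$ with values in $V^{(2,0)}_\alpha(0,1)$ then follows from $w\in C^0([0,T],D(A))$ and the linearity of $w\mapsto\mu w$ by applying the same estimate to $w(t_1)-w(t_2)$.
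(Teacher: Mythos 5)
Your decomposition is the same as the paper's: expand $(x^\alpha(\mu w)_x)_x=(x^\alpha\mu_x)_x w+(x^\alpha w_x)_x\mu+2x^\alpha\mu_x w_x$, estimate term by term using $\mu\in L^\infty$, $x^{\alpha/2}\mu_x\in L^\infty$ and (for $\alpha\ge1$) $(x^\alpha\mu_x)_x\in L^\infty$, and split into the two degeneracy regimes. All of that part matches the paper and is fine.

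The flawed step is your verification of the boundary condition $(x^\alpha(\mu w)_x)(0)=0$ when $\alpha\in[1,2)$. You assert that $w\in D(A)$ ``remains bounded in a neighborhood of $0$'' by an argument from Lemma \ref{lem-wellposed}; but that lemma only proves boundedness of $\mu$ (via $|\mu_x|\le Cx^{-\alpha/2}\in L^1$), not of $w$, and the claim is in fact false for $\alpha\in(3/2,2)$: from $x^\alpha w_x(x)=\int_0^x(s^\alpha w_s)_s\,ds$ one only gets $|x^\alpha w_x|\le C\sqrt{x}$, hence $|w_x|\le Cx^{1/2-\alpha}$ and $|w(x)|\le C\bigl(1+x^{3/2-\alpha}\bigr)$, which blows up; the function $x\mapsto x^{3/2-\alpha}$ (suitably modified near $x=1$) lies in $D(A)$ and is unbounded. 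Your argument can be salvaged by using this quantified growth rate, since $|x^\alpha\mu_x(x)\,w(x)|\le Cx^{\alpha/2}\bigl(1+x^{3/2-\alpha}\bigr)=C\bigl(x^{\alpha/2}+x^{(3-\alpha)/2}\bigr)\to0$, but that is not what you wrote. The paper sidesteps the issue entirely: as established in the proof of Lemma \ref{lem-IPP-s}, for $\alpha\ge1$ \emph{every} $f\in H^2_\alpha(0,1)$ automatically satisfies $x^\alpha f_x\to0$ as $x\to0^+$, so once $\mu w\in H^2_\alpha(0,1)$ is proved the flux condition at $0$ comes for free. (A smaller loose end: for the weakly degenerate case you prove $x^\alpha\mu_x\to0$ only ``when $\alpha>0$'' and leave $\alpha=0$ unaddressed; the paper's own proof is equally terse there, so this is a shared rather than a new gap.)
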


\begin{proof}[Proof of Lemma \ref{lem-reg-muw}] We distinguish the cases $\alpha \in [0,1)$ and $\alpha \in [1,2)$.
\begin{itemize}
\item $\alpha \in [0,1)$: let $\mu \in V^{(2,\infty)} _\alpha (0,1)$ and $w \in V^{(2,0)} _\alpha (0,1)$. As we have already shown, $\mu \in H^2 _\alpha (0,1)$ implies that $\mu \in L^\infty (0,1)$, and so $\mu w \in L^2 (0,1)$. Moreover, we have that $(\mu w)_x = \mu _x w + \mu w_x\in L^1(0,1)$ because $\mu, w \in L^\infty (0,1)$, $\mu _x = (x^{\alpha /2} \mu _x) 
x^{-\alpha /2} \in L^1 (0,1)$ and $w_x \in L^1 (0,1)$. Thus, $\mu w$ is absolutely continuous on $[0,1]$. Furthermore, $x^{\alpha /2} (\mu w)_x = (x^{\alpha /2} \mu _x) w + (x^{\alpha /2} w_x) \mu\in L^2(0,1)$ because $x^{\alpha /2} \mu _x, x^{\alpha /2} w_x \in L^2 (0,1)$ and $w, \mu \in L^\infty (0,1)$. We observe that $(x^\alpha (\mu w)_x)_x = (x^\alpha \mu _x)_x w + (x^\alpha w_x)_x \mu + 2 x^\alpha \mu _x w_x$. Since $(x^\alpha \mu _x)_x,\, (x^\alpha w_x)_x \in L^2 (0,1)$ and $w,\mu \in L^\infty (0,1)$, we deduce that $(x^\alpha \mu _x)_x w + (x^\alpha w_x)_x \mu \in L^2 (0,1)$. Concerning the term $2 x^\alpha \mu _x w_x$, we note that $\mu \in V^{(2,\infty)} _\alpha (0,1)$ implies that $\vert x^\alpha \mu _x w_x \vert \leq C x^{\alpha /2} \vert w_x \vert \in L^2 (0,1)$. Hence, $\mu w \in H^2 _\alpha (0,1)$. It remains to check the condition at $x=0$. We have that $x^{\alpha} (\mu w)_x = x^{\alpha} \mu _x w + x^{\alpha}w_x  \mu $. Since $\mu \in V^{(2,\infty)} _\alpha (0,1)$ and $w\in L^\infty (0,1)$, then $x^{\alpha} \mu _x w \to 0$ as $x\to 0$, and since $w \in V^{2,0} _\alpha (0,1)$ and $\mu \in L^\infty (0,1)$, then $x^{\alpha}w_x  \mu \to 0$ as $x\to 0$.

Thus $\mu w \in V^{(2,0)} _\alpha (0,1)$. We conclude that if $\mu \in V^{(2,\infty)} _\alpha (0,1)$ and $w\in C^0 ([0,T], D(A))$, then $\mu w \in C^0 ([0,T], V^{(2,0)} _\alpha (0,1))$.
\item $\alpha \in [1,2)$: we observe that $\mu \in V^{(2, \infty, \infty)} _\alpha (0,1)$ implies that $\vert \mu _x \vert \leq \frac{C}{x^{\alpha /2}}$, hence $\mu_ x \in L^1 (0,1)$. Therefore, $\mu \in L^\infty (0,1)$ and so $\mu w \in L^2 (0,1)$. Moreover, $x^{\alpha /2} (\mu w)_x = (x^{\alpha /2} \mu _x) w + (x^{\alpha /2} w_x) \mu\in L^2(0,1)$ because $x^{\alpha /2} \mu _x \in L^\infty (0,1)$ and $w \in L^2 (0,1)$. Furthermore, since $x^{\alpha /2} w_x \in L^2 (0,1)$ and $\mu \in L^\infty (0,1)$, we have $(x^{\alpha /2} w _x) \mu \in L^2 (0,1)$. Hence $x^{\alpha /2} (\mu w)_x \in L^2 (0,1)$. Now, consider $(x^\alpha (\mu w)_x)_x = (x^\alpha \mu _x)_x w + (x^\alpha w_x)_x \mu + 2 x^\alpha \mu _x w_x$. Since $(x^\alpha \mu _x)_x \in L^\infty (0,1)$ and $w \in L^2 (0,1)$, we have that $(x^\alpha \mu _x)_x w \in L^2 (0,1)$. Moreover, since $(x^\alpha w _x)_x \in L^2 (0,1)$ and $\mu \in L^\infty (0,1)$, it holds that $(x^\alpha w _x)_x \mu \in L^2 (0,1)$. Concerning the term $2 x^\alpha \mu _x w_x$, we note that $\mu \in V^{(2,\infty, \infty)} _\alpha (0,1)$ implies that $\vert x^\alpha \mu _x w_x \vert \leq C x^{\alpha /2} \vert w_x \vert \in L^2 (0,1)$. Therefore, $\mu w \in H^2 _\alpha (0,1)$. Finally, $\mu w \in H^2 _\alpha (0,1)$, with $\alpha \in [1,2)$, implies that $x^{\alpha} (\mu  w)_x \to 0$ as $x\to 0$. So, we have proved that $\mu w \in V^{(2,0)} _\alpha (0,1)$. And, if $\mu \in V^{(2,\infty,\infty)} _\alpha (0,1)$ and $w\in C^0 ([0,T], D(A))$, then $\mu w \in C^0 ([0,T], V^{(2,0)} _\alpha (0,1)$.
\end{itemize}
This concludes the proof of Lemma \ref{lem-reg-muw}.
\end{proof}


\subsubsection{Proof of Proposition \ref{prop-"thm3"}, part a)} \hfill
\label{sec-reg-H3-ok}

Using formula \eqref{def-gammanT}, Lemma \ref{lem-reg-muw} (with $w= w^{(p)}$) and Lemma \ref{lem-cond-h2}, we obtain that \eqref{eq-cond-suff} holds true and then \eqref{condition-H3H2} shows that $(w^{(p)}(T), w^{(p)} _t(T)) \in H^3 _{(0)} (0,1) \times D(A)$, which proves the validity of point a) of Proposition \ref{prop-"thm3"}. \qed


\subsection{Proof of Proposition \ref{prop-"thm3"}, part b)} \hfill

In Proposition \ref{prop-"thm3"}, part a), we have proved that $\Theta _T$ maps $L^2 (0,T)$ into $H^3 _{(0)}(0,1) \times D(A)$. Now we show that $\Theta _T$ is differentiable at every $p\in L^2 (0,T)$. Let $p_0, q \in L^2 (0,T)$. Then, consider $w^{(p_0)}$ solution of \eqref{CMU-eq-ctr-10} with $p=p_0$, and $w^{(p_0+q)}$ solution of \eqref{CMU-eq-ctr-10} with $p=p_0+q$.

Formally, let us write a limited development of $w^{(p_0+q)}$ with respect to $q$:
$$ w^{(p_0+q)} = w^{(p_0)} + W_1 (q) + \cdots $$
We use this development in \eqref{CMU-eq-ctr-10} to find the equation satisfied by the supposed first order term $W_1 (q)$:
denoting
$$ Pw:= w_{tt} - (x^\alpha w_x)_x ,$$
we have 
$$ P(w^{(p_0)} + W_1 (q) + \cdots) 
= \Bigl(p_0(t)+q(t) \Bigr) \, \mu (x) \, \Bigl( w^{(p_0)} + W_1 (q) + \cdots \Bigr),$$
hence, we deduce that $W_1(q)$ is solution of 
$$ P W_1 (q) = p_0 (t) \mu (x) W_1 (q) + q(t) \mu (x) w^{(p_0)} ,$$
which is the motivation to consider $W_1(q)$ as the solution of \eqref{CMU-eq-ctr-10-W} with $p=p_0$, that is,
$$ W_1 (q) = W ^{(p_0,q)} .$$
So, we introduce
\begin{equation}
\label{def-xi}
v^{(p_0,q)} := w^{(p_0+q)} - w^{(p_0)} - W^{(p_0,q)} ,
\end{equation}
which allows us to write
$$ \Theta_T (p_0+q) = \Theta_T (p_0) + (W ^{(p_0,q)} (T) , W ^{(p_0,q)} _t(T)) + (v ^{(p_0,q)} (T) , v ^{(p_0,q)} _t(T)) .$$
We are going to prove the following lemmas.

\begin{Lemma}
\label{lem-diff-ordre1}
The application
\begin{equation*}
\begin{split}
 L^2 (0,T) &\to H^3 _{(0)} (0,1) \times D(A)\\
q&\mapsto (W ^{(p_0,q)} (T) , W ^{(p_0,q)} _t(T)) 
\end{split}
\end{equation*}
is well-defined, linear and continuous.
\end{Lemma}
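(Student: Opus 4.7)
The plan is to imitate closely the proof of Proposition \ref{prop-"thm3"} part a), exploiting the fact that \eqref{CMU-eq-ctr-10-W} is a linear inhomogeneous wave equation in $W^{(p_0,q)}$ with the frozen coefficient $w^{(p_0)}$. Linearity of $q\mapsto W^{(p_0,q)}$ (and hence of the trace map at $t=T$) is immediate because \eqref{CMU-eq-ctr-10-W} is linear in the pair $(W^{(p_0,q)},q)$ and the initial data vanish. For well-posedness at the $C^0([0,T],D(\mathcal A))$ level, I would view \eqref{CMU-eq-ctr-10-W} as the abstract Cauchy problem \eqref{CMU-eq-ctr-f-ordre1} with bilinear control $p_0$ and inhomogeneity $\mathcal F(s)=(0,q(s)\mu w^{(p_0)}(\cdot,s))^{\top}$. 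Since $w^{(p_0)}\in C^0([0,T],D(A))$ by Proposition \ref{prop-"prop2"} and $\mu\in V^2_\alpha\subset V^1_\alpha$, Lemma \ref{lem-wellposed} gives $\mu w^{(p_0)}\in C^0([0,T],H^1_\alpha)$, hence $q\mu w^{(p_0)}\in L^2((0,T),H^1_\alpha)$. Applying Proposition \ref{prop-"prop2"} then yields the estimate
\begin{equation*}
\Vert W^{(p_0,q)}\Vert_{C^0([0,T],D(A))} \leq C(\alpha,T,p_0,\mu)\, \Vert q\Vert_{L^2(0,T)}\, \Vert w^{(p_0)}\Vert_{C^0([0,T],D(A))}.
\end{equation*}

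For the hidden regularity $(W^{(p_0,q)}(T),W^{(p_0,q)}_t(T))\in H^3_{(0)}\times D(A)$, I would reproduce the spectral decomposition of Section \ref{sec-form-gamman}, noting that the source now has two pieces: $p_0(t)\mu W^{(p_0,q)}$ and $q(t)\mu w^{(p_0)}$. Writing the analogue of \eqref{formule-exp2}, the Fourier coefficient that plays the role of $\gamma^{(p)}_n(T)$ splits as $\widetilde\gamma_n(T)=I_n(q)+II_n(q)$, and the sufficient condition \eqref{condition-H3H2} reduces to proving
\begin{equation*}
\sum_{n\in\mathbb Z}\lambda_{\alpha,\vert n\vert}^{2}\vert I_n(q)\vert^{2}<\infty \quad\text{and}\quad \sum_{n\in\mathbb Z}\lambda_{\alpha,\vert n\vert}^{2}\vert II_n(q)\vert^{2}<\infty,
\end{equation*}
with bounds depending linearly on $\Vert q\Vert_{L^2(0,T)}^{2}$. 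Each sum is treated by Lemma \ref{lem-cond-h2} together with Lemma \ref{lem-reg-muw}: for $II_n$, take $p=q$ and $g=\mu w^{(p_0)}$, observing that Lemma \ref{lem-reg-muw} gives $g\in C^0([0,T],V^{(2,0)}_\alpha)$; for $I_n$, take $p=p_0$ and $g=\mu W^{(p_0,q)}$, again in $C^0([0,T],V^{(2,0)}_\alpha)$ by Lemma \ref{lem-reg-muw}, since $W^{(p_0,q)}\in C^0([0,T],D(A))$ by the first paragraph, and absorbing $\Vert W^{(p_0,q)}\Vert_{C^0([0,T],D(A))}$ through the estimate above.

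The main obstacle I anticipate is not a new analytical ingredient but a careful bookkeeping of constants: once the abstract framework of Sections \ref{sec-hidden}--\ref{sec-reg-H3-ok} is available, the argument for $W^{(p_0,q)}$ is essentially a transcription of the one for $w^{(p)}$, with the role of the initial datum $1$ played by the inhomogeneity $q\mu w^{(p_0)}$. One minor point to handle carefully is that \eqref{eq-cond-suff} sums over $n\in\mathbb Z$ while Lemma \ref{lem-cond-h2} is stated for $n\geq 1$: the negative-frequency contribution is estimated by the same argument after complex conjugation, because the Ingham bound \eqref{Ingham2} depends only on $\vert\sqrt{\lambda_{\alpha,n}}\vert$. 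Assembling all these estimates gives a bound
\begin{equation*}
\Vert(W^{(p_0,q)}(T),W^{(p_0,q)}_t(T))\Vert_{H^3_{(0)}\times D(A)} \leq C(\alpha,T,p_0,\mu,w^{(p_0)})\, \Vert q\Vert_{L^2(0,T)},
\end{equation*}
which yields both well-definedness into $H^3_{(0)}(0,1)\times D(A)$ and continuity of the linear map, completing the proof.
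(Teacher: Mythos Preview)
Your proposal is correct and follows essentially the same approach as the paper: well-posedness via Proposition \ref{prop-"prop2"} with source $q\mu w^{(p_0)}$, then the spectral decomposition of Section \ref{sec-form-gamman} with the coefficient split into the two pieces $\gamma_n^{(p_0,\mu,W^{(p_0,q)})}$ and $\gamma_n^{(q,\mu,w^{(p_0)})}$, each handled by Lemma \ref{lem-cond-h2} combined with Lemma \ref{lem-reg-muw}. The only detail the paper makes explicit that you leave implicit is the separate estimate of the zero-mode contribution $\Gamma_0^{(p_0,q)}(T)$ via $\Vert R_0\Vert_{L^1(0,T)}$, but this is indeed routine bookkeeping.
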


and

\begin{Lemma}
\label{lem-diff-ordre2}
The application
\begin{equation*}
\begin{split}
L^2 (0,T)&\to H^3 _{(0)} (0,1) \times D(A) \\
q &\mapsto (v ^{(p_0,q)} (T) , v ^{(p_0,q)} _t(T))
\end{split}
\end{equation*}
is well-defined, and satisfies
\begin{equation}
\label{def-xi-diff}
\frac{\Vert (v ^{(p_0,q)} (T) , v ^{(p_0,q)} _t(T)) \Vert _{H^3 _{(0)} (0,1) \times D(A)} }{\Vert q \Vert _{L^2 (0,T)}} \to 0, \quad \text{ as }\,\, \Vert q \Vert _{L^2 (0,T)} \to 0 .
\end{equation}
\end{Lemma}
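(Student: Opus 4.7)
The plan is to exploit the quadratic structure of the remainder $v^{(p_0,q)}$. First, I would subtract the equations satisfied by $w^{(p_0+q)}$, $w^{(p_0)}$ and $W^{(p_0,q)}$ (all three with the same Neumann boundary conditions) to find that $v^{(p_0,q)}$ solves
$$v^{(p_0,q)}_{tt} - (x^\alpha v^{(p_0,q)}_x)_x = p_0(t)\mu(x)\, v^{(p_0,q)} + q(t)\mu(x)\,\delta w,$$
with homogeneous initial data, where $\delta w := w^{(p_0+q)} - w^{(p_0)}$. The essential point is that the source term carries both a factor $q$ and the difference $\delta w$, which itself will turn out to be of size $O(\|q\|_{L^2})$.

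Next I would apply Proposition \ref{prop-"prop2"} twice. Subtracting the equations for $w^{(p_0+q)}$ and $w^{(p_0)}$ shows that $\delta w$ solves the evolution with control $p_0+q$ and source $q(t)\mu(x)\, w^{(p_0)}$, starting from zero initial data; since $w^{(p_0)} \in C^0([0,T],D(A))$ and multiplication by $\mu \in V^1_\alpha$ preserves $H^1_\alpha(0,1)$ (by Lemma \ref{lem-wellposed}), the estimate \eqref{estimW} gives $\|\delta w\|_{C^0([0,T],D(\mathcal{A}))} \leq C\,\|q\|_{L^2(0,T)}$, with $C$ uniform for $\|q\|_{L^2}$ small. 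Applied a second time to $v^{(p_0,q)}$ itself with source $q\mu\,\delta w$, the same proposition yields $\|v^{(p_0,q)}\|_{C^0([0,T],D(\mathcal{A}))} \leq C\,\|q\|_{L^2}^2$; in particular $v^{(p_0,q)}$ is well-defined in that space.

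The core step is then to upgrade this bound to the topology of $H^3_{(0)}(0,1) \times D(A)$. Since $v^{(p_0,q)}$ has zero initial data, repeating the derivation of Section \ref{sec-form-gamman} that led to \eqref{formule-exp2} produces the expansion
$$\left(\begin{array}{c} v^{(p_0,q)}(x,T) \\ v^{(p_0,q)}_t(x,T) \end{array}\right) = \sum_{n\in\mathbb{Z}^*}\frac{1}{2i\omega_{\alpha,n}}\,\tilde\gamma_n(T)\,\Psi_{\alpha,n}(x)\,e^{i\omega_{\alpha,n}T},$$
with $\tilde\gamma_n(T) = \int_0^T \langle p_0(s)\mu\, v^{(p_0,q)}(\cdot,s) + q(s)\mu\,\delta w(\cdot,s),\, \Phi_{\alpha,|n|}\rangle_{L^2(0,1)}\, e^{-i\omega_{\alpha,n}s}\,ds$. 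Splitting this integral into its two pieces and applying Lemma \ref{lem-cond-h2} once with $(p,g) = (p_0,\, \mu v^{(p_0,q)})$ and once with $(p,g) = (q,\, \mu\,\delta w)$, where Lemma \ref{lem-reg-muw} guarantees that both $\mu v^{(p_0,q)}$ and $\mu\,\delta w$ lie in $C^0([0,T], V^{(2,0)}_\alpha(0,1))$, one obtains, via the same argument as in Section \ref{sec-pr-prop-a},
$$\|(v^{(p_0,q)}(T), v^{(p_0,q)}_t(T))\|_{H^3_{(0)}\times D(A)} \leq C\bigl(\|p_0\|_{L^2}\,\|v^{(p_0,q)}\|_{C^0([0,T],D(A))} + \|q\|_{L^2}\,\|\delta w\|_{C^0([0,T],D(A))}\bigr).$$
Plugging in the two bounds from the previous paragraph delivers $\|(v^{(p_0,q)}(T), v^{(p_0,q)}_t(T))\|_{H^3_{(0)}\times D(A)} = O(\|q\|_{L^2}^2)$, and dividing by $\|q\|_{L^2(0,T)}$ yields \eqref{def-xi-diff}.

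The main obstacle is purely bookkeeping: one has to verify that the constant $C(\alpha,T,p)$ appearing in Proposition \ref{prop-"prop2"}, together with those produced by Lemmas \ref{lem-cond-h2} and \ref{lem-reg-muw}, remain uniformly bounded as $\|q\|_{L^2(0,T)} \to 0$, so that the genuinely quadratic factor $\|q\|_{L^2}^2$ dominates. Once this is settled, no new analytical ingredient beyond part a) is needed.
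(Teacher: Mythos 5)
Your proposal is correct and follows essentially the same route as the paper: well-posedness estimates give $\Vert w^{(p_0+q)}-w^{(p_0)}\Vert_{C^0([0,T],D(\mathcal A))}=O(\Vert q\Vert_{L^2})$ and $\Vert v^{(p_0,q)}\Vert_{C^0([0,T],D(\mathcal A))}=O(\Vert q\Vert_{L^2}^2)$, and then Lemma \ref{lem-cond-h2} combined with Lemma \ref{lem-reg-muw} upgrades the final-time bound to the $H^3_{(0)}\times D(A)$ topology. Two small remarks. First, you split the source as $p_0\mu v^{(p_0,q)}+q\mu\,\delta w$ and apply Lemma \ref{lem-cond-h2} with $(p,g)=(p_0,\mu v^{(p_0,q)})$ and $(p,g)=(q,\mu\,\delta w)$, whereas the paper first rewrites the source as $(p_0+q)\mu v^{(p_0,q)}+q\mu W^{(p_0,q)}$ and uses $(p,g)=(p_0+q,\mu v^{(p_0,q)})$ and $(p,g)=(q,\mu W^{(p_0,q)})$; both splittings give $O(\Vert q\Vert_{L^2}^2)$ since each factor pair contributes the right powers of $\Vert q\Vert_{L^2}$, so your version is, if anything, slightly more direct. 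Second, your displayed expansion of $(v^{(p_0,q)}(T),v^{(p_0,q)}_t(T))$ omits the zero mode: the sum over $n\in\mathbb Z^*$ must be supplemented by the term $E^{(p_0,q)}_0(T)=\bigl(\int_0^T z_0(s)(T-s)\,ds,\ \int_0^T z_0(s)\,ds\bigr)$ coming from the projection onto $\Phi_{\alpha,0}$ (the formula with $1/(2i\omega_{\alpha,n})$ is singular at $n=0$). This term is not covered by Lemma \ref{lem-cond-h2} and needs the separate, elementary estimate $\Vert z_0\Vert_{L^1(0,T)}\le C\Vert q\Vert_{L^2(0,T)}^2$, which follows from the same bounds on $v^{(p_0,q)}$ and $\delta w$; the paper carries this out explicitly. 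With that addition, and the uniformity of constants for small $\Vert q\Vert_{L^2}$ that you already flag, the argument is complete.
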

Then, we conclude that $\Theta_T$ is differentiable at $p_0$ and that 
$$ D\Theta _T (p_0) \cdot q = (W ^{(p_0,q)} (T) , W ^{(p_0,q)} _t(T)) .$$


\subsubsection{Proof of Lemma \ref{lem-diff-ordre1}.} \hfill
\label{sec-diff-ordre1}

First, we prove that $(W ^{(p_0,q)} (T) , W ^{(p_0,q)} _t(T)) \in H^3 _{(0)} (0,1) \times D(A)$. We observe that problem \eqref{CMU-eq-ctr-10-W} is well-posed. Indeed,
from Proposition \ref{prop-"prop2"} we deduce that 
$w^{(p_0)} \in C^0 ([0,T], D(A))$, and by applying Lemma \ref{lem-reg-muw}
we obtain that $\mu w^{(p_0)} \in C^0 ([0,T], V^{(2,0)} _\alpha (0,1))$ because $\mu \in V^2 _\alpha(0,1)$. Therefore, $\mu w^{(p_0)} \in C^0 ([0,T], H^1 _\alpha (0,1))$. Furthermore, we have that $q \mu w^{(p_0)} \in L^2 (0,T; H^1 _\alpha (0,1))$. Thus, we can apply Proposition \ref{prop-"prop2"} to \eqref{CMU-eq-ctr-10-W} (taking $f= q \mu w^{(p_0)}$), obtaining that
$$ (W ^{(p_0,q)},W ^{(p_0,q)}_t) \in C^0 ([0,T], D(A) \times H^1 _\alpha (0,1)).$$
Moreover, \eqref{estimW} gives that
\begin{equation}
\begin{split}
\label{estimWpq}
\Vert W ^{(p_0,q)} \Vert _{C^0([0,T],D(A))} &+ \Vert W_t ^{(p_0,q)} \Vert _{C^0([0,T],H^1 _\alpha (0,1))}
\\
&\qquad\quad\leq C(T) \Vert q \mu w^{(p_0)} \Vert _{L^2(0,T;H^1 _\alpha (0,1)))}
\\
&\qquad\quad\leq C(T) \Vert q \Vert _{L^2 (0,T)} \Vert \mu w^{(p_0)} \Vert _{C^0([0,T],H^1 _\alpha (0,1)))}.
\end{split}
\end{equation} 
We can now decompose $(W ^{(p_0,q)}(T), W ^{(p_0,q)} _t (T))$ as follows:
denoting
\begin{equation}
\label{def-RnTW}
R_n (s) = \langle p_0(s) \mu (\cdot) W^{(p_0,q)} (\cdot ,s) + q(s) \mu (\cdot) w^{(p_0)} (\cdot, s), \Phi _{\alpha,\vert n \vert} \rangle _{L^2 (0,1)},
\end{equation}
we have 
\begin{equation}
\label{formule-exp2W}
\left( \begin{array}{c} W^{(p_0,q)}(x,T)  \\ W^{(p_0,q)} _t (x,T) \end{array} \right)
= \Gamma ^{(p_0,q)} _0 (T)
\\
+ \sum _{n\in \mathbb Z^*} \frac{1}{2i \omega _{\alpha,n}}  \, \gamma ^{(p_0,q)} _n (T) \, \Psi _{\alpha,n} (x) \, e^{i\omega_{\alpha,n} T} ,
\end{equation}
with 
\begin{equation}
\label{def-gamma0TW}
\Gamma ^{(p_0,q)} _0 (T) = \left( \begin{array}{c}  \int _0 ^T R_0 (s) (T-s) \, ds   \\ \int _0 ^T R_0 (s) \, ds  \end{array} \right) = \left( \begin{array}{c} \gamma ^{(p_0,q)} _{00} (T)   \\ \gamma ^{(p_0,q)} _{01} (T)  \end{array} \right) ,
\end{equation}
and
\begin{equation}
\label{def-gammanTW}
\forall\, n \in \mathbb Z ^*, \quad 
\gamma ^{(p_0,q)} _n (T) = \int _0 ^T R_n (s)\, e^{-i \omega _{\alpha,n} s} \, ds .
\end{equation}
Moreover, the following implication holds true
\begin{multline}
\label{condition-H3H2pq}
\sum _{n \in \mathbb Z} \lambda _{\alpha , \vert n \vert } ^2 \, \vert \gamma ^{(p_0,q)} _n (T) \vert  ^2  < \infty
\\
\quad \implies \quad (W ^{(p_0,q)} (T), W ^{(p_0,q)} _t(T)) \in H^3 _{(0)} (0,1) \times D(A) .
\end{multline}
Therefore, to show that $(W^{(p_0,q)}(T),W^{(p_0,q)}_t(T))\in H^3_{(0)}(0,1)\times D(A)$, we have to prove the convergence of the above series. We decompose as follows: $\forall\, n \neq 0$
\begin{equation}
\label{form-gamman+pq}
\begin{split}
\gamma ^{(p_0,q)} _n (T) 
&= \int _0 ^T p_0(s) \, \langle \mu (\cdot) W^{(p_0,q)} (\cdot,s), \Phi _{\alpha,\vert n \vert} \rangle _{L^2 (0,1)} \, e^{-i\omega _{\alpha,n} s} \, ds 
\\
&\quad+ \int _0 ^T q(s) \, \langle \mu (\cdot) w^{(p_0)} (\cdot,s), \Phi _{\alpha,\vert n \vert} \rangle _{L^2 (0,1)} \, e^{-i\omega _{\alpha,n} s} \, ds 
\\
&=: \gamma _n ^{(p_0, \mu, W^{(p_0,q)}) } (T) + \gamma _n ^{(q, \mu, w^{(p_0)}) }  (T) .
\end{split}
\end{equation}
We apply Lemma \ref{lem-cond-h2} first choosing $p=p_0$ and $g= \mu  W^{(p_0,q)}$. Since $p_0\in L^2(0,T)$ and $\mu W^{(p_0,q)}\in C^0([0,T],V^{(2,0)}_\alpha(0,1)$  we obtain
\begin{equation}
\label{prop-Sn-1pq}
\sum _{n \in \mathbb Z^*} \lambda _{\alpha ,\vert n \vert } ^2 \, \vert \gamma _n ^{(p_0, \mu, W^{(p_0,q)}) } (T) \vert  ^2  < \infty ,
\end{equation}
and furthermore
\begin{equation}
\begin{split}
\label{prop-Sn-2pq}
&\Bigl( \sum _{n \in \mathbb Z^*} \lambda _{\alpha ,\vert n \vert } ^2 \, \vert \gamma _n ^{(p_0, \mu, W^{(p_0,q)}) } (T) \vert  ^2  \Bigr) ^{1/2}\\
&\qquad\qquad\quad  \leq C(\alpha,T) \, \Vert p_0 \Vert _{L^2(0,T)}  \,  \Vert \mu  W^{(p_0,q)} \Vert _{C^0 ([0,T], V^{(2,0)} _\alpha (0,1))}  
\\
&\qquad\qquad\quad\leq C'(\alpha,T) \, \Vert p_0 \Vert _{L^2(0,T)}  \,  \Vert \mu  \Vert _{V^2 _\alpha} \, \Vert W^{(p_0,q)} \Vert _{C^0 ([0,T], D(A))} 
\\
&\qquad\qquad\quad\leq C''(\alpha,T) \, \Vert p_0 \Vert _{L^2(0,T)}  \,  \Vert \mu  \Vert _{V^2 _\alpha} \, \Vert q \Vert _{L^2 (0,T)} \, \Vert \mu w^{(p_0)} \Vert _{C^0([0,T],H^1 _\alpha (0,1)))}
\\
&\qquad\qquad\quad\leq C'''(\alpha,T) \, \Vert p_0 \Vert _{L^2(0,T)}  \, \Vert q \Vert _{L^2 (0,T)} \,  \Vert \mu  \Vert _{V^2 _\alpha} ^2  \, \Vert  w^{(p_0)} \Vert _{C^0([0,T],D(A))} ,
\end{split}
\end{equation}
In the same way, we apply Lemma \ref{lem-cond-h2} with $p=q$ and $g= \mu  w^{(p_0)}$ and we get
\begin{equation}
\label{prop-Sn-1pq2}
\sum _{n \in \mathbb Z^*} \lambda _{\alpha ,\vert n \vert } ^2 \, \vert \gamma _n ^{(q, \mu, w^{(p_0)}) } (T) \vert  ^2  < \infty.
\end{equation}
and, moreover,
\begin{equation}
\begin{split}
\label{prop-Sn-2pq2}
&\Bigl( \sum _{n \in \mathbb Z^*} \lambda _{\alpha ,\vert n \vert } ^2 \, \vert \gamma _n ^{(q, \mu, w^{(p_0)}) } (T) \vert  ^2  \Bigr) ^{1/2}
\\
&\qquad\qquad\qquad\leq C(\alpha,T) \, \Vert q \Vert _{L^2(0,T)}  \,  \Vert \mu  w^{(p_0)} \Vert _{C^0 ([0,T], 
V^{(2,0)} _\alpha (0,1))}  
\\
&\qquad\qquad\qquad\leq C'(\alpha,T) \, \Vert q \Vert _{L^2(0,T)}  \,  \Vert \mu  \Vert _{V^2 _\alpha} \, \Vert w^{(p_0)} \Vert _{C^0 ([0,T], D(A))} 
.
\end{split}
\end{equation}
We have proved that
$$ \sum _{n \in \mathbb Z^*} \lambda _{\alpha , \vert n \vert } ^2 \, \vert \gamma ^{(p_0,q)} _n (T) \vert  ^2  < \infty ,$$
so, from \eqref{condition-H3H2pq} we have that 
$$ (W ^{(p_0,q)} (T), W ^{(p_0,q)} _t(T)) \in H^3 _{(0)} (0,1) \times D(A) ,$$
and furthermore
\begin{equation*}
\begin{split}
&\left \Vert \left( \begin{array}{c} W^{(p_0,q)}(\cdot ,T)  \\ W^{(p_0,q)} _t (\cdot ,T) \end{array} \right)
- \Gamma ^{(p_0,q)} _0 (T) \right \Vert _{H^3 _{(0)} (0,1) \times D(A)}
\\
&\qquad\qquad\quad\leq C \Bigl( \sum _{n \in \mathbb Z ^*} \lambda _{\alpha, \vert n \vert} ^2 \vert \gamma ^{(p_0,q)} _n (T) \vert ^2 \Bigr) ^{1/2}
\\
&\qquad\qquad\quad\leq C'''(\alpha,T) \, \Vert p_0 \Vert _{L^2(0,T)}  \, \Vert q \Vert _{L^2 (0,T)} \,  \Vert \mu  \Vert _{V^2 _\alpha(0,1)} ^2  \, \Vert  w^{(p_0)} \Vert _{C^0([0,T],D(A))} 
\\
&\qquad\qquad\quad\quad+ C'(\alpha,T) \, \Vert q \Vert _{L^2(0,T)}  \,  \Vert \mu  \Vert _{V^2 _\alpha(0,1)} \, \Vert w^{(p_0)} \Vert _{C^0 ([0,T], D(A))} 
.
\end{split}
\end{equation*}
However, $\Gamma ^{(p_0,q)} _0 (T)$ is independent of $x$, hence
$$\left \Vert \Gamma ^{(p_0,q)} _0 (T) \right\Vert _{H^3 _{(0)} (0,1) \times D(A)}
= \left\Vert \Gamma ^{(p_0,q)} _0 (T) \right\Vert _{L^2 (0,1) \times L^2 (0,1)} \leq C \int _0 ^T \left\vert R_0 (s) \right\vert \, ds  ,$$
and 
\begin{equation*}
\begin{split}
\vert R_0 (s) \vert 
&\leq \vert p_0 (s) \vert \, \vert \langle \mu W ^{(p_0,q)} (s), \Phi _{\alpha,0} \rangle _{L^2 (0,1)} \vert + \vert q (s) \vert \, \vert \langle \mu w ^{(p_0)} (s), \Phi _{\alpha,0} \rangle _{L^2 (0,1)} \vert
\\
&\leq \vert p_0 (s) \vert \Vert \mu W ^{(p_0,q)} (s) \Vert _{L^2 (0,1)} + \vert q (s) \vert \, \Vert \mu w ^{(p_0)} (s) \Vert _{L^2 (0,1)}
\\
&\leq C \vert p_0 (s) \vert \, \Vert \mu \Vert _{V^2 _\alpha(0,1)} \Vert W ^{(p_0,q)}  \Vert _{C^0 ([0,T], D(A))}\\
&\qquad\qquad\qquad\qquad\qquad\qquad
+ C \vert q (s) \vert \, \Vert \mu \Vert _{V^2 _\alpha(0,1)} \Vert w ^{(p_0)}  \Vert _{C^0 ([0,T], D(A))}
\\
&\leq C' \vert p_0 (s) \vert \, \Vert \mu \Vert _{V^2 _\alpha(0,1)} ^2 \, \Vert q \Vert _{L^2 (0,T)} \Vert w ^{(p_0)}  \Vert _{C^0 ([0,T], D(A))}\\
&\qquad\qquad\qquad\qquad\qquad\qquad+ C \vert q (s) \vert \, \Vert \mu \Vert _{V^2 _\alpha(0,1)} \Vert w ^{(p_0)}  \Vert _{C^0 ([0,T], D(A))}.
\end{split}
\end{equation*}
Thus
$$ \Vert R_0 \Vert _{L^1 (0,T)} \leq C(\alpha, T, \mu, w^{(p_0)}) \Vert q \Vert _{L^2 (0,T)} . $$
and therefore
$$ \left \Vert \left( \begin{array}{c} W^{(p_0,q)}(\cdot ,T)  \\ W^{(p_0,q)} _t (\cdot ,T) \end{array} \right) \right \Vert _{H^3 _{(0)} (0,1) \times D(A)}
\leq C(\alpha, T, \mu, w^{(p_0)}) \Vert q \Vert _{L^2 (0,T)} .$$
Hence, we have proved that the application $q\mapsto (W^{(p_0,q})(T),W^{(p_0,q)}_t(T))$ is continuous. This concludes the proof of Lemma \ref{lem-diff-ordre1}. \qed


\subsubsection{Proof of Lemma \ref{lem-diff-ordre2}.} \hfill
\label{sec-diff-ordre2}

Function $v^{(p_0,q)}$, defined in \eqref{def-xi}, is the classical solution of
\begin{equation}
\label{CMU-eq-ctr-10xi}
\begin{cases}
v^{(p_0,q)} _{tt} - (x^\alpha v^{(p_0,q)} _x)_x = p_0(t) \mu (x) v^{(p_0,q)} + q(t) \mu (x) (w ^{(p_0+q)} - w ^{(p_0)})  , 
\\
(x^\alpha v^{(p_0,q)} _x)(x=0,t)=0 , 
\\
v^{(p_0,q)} _x (x=1,t)=0 , 
\\
v^{(p_0,q)} (x,0)=0 ,  
\\
v^{(p_0,q)} _t (x,0)=0  
,
\end{cases}
\end{equation}
that is actually a problem similar to \eqref{CMU-eq-ctr-10-W} with $p=p_0$ and  $w ^{(p_0+q)} - w ^{(p_0)}$ that replaces $w^{p}$. Then, the linear control system \eqref{CMU-eq-ctr-10xi} is well-posed, and 
$$ (v ^{(p_0,q)}, v ^{(p_0,q)}_t) \in C^0 ([0,T], D(A) \times H^1 _\alpha (0,1)).$$
So, \eqref{estimW} gives that
\begin{equation}
\label{estimWpqvv}
\begin{split}
\Vert v ^{(p_0,q)} \Vert _{C^0([0,T],D(A))} &+ \Vert v_t ^{(p_0,q)} \Vert _{C^0([0,T],H^1 _\alpha (0,1))}
\\
&\leq C \Vert q \mu (w^{(p_0+q)}-w^{(p_0)}) \Vert _{L^2(0,T;H^1 _\alpha (0,1)))}
\\
&\leq C \Vert q \Vert _{L^2 (0,T)} \Vert \mu (w^{(p_0+q)}-w^{(p_0)}) \Vert _{C^0([0,T],H^1 _\alpha (0,1)))}
\\
&\leq C \Vert q \Vert _{L^2 (0,T)} \Vert \mu  \Vert _{V^2 _\alpha(0,1)} \Vert (w^{(p_0+q)}-w^{(p_0)}) \Vert _{C^0([0,T],H^1 _\alpha (0,1)))};
\end{split}
\end{equation} 
We can decompose $(v ^{(p_0,q)}(T), v ^{(p_0,q)} _t (T))$ as follows:
denoting
\begin{equation}
\label{def-RnTWv}
z_n (s) = \langle p_0(s) \mu (\cdot) v^{(p_0,q)} (\cdot ,s) + q(s) \mu (\cdot) (w^{(p_0+q)}-w^{(p_0)}) (\cdot, s), \Phi _{\alpha,\vert n \vert} \rangle _{L^2 (0,1)},
\end{equation}
we have 
\begin{equation}
\label{formule-exp2Wvvv}
\left( \begin{array}{c} v^{(p_0,q)}(x,T)  \\ v^{(p_0,q)} _t (x,T) \end{array} \right)
=  E ^{(p_0,q)} _0 (T)
+ \sum _{n\in \mathbb Z^*} \frac{1}{2i \omega _{\alpha,n}}  \, \varepsilon ^{(p_0,q)} _n (T) \, \Psi _{\alpha,n} (x) \, e^{i\omega_{\alpha,n} T} ,
\end{equation}
with 
\begin{equation}
\label{def-gamma0TWvvv}
E ^{(p_0,q)} _0 (T) = \left( \begin{array}{c}  \int _0 ^T  z _0 (s) (T-s) \, ds   \\ \int _0 ^T  z _0 (s) \, ds  \end{array} \right) ,
\end{equation}
and
\begin{equation}
\label{def-gammanTWvvv}
\forall\, n \in \mathbb Z ^*, \quad 
\varepsilon ^{(p_0,q)} _n (T) = \int _0 ^T z_n (s)\, e^{-i \omega _{\alpha,n} s} \, ds .
\end{equation}
As showed in section \ref{sec-suff-cond},
\begin{multline}
\label{condition-H3H2pqvvv}
\sum _{n \in \mathbb Z} \lambda _{\alpha , \vert n \vert } ^2 \, \vert \varepsilon ^{(p_0,q)} _n (T) \vert  ^2  < \infty
\,\, \implies \,\, (v ^{(p_0,q)} (T), v ^{(p_0,q)} _t(T)) \in H^3 _{(0)} (0,1) \times D(A) ,
\end{multline}
Thus, to ensure that $(v^{(p_0,q)}(T),v^{(p_0,q)}_t(T)\in H^3_{(0)}(0,1)\times D(A)$, we have to prove the convergence of the above series. We observe that
\begin{multline*}
p_0(t) \mu (x) v^{(p_0,q)} + q(t) \mu (x) (w ^{(p_0+q)} - w ^{(p_0)}) 
\\
= (p_0(t)+q(t)) \mu (x) v^{(p_0,q)} + q(t) \mu (x) W ^{(p_0,q)},
\end{multline*}
therefore, we decompose $ \varepsilon ^{(p_0,q)} _n (T)$ as follows: $\forall\, n \neq 0$
\begin{equation}
\begin{split}
\label{form-gamman+pqvv}
\varepsilon ^{(p_0,q)} _n (T)
&= \int _0 ^T (p_0(s)+q(s)) \, \langle \mu (\cdot) v^{(p_0,q)} (\cdot,s), \Phi _{\alpha,\vert n \vert} \rangle _{L^2 (0,1)} \, e^{-i\omega _{\alpha,n} s} \, ds 
\\
&\quad+ \int _0 ^T q(s) \, \langle \mu (\cdot) W^{(p_0,q)} (\cdot,s), \Phi _{\alpha,\vert n \vert} \rangle _{L^2 (0,1)} \, e^{-i\omega _{\alpha,n} s} \, ds 
\\
&= \gamma _n ^{(p_0+q, \mu, v^{(p_0,q)}) } (T) + \gamma _n ^{(q, \mu, W^{(p_0,q)}) }  (T).
\end{split}
\end{equation}
Applying twice Lemma \ref{lem-cond-h2}, we obtain
\begin{equation}
\begin{split}
\label{prop-Sn-2pqvv}
&\left( \sum _{n =1} ^\infty \lambda _{\alpha ,n} ^2 \, \vert \gamma _n ^{(p_0+q, \mu, v^{(p_0,q)}) } (T) \vert  ^2  \right) ^{1/2}
\\
&\quad\quad\leq C(\alpha,T) \, \Vert p_0 +q \Vert _{L^2(0,T)}  \,  \Vert \mu  v^{(p_0,q)} \Vert _{C^0 ([0,T], 
V^{(2,0)} _\alpha (0,1))}  
\\
&\quad\quad\leq C'(\alpha,T) \, \Vert p_0 +q \Vert _{L^2(0,T)}  \,  \Vert \mu  \Vert _{V^2 _\alpha(0,1)} \, \Vert v^{(p_0,q)} \Vert _{C^0 ([0,T], D(A))} 
\\
&\quad\quad\leq C''(\alpha,T) \,  \Vert p_0 + q \Vert _{L^2(0,T)}   \, \Vert q \Vert _{L^2 (0,T)} \,  \Vert \mu  \Vert _{V^2 _\alpha(0,1)} ^2  \, \Vert  w^{(p_0+q)}-w^{(p_0)} \Vert _{C^0([0,T],D(A))} ,
\end{split}
\end{equation}
and 
\begin{equation}
\begin{split}
\label{prop-Sn-2pq2vv2}
\left( \sum _{n =1} ^\infty \lambda _{\alpha ,n} ^2 \, \vert \gamma _n ^{(q, \mu, W^{(p_0,q)}) } (T) \vert  ^2  \right) ^{1/2}&\leq C(\alpha,T) \, \Vert q \Vert _{L^2(0,T)}  \,  \Vert \mu  W^{(p_0,q)} \Vert _{C^0 ([0,T], V^{(2,0)} _\alpha (0,1))}  
\\
\leq& C'(\alpha,T) \, \Vert q \Vert _{L^2(0,T)}  \,  \Vert \mu  \Vert _{V^2 _\alpha(0,1)} \, \Vert W^{(p_0,q)} \Vert _{C^0 ([0,T], D(A))} 
\\
\leq& C''(\alpha,T) \, \Vert q \Vert _{L^2(0,T)} ^2  \,  \Vert \mu  \Vert _{V^2 _\alpha(0,1)} ^2 \, \Vert w^{(p_0)} \Vert _{C^0 ([0,T], D(A))}.
\end{split}
\end{equation}
Thus, we have proved that
$$ \sum _{n \in \mathbb Z^*} \lambda _{\alpha , \vert n \vert } ^2 \, \vert \varepsilon ^{(p_0,q)} _n (T) \vert  ^2  < \infty ,$$
which implies that 
$$ (v ^{(p_0,q)} (T), v ^{(p_0,q)} _t(T)) \in H^3 _{(0)} (0,1) \times D(A) .$$
Furthermore,
\begin{equation*}
\begin{split}
&\left \Vert \left( \begin{array}{c} v^{(p_0,q)}(\cdot ,T)  \\ v^{(p_0,q)} _t (\cdot ,T) \end{array} \right)
- E ^{(p_0,q)} _0 (T) \right \Vert _{H^3 _{(0)} (0,1) \times D(A)}
\\
&\quad\quad\leq C \Bigl( \sum _{n \in \mathbb Z} \lambda _{\alpha, \vert n \vert} ^2 \vert \varepsilon ^{(p_0,q)} _n (T) \vert ^2 \Bigr) ^{1/2}
\\
&\quad\quad\leq C''(\alpha,T) \,  \Vert p_0 + q \Vert _{L^2(0,T)}   \, \Vert q \Vert _{L^2 (0,T)} \,  \Vert \mu  \Vert _{V^2 _\alpha(0,1)} ^2  \, \Vert  w^{(p_0+q)}-w^{(p_0)} \Vert _{C^0([0,T],D(A))} 
\\
&\quad\quad\quad+ C''(\alpha,T) \, \Vert q \Vert _{L^2(0,T)} ^2  \,  \Vert \mu  \Vert _{V^2 _\alpha(0,1)} ^2 \, \Vert w^{(p_0)} \Vert _{C^0 ([0,T], D(A))}.
\end{split}
\end{equation*}
To conclude, we observe that 
\begin{equation}
\label{def-upq}
u^{(p_0,q)} := w^{(p_0+q)} - w^{(p_0)}
\end{equation}
is solution of 
\begin{equation}
\label{CMU-eq-ctr-10u}
\begin{cases}
u^{(p_0,q)} _{tt} - (x^\alpha u^{(p_0,q)} _x)_x = p_0(t) \mu (x) u^{(p_0,q)} + q(t) \mu (x) w^{(p_0+q)} , 
\\
(x^\alpha u^{(p_0,q)} _x)(x=0,t)=0 , 
\\
u^{(p_0,q)} _x (x=1,t)=0 , 
\\
u^{(p_0,q)} (x,0)=0 ,  
\\
u^{(p_0,q)} _t (x,0)=0  
,
\end{cases}
\end{equation}
hence, \eqref{estimW} implies that
\begin{equation*}
\begin{split}
\Vert w^{(p_0+q)} - w^{(p_0)} \Vert _{C^0([0,T],D(A))} 
&= \Vert u^{(p_0,q)} \Vert _{C^0([0,T],D(A))}
\\
&\leq C \Vert q \mu w^{(p_0+q)} \Vert _{L^2(0,T;H^1 _\alpha (0,1)))}
\\
&\leq C \Vert q \Vert _{L^2 (0,T)} \Vert \mu  \Vert _{V^2 _\alpha} \Vert w^{(p_0+q)} \Vert _{C^0([0,T],D(A))}.
\end{split}
\end{equation*} 
So, we get
\begin{equation*}
\begin{split}
&\left \Vert \left( \begin{array}{c} v^{(p_0,q)}(\cdot ,T)  \\ v^{(p_0,q)} _t (\cdot ,T) \end{array} \right)
- E ^{(p_0,q)} _0 (T) \right \Vert _{H^3 _{(0)} (0,1) \times D(A)}
\\
&\qquad\quad\leq C''(\alpha,T) \, \Vert q \Vert _{L^2 (0,T)} ^2  \Bigl(  \Vert p_0 + q \Vert _{L^2(0,T)}  \,  \Vert \mu  \Vert _{V^2 _\alpha(0,1)} ^3  \, \Vert  w^{(p_0+q)} \Vert _{C^0([0,T],D(A))} 
\\
&\qquad\quad\quad+  \,  \Vert \mu  \Vert _{V^2 _\alpha(0,1)} ^2 \, \Vert w^{(p_0)} \Vert _{C^0 ([0,T], D(A))} \Bigr) .
\end{split}
\end{equation*}
However, $E ^{(p_0,q)} _0 (T)$ is independent of $x$, therefore, we deduce that
$$ \Vert E ^{(p_0,q)} _0 (T) \Vert _{H^3 _{(0)} (0,1) \times D(A)}
= \Vert E ^{(p_0,q)} _0 (T) \Vert _{L^2 (0,1) \times L^2 (0,1)} \leq C \int _0 ^T \vert z_0 (s) \vert \, ds  ,$$
and 
\begin{equation*}
\begin{split}
\vert z_0 (s) \vert 
&\leq \vert p_0 (s) \vert \, \vert \langle \mu v ^{(p_0,q)} (s), \Phi _{\alpha,0} \rangle _{L^2 (0,1)} \vert + \vert q (s) \vert \, \vert \langle \mu (w ^{(p_0+q)}-w ^{(p_0)}) (s), \Phi _{\alpha,0} \rangle _{L^2 (0,1)} \vert
\\
&\leq \vert p_0 (s) \vert \Vert \mu v ^{(p_0,q)} (s) \Vert _{L^2 (0,1)} + \vert q (s) \vert \, \Vert \mu (w ^{(p_0+q)}-w ^{(p_0)}) (s) \Vert _{L^2 (0,1)}
\\
&\leq C \vert p_0 (s) \vert \, \Vert \mu \Vert _{V^2 _\alpha(0,1)} \Vert v ^{(p_0,q)}  \Vert _{C^0 ([0,T], D(A))}
\\
&\quad+ C \vert q (s) \vert \, \Vert \mu \Vert _{V^2 _\alpha(0,1)} \Vert w ^{(p_0+q)}-w ^{(p_0)}  \Vert _{C^0 ([0,T], D(A))}
\\
&\leq C' \vert p_0 (s) \vert \, \Vert \mu \Vert ^2 _{V^2 _\alpha(0,1)} \Vert q \Vert _{L^2 (0,T)}  \Vert (w^{(p_0+q)}-w^{(p_0)}) \Vert _{C^0([0,T],H^1 _\alpha (0,1)))}
\\
&\quad+ C \vert q (s) \vert \, \Vert \mu \Vert _{V^2 _\alpha(0,1)} ^2 \Vert q \Vert _{L^2 (0,T)} \Vert w^{(p_0+q)} \Vert _{C^0([0,T],D(A))} 
\\
&\leq C' \vert p_0 (s) \vert \, \Vert \mu \Vert ^3 _{V^2 _\alpha(0,1)} \Vert q \Vert ^2 _{L^2 (0,T)}  
 \Vert w^{(p_0+q)} \Vert _{C^0([0,T],D(A))}
 \\
&\quad+ C \vert q (s) \vert \, \Vert \mu \Vert _{V^2 _\alpha(0,1)} ^2 \Vert q \Vert _{L^2 (0,T)} \Vert w^{(p_0+q)} \Vert _{C^0([0,T],D(A))}.
\end{split}
\end{equation*}
Hence, we have showed that
$$ \Vert z_0 \Vert _{L^1 (0,T)} \leq C(\alpha, T, \mu, w^{(p_0)}) \Vert q \Vert ^2 _{L^2 (0,T)} , $$
and so,
$$ \left \Vert \left( \begin{array}{c} v^{(p_0,q)}(\cdot ,T)  \\ v^{(p_0,q)} _t (\cdot ,T) \end{array} \right) \right \Vert _{H^3 _{(0)} (0,1) \times D(A)}
\leq C(\alpha, T, \mu, w^{(p_0)}) \Vert q \Vert ^2 _{L^2 (0,T)} .$$
Thus, \eqref{def-xi-diff} is satisfied and this concludes the proof of Lemma \ref{lem-diff-ordre2}. \qed


\subsection{Proof of Proposition \ref{prop-"thm3"}, part c)} \hfill

To prove that $\Theta _T$ is of class $C^1$, we have to prove that
the application $D\Theta _T$ is continuous from $L^2 (0,T)$ into $\mathcal{L}_c (L^2 (0,T), H^3 _{(0)} \times D(A))$, namely
$$ \vert \vert \vert D\Theta _T (p_0 + \tilde p)  - D\Theta _T (p_0) \vert \vert \vert _{\mathcal{L}_c (L^2 (0,T), H^3 _{(0)} \times D(A))} \to 0, \quad \text{ as } \Vert \tilde p \Vert _{L^2 (0,T)} \to 0 .$$
Proceeding as in section \ref{sec-diff-ordre1}, it is easy to verify that there exists $C(\alpha, T, \mu, p_0) >0$ such that for any $\tilde p,\, q \in L^2 (0,T)$
\begin{equation}
\begin{split}
\label{reader}
\Vert D\Theta _T (p_0 +\tilde p)\cdot q - D\Theta _T (p_0)\cdot q \Vert _{H^3 _{(0)} \times D(A)} &= \Vert W^{(p_0+\tilde p, q)} (T) - W^{(p_0, q)} (T) \Vert _{H^3 _{(0)} \times D(A)} 
\\
&\leq C(\alpha, T, \mu, p_0) \, \Vert \tilde p \Vert _{L^2 (0,T)} \, \Vert q \Vert _{L^2 (0,T)} ,
\end{split}
\end{equation}
which implies that
$$ \forall \,\tilde p, \quad \vert \vert \vert D\Theta _T (p_0+\tilde p) - D\Theta _T (p_0)  \vert \vert \vert _{\mathcal{L}_c (L^2 (0,T), H^3 _{(0)} \times D(A))}  \leq C(\alpha, T, \mu, p_0) \, \Vert \tilde p \Vert _{L^2 (0,T)}.$$
Thus, $D\Theta _T$ is continuous and this concludes the proof of Proposition \ref{prop-"thm3"}, part c). 
We leave the proof of \eqref{reader} to the reader. \qed


\section{Reachability for $T>T_0$: proof of Theorem \ref{thm-ctr}}\hfill
\label{sec-pr-thm}


The proof follows from the classical inverse mapping theorem applied to the function $\Theta _T: L^2 (0,T) \to H^3 _{(0)} \times D(A)$ at the point $p_0=0$.
First, we recall that $\Theta _T (p_0=0) = (1,0)$. In the following we study $D\Theta _T (0)$. 

\subsection{Surjectivity of $D\Theta _T (0)$: study of the associated moment problem} \hfill
\label{sec-surj-cas1}
The key point of the proof of Theorem \ref{thm-ctr} is the following result.
\begin{Lemma}
\label{lem-DThetaT(0)inv}
Assume that \eqref{hyp-T0} and \eqref{hyp-mu} are satisfied.
Then, the linear application 
\begin{equation*}
\begin{split}
D\Theta _T (0): L^2 (0,T) &\to H^3 _{(0)} \times D(A)\\
q&\mapsto (W^{(0,q)} (T), W^{(0,q)} _t (T))
\end{split}
\end{equation*}
is surjective, and 
$$ D\Theta _T (0): \overline{\text{Vect } \{1,t, \cos \sqrt{\lambda _{\alpha,n}} t, \sin \sqrt{\lambda _{\alpha,n}} t, n\geq 1 \}} \to H^3 _{(0)} \times D(A)$$ is invertible.
\end{Lemma}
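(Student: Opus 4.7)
The plan is to reduce the surjectivity of $D\Theta_T(0)$ to an $\ell^2$ moment problem for the family of complex exponentials $\{e^{i\omega_{\alpha,n}t}\}_{n\in\mathbb Z}$, augmented by the function $t$ to handle the double multiplicity at the zero frequency, and then solve this moment problem via the Ingham-type estimate already available since $T>T_0$.

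First I would compute $D\Theta_T(0)\cdot q$ explicitly. Since $p_0=0$, the reference trajectory is $w^{(0)}\equiv 1$, so \eqref{CMU-eq-ctr-10-W} reduces to
$$W_{tt}-(x^\alpha W_x)_x=q(t)\mu(x),\qquad W(0)=W_t(0)=0.$$
Writing $\mu_n:=\langle\mu,\Phi_{\alpha,n}\rangle_{L^2(0,1)}$ and applying the formulas of Section \ref{sec-form-gamman} with $r_n(s)=q(s)\mu_n$, the Fourier coefficients of $(W^{(0,q)}(T),W^{(0,q)}_t(T))$ in the basis $(\Phi_{\alpha,n})$ become explicit linear integrals of $q$. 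Given a target $(w_0^f,w_1^f)\in H^3_{(0)}\times D(A)$ with Fourier coefficients $(d_n),(e_n)$, the equation $D\Theta_T(0)\cdot q=(w_0^f,w_1^f)$ becomes the moment problem
$$\mu_0\int_0^T q(s)(T-s)\,ds=d_0,\qquad \mu_0\int_0^T q(s)\,ds=e_0,$$
$$\mu_n\int_0^T q(s)\,e^{\mp i\sqrt{\lambda_{\alpha,n}}\,s}\,ds=e^{\mp i\sqrt{\lambda_{\alpha,n}}\,T}\bigl(e_n\mp i\sqrt{\lambda_{\alpha,n}}\,d_n\bigr),\qquad n\geq 1.$$

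Next I would check that the prescribed data, after division by $\mu_n$, forms an $\ell^2$ sequence. This is exactly where the admissibility assumption \eqref{hyp-mu} is used: the bound $|\mu_n|\geq c/\lambda_{\alpha,n}^*$ implies
$$\sum_{n\geq 1}\Bigl|\frac{e_n\mp i\sqrt{\lambda_{\alpha,n}}\,d_n}{\mu_n}\Bigr|^2\leq C\sum_{n\geq 1}\bigl((\lambda_{\alpha,n}^*)^3|d_n|^2+(\lambda_{\alpha,n}^*)^2|e_n|^2\bigr)<\infty,$$
the right-hand side being finite precisely because $w_0^f\in H^3_{(0)}$ and $w_1^f\in H^2_{(0)}=D(A)$. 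Without the lower bound on $|\mu_n|$ one could not invert the moments at high frequencies, so this step is the bridge between the regularity of the target and the solvability of the moment problem.

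The decisive step is then the solvability of the moment problem itself. Propositions \ref{prop-vp-w} and \ref{prop-vp-s} provide the asymptotic gap $\sqrt{\lambda_{\alpha,n+1}}-\sqrt{\lambda_{\alpha,n}}\to(2-\alpha)\pi/2$ together with a uniform lower bound of the same order, so for $T>T_0=4/(2-\alpha)$ the classical Ingham theorem applies and shows that $\{e^{i\omega_{\alpha,n}t}\}_{n\in\mathbb Z}$ is a Riesz sequence in $L^2(0,T)$. The only delicate point is the double moment at frequency $0$ (arising from the fact that the eigenvalue $\lambda_{\alpha,0}=0$ contributes both the position and velocity components of the ground-state mode): I would handle it by augmenting the family into $\{1,\,t,\,\cos\sqrt{\lambda_{\alpha,n}}\,t,\,\sin\sqrt{\lambda_{\alpha,n}}\,t\}_{n\geq 1}$ and noting that the replacement of one copy of the constant by $t$ preserves the Riesz-basis property of its closed span $E_T\subset L^2(0,T)$. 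A biorthogonal construction in $E_T$ then produces a unique $q\in E_T$ solving the moment problem, with a continuous $\ell^2\to L^2$ bound. Extending $q$ by an arbitrary element of $E_T^\perp$ yields the surjectivity of $D\Theta_T(0)$, while its restriction to $E_T$ is a continuous bijection onto $H^3_{(0)}\times D(A)$, which is precisely the claimed invertibility. The main obstacle I anticipate is this last augmented Ingham/Riesz step, namely checking that the introduction of $t$ to account for the ground-state direction is compatible with the Riesz-basis property; for $T$ strictly greater than $T_0$ this is essentially standard, whereas the borderline case $T=T_0$ would require the finer Kadec-type extensions alluded to after Theorem \ref{thm-ctr=}.
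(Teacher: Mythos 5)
Your reduction to the moment problem, the use of \eqref{hyp-mu} to convert the $H^3_{(0)}\times D(A)$ regularity of the target into an $\ell^2$ bound on the normalized moments, and the appeal to Ingham/Haraux for $T>T_0$ all coincide with the paper's argument. The one place where your proposal leaves a genuine gap is exactly the point you flag at the end: you assert that adjoining $t$ to the family $\{1,\cos\sqrt{\lambda_{\alpha,n}}\,t,\sin\sqrt{\lambda_{\alpha,n}}\,t\}$ ``preserves the Riesz-basis property of its closed span'' and call this essentially standard, but you give no argument. This is precisely the non-trivial content of the second step of the paper's proof: what must be shown is that $\tilde s_{\alpha,0}(t)=t$ does \emph{not} belong to the closed span $E_\alpha$ of the trigonometric family; once that is known, one does not even need the augmented family to be a Riesz basis --- it suffices to take the component of $t$ orthogonal to $E_\alpha$, normalize it into an element $Q_\alpha^\perp$ with $\langle Q_\alpha^\perp,\tilde s_{\alpha,0}\rangle=1$, and add the correction $B_{\alpha,0}^f\,Q_\alpha^\perp$ to the unique solution $Q_\alpha\in E_\alpha$ of the reduced system. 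This yields both surjectivity and the continuity estimate $\Vert Q\Vert^2=\Vert Q_\alpha\Vert^2+|B^f_{\alpha,0}|^2\Vert Q_\alpha^\perp\Vert^2\le C\sum(|A^f_{\alpha,n}|^2+|B^f_{\alpha,n}|^2)$, hence the claimed invertibility on $\overline{\text{Vect }}\{1,t,\cos\sqrt{\lambda_{\alpha,n}}t,\sin\sqrt{\lambda_{\alpha,n}}t\}$.

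The fact $\tilde s_{\alpha,0}\notin E_\alpha$ does require proof, and the paper's argument is worth internalizing because it uses $T>T_0$ in an essential second way. Suppose $t$ were a limit of finite linear combinations of $1$, $\cos\sqrt{\lambda_{\alpha,n}}t$, $\sin\sqrt{\lambda_{\alpha,n}}t$; integrating repeatedly, every polynomial would lie in $E_\alpha$, so $E_\alpha=L^2(0,T)$ by density, and the reduced moment problem \eqref{syst-momentsQ-ps-part} would then have a \emph{unique} solution in $L^2(0,T)$. But since $T>T_0$, one can prescribe the control arbitrarily on an initial interval $(0,\tfrac{T-T_0}{2})$ and still solve the reduced moments on the remaining window of length $\tfrac{T+T_0}{2}>T_0$ (system \eqref{syst-momentsQ-ps-part2}), producing a second, distinct solution --- a contradiction. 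If you prefer to keep your formulation via an augmented Riesz basis (a generalized Ingham inequality with a doubled zero frequency), you would need to cite or prove such a divided-difference Ingham result explicitly; as written, the claim is the missing step rather than a routine one.
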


\begin{proof}[Proof of Lemma \ref{lem-DThetaT(0)inv}] Since $w^{(0)}=1$, \eqref{CMU-eq-ctr-10-W} implies that $W^{(0,q)}$ is solution of the following linear problem

\begin{equation}
\label{CMU-eq-ctr-10-W-0}
\begin{cases}
W^{(0,q)}_{tt} - (x^\alpha W^{(0,q)} _x)_x = q(t) \mu (x)  , \quad &x\in (0,1), t \in (0,T), 
\\
(x^\alpha W^{(0,q)}_x)(x=0,t)=0 , \quad &t \in (0,T) ,\\
W^{(0,q)}_x (x=1,t)=0 , \quad &t \in (0,T) , \\
W^{(0,q)}(x,0)=0 ,  \quad &x\in (0,1), 
\\
W^{(0,q)} _t (x,0)=0 , \quad &x\in (0,1) .
\end{cases}
\end{equation}
Following the procedure of section \ref{sec-form-gamman}, we introduce
\begin{equation}
\label{def-RnTW-0}
r_n (s) = \langle q(s) \mu , \Phi _{\alpha,n} \rangle _{L^2 (0,1)} = \mu_{\alpha,n} q(s) 
\quad \text{with} \quad \mu_{\alpha,n}= \langle  \mu , \Phi _{\alpha,n} \rangle _{L^2 (0,1)},
\end{equation}
and we have can express the solution of \eqref{CMU-eq-ctr-10-W-0} at time $T$, $(W^{(0,q)}(T),W^{(0,q)}_t(T))$ as follows
\begin{multline}
\label{form-pour-diff}
W^{(0,q)}(x,T) = \int _0 ^T r_0 (s) (T-s) \, ds 
\\
+ \sum _{n=1} ^\infty \Bigl( \int _0 ^T r_n (s) \frac{\sin \sqrt{\lambda _{\alpha,n}} (T-s)}{\sqrt{\lambda _{\alpha,n}}} \, ds  \Bigr) \Phi _{\alpha,n} (x) , 
\end{multline}
and
\begin{multline}
\label{form-pour-diff'}
W^{(0,q)} _t (x,T) = \int _0 ^T r_0 (s) \, ds
\\
+ \sum _{n=1} ^\infty  \Bigl( \sqrt{\lambda _{\alpha,n}} \int _0 ^T r_n (s) \frac{\cos \sqrt{\lambda _{\alpha,n}} (T-s)}{\sqrt{\lambda _{\alpha,n}}} \, ds \Bigr) \Phi _{\alpha,n} (x) .
\end{multline}
To prove Lemma \ref{lem-DThetaT(0)inv}, we choose any pair $(Y ^f, Z ^f) \in H^3 _{(0)} \times D(A)$, and we want to show that there exists $q\in L^2 (0,T)$ such that
\begin{equation}
\label{target}
 (W^{(0,q)}(T) , W^{(0,q)} _t (T) ) = (Y ^f, Z ^f).
 \end{equation}
Introducing the Fourier coefficients of the target state
$$ Y_{\alpha,n} ^f = \langle Y^f , \Phi _{\alpha,n} \rangle _{L^2 (0,1)} \quad \text{ and } \quad 
Z_{\alpha,n} ^f = \langle Z^f , \Phi _{\alpha,n} \rangle _{L^2 (0,1)} , $$
we can decompose $(Y ^f, Z ^f)$ as follows
$$ Y^f (x) = \sum _{n=0} ^\infty Y_{\alpha,n} ^f \Phi _{\alpha,n} (x) =  Y_{\alpha,0} ^f + \sum _{n=1} ^\infty Y_{\alpha,n} ^f \Phi _{\alpha,n} (x) $$
and
$$ Z^f (x) = \sum _{n=0} ^\infty Z_{\alpha,n} ^f \Phi _{\alpha,n} (x) = Z_{\alpha,0} ^f + \sum _{n=1} ^\infty Z_{\alpha,n} ^f \Phi _{\alpha,n} (x).$$
We derive from \eqref{form-pour-diff} and \eqref{form-pour-diff'} that \eqref{target} is satisfied if and only if
\begin{equation}
\label{syst-moments}
\begin{cases}
\int _0 ^T r_0 (s) \, ds = Z_{\alpha,0} ^f , \\
\sqrt{\lambda _{\alpha,n}} \int _0 ^T r_n (s) \frac{\cos \sqrt{\lambda _{\alpha,n}} (T-s)}{\sqrt{\lambda _{\alpha,n}}} \, ds =
 Z_{\alpha,n} ^f, & \text{ for all } n \geq 1,\\
 \int _0 ^T r_n (s) \frac{\sin \sqrt{\lambda _{\alpha,n}} (T-s)}{\sqrt{\lambda _{\alpha,n}}} \, ds = Y_{\alpha,n} ^f, &\text{ for all } n \geq 1,\\
\int _0 ^T r_0 (s) (T-s) \, ds = Y_{\alpha,0} ^f.
\end{cases}
\end{equation}
Introducing the function
$$ Q(s) := q (T-s) ,$$
\eqref{syst-moments} becomes
\begin{equation}
\label{syst-momentsQ}
\begin{cases}
\mu _{\alpha,0} \int _0 ^T Q (t) \, dt = Z_{\alpha,0} ^f , \\
\mu _{\alpha,n} \int _0 ^T Q (t) \cos \sqrt{\lambda _{\alpha,n}} t \, dt = Z_{\alpha,n} ^f, & \text{ for all } n \geq 1,\\
\mu _{\alpha,n} \int _0 ^T Q (t) \sin \sqrt{\lambda _{\alpha,n}}t \, dt = \sqrt{\lambda _{\alpha,n}} \, Y_{\alpha,n} ^f, & \text{ for all } n \geq 1,\\
\mu _{\alpha,0} \int _0 ^T   Q(t) \, t \, dt = Y_{\alpha,0} ^f.
\end{cases}
\end{equation}
System \eqref{syst-momentsQ} is usually called \emph{moment problem}. Observe that \eqref{hyp-mu} implies that the coefficients $\mu _{\alpha,n}$ are different from $0$ for all $n\geq 0$, which is a necessary condition to solve \eqref{syst-momentsQ}.

Let us introduce
\begin{equation}
\label{eq-coeffs} 
\begin{cases}
A_{\alpha,0} ^f :=\frac{ Z_{\alpha,0} ^f}{\mu_{\alpha,0}} ,\\
A_{\alpha,n} ^f := \frac{Z_{\alpha,n} ^f}{\mu _{\alpha,n}},   & \text{ for all } n \geq 1,\\
B_{\alpha,n} ^f := \frac{\sqrt{\lambda _{\alpha,n}} \, Y_{\alpha,n} ^f}{\mu _{\alpha,n}}, & \text{ for all } n \geq 1,\\
B_{\alpha,0} ^f := \frac{Y_{\alpha,0} ^f}{\mu_{\alpha,0}} ,
\end{cases} 
\end{equation}
and
\begin{equation}
\label{eq-riesz} 
\begin{cases}
c_{\alpha,0}: t\in (0,T) \mapsto 1 ,\\
c_{\alpha,n}: t\in (0,T) \mapsto \cos \sqrt{\lambda _{\alpha,n}} t \quad \text{ for all } n \geq 1,\\
s_{\alpha,n}: t\in (0,T) \mapsto \sin \sqrt{\lambda _{\alpha,n}} t \quad \text{ for all } n \geq 1,\\
\tilde s _{\alpha,0}: t\in (0,T) \mapsto t ,
\end{cases} 
\end{equation}
in such a way that \eqref{syst-momentsQ} can be written as
\begin{equation}
\label{syst-momentsQ-ps}
\begin{cases}
\langle  Q, c_{\alpha,0} \rangle _{L^2 (0,T)}   = A_{\alpha,0} ^f , \\
\langle  Q, c_{\alpha,n} \rangle _{L^2 (0,T)}   = A_{\alpha,n} ^f \quad \text{ for all } n \geq 1, \\
\langle  Q, s_{\alpha,n} \rangle _{L^2 (0,T)}   = B_{\alpha,n} ^f \quad \text{ for all } n \geq 1,\\
\langle  Q, \tilde s_{\alpha,0} \rangle _{L^2 (0,T)}   = B_{\alpha,0} ^f .
\end{cases}
\end{equation}
We are going to prove that system \eqref{syst-momentsQ-ps} has (at least) a solution $Q$ in two steps:
\begin{itemize}
\item we prove that the reduced system
\begin{equation}
\label{syst-momentsQ-ps-red}
\begin{cases}
\langle  Q, c_{\alpha,0} \rangle _{L^2 (0,T)}   = A_{\alpha,0} ^f , \\
\langle  Q, c_{\alpha,n} \rangle _{L^2 (0,T)}   = A_{\alpha,n} ^f \quad \text{ for all } n \geq 1, \\
\langle  Q, s_{\alpha,n} \rangle _{L^2 (0,T)}   = B_{\alpha,n} ^f \quad \text{ for all } n \geq 1
\end{cases}
\end{equation}
has at least a solution $Q_\alpha$,

\item using $Q_\alpha$, we construct a solution $Q$ of the full system \eqref{syst-momentsQ-ps}.
\end{itemize}
We use results stated in \cite[Appendix]{B-L} (see also \cite{Duca}).

{\it Step 1: Existence of a solution of the reduced system \eqref{syst-momentsQ-ps-red}.}
We consider the space
$$ E_\alpha := \overline{\text{ Vect } \{c_{\alpha,0}, c_{\alpha,n}, s_{\alpha,n},\, n\geq 1 \} } ,$$
which is a closed subspace of $L^2 (0,T)$. To solve the reduced system, we use the following characterization of Riesz Basis (see (\cite[Prop. 19]{B-L}): the family $\{c_{\alpha,0}, c_{\alpha,n}, s_{\alpha,n}, n\geq 1 \}$ is a Riesz basis of $E_\alpha$ if and only if there exist $C_1 (\alpha,T), C_2 (\alpha,T) >0$ such that, for all $N \geq 1$ and for any $(a_n)_{0\leq n\leq N}$, $(b_n)_{1\leq n\leq N}$ it holds that
\begin{equation}
\label{BL1}
C_1 \Bigl( a_0 ^2 + \sum _{n=1} ^N a_n ^2 + b_n ^2 \Bigr) 
\leq \int _0 ^T \Bigl\vert S^{(a,b)} (t) \Bigr\vert ^2 \, dt 
\leq C_2 \Bigl( a_0 ^2 + \sum _{n=1} ^N a_n ^2 + b_n ^2 \Bigr) ,
\end{equation}
where
\begin{equation}
\label{BL2}
S^{(a,b)} (t) = a_0 c_{\alpha,0} (t) + \sum _{n=1} ^N a_n c_{\alpha,n} (t) + b_n s_{\alpha,n} (t).
\end{equation}
We observe that \eqref{BL1} holds true as a consequence of Ingham theory: indeed, by expressing $\cos y$ and $\sin y$ as
$$ \cos y = \frac{e^{iy} + e^{-iy}}{2}, \quad \text{ and } \quad \sin y = \frac{e^{iy} - e^{-iy}}{2i},$$
we have that
\begin{equation*}
\begin{split}
S^{(a,b)} (t) &= a_0 e^{i \omega_{\alpha,0} t} + \sum _{n=1} ^N a_n \frac{e^{i\omega_{\alpha,n} t} + e^{-i\omega_{\alpha,n} t}}{2} + b_n \frac{e^{i\omega_{\alpha,n} t} - e^{-i\omega_{\alpha,n} t}}{2i} 
\\
&= \sum _{n=-N} ^N d_n e^{i\omega_{\alpha,n} t} ,
\end{split}
\end{equation*}
with 
$$ 
\begin{cases} 
d_0 = a_0, \\
d_n = \frac{a_n}{2} + \frac{b_n}{2i}, & \text{ for } n\geq 1 , \\
d_n = \frac{a_{-n}}{2} - \frac{b_{-n}}{2i}, & \text{ for } n\leq -1.
\end{cases} $$
Since $\omega_{\alpha,n+1} - \omega_{\alpha,n} >0$ for all $n\in \mathbb Z$ and 
$$ \forall\, \vert n \vert \geq 2, \quad \omega_{\alpha,n+1} - \omega_{\alpha,n} \geq \frac{2-\alpha}{2} \pi ,$$
we can apply a general result of Haraux \cite{Haraux} (see also \cite[Theorem 6]{B-L}) that ensures that if
$$ T> \frac{2\pi}{\frac{2-\alpha}{2} \pi} = \frac{4}{2-\alpha},$$
then, there exist $C^{(I)} _1, C^{(I)} _2 >0$ independent of $N$ and of the coefficients $(d_n)_{-N\leq n\leq N}$, such that
\begin{equation}
\label{BL3}
C^{(I)} _1 \sum _{n=-N} ^N \vert d_n \vert ^2 
\leq \int _0 ^T \Bigl\vert \sum _{n=-N} ^N d_n e^{i\omega_{\alpha,n} t}  \Bigr\vert ^2 \, dt 
\leq C^{(I)} _2 \sum _{n=-N} ^N \vert d_n \vert ^2.
\end{equation}
Since
$$ \sum _{n=-N} ^N \vert d_n \vert ^2 = a_0 ^2 + 2 \sum _{n=1} ^N \frac{a_n ^2 + b_n ^2}{4} ,$$
\eqref{BL3} implies that \eqref{BL1} holds true and so the family $\{c_{\alpha,0}, c_{\alpha,n}, s_{\alpha,n}, n\geq 1 \}$ is a Riesz basis of $E_\alpha$. Thus, the application $\mathcal F : E_\alpha \to \ell ^2 (\mathbb N)$:
$$\mathcal F(f)= ( \langle f, c_{\alpha,0} \rangle _{L^2 (0,T)}, \langle f, c_{\alpha,1} \rangle _{L^2 (0,T)} , \langle f, s_{\alpha,1} \rangle _{L^2 (0,T)}, \langle f, c_{\alpha,2} \rangle _{L^2 (0,T)}, \cdots) $$
is an isomorphism (see, e.g., \cite[Proposition 20]{B-L}).
We note that 
$$ Y^f \in H^3 _{(0)} (0,1) \quad \implies \quad \sum _{n=0} ^\infty \lambda _{\alpha,n} ^3 \vert Y^f _n \vert ^2 < \infty,$$
and
$$ Z^f \in D(A) \quad \implies \quad \sum _{n=0} ^\infty \lambda _{\alpha,n} ^2 \vert Z^f _n \vert ^2 < \infty,$$
and then \eqref{hyp-mu} ensures us that
$$ \vert A _{\alpha,0} ^f \vert ^2 + \sum _{n=1} ^\infty \vert A _{\alpha,n} ^f \vert ^2 + \vert B _{\alpha,n} ^f \vert ^2 < \infty . $$
Therefore, there exists a unique $Q_\alpha \in E_\alpha$ such that
$$ \mathcal F (Q_\alpha) = (A _{\alpha,0} ^f, A _{\alpha,1} ^f, B _{\alpha,1} ^f, A _{\alpha,2} ^f, \cdots).$$
Thus,
\begin{equation}
\label{syst-momentsQ-ps-part}
\begin{cases}
\langle  Q_\alpha, c_{\alpha,0} \rangle _{L^2 (0,T)}   = A_{\alpha,0} ^f , \\
\langle  Q_\alpha, c_{\alpha,n} \rangle _{L^2 (0,T)}   = A_{\alpha,n} ^f \quad \text{ for all } n \geq 1, \\
\langle  Q_\alpha, s_{\alpha,n} \rangle _{L^2 (0,T)}   = B_{\alpha,n} ^f \quad \text{ for all } n \geq 1;
\end{cases}
\end{equation}
and, moreover, the application 
\begin{equation*}
\begin{split}
 \ell ^2 (\mathbb N) &\to E_\alpha,\\
(A_{\alpha,0} ^f, A_{\alpha,1} ^f, B_{\alpha,1} ^f, A_{\alpha,2} ^f, \cdots) &\mapsto Q_\alpha 
\end{split}
\end{equation*}
is continuous.

{\it Step 2: Existence of a solution of the full system \eqref{syst-momentsQ-ps}.}
We claim that $\tilde s_{\alpha,0} \notin E_\alpha$: indeed, if $t\mapsto t$ was the limit of a sequence of linear combinations of $c_{\alpha,0}$, $c_{\alpha,n}$ and $s_{\alpha,n}$, the same would be true for the function $t\mapsto t^2$, by integration. Then, by integrating further, also $t\mapsto t^3$ would be the limit of a sequence of linear combinations of $c_{\alpha,0}$, $c_{\alpha,n}$ and $s_{\alpha,n}$. Thus, by iterating this procedure, we deduce that all the polynomials could be written in this form. Therefore, $L^2 (0,T)$ would be equal to $E_\alpha$ and \eqref{syst-momentsQ-ps-part} would have a unique solution. However, this is not the case: define $T_0:= \frac{4}{2-\alpha}$, and choose $q_\alpha$ smooth, compactly supported in $(0, \frac{T-T_0}{2})$ and different from $Q_\alpha$ on that interval. Now, consider the following problem
\begin{equation}
\label{syst-momentsQ-ps-part2}
\begin{cases}
\langle  \tilde Q_\alpha, c_{\alpha,0} \rangle _{L^2 (\frac{T-T_0}{2},T)}   = A_{\alpha,0} ^f - \langle  q_\alpha, c_{\alpha,0} \rangle _{L^2 (0,\frac{T-T_0}{2})}, \\
\langle  \tilde Q_\alpha, c_{\alpha,n} \rangle _{L^2 (\frac{T-T_0}{2},T)}   = A_{\alpha,n} ^f - \langle  q_\alpha, c_{\alpha,n} \rangle _{L^2 (0,\frac{T-T_0}{2})}\quad \text{ for all } n \geq 1, \\
\langle  \tilde Q_\alpha, s_{\alpha,n} \rangle _{L^2 (\frac{T-T_0}{2},T)}   = B_{\alpha,n} ^f - \langle  q_\alpha, s_{\alpha,n} \rangle _{L^2 (0,\frac{T-T_0}{2})}\quad \text{ for all } n \geq 1.
\end{cases}
\end{equation}
Since $T-\frac{T-T_0}{2} = \frac{T+T_0}{2} >T_0$, and the sequences $(\langle q_\alpha, c_{\alpha,n} \rangle _{L^2 (0,\frac{T-T_0}{2})})_n$ and $(\langle q_\alpha, s_{\alpha,n} \rangle _{L^2 (0,\frac{T-T_0}{2})})_n$ are square-summable (by integration by parts), there exists a solution $\tilde Q _\alpha \in L^2 (\frac{T-T_0}{2},T)$ of \eqref{syst-momentsQ-ps-part2}. 
So, the function
$$ Q_\alpha ^* := \begin{cases} q_\alpha \text{ on } (0,\frac{T-T_0}{2}), \\ 
\tilde Q_\alpha \text{ on } (\frac{T-T_0}{2},T)
\end{cases}$$
solves \eqref{syst-momentsQ-ps-part}. However, $Q_\alpha ^* \neq Q_\alpha$ on $(0, \frac{T-T_0}{2})$, which is contradiction with the fact that $Q_\alpha$ is the unique solution of \eqref{syst-momentsQ-ps-part}. Therefore $\tilde s_{\alpha,0} \notin E_\alpha$, and 
if we denote $p ^\perp _{\alpha,0}$ the orthogonal projection of $\tilde s_{\alpha,0}$ on $E_\alpha$, then $\tilde s_{\alpha,0}- p ^\perp _{\alpha,0} \neq 0$, and 
$$ Q_\alpha ^\perp := \frac{\tilde s_{\alpha,0}- p ^\perp _{\alpha,0}}{\Vert \tilde s_{\alpha,0}- p ^\perp _{\alpha,0} \Vert _{L^2 (0,T)} ^2} $$ is orthogonal to $E_\alpha$, and furthermore
$$ \langle  Q_\alpha ^\perp , \tilde s_{\alpha,0} \rangle _{L^2 (0,T)} = 1.$$
Thus,
$$ Q := Q_\alpha + B_{\alpha,0} Q_\alpha ^\perp $$
solves \eqref{syst-momentsQ-ps}. Moreover,
$$ \Vert Q \Vert _{L^2 (0,T)} ^2 = \Vert Q_\alpha \Vert _{L^2 (0,T)} ^2  + \Vert B_{\alpha,0}^f Q_\alpha ^\perp \Vert _{L^2 (0,T)} ^2 
\leq C \Bigl( \sum _{n=0} ^\infty \vert A_{\alpha,n}^f \vert ^2 + \vert B_{\alpha,n}^f \vert ^2 \Bigr) ,$$
which completes the proof of Lemma \ref{lem-DThetaT(0)inv}.

\end{proof}


\subsection{Proof of Theorem \ref{thm-ctr} (inverse mapping argument)} \hfill
\label{sec-concl-T>T0}

We define the space 
$$ F_\alpha := \overline{\text{Vect } \{1,t, \cos \sqrt{\lambda _{\alpha,n}} t, \sin \sqrt{\lambda _{\alpha,n}} t, n\geq 1 \}}.$$
Then, the restriction of $\Theta _T$ to $F_\alpha$
\begin{equation*}
\begin{split}
\Theta _{\alpha,T}: F_\alpha &\to H^3 _{(0)} (0,1) \times D(A), \\
p &\mapsto \Theta _{\alpha,T} (p) := \Theta _{T}(p)
\end{split}
\end{equation*}
is $C^1$ (Proposition \ref{prop-"thm3"}) and $D\Theta _{\alpha,T} (0)$ is invertible (Lemma \ref{lem-DThetaT(0)inv}). Thus, the inverse mapping theorem ensures that there exists a neighborhood $\mathcal V (0) \subset F_\alpha$ and a neighborhood $\mathcal V (1,0) \subset H^3 _{(0)} (0,1) \times D(A)$ such that
$$ \Theta _{\alpha,T}: \mathcal V (0) \to \mathcal V (1,0) $$
is a $C^1$-diffeomorphism. Hence, given $(w_0 ^f, w_1 ^f) \in \mathcal V (1,0)$, we choose $p^f:= \Theta _{\alpha,T} ^{-1} (w_0 ^f, w_1 ^f)$, and so the solution of \eqref{CMU-eq-ctr-10} with $p= p^f$ satisfies 
$$ (w(T), w_t (T)) = \Theta _T (p^f) = \Theta _T (\Theta _{\alpha,T} ^{-1} (w_0 ^f, w_1 ^f)) = (w_0 ^f, w_1 ^f) .$$
This concludes the proof of Theorem \ref{thm-ctr}. \qed


\section{Proof of Theorem \ref{thm-ctr=}: Reachability for $T=T_0$}
\label{sec-pr-thm=}

\subsection{Proof of Theorem \ref{thm-ctr=} first part: Reachability for $T=T_0$ and $\alpha \in [0,1)$} \hfill
\label{sec-pr-thm=0-1}

The proof follows from classical arguments concerning families of exponentials (\cite{Avdonin-Ivanov}) in the space $L^2 (0,T)$ and the strategy of Beauchard \cite{B-bi}:
\begin{itemize}
\item we study the solvability of the moment problem \eqref{syst-moments} (or equivalently \eqref{syst-momentsQ}), 
\item we conclude using the inverse mapping theorem.
\end{itemize}

\subsubsection{Main tools to study the solvability of the moment problem \eqref{syst-momentsQ}} \hfill

In order to use classical results on complex exponentials, we can transform \eqref{syst-momentsQ} into the following 
system

\begin{equation}
\label{syst-momentsQc}
\begin{cases}
\mu _{\alpha,0} \int _0 ^T Q (t) \, dt = Z_{\alpha,0} ^f , \\
\mu _{\alpha,n} \int _0 ^T Q (t) e^{-i\sqrt{\lambda _{\alpha,n}} t } \, dt = Z_{\alpha,n} ^f - i \sqrt{\lambda _{\alpha,n}} \, Y_{\alpha,n} ^f, & \text{ for all } n \geq 1,\\
\mu _{\alpha,n} \int _0 ^T Q (t) e^{i\sqrt{\lambda _{\alpha,n}} t } \, dt = Z_{\alpha,n} ^f + i \sqrt{\lambda _{\alpha,n}} \, Y_{\alpha,n} ^f, & \text{ for all } n \geq 1,\\
\mu _{\alpha,0} \int _0 ^T   Q(t) \, t \, dt = Y_{\alpha,0} ^f .
\end{cases}
\end{equation}
By introducing the notation
\begin{equation}
\label{syst-coeffs}
\begin{cases}
C_{\alpha,0}^f := \frac{Z_{\alpha,0} ^f}{\mu _{\alpha,0}}, \\
 C_{\alpha,n}^f := \frac{Z_{\alpha,n} ^f - i \sqrt{\lambda _{\alpha,n}} \, Y_{\alpha,n} ^f}{\mu _{\alpha,n}} \quad \text{for all } n \geq 1 , \\
C_{\alpha,-n}^f := \frac{Z_{\alpha,n} ^f + i \sqrt{\lambda _{\alpha,n}} \, Y_{\alpha,n} ^f}{\mu _{\alpha,n}} \quad \text{for all } n \geq 1 ,
\end{cases}
\end{equation}
and the natural scalar product in $L^2(0,T;\mathbb C)$
$$ \forall f,g \in L^2(0,T;\mathbb C), \quad \langle  f, g \rangle _{L^2 (0,T; \mathbb C)} = \int _0 ^T f(t) \overline{g(t)} \, dt ,$$
\eqref{syst-momentsQc} can be written as
\begin{equation}
\label{syst-momentsQc-ps}
\begin{cases}
\langle  Q, e^{i \omega _{\alpha,0}t} \rangle _{L^2 (0,T;\mathbb C)}   = C_{\alpha,0} ^f , \\
\langle  Q, e^{i \omega _{\alpha,n}t} \rangle _{L^2 (0,T;\mathbb C)}   = C_{\alpha,n} ^f \quad \text{ for all } n \geq 1, \\
\langle  Q, e^{i \omega _{\alpha,n}t} \rangle _{L^2 (0,T;\mathbb C)}   = C_{\alpha,n} ^f \quad \text{ for all } n \leq -1,\\
\langle  Q, \tilde s_{\alpha,0} \rangle _{L^2 (0,T;\mathbb C)}   = B_{\alpha,0} ^f ,
\end{cases}
\end{equation}
(where $\tilde s_{\alpha,0}$ and $B_{\alpha,0} ^f $ have been defined in \eqref{eq-coeffs} and \eqref{eq-riesz}). We are going to study first the solvability of the subsystem composed by the first three equations, that is, the following moment problem
\begin{equation}
\label{syst-momentsQc-ps-ss}
 \forall\, n \in \mathbb Z, \quad \langle  Q, e^{i \omega _{\alpha,n}t} \rangle _{L^2 (0,T;\mathbb C)}   = C_{\alpha,n} ^f .
\end{equation}


\subsubsection{Main solvability results for $\alpha \in [0,1)$} \hfill
\label{sec-pr-thm-T=T0-01}
\begin{Lemma}
\label{lem-Riesz-T0}
Let $\alpha \in [0,1)$. Then, the sequence $(e^{i \omega _{\alpha,n}t})_{n\in \mathbb Z}$ is a Riesz basis in $L^2(0,T_0)$.
\end{Lemma}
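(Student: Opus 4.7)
The plan is to reduce the claim to (an extension of) Kadec's $\frac{1}{4}$ theorem, by comparing the sequence $(\omega_{\alpha,n})_{n\in\mathbb Z}$ to the equispaced lattice $(n\eta_\alpha)_{n\in\mathbb Z}$, where $\eta_\alpha := \frac{2\pi}{T_0} = \kappa_\alpha \pi$ is precisely the critical frequency. Since $T = T_0$ is the critical time of Ingham's theorem for this gap, Ingham alone is no longer sufficient, and the finer bound on the deviation between $\omega_{\alpha,n}$ and $n\eta_\alpha$ given by Kadec is needed.

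\medskip
\noindent\textbf{Step 1: asymptotics of $\omega_{\alpha,n}$.} By Proposition \ref{prop-vp-w}, for $n\ge 1$ we have $\omega_{\alpha,n} = \kappa_\alpha\, j_{-\nu_\alpha+1,n}$, while $\omega_{\alpha,-n}=-\omega_{\alpha,n}$ and $\omega_{\alpha,0}=0$. The classical McMahon expansion for positive zeros of Bessel functions of order $-\nu_\alpha+1\ge\tfrac12$ (see e.g.\ \cite{Watson}) yields
\[
j_{-\nu_\alpha+1,n} = \Bigl(n + \tfrac{1-\nu_\alpha}{2} - \tfrac14\Bigr)\pi + O\bigl(\tfrac{1}{n}\bigr),
\qquad n\to+\infty.
\]
Hence, writing $\delta_\alpha := \kappa_\alpha\pi\cdot\frac{1-2\nu_\alpha}{4}=\eta_\alpha\cdot\frac{1-2\nu_\alpha}{4}$,
\[
\omega_{\alpha,n} - n\eta_\alpha = \delta_\alpha + O\bigl(\tfrac{1}{n}\bigr),
\qquad n\to+\infty,
\]
with the symmetric behavior $\omega_{\alpha,n}-n\eta_\alpha=-\delta_\alpha+O(1/|n|)$ as $n\to-\infty$.

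\medskip
\noindent\textbf{Step 2: the deviation is strictly less than $\frac{\eta_\alpha}{4}$.} Since $\nu_\alpha=\frac{1-\alpha}{2-\alpha}\in(0,\tfrac12]$ for $\alpha\in[0,1)$, we have $1-2\nu_\alpha\in[0,1)$, so
\[
\bigl|\delta_\alpha\bigr| \;=\; \eta_\alpha\cdot\frac{1-2\nu_\alpha}{4} \;<\; \frac{\eta_\alpha}{4}.
\]
Combined with Step~1, there exist $L<\tfrac14$ and $N\in\mathbb N$ such that $|\omega_{\alpha,n}-n\eta_\alpha|\le L\eta_\alpha$ for all $|n|\ge N$. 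The finitely many remaining terms ($|n|<N$) can be handled separately thanks to the extension of Kadec's theorem stated in Lemma \ref{lem-Riesz-T0-1-2}, which tolerates arbitrary modifications of a finite number of elements of the sequence without affecting the Riesz basis property.

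\medskip
\noindent\textbf{Step 3: application of the extended Kadec theorem and conclusion.} By a rescaling $t\mapsto \frac{\eta_\alpha}{2\pi}t$, the Riesz basis property in $L^2(0,T_0)$ for $(e^{i\omega_{\alpha,n}t})_{n\in\mathbb Z}$ is equivalent to the Riesz basis property in $L^2(0,2\pi)$ for $(e^{i\mu_n t})_{n\in\mathbb Z}$ where $\mu_n := \omega_{\alpha,n}/\kappa_\alpha$ satisfies $|\mu_n - n|\le L<\tfrac14$ for large $|n|$. The extended Kadec theorem of Lemma \ref{lem-Riesz-T0-1-2} then gives the conclusion. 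The main obstacle in this argument is precisely that the estimate $|\delta_\alpha|<\frac{\eta_\alpha}{4}$ becomes tight as $\alpha\to 1^-$, where $\nu_\alpha\to 0^+$ and thus $|\delta_\alpha|\to\frac{\eta_\alpha}{4}$; this is exactly the mechanism that makes the case $\alpha=1$ critical and left open in Theorem \ref{thm-ctr=}. For $\alpha\in[0,1)$ strictly, the inequality is strict and the Kadec bound applies, which concludes the proof.
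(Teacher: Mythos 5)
Your proof is correct and follows essentially the same route as the paper: McMahon asymptotics for the zeros of $J_{-\nu_\alpha+1}$ give $\frac{2}{\pi(2-\alpha)}\,\omega_{\alpha,n}-n\to\frac{\alpha}{4(2-\alpha)}=\frac{1-2\nu_\alpha}{4}<\frac14$ for $\alpha\in[0,1)$, after which a finite modification of the sequence plus Kadec's $\frac14$ theorem yields the Riesz basis property. Two cosmetic points only: in Step 3 the normalized exponents should be $\mu_n=\omega_{\alpha,n}/\eta_\alpha=\omega_{\alpha,n}/(\kappa_\alpha\pi)$ rather than $\omega_{\alpha,n}/\kappa_\alpha$, with the rescaling $s=\frac{2\pi}{T_0}\,t$ sending $(0,T_0)$ onto an interval of length $2\pi$ (your Step 2 already contains the correct bound $|\omega_{\alpha,n}/\eta_\alpha-n|\le L<\frac14$), and the fact that a finite perturbation of a Riesz basis of exponentials remains a Riesz basis is the Avdonin--Ivanov lemma the paper invokes, rather than Lemma \ref{lem-Riesz-T0-1-2} itself.
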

From the previous Lemma we deduce the following result.
\begin{Lemma}
\label{lem-Riesz-T0-moment}
Let $\alpha \in [0,1)$ and $T=T_0$. Then, the moment problem \eqref{syst-momentsQc-ps-ss} has one and only one solution $Q \in L^2 (0,T_0;\mathbb R)$.
\end{Lemma}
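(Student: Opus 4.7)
The plan is to deduce Lemma \ref{lem-Riesz-T0-moment} directly from the Riesz basis property established in Lemma \ref{lem-Riesz-T0}, handling first existence/uniqueness in the complex Hilbert space $L^2(0,T_0;\mathbb C)$, and then recovering the fact that the unique solution is real via a conjugation symmetry that is built into the coefficients $C_{\alpha,n}^f$.

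Concretely, since $(e^{i\omega_{\alpha,n}t})_{n\in\mathbb Z}$ is a Riesz basis of $L^2(0,T_0;\mathbb C)$, a standard moment-duality result (e.g.\ \cite[Proposition 20]{B-L}) yields that for every sequence $(c_n)_{n\in\mathbb Z}\in\ell^2(\mathbb Z;\mathbb C)$ there exists one and only one $Q\in L^2(0,T_0;\mathbb C)$ satisfying $\langle Q, e^{i\omega_{\alpha,n}t}\rangle_{L^2(0,T_0;\mathbb C)}=c_n$ for every $n\in\mathbb Z$. Thus I only need to verify that $(C_{\alpha,n}^f)_{n\in\mathbb Z}\in\ell^2(\mathbb Z)$. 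From the definitions \eqref{syst-coeffs} and the admissibility hypothesis \eqref{hyp-mu} I bound
\begin{equation*}
|C_{\alpha,n}^f|^2 \;\leq\; \frac{1}{c^2}\,(\lambda^*_{\alpha,n})^2\,\bigl(|Z^f_{\alpha,n}|^2+\lambda_{\alpha,n}|Y^f_{\alpha,n}|^2\bigr),
\end{equation*}
so summing over $n$ and using $Z^f\in D(A)$ (which gives $\sum_n \lambda_{\alpha,n}^2|Z^f_{\alpha,n}|^2<\infty$) together with $Y^f\in H^3_{(0)}$ (which gives $\sum_n\lambda_{\alpha,n}^3|Y^f_{\alpha,n}|^2<\infty$) yields $\sum_n |C_{\alpha,n}^f|^2<\infty$. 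This produces a unique complex $Q\in L^2(0,T_0;\mathbb C)$ solving \eqref{syst-momentsQc-ps-ss}, together with a continuous linear dependence of $Q$ on $(w_0^f,w_1^f)$.

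It remains to show that this $Q$ is real-valued. The key observation is the symmetry $\omega_{\alpha,-n}=-\omega_{\alpha,n}$, which implies $\overline{e^{i\omega_{\alpha,n}t}}=e^{i\omega_{\alpha,-n}t}$, together with the symmetry $\overline{C_{\alpha,n}^f}=C_{\alpha,-n}^f$ of the coefficients (visible directly from \eqref{syst-coeffs}, since $Z^f_{\alpha,n}$, $Y^f_{\alpha,n}$, $\mu_{\alpha,n}$ and $\sqrt{\lambda_{\alpha,n}}$ are real; the case $n=0$ is trivial because $C_{\alpha,0}^f$ is real). A direct computation then gives
\begin{equation*}
\langle \overline{Q},\,e^{i\omega_{\alpha,n}t}\rangle_{L^2(0,T_0;\mathbb C)}
=\overline{\langle Q,\,e^{i\omega_{\alpha,-n}t}\rangle_{L^2(0,T_0;\mathbb C)}}
=\overline{C_{\alpha,-n}^f}=C_{\alpha,n}^f,
\end{equation*}
so $\overline{Q}$ also solves the moment problem \eqref{syst-momentsQc-ps-ss}. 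By the uniqueness part of the Riesz basis argument, $Q=\overline{Q}$, so $Q\in L^2(0,T_0;\mathbb R)$, which concludes the proof.

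I do not anticipate any substantive obstacle: both ingredients (Riesz basis for existence/uniqueness in the complex space; symmetry $n\leftrightarrow -n$ for the reality) are essentially bookkeeping once Lemma \ref{lem-Riesz-T0} is in hand. The only delicate point is checking that the admissibility condition on $\mu$ is strong enough to guarantee $\ell^2$-summability of $(C_{\alpha,n}^f)$, but the bound $|\mu_{\alpha,n}|\geq c/\lambda^*_{\alpha,n}$ precisely compensates for the factor $\sqrt{\lambda_{\alpha,n}}$ introduced when $Y^f\in H^3_{(0)}$ is used in place of $H^2_{(0)}$, and this is the only reason the extra regularity $H^3_{(0)}\times D(A)$ (rather than $H^2_{(0)}\times H^1_\alpha$) on the target is needed.
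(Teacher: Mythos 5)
Your proposal is correct and follows essentially the same route as the paper: the Riesz basis property from Lemma \ref{lem-Riesz-T0} gives existence and uniqueness of a complex solution (the paper writes it explicitly as $Q=\sum_m C^f_{\alpha,m}\sigma_m$ via the biorthogonal family, whereas you invoke the isomorphism onto $\ell^2$), and reality follows from the symmetries $\omega_{\alpha,-n}=-\omega_{\alpha,n}$ and $\overline{C^f_{\alpha,n}}=C^f_{\alpha,-n}$ — the paper conjugates the series directly while you observe that $\overline{Q}$ solves the same moment problem and appeal to uniqueness, which is equivalent. Your explicit check that $(C^f_{\alpha,n})_n\in\ell^2$ using \eqref{hyp-mu} is a detail the paper leaves implicit at this point, and it is correct.
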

Lemma \ref{lem-Riesz-T0-moment} will imply the following result.
\begin{Lemma}
\label{lem-non-solv}
Let $\alpha\in[0,1)$ and $T=T_0$. There exists a closed hyperplane of $H^3_{(0)} \times D(A)$, denoted by $P^f _\alpha$ and defined in \eqref{eq-hyperplan-def}, such that the moment problem \eqref{syst-momentsQc-ps} has a solution if and only if $(Y^f, Z^f) \in P^f _\alpha$.
\end{Lemma}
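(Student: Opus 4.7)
\textbf{Proof plan for Lemma \ref{lem-non-solv}.} The idea is to use the Riesz basis structure provided by Lemma \ref{lem-Riesz-T0} (equivalently Lemma \ref{lem-Riesz-T0-moment}) to reduce the whole moment problem \eqref{syst-momentsQc-ps} to a single scalar condition. Write the first three equations of \eqref{syst-momentsQc-ps} as the subsystem \eqref{syst-momentsQc-ps-ss}; by Lemma \ref{lem-Riesz-T0-moment} this subsystem has a unique solution in $L^2(0,T_0;\mathbb R)$, which I denote $Q_0=Q_0(Y^f,Z^f)$. Since the biorthogonal family of a Riesz basis depends continuously and linearly on the moments, and since the admissibility condition \eqref{hyp-mu} ensures that the sequence $(C^f_{\alpha,n})_{n\in\mathbb Z}$ in \eqref{syst-coeffs} lies in $\ell^2(\mathbb Z)$ with norm controlled by $\|(Y^f,Z^f)\|_{H^3_{(0)}\times D(A)}$, the map $(Y^f,Z^f)\mapsto Q_0$ is linear and continuous from $H^3_{(0)}\times D(A)$ into $L^2(0,T_0;\mathbb R)$. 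Reality of $Q_0$ comes from the symmetry $\overline{C^f_{\alpha,n}}=C^f_{\alpha,-n}$ combined with $\overline{e^{i\omega_{\alpha,n}t}}=e^{i\omega_{\alpha,-n}t}$, which yields $\overline{Q_0}=Q_0$ by uniqueness.

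Because $Q_0$ is the \emph{unique} solution of the subsystem, the full moment problem \eqref{syst-momentsQc-ps} is solvable if and only if $Q_0$ also fulfils the last equation, namely
\begin{equation}
\langle Q_0(Y^f,Z^f),\tilde s_{\alpha,0}\rangle_{L^2(0,T_0)} \;=\; B^f_{\alpha,0}\;=\;\frac{Y^f_{\alpha,0}}{\mu_{\alpha,0}} .
\end{equation}
This motivates introducing the linear form
\begin{equation}
\label{eq-hyperplan-def}
\ell: H^3_{(0)}\times D(A)\to\mathbb R,\qquad \ell(Y^f,Z^f):=\langle Q_0(Y^f,Z^f),\tilde s_{\alpha,0}\rangle_{L^2(0,T_0)}-\frac{Y^f_{\alpha,0}}{\mu_{\alpha,0}},
\end{equation}
and setting $P^f_\alpha:=\ker\ell$. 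Cauchy--Schwarz together with the continuity of $(Y^f,Z^f)\mapsto Q_0$ and the evident continuity of $(Y^f,Z^f)\mapsto Y^f_{\alpha,0}$ show that $\ell$ is continuous, so $P^f_\alpha$ is a closed linear subspace of $H^3_{(0)}\times D(A)$.

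It remains to check that $\ell$ is not identically zero (so that $P^f_\alpha$ has codimension exactly one, i.e.\ is a genuine closed hyperplane). For this I would exhibit the explicit witness $(Y^f_*,Z^f_*)=(\Phi_{\alpha,0},0)=(1,0)$. Since $\lambda^*_{\alpha,0}=1$, this pair belongs to $H^3_{(0)}\times D(A)$; by \eqref{syst-coeffs} all coefficients $C^f_{\alpha,n}$ vanish, hence $Q_0\equiv 0$ by uniqueness, and therefore $\ell(Y^f_*,Z^f_*)=-1/\mu_{\alpha,0}$, which is nonzero by \eqref{hyp-mu}. This completes the identification of the reachable directions as the closed hyperplane $P^f_\alpha$. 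The main conceptual point is not the algebra but the passage from the Riesz basis property to uniqueness in the moment problem: the difficulty I expect to linger on is verifying carefully that the continuous dependence $(Y^f,Z^f)\mapsto Q_0$ produces enough regularity to pair $Q_0$ with the polynomial $\tilde s_{\alpha,0}\in L^2(0,T_0)$, and that the reality of $Q_0$ is preserved, so that $\ell$ is a well-defined \emph{real} continuous linear form on the real Hilbert space $H^3_{(0)}\times D(A)$.
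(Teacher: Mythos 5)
Your proposal is correct and follows essentially the same route as the paper: unique solvability of the subsystem \eqref{syst-momentsQc-ps-ss} via the Riesz basis of Lemma \ref{lem-Riesz-T0}, followed by reduction of the full problem to a single scalar compatibility condition whose zero set is $P^f_\alpha$. The only (cosmetic) difference is that the paper makes the linear form explicit by expanding $\tilde s_{\alpha,0}=\sum_m\beta_m e^{i\omega_{\alpha,m}t}$ in the Riesz basis to obtain the coefficient relation \eqref{ssev-29avril}, whereas you keep the functional abstract and supply the (correct) witness $(1,0)$ to confirm it is non-trivial, hence a genuine closed hyperplane.
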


\subsubsection{Proof of Lemma \ref{lem-Riesz-T0}} \hfill

The proof follows from the Kadec' s $\frac{1}{4}$ Theorem (\cite{Kadec}, \cite[Theorem 1.14 p. 42]{Young}). First, we note that the sequence $(\omega _{\alpha,n})_{n\in \mathbb Z}$ is odd, that is $\omega_{\alpha,-n}=-\omega_{\alpha,n}$, and 
$$ \forall\, n \geq 1, \quad \omega _{\alpha,n} = \kappa _\alpha j_{-\nu_\alpha +1,n} = \frac{2-\alpha}{2} j_{\frac{1}{2-\alpha},n} .$$
Mac Mahon's formula (see \cite[p. 506]{Watson})
provides the following asymptotic development of $j_{\nu,n}$ as $n\to \infty$
 $$ j_{\nu,n} = \pi ( n + \frac{\nu}{2} - \frac{1}{4}) + O(\frac{1}{n}) .$$
Hence, we have
 $$ \omega _{\alpha,n} = \frac{2-\alpha}{2}\pi \Bigl( n + \frac{1}{2(2-\alpha)} - \frac{1}{4} \Bigr) + O(\frac{1}{n})
 = \frac{2-\alpha}{2}\pi \Bigl(n+ \frac{\alpha}{4(2-\alpha)} \Bigr) + O(\frac{1}{n}) .$$
Therefore, we deduce that
\begin{equation}
\label{DAS-omega_n}
\frac{2}{\pi(2-\alpha)} \, \omega _{\alpha,n} = n+ \frac{\alpha}{4(2-\alpha)} + O(\frac{1}{n}) \quad \text{ as } n \to +\infty ,
\end{equation}
and, in particular,
$$ \frac{2}{\pi(2-\alpha)} \, \omega _{\alpha,n} - n \to \frac{\alpha}{4(2-\alpha)} \quad \text{ as } n \to +\infty.
$$
Since $\alpha \in [0,1)$, it holds that
 $$ \frac{\alpha}{4(2-\alpha)} = \frac{1}{4} \, \frac{\alpha}{2-\alpha} < \frac{1}{4} ,$$
thus, we gather that for any $L \in (\frac{1}{4}\frac{\alpha}{2-\alpha}, \frac{1}{4})$ there exists $N_0$ such that
 $$ \forall\, n \geq N_0, \quad  \Bigl\vert \frac{2}{\pi(2-\alpha)} \, \omega _{\alpha,n} - n \Bigr\vert \leq L < \frac{1}{4} .$$
Since the sequence $(\frac{2}{\pi(2-\alpha)} \, \omega _{\alpha,n})_{n\in \mathbb Z}$ is odd, the above bound holds also for negative indices, hence
 $$ \forall\, \vert n \vert \geq N_0, \quad  \Bigl\vert \frac{2}{\pi(2-\alpha)} \, \omega _{\alpha,n} - n \Bigr\vert \leq L < \frac{1}{4} .$$
Then, consider the sequence $(\tilde \omega _{\alpha,n})_{n\in \mathbb Z}$ defined by
$$ 
\tilde \omega _{\alpha,n} := 
\begin{cases} 
 n , & \quad \forall \vert n \vert < N_0, \\
\frac{2}{\pi(2-\alpha)} \, \omega _{\alpha,n}, & \quad \forall \vert n \vert \geq N_0 .
\end{cases}$$
The new sequence $(\tilde \omega _{\alpha,n})_{n\in \mathbb Z}$ satisfies
$$ \forall n \in \mathbb Z, \quad \Bigl\vert \tilde \omega _{\alpha,n} - n \Bigr\vert \leq L < \frac{1}{4} ,$$
and we can apply Kadec's $\frac{1}{4}$ Theorem, which implies that the sequence $(e^{i \tilde \omega _{\alpha,n} \tau})_{n\in \mathbb Z}$ is a Riesz basis in $L^2(-\pi,\pi; \mathbb C)$ (where $\tau$ is the variable in $(-\pi,\pi)$), see \cite[Theorem 1.14 p. 42]{Young}. Thanks to the change of variables
$$ [-\pi, \pi] \to [0,T_0], \quad \tau \mapsto t = \frac{\tau}{\pi} \frac{T_0}{2} + \frac{T_0}{2} ,$$
we obtain that the sequence $(e^{i \tilde \omega _{\alpha,n} \frac{2\pi}{T_0} t})_{n\in \mathbb Z}$ is a Riesz basis in $L^2(0,T_0; \mathbb C)$. Indeed, for any $g\in L^2(0,T_0; \mathbb C)$, we define
$$ f(\tau) := g\left(\frac{T_0}{2\pi} \tau + \frac{T_0}{2}\right) \in L^2(-\pi,\pi; \mathbb C) ,$$
which can be developed using the Riesz basis $(e^{i \tilde \omega _{\alpha,n} \tau})_{n\in \mathbb Z}$
$$ f(\tau) = \sum _{n\in \mathbb Z} c_n e^{i \tilde \omega _{\alpha,n} \tau} , \quad \text{ with } \quad 
A_0 \sum _{n\in \mathbb Z} \vert c_n  \vert ^2 \leq \Vert f \Vert ^2 \leq B_0 \sum _{n\in \mathbb Z} \vert c_n  \vert ^2, $$
where $A_0$ and $B_0$ are suitable positive constants. Going back to the original time interval $(0,T_0)$, we obtain
\begin{equation*}
\begin{split}
 g(t) &= f\left(2\pi \frac{t-\frac{T_0}{2}}{T_0}\right) = \sum _{n\in \mathbb Z} c_n e^{i \tilde \omega _{\alpha,n} 2\pi \frac{t-\frac{T_0}{2}}{T_0}}\\
& = \sum _{n\in \mathbb Z} c_n \, e^{-i \tilde \omega _{\alpha,n} \pi}  \, e^{i \tilde \omega _{\alpha,n} \frac{2\pi}{T_0} t} 
 = \sum _{n\in \mathbb Z} \tilde c_n \, e^{i \tilde \omega _{\alpha,n} \frac{2\pi}{T_0} t}
 \end{split}
 \end{equation*}
where $\tilde c_n = c_n \, e^{-i \tilde \omega _{\alpha,n} \pi}$. We further deduce that there exist two positive constants $A_1$ and $B_1$ such that 
 $$ A_1 \sum _{n\in \mathbb Z} \vert \tilde c_n  \vert ^2 \leq \Vert g \Vert ^2 \leq B_1 \sum _{n\in \mathbb Z} \vert \tilde c_n  \vert ^2 , $$
because $\vert \tilde c_n \vert = \vert c_n \vert$ and $(e^{i \tilde \omega _{\alpha,n} \tau})_{n\in \mathbb Z}$ is a Riesz basis of $L^2(-\pi,\pi; \mathbb C)$. 
 
We notice that
 $$ \forall\, \vert n \vert \geq N_0, \quad \tilde \omega _{\alpha,n} \frac{2\pi}{T_0} = \tilde \omega _{\alpha,n} \frac{2\pi}{\frac{4}{2-\alpha}} = \frac{2}{\pi(2-\alpha)} \, \omega _{\alpha,n} \, \frac{\pi(2-\alpha)}{2} =  \omega _{\alpha,n}, $$
 and since modifying a finite number of terms does not affect the fact of being a Riesz basis (\cite[Lemma II.4.11 p. 105]{Avdonin-Ivanov}), we deduce that $(e^{i \omega _{\alpha,n} t})_{n\in \mathbb Z}$ is a Riesz basis in $L^2(0,T_0; \mathbb C)$. \qed


\subsubsection{Proof of Lemma \ref{lem-Riesz-T0-moment}} \hfill

 Since $(e^{i \omega _{\alpha,n} t})_{n\in \mathbb Z}$ is a Riesz basis in $L^2(0,T_0; \mathbb C)$, there exists one and only one biorthogonal sequence: $(\sigma _m (t))_{m\in \mathbb Z}$ satisfying
$$ \langle \sigma _m, e^{i \omega _{\alpha,n} t} \rangle _{L^2(0,T_0;\mathbb C)} = \int _0 ^{T_0} \sigma _m (t) e^{-i \omega _{\alpha,n} t} \, dt = \delta _{mn} .$$
Taking the conjugate, we obtain that
$$ \int _0 ^{T_0} \overline{\sigma _m (t)} e^{i \omega _{\alpha,n} t} \, dt = \delta _{mn} .$$
Recalling that $\omega _{\alpha,n} = - \omega _{\alpha,-n}$, we have
$$ \langle \overline{\sigma _m}, e^{i \omega _{\alpha,-n} t} \rangle _{L^2(0,T_0;\mathbb C)}
= \delta _{mn} = \langle \sigma _{-m}, e^{i \omega _{\alpha,-n} t} \rangle _{L^2(0,T_0;\mathbb C)} ,$$
which implies that 
$$\forall\, m\in \mathbb Z, \quad \overline{\sigma _m} = \sigma _{-m} .$$
Now, using once again that $(e^{i \omega _{\alpha,n} t})_{n\in \mathbb Z}$ is a Riesz basis in $L^2(0,T_0; \mathbb C)$, the moment problem \eqref{syst-momentsQc-ps-ss} has ne and only one solution, given by
\begin{equation}
\label{formule-sol-moments}
Q(t) = \sum _{m\in \mathbb Z} C_{\alpha,m} ^f \sigma _m (t) .
\end{equation}
It remains to verify that $Q$ takes its values in $\mathbb R$: taking the conjugate, we have
$$ \overline{Q(t)} = \sum _{m\in \mathbb Z} \overline{C_{\alpha,m} ^f} \overline{\sigma _m (t)}
= \sum _{m\in \mathbb Z} C_{\alpha,-m} ^f \sigma _{-m} (t) = Q(t) .$$
Hence $Q \in L^2 (0,T_0;\mathbb R)$. \qed

\subsubsection{Proof of Lemma \ref{lem-non-solv}} \hfill

We have proved in Lemma \ref{lem-Riesz-T0-moment} that the subsystem \eqref{syst-momentsQc-ps-ss} admits a unique solution $Q \in L^2 (0,T_0;\mathbb R)$, given by \eqref{formule-sol-moments}. Therefore, the moment problem \eqref{syst-momentsQc-ps} is satisfied if and only if the solution $Q$ given by \eqref{formule-sol-moments} satisfies also the last equation in \eqref{syst-momentsQc-ps}.
Since $(e^{i \omega _{\alpha,n} t})_{n\in \mathbb Z}$ is a Riesz basis in $L^2(0,T_0; \mathbb C)$, there exists a unique sequence $(\beta_n)_{n\in \mathbb Z} \in \ell ^2 (\mathbb Z)$ such that
$$ \tilde s _{\alpha,0} (t) = \sum _{m\in \mathbb Z} \beta_m \, e^{i \omega _{\alpha,m} t} \quad \text{ in } L^2(0,T_0; \mathbb C).$$
Since $\tilde s _{\alpha,0}$ is real-valued, we have that
$$ \sum _{m\in \mathbb Z} \beta_m \, e^{i \omega _{\alpha,m} t} = \tilde s _{\alpha,0} (t)
= \overline{\tilde s _{\alpha,0} (t)}
= \sum _{m\in \mathbb Z} \overline{\beta_m} \, e^{-i \omega _{\alpha,m} t} 
= \sum _{m\in \mathbb Z} \overline{\beta_{-m}} \, e^{i \omega _{\alpha,m} t},$$
from which we deduce that
$$ \forall\, m \in \mathbb Z, \quad \overline{\beta_m} = \beta _{-m} .$$
Thus,
\begin{equation*} 
\begin{split}
\langle Q, \tilde s _{\alpha,0} \rangle _{L^2(0,T_0;\mathbb C)}
&= \langle Q, \sum _{m\in \mathbb Z} \beta_m \, e^{i \omega _{\alpha,m} t} \rangle _{L^2(0,T_0;\mathbb C)}
\\
&= \sum _{m\in \mathbb Z} \overline{\beta_m} \, \langle Q, e^{i \omega _{\alpha,m} t} \rangle _{L^2(0,T_0;\mathbb C)}
= \sum _{m\in \mathbb Z} \overline{\beta_m} \, C_{\alpha,m} ^f.
\end{split}
\end{equation*}
Hence, the solution $Q$ given by \eqref{formule-sol-moments} solves \eqref{syst-momentsQc-ps} if and only if
$$ B_{\alpha,0} ^f = \sum _{m\in \mathbb Z} \overline{\beta_m} \, C_{\alpha,m} ^f , $$
or, equivalently, if and only if
\begin{equation}
\label{ssev-29avril}
\frac{Y_{\alpha,0} ^f}{\mu _{\alpha,0}}
= \beta_0 \frac{Z_{\alpha,0} ^f}{\mu _{\alpha,0}} 
+ \sum _{m\geq 1} \Bigl(  2 (\text{Re } \beta_m) \frac{Z_{\alpha,m} ^f}{\mu _{\alpha,m}} 
- 2 (\text{Im } \beta_m) \omega _{\alpha,m} \frac{Y_{\alpha,m} ^f}{\mu _{\alpha,m}}
\Bigr) ,
\end{equation}
where the relation of $B^f_{\alpha,0}$ and $C^f_{\alpha,m}$ with $(Y^f,Z^f)$ are given in \eqref{eq-coeffs} and \eqref{syst-coeffs}. We now introduce the closed hyperplane $P^f _\alpha$ of $H^3 (0) \times D(A)$ defined by
\begin{equation}
\label{eq-hyperplan-def}
P^f _\alpha := \{(Y^f,Z^f) \in H^3 (0) \times D(A) \text{ such that \eqref{ssev-29avril} is satisfied} \}.
%
\end{equation}
Therefore \eqref{syst-momentsQc-ps} has a solution if and only if $(Y^f,Z^f) \in P_\alpha ^f$. \qed 

\subsubsection{Proof of Theorem \ref{thm-ctr=} first part (inverse mapping argument)} \hfill
\label{sec-concl-T=T0}

As in section \ref{sec-concl-T>T0}, we consider the application
\begin{equation}
\label{def-thetaT0}
\Theta _{T_0}: L^2 (0,T_0) \to H^3 _{(0)} \times D(A), \quad \Theta _{T_0} (p) := (w^{(p)} (T_0), w^{(p)} _t (T_0)) .
\end{equation}
We recall that $P^f _\alpha$ is the closed hyperplane of $H^3 _{(0)} \times D(A)$ defined by \eqref{eq-hyperplan-def}.

From the previous section it follows that the application $D \Theta _{T_0}$ satisfies
$$ D\Theta _{T_0} (0) (L^2 (0,T_0) )  \subset P_\alpha ^f .$$
Indeed, if $(Y^f,Z^f)= (W^{(0,q)} (T_0), W_t ^{(0,q)} (T_0)$, then the moment problem is satisfied and the Fourier coefficients of $Y^f$ and $Z^f$ satisfy \eqref{ssev-29avril}, hence $(Y^f,Z^f) \in P^f _\alpha$. Moreover,
$$ D\Theta _{T_0}(0): L^2 (0,T_0) \to  P_\alpha ^f,  $$
is invertible. In fact, it follows from Lemma \ref{lem-Riesz-T0-moment} and formula \eqref{formule-sol-moments}.

Now consider $(Y^\perp, Z^\perp) \neq 0$ and orthogonal to $P^f _\alpha$: this allows us to decompose
the space $H^3 _{(0)} \times D(A)$ into
$$ H^3 _{(0)} \times D(A) = P^f _\alpha \oplus (P^f _\alpha) ^\perp $$
where $(P^f _\alpha) ^\perp = \mathbb R (Y^\perp, Z^\perp)$ is one dimensional.
Consider the associated orthogonal projections $\text{proj}_{P^f _\alpha}$ and $\text{proj}^\perp _{P^f _\alpha}$. Any $(Y,Z)\in H^3_{(0)}(0,1)\times D(A)$ can be decompose as
$$ (Y,Z) = \text{proj}_{P^f _\alpha} (Y,Z) + \text{proj}^\perp _{P^f _\alpha} (Y,Z) \text{ with } \begin{cases}
\text{proj}_{P^f _\alpha} (Y,Z) \in P^f _\alpha,\\
\text{proj}^\perp _{P^f _\alpha} (Y,Z) \in (P^f _\alpha) ^\perp  . \end{cases}$$
The application
$$ \tilde \Theta _{\alpha,T_0} : L^2(0,T_0) \to P^f _\alpha, \quad \tilde \Theta _{\alpha,T_0} (q) = \text{proj}_{P^f _\alpha} (\Theta _{T_0} (q)) $$
satisfies
$$ D\tilde \Theta _{\alpha,T_0}(0) : L^2(0,T_0) \to P^f _\alpha, \quad D\tilde\Theta _{\alpha,T_0}(0) = \text{proj}_{P^f _\alpha}  (D\Theta _{T_0}(0)) .$$
Hence $D\tilde \Theta _{\alpha,T_0}(0) : L^2(0,T_0) \to P^f _\alpha$ is invertible, and therefore the inverse mapping theorem implies that there exists a neighborhood $\mathcal V (0) \subset L^2(0,T_0)$ and a neighborhood $\mathcal V (\text{proj}_{P^f _\alpha} ((1,0)) \subset P^f _\alpha$ such that
$$ \tilde \Theta _{\alpha,T_0} : \mathcal V (0) \subset L^2(0,T_0) \to \mathcal V (\text{proj}_{P^f _\alpha} ((1,0)) \subset P^f _\alpha $$ is a $C^1$-diffeomorphism. Therefore,
$$ \Theta _{T_0} (\mathcal V (0)) = \{ (Y,Z) + \text{proj}^\perp _{P^f _\alpha} (\Theta _{T_0} (\tilde \Theta ^{-1} _{\alpha,T_0} (Y,Z))) , (Y,Z) \in \mathcal V (\text{proj}_{P^f _\alpha} ((1,0)) \} , $$
which means that $\Theta _{T_0} (\mathcal V (0))$ is the graph of the application 
$$ \mathcal V (\text{proj}_{P^f _\alpha} ((1,0)) \to (P^f _\alpha) ^\perp, \quad (Y,Z) \mapsto \text{proj}^\perp _{P^f _\alpha} (\Theta _{T_0} (\tilde\Theta ^{-1} _{\alpha,T_0} (Y,Z))) ,$$
hence $\Theta _{T_0} (\mathcal V (0))$ is a submanifold of codimension 1. This concludes the proof  of Theorem \ref{thm-ctr<} in the case  $T=T_0$ and $\alpha \in [0,1)$. \qed


\subsection{Proof of Theorem \ref{thm-ctr=} second part: Reachability when $T=T_0$ and $\alpha \in [1,2)$} \hfill
\label{sec-pr-thm=1-2}

When $\alpha \in [1,2)$, we derive from \eqref{DAS-omega_n} that
$$ \frac{2}{\pi(2-\alpha)} \, \omega _{\alpha,n} - n \to \frac{\alpha}{4(2-\alpha)} \quad \text{ as } n \to \infty .$$
However, we notice that
$$ \alpha \in [1,2) \quad \implies \quad \frac{\alpha}{4(2-\alpha)} \geq \frac{1}{4} .$$
This fact represents the main difference with respect to the analysis of the solvability of moment problem \eqref{syst-momentsQc-ps} of section \ref{sec-pr-thm=0-1} (Lemma \ref{lem-Riesz-T0}-\ref{lem-non-solv}).

\subsubsection{Main solvability results when $\alpha \in [1,2)$} \hfill
\label{sec-pr-thm-T=T0-1-2}

In this section will prove an extension of the Kadec's $\frac{1}{4}$ Theorem (\cite{Kadec}, \cite[Theorem 1.14 p. 42]{Young}). Our results are similar to those of \cite[Theorem F p. 149]{Joo}, however, thanks to our assumptions, we are able to give very simple statements and proofs.

\begin{Lemma}
\label{lem-Riesz-T0-1-2} 
Consider an odd sequence of real numbers $(x_n)_{n\in \mathbb Z}$, that is $x_{-n}=-x_n$, such that there exist $k \in \mathbb N^* $, $\delta \in (0, \frac{1}{4})$ and $N_0 \geq 0$ for which
\begin{equation}
\label{hyp-Kadec-k}
n  \geq N_0 \quad \implies \quad  \left\vert x_n - n - \frac{k}{2} \right\vert \leq \frac{1}{4} - \delta .
\end{equation}
Then, choosing distinct real numbers $x' _1, \cdots, x'_k \notin \{ x_n, n \in \mathbb Z \}$, the set 
$$ \{ e^{ix'_1 t}, \cdots, e^{ix'_k t} \} \cup \{ e^{ix_n t}, n\in \mathbb Z \}$$
is a Riesz basis of $L^2 (-\pi,\pi)$.
\end{Lemma}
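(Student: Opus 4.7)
The plan is to construct a reindexing $(\hat x_n)_{n\in\mathbb Z}$ of the combined set $\{x_n:n\in\mathbb Z\}\cup\{x'_1,\ldots,x'_k\}$ whose tails are Kadec-type perturbations of the shifted integers $n+k/2$, apply the classical Kadec $\frac{1}{4}$ theorem to an auxiliary sequence $(y_n)$ that coincides with $\hat x$ outside a finite window, and then transfer the Riesz-basis property via the same finite-modification stability (Lemma~II.4.11 of \cite{Avdonin-Ivanov}) already used in section~\ref{sec-pr-thm-T=T0-01}.

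First, I will record that $\{e^{i(n+k/2)t}\}_{n\in\mathbb Z}$ is a Riesz basis of $L^2(-\pi,\pi)$: multiplication by the unimodular function $e^{ikt/2}$ is a unitary self-map of $L^2(-\pi,\pi)$ carrying the standard Fourier basis $\{e^{int}\}_{n\in\mathbb Z}$ onto it. Next, I will set $\hat x_n := x_n$ for $n\geq N_0$, $\hat x_n := x_{n+k}$ for $n\leq -(N_0+k)$, and for the middle range $-(N_0+k)<n<N_0$ --- which contains exactly $2N_0+k-1$ indices --- define $\hat x_n$ via any bijection onto the $2N_0-1$ values $\{x_m:|m|<N_0\}$ together with the $k$ extra numbers $x'_1,\ldots,x'_k$; the counts $2N_0-1+k$ and $2N_0+k-1$ match. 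As sets one then has $\{\hat x_n\}_{n\in\mathbb Z}=\{x_n\}_{n\in\mathbb Z}\cup\{x'_1,\ldots,x'_k\}$. On the positive tail the hypothesis \eqref{hyp-Kadec-k} gives directly $|\hat x_n-(n+k/2)|\leq \frac{1}{4}-\delta$; on the negative tail, writing $m=-(n+k)\geq N_0$ and exploiting oddness, $\hat x_n=x_{n+k}=-x_m$, hence $\hat x_n-(n+k/2)=-(x_m-m-k/2)$ and the same bound holds.

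Finally, I will introduce the auxiliary sequence $y_n:=\hat x_n$ for $|n|\geq N_0+k$ together with $y_n:=n+k/2$ for $-(N_0+k)<n<N_0$. Then $|y_n-(n+k/2)|\leq \frac{1}{4}-\delta$ is valid for every $n\in\mathbb Z$, and all the $y_n$ are pairwise distinct: distinct integers produce distinct centers $n+k/2$, and the bound $|y_n-(n+k/2)|<\frac{1}{4}$ confines the $y_n$ to disjoint open intervals of length less than $\frac{1}{2}$ about each center. By Kadec's $\frac{1}{4}$ theorem \cite[Theorem~1.14 p.~42]{Young}, applied relative to the Riesz basis of the previous step, $\{e^{iy_n t}\}_{n\in\mathbb Z}$ is a Riesz basis of $L^2(-\pi,\pi)$. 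Since $(\hat x_n)$ and $(y_n)$ coincide outside the finite window $-(N_0+k)<n<N_0$, and all $\hat x_n$ are distinct (the tail terms are distinct because $(x_n)$ is, and they are disjoint from the middle terms by the same interval argument together with $x'_j\notin\{x_n\}$), the finite-modification lemma \cite[Lemma~II.4.11 p.~105]{Avdonin-Ivanov} transfers the Riesz-basis property to $\{e^{i\hat x_n t}\}_{n\in\mathbb Z}=\{e^{ix'_j t}\}_{j=1}^k\cup\{e^{ix_n t}\}_{n\in\mathbb Z}$, which is the conclusion of the lemma. The delicate point in this plan will be the cardinality bookkeeping between the two tails via the shift by $k$ --- exactly the mechanism that forces the $k$ extra frequencies and uses the oddness of $(x_n)$ essentially --- whereas the distinctness checks at each stage are routine.
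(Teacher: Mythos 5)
Your argument is correct and rests on the same two pillars as the paper's proof --- Kadec's $\frac14$ theorem combined with the stability of Riesz bases of exponentials under modification of finitely many (distinct) exponents --- but it packages them differently, and in a way that is arguably an improvement. The paper splits into the cases $k$ even and $k$ odd, writes out the constructions only for $k=2$ (a pure index shift, since $k/2$ is then an integer) and $k=1$ (a value shift by $\frac12$ undone by multiplying the basis by a unimodular exponential), and asserts that general $k$ follows ``with the same method''; you instead give a single construction valid for every $k$: shift the reference lattice to $n+\frac k2$ via the unitary multiplication by $e^{ikt/2}$, shift the index on one tail only (by $k$) to open exactly $k$ slots in a finite middle window, verify the tail estimates there using \eqref{hyp-Kadec-k} and the oddness of $(x_n)$, and then invoke Kadec plus the finite-modification lemma once. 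The cardinality bookkeeping ($2N_0+k-1$ middle indices versus $2N_0-1$ retained values plus $k$ new frequencies) and the negative-tail estimate via $m=-(n+k)$ are both right. One small slip: your auxiliary sequence $y_n$ is declared equal to $\hat x_n$ for $|n|\geq N_0+k$ and equal to $n+\frac k2$ for $-(N_0+k)<n<N_0$, which leaves the indices $N_0\leq n<N_0+k$ undefined; you clearly intend $y_n=\hat x_n$ on the complement of the middle window, i.e.\ for $n\geq N_0$ and for $n\leq -(N_0+k)$, and with that reading (which is what your later claim ``$(\hat x_n)$ and $(y_n)$ coincide outside the finite window'' presupposes) the proof is complete. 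Note also that, as in the paper, the distinctness of the $x_n$ themselves is used implicitly when invoking the finite-modification lemma; it is guaranteed in the application but is not literally part of the hypotheses of the lemma as stated.
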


As a consequence of the above result, we deduce the following Lemma.

\begin{Lemma}
\label{cor-Riesz}
Let $\alpha \in [1,2)$. Assume that 
\begin{equation}
\label{cond-alpha}
\frac{1}{2-\alpha} \notin \mathbb N .
\end{equation}
Denote by $k_\alpha$ the integer part of $\frac{1}{2-\alpha}$.
Then, the set $\{ e^{i\omega_{\alpha,n} t }, n\in \mathbb Z \}$ can be complemented by $k_\alpha$ exponentials to form a Riesz basis of $L^2 (0,T_0)$.
\end{Lemma}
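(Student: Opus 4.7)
The plan is to reduce this lemma to Lemma \ref{lem-Riesz-T0-1-2} by the same rescaling argument used in the proof of Lemma \ref{lem-Riesz-T0}, with the crucial new observation that, for $\alpha\in[1,2)$, the asymptotic ``shift'' of the eigenvalue sequence with respect to the integer lattice overshoots Kadec's $\tfrac{1}{4}$ bound, but only by a well-controlled amount measured by $k_\alpha=\lfloor \frac{1}{2-\alpha}\rfloor$.

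First, I would introduce the odd rescaled sequence $x_n := \frac{2}{\pi(2-\alpha)}\,\omega_{\alpha,n}$ for $n\in\mathbb Z$, and re-write the asymptotic \eqref{DAS-omega_n} as
$$ x_n - n \;=\; \frac{\alpha}{4(2-\alpha)} + O\!\Bigl(\tfrac{1}{n}\Bigr) \;=\; \frac{1}{2(2-\alpha)} - \frac{1}{4} + O\!\Bigl(\tfrac{1}{n}\Bigr)\qquad\text{as } n\to\infty. $$
Next, writing $\frac{1}{2-\alpha} = k_\alpha + r_\alpha$ with $k_\alpha\in\mathbb N$ its integer part and $r_\alpha\in(0,1)$ \emph{strictly}, thanks to the hypothesis $\frac{1}{2-\alpha}\notin\mathbb N$, I would get
$$ x_n - n - \frac{k_\alpha}{2} \;=\; \frac{r_\alpha}{2} - \frac{1}{4} + O\!\Bigl(\tfrac{1}{n}\Bigr) ,\qquad \Bigl|\tfrac{r_\alpha}{2}-\tfrac{1}{4}\Bigr| < \tfrac{1}{4}. $$
Therefore there exist $\delta>0$ and $N_0\in\mathbb N$ such that $|x_n - n - k_\alpha/2|\leq \frac{1}{4}-\delta$ for every $n\geq N_0$, so the odd sequence $(x_n)_{n\in\mathbb Z}$ satisfies the assumptions of Lemma \ref{lem-Riesz-T0-1-2} with $k=k_\alpha$.

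Applying Lemma \ref{lem-Riesz-T0-1-2}, for any choice of $k_\alpha$ distinct real numbers $x'_1,\dots,x'_{k_\alpha}\notin\{x_n:n\in\mathbb Z\}$, the family $\{e^{ix'_j\tau}\}_{1\leq j\leq k_\alpha}\cup\{e^{ix_n\tau}\}_{n\in\mathbb Z}$ is a Riesz basis of $L^2(-\pi,\pi)$. I would then transfer this Riesz basis property to $L^2(0,T_0)$ via the affine change of variable $\tau=\frac{2\pi}{T_0}(t-\tfrac{T_0}{2})$, exactly as at the end of the proof of Lemma \ref{lem-Riesz-T0}; since $\frac{2\pi}{T_0}=\frac{\pi(2-\alpha)}{2}$ we have $\frac{2\pi}{T_0}x_n=\omega_{\alpha,n}$, so that, setting $\omega'_j:=\frac{2\pi}{T_0}x'_j$, the family $\{e^{i\omega'_jt}\}_{1\leq j\leq k_\alpha}\cup\{e^{i\omega_{\alpha,n}t}\}_{n\in\mathbb Z}$ is a Riesz basis of $L^2(0,T_0)$, which is the claim.

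The main obstacle is the correct identification of the shift $k_\alpha$: one must convert the asymptotic constant $\frac{\alpha}{4(2-\alpha)}$ — which equals or exceeds $\frac{1}{4}$ for $\alpha\in[1,2)$, so Kadec's theorem in its original form does not apply — into a strict inequality of the form $|\cdot-k_\alpha/2|<\frac{1}{4}$; it is precisely the hypothesis $\frac{1}{2-\alpha}\notin\mathbb N$ which excludes the borderline values $r_\alpha\in\{0,1\}$ (equivalently $\alpha=2-\tfrac{1}{k}$ for $k\in\mathbb N^*$) and thus guarantees the existence of the positive slack $\delta$ needed to invoke Lemma \ref{lem-Riesz-T0-1-2}.
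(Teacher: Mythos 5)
Your proposal is correct and follows essentially the same route as the paper: rescale $\omega_{\alpha,n}$ by $\frac{2}{\pi(2-\alpha)}$, identify the asymptotic shift $\frac{\alpha}{4(2-\alpha)}=\frac{k_\alpha}{2}+\frac{r_\alpha}{2}-\frac{1}{4}$ via the integer/fractional decomposition of $\frac{1}{2-\alpha}$, verify hypothesis \eqref{hyp-Kadec-k} with $k=k_\alpha$, and invoke Lemma \ref{lem-Riesz-T0-1-2} before transferring back to $L^2(0,T_0)$ by the affine change of variable. Your closing remark correctly isolates the role of the hypothesis $\frac{1}{2-\alpha}\notin\mathbb N$ as excluding the borderline cases $r_\alpha\in\{0,1\}$, exactly as in the paper.
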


Lemma \ref{cor-Riesz} implies the following result.

\begin{Lemma}
\label{lem-Riesz-T0-moment-1-2}
Let $\alpha \in [1,2)$ satisfy \eqref{cond-alpha} and let $T=T_0$. Then, moment problem \eqref{syst-momentsQc-ps} has a unique solution
$$Q \in L^2 (0,T_0;\mathbb R) \in F_\alpha := \overline{\text{Vect } \{\tilde s _{\alpha,0}, e^{i\omega _{\alpha,n}t} , n\in \mathbb Z \} } .$$
\end{Lemma}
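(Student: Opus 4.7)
The plan is to upgrade $\{e^{i\omega_{\alpha,n}t}\}_{n\in\mathbb{Z}}$ to a Riesz basis of $F_\alpha$ by adjoining the single function $\tilde s_{\alpha,0}$, and then to solve the moment problem via biorthogonal expansion. First, Lemma \ref{cor-Riesz} asserts that $\{e^{i\omega_{\alpha,n}t}\}_{n\in\mathbb{Z}}$ can be completed to a Riesz basis of $L^2(0,T_0)$ by adjoining exactly $k_\alpha$ extra exponentials; in particular, this family is itself a Riesz sequence, hence a Riesz basis of its closed complex linear span $E_\alpha$, and $E_\alpha$ is a closed subspace of $L^2(0,T_0)$ of codimension $k_\alpha\ge 1$ (this is where the hypothesis $1/(2-\alpha)\notin\mathbb N$ is used, since it forces $k_\alpha\ge 1$ for $\alpha\in(1,2)$).

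The crucial step is to show $\tilde s_{\alpha,0}\notin E_\alpha$. I argue by contradiction in the spirit of the analogous step in the proof of Lemma \ref{lem-DThetaT(0)inv}. If $t\in E_\alpha$, then because $\omega_{\alpha,0}=0$ the constant function lies in $E_\alpha$, and an iterative integration argument yields $t^k\in E_\alpha$ for every $k\ge 1$. Indeed, if $f_N\to t^k$ in $L^2(0,T_0)$ with $f_N\in\mathrm{Vect}\{e^{i\omega_{\alpha,n}s}\}$, then the primitive $\int_0^s f_N(\tau)\,d\tau$ converges to $s^{k+1}/(k+1)$ in $L^2(0,T_0)$ (by Cauchy--Schwarz) and belongs to $\mathrm{Vect}\{e^{i\omega_{\alpha,n}s}\}+\mathbb C\cdot s\subset E_\alpha$, using that $s\in E_\alpha$ by the induction hypothesis. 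By the Weierstrass theorem all polynomials would then lie in $E_\alpha$, whence $E_\alpha=L^2(0,T_0)$, contradicting $k_\alpha\ge 1$.

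Having established $\tilde s_{\alpha,0}\notin E_\alpha$, a standard extension result (adjoining one vector outside the closed span of a Riesz basis produces a Riesz basis of the enlarged closed span) implies that $\{\tilde s_{\alpha,0}\}\cup\{e^{i\omega_{\alpha,n}t}\}_{n\in\mathbb Z}$ is a Riesz basis of $F_\alpha$. Let $(\tilde\sigma_0,\sigma_n)_{n\in\mathbb Z}\subset F_\alpha$ be the associated (unique) biorthogonal sequence. Since $(Y^f,Z^f)\in H^3_{(0)}\times D(A)$ and $\mu\in V^{(adm)}$, the coefficients $(C^f_{\alpha,n})_{n\in\mathbb Z}$ defined in \eqref{syst-coeffs} are square-summable; therefore
\begin{equation*}
Q:=B^f_{\alpha,0}\,\tilde\sigma_0+\sum_{n\in\mathbb Z}C^f_{\alpha,n}\,\sigma_n
\end{equation*}
converges in $L^2(0,T_0)$ and is the unique element of $F_\alpha$ that solves the moment problem \eqref{syst-momentsQc-ps}.

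Finally, the reality $Q\in L^2(0,T_0;\mathbb R)$ follows from uniqueness. Using $\omega_{\alpha,-n}=-\omega_{\alpha,n}$, the symmetry $\overline{C^f_{\alpha,n}}=C^f_{\alpha,-n}$ read off from \eqref{syst-coeffs}, $B^f_{\alpha,0}\in\mathbb R$, and the fact that $\tilde s_{\alpha,0}$ is real, a direct computation shows that $\overline Q$ satisfies the same moment conditions as $Q$. Since $F_\alpha$ is invariant under complex conjugation (its generators are mapped to themselves up to relabeling $n\leftrightarrow -n$), one has $\overline Q\in F_\alpha$, and uniqueness forces $\overline Q=Q$. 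The main obstacle of the whole argument is the exclusion $\tilde s_{\alpha,0}\notin E_\alpha$: it is precisely at this point that the positive deficiency $k_\alpha\ge 1$ provided by Lemma \ref{cor-Riesz} is used decisively, and it is also the place where the excluded values $1/(2-\alpha)\in\mathbb N$ would genuinely break the argument.
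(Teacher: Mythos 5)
Your proposal is correct and follows essentially the same route as the paper: invoke Lemma \ref{cor-Riesz} to get that the exponentials form a Riesz basis of their closed span of codimension $k_\alpha\ge 1$, exclude $\tilde s_{\alpha,0}$ from that span by the integration-to-polynomials argument, and then solve uniquely in $F_\alpha$ and check reality. The only cosmetic differences are that you phrase the construction via the biorthogonal family of the enlarged Riesz basis rather than via an orthogonal projection, and you deduce reality from uniqueness and conjugation-invariance of $F_\alpha$ instead of showing that the imaginary part is simultaneously in $F_\alpha$ and orthogonal to it; these are equivalent.
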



\subsubsection{Proof of Lemma \ref{lem-Riesz-T0-1-2}} \hfill

We are going to prove it for $k=1$ and $k=2$, and then the other cases are easily deduced.

\underline{Case $k$ even.} We consider $k=2$, however the method applies similarly for all $k$ even. For $k=2$, the assumption reads as: there exist $\delta\in\left(0,\frac{1}{4}\right)$ and $N_0\geq 0$ such that
\begin{equation}
\label{hyp-Kadec-2}
n \geq N_0 \quad \implies \quad  \vert x_n - (n + 1) \vert \leq \frac{1}{4} - \delta .
\end{equation}
Then, let us consider the following sequence
\begin{equation}
\label{def-om-mod2}
\forall n \in \mathbb Z, \quad x ^{(mod)} _n = 
\begin{cases}
x _{n-1}  & \text{ if } n \geq N_0 +1 , \\
n & \text{ if } \vert n \vert \leq N_0 , \\
x _{n+1}  & \text{ if } n \leq - N_0 -1 .
\end{cases}
\end{equation}
We claim that
\begin{equation}
\label{def-om-mod2-prop}
\forall\, n \in \mathbb Z, \quad \vert x ^{(mod)} _n  - n \vert \leq \frac{1}{4} - \delta.
\end{equation}
Indeed, \eqref{def-om-mod2-prop} is straightforward for any $\vert n \vert \leq N_0$. Moreover, if $n\geq N_0+1$ we get that
$$ \vert x ^{(mod)} _n  - n \vert = \vert x _{n-1}  - n \vert \leq \frac{1}{4} - \delta $$
thanks to \eqref{hyp-Kadec-2}. Finally, if  $n \leq - N_0 -1$, 
\begin{multline*}
\vert x ^{(mod)} _n  - n \vert 
= \vert x _{n+1}   - n \vert
= \Bigl\vert - x _{\vert n+1 \vert } + \vert n \vert \Bigr\vert 
\\
= \Bigl\vert - x _{\vert n \vert -1 } + \vert n \vert \Bigr\vert
= \Bigl\vert x _{\vert n \vert - 1 } - \vert n \vert \Bigr\vert 
\leq \frac{1}{4} - \delta 
\end{multline*}
once again using \eqref{hyp-Kadec-2}.
Then \eqref{def-om-mod2-prop} is satisfied. We deduce from the Kadec's $\frac{1}{4}$ Theorem (\cite{Kadec}, \cite[Theorem 1.14 p. 42]{Young}) that the set $\{ e^{ix ^{(mod)} _n t},\, n\in \mathbb Z \}$ is a Riesz basis of $L^2 (-\pi,\pi)$. However, we can reorder the family $ \{ e^{ix ^{(mod)} _n t},\, n\in \mathbb Z \} $ as follows
\begin{equation*}
\begin{split}
 \{ e^{ix ^{(mod)} _n t},\,& n\in \mathbb Z \} 
 \\ &= \{ e^{ix _{n+1} t},\, n \leq -N_0 - 1 \} \cup \{ e^{ix ^{(mod)} _n t},\, \vert n \vert \leq N_0 \} \cup \{ e^{ix _{n-1} t},\, n \geq N_0 + 1 \}
\\
&= \{ e^{ix _{m} t},\, m \leq - N_0 \} \cup \{ e^{ix ^{(mod)} _n t},\, \vert n \vert \leq N_0 \} \cup \{ e^{ix _{m} t},\, m \geq N_0 \}  \\
&= \{ e^{ix _{m} t},\, \vert m \vert  \geq N_0 \} \cup \{ e^{ix ^{(mod)} _n t},\, \vert n \vert \leq N_0 -1 \} \cup \{ e^{ix ^{(mod)} _n t},\, \vert n \vert = N_0 \}.
\end{split}
\end{equation*}
In order to keep the property to be a Riesz basis, we are allowed to modify a finite number of the elements of the family $ \{ e^{ix ^{(mod)} _n t}, n\in \mathbb Z \} $ (see \cite[Lemma II.4.11 p. 105]{Avdonin-Ivanov}), if we do not consider twice the same element. Therefore, we can transform the set of $2N_0 + 1$ elements
$$ \{ e^{ix ^{(mod)} _n t}, \vert n \vert \leq N_0 -1 \} \cup \{ e^{ix ^{(mod)} _n t}, \vert n \vert = N_0 \} $$
into 
$$ \{ e^{ix _n t}, \vert n \vert \leq N_0 -1 \} \cup \{ e^{ix ' _0 t}, e^{ix '' _0 t} \} $$
with $x'_0\neq x'' _0$ and $x'_0,x''_0\notin\{x_n, n\in \mathbb Z \}$. Thus,
$$ \{ e^{ix _{m} t}, m \in \mathbb Z \} \cup \{ e^{ix ' _0 t}, e^{ix '' _0 t} \} $$
is a Riesz basis of $L^2 (-\pi,\pi)$. Therefore Lemma \ref{lem-Riesz-T0-1-2} is proved when $k=2$ and when $k$ is even, with the same method. \qed

\underline{Case $k$ odd.} In the same way, we treat the case $k=1$, which can be easily extended for any $k$ odd. For $k=1$, the assumption reads as: that there exist $\delta\in \left(0,\frac{1}{4}\right)$ and $N_0\geq0$ such that
\begin{equation}
\label{hyp-Kadec-1}
n \geq N_0 \quad \implies \quad  \left\vert x_n - n  - \frac{1}{2} \right\vert \leq \frac{1}{4} - \delta .
\end{equation}
Consider the following sequence
\begin{equation}
\label{def-om-mod1}
\forall\, n\, \in \mathbb Z, \quad x ^{(mod)} _n = 
\begin{cases}
x _{n} - \frac{1}{2} & \text{ if } n \geq N_0 , \\
n & \text{ if } -N_0 \leq n  \leq N_0 -1 , \\
x _{n+1} - \frac{1}{2}  & \text{ if } n \leq - N_0 -1 .
\end{cases}
\end{equation}
We claim that
\begin{equation}
\label{def-om-mod1-prop}
\forall n\, \in \mathbb Z, \quad \left\vert x ^{(mod)} _n  - n \right\vert \leq \frac{1}{4} - \delta .
\end{equation}
Indeed, for $-N_0 \leq n  \leq N_0 -1$ \eqref{def-om-mod1-prop} is trivially true. Moreover, for $n\geq N_0$ we have that
$$ \left\vert x ^{(mod)} _n  - n \right\vert = \left\vert x _{n} - \frac{1}{2} - n \right\vert \leq \frac{1}{4} - \delta $$
thanks to \eqref{hyp-Kadec-1}. Finally, for  $n \leq - N_0 -1$, 
\begin{multline*}
\left\vert x ^{(mod)} _n  - n \right\vert 
= \left\vert x _{n+1} - \frac{1}{2}  - n \right\vert
= \left\vert - x _{\vert n+1 \vert } - \frac{1}{2} + \vert n \vert \right\vert 
\\
= \left\vert - x _{\vert n \vert -1 } - \frac{1}{2} + \vert n \vert \right\vert
= \left\vert x _{\vert n \vert - 1 } - \vert n \vert + \frac{1}{2} \right\vert 
= \left\vert x _{\vert n \vert - 1 } - (\vert n \vert - 1) - \frac{1}{2}  \right\vert
\leq \frac{1}{4} - \delta 
\end{multline*}
using that $\{x_n\}_{n\in\mathbb Z}$ is odd and, once again, thanks to \eqref{hyp-Kadec-1}.
Then, \eqref{def-om-mod1-prop} is satisfied. We deduce from the Kadec's $\frac{1}{4}$ Theorem (\cite{Kadec}, \cite[Theorem 1.14 p. 42]{Young}) that the set $\{ e^{ix ^{(mod)} _n t}, n\in \mathbb Z \}$ is a Riesz basis of $L^2 (-\pi,\pi)$. Now, we shift this basis. To this purpose, we observe that if $f \in L^2 (-\pi,\pi)$, then $g: t\mapsto g(t) = f(t) e^{-it/2}$ is still a function of $L^2 (-\pi,\pi)$. Hence, it can be decomposed as
$$ f(t) e^{-it/2} = \sum _{n\in \mathbb Z} c_n e^{ix ^{(mod)} _n t} \quad \text{ with } \quad 
A \sum _{n\in \mathbb Z} \vert c_n \vert ^2 \leq \Vert g \Vert ^2 _{L^2 (-\pi,\pi)} \leq B \sum _{n\in \mathbb Z} \vert c_n \vert ^2 ,$$
since $\{e^{ix_n^{(mod)}t},\,n\in\mathbb Z\}$ is a Riesz basis of $L^2(-\pi,\pi)$. Therefore, we have that
$$ f(t) = \sum _{n\in \mathbb Z} c_n e^{i(x ^{(mod)} _n + \frac{1}{2})t} \quad \text{ with } \quad 
A \sum _{n\in \mathbb Z} \vert c_n \vert ^2 \leq \Vert f \Vert ^2 _{L^2 (-\pi,\pi)} \leq B \sum _{n\in \mathbb Z} \vert c_n \vert ^2 .$$
Hence, the set $\{ e^{i(x ^{(mod)} _n + \frac{1}{2}) t}, n\in \mathbb Z \}$ is another Riesz basis of $L^2 (-\pi,\pi)$ and can be rewritten as
\begin{multline*}
 \{ e^{i(x ^{(mod)} _n + \frac{1}{2})) t},\, n\in \mathbb Z \} 
 \\ 
 = \{ e^{ix _{n+1} t},\, n \leq -N_0 - 1 \} \cup \{ e^{i(x ^{(mod)} _n + \frac{1}{2})) t},\, -N_0 \leq n  \leq N_0 -1 \} \cup \{ e^{ix _{n} t},\, n \geq N_0  \}
\\
= \{ e^{ix _{m} t},\, \vert m \vert  \geq N_0 \} \cup \{ e^{ix ^{(mod)} _n t},\, -N_0 \leq n  \leq N_0 -1 \} .
\end{multline*}
The last set on the right-hand side of the above formula contains $2N_0$ elements which can be modified without changing the Riesz basis property as follows
$$ \{ e^{ix _{m} t}, \vert m \vert  \leq N_0 -1 \} \cup \{ e^{ix _0 ' t} \},$$
where $x' _0 \notin  \{x_n, n\in \mathbb Z \}$.  Therefore, Lemma \ref{lem-Riesz-T0-1-2} is proved for $k=1$ and, similarly, for any $k$ odd. \qed 


\subsubsection{Proof of Lemma \ref{cor-Riesz}} \hfill

We know from \eqref{DAS-omega_n} that
$$ \frac{2}{\pi(2-\alpha)} \, \omega _{\alpha,n} - n \to \frac{\alpha}{4(2-\alpha)} =:\ell _\alpha \quad \text{ as } n \to +\infty. $$
Hence, we introduce
$$ \forall\, n\in \mathbb Z, \quad  x_n := \frac{2}{\pi(2-\alpha)} \, \omega _{\alpha,n} .$$
Since $\frac{1}{2-\alpha} \notin \mathbb N$, we can decompose it into the sum of its integer part $k_\alpha$ and its fractional part $\theta _\alpha$:
$$ \frac{1}{2-\alpha} = \left[\frac{1}{2-\alpha}\right] + \left\{ \frac{1}{2-\alpha} \right\} = k_\alpha + \theta _\alpha \quad \text{ where } \theta _\alpha \in (0,1) .$$
Then, we can rewrite $\ell_\alpha$ as
$$ \ell _\alpha = \frac{\alpha}{4(2-\alpha)} = \frac{1}{4} (k_\alpha +\theta_\alpha ) \Bigl( 2 - \frac{1}{k_\alpha+\theta_\alpha} \Bigr) 
= \frac{1}{4}( 2k_\alpha + 2 \theta_\alpha - 1 ) = \frac{k_\alpha}{2} + \frac{2\theta_\alpha -1}{4} 
.$$
Hence,
$$ x_n  - n - \frac{k_\alpha}{2} \to \ell _\alpha - \frac{k_\alpha}{2} = \frac{2\theta_\alpha -1}{4} \in \left(- \frac{1}{4}, \frac{1}{4}\right),$$
and \eqref{hyp-Kadec-k} is satisfied with $k=k_\alpha$. Therefore, the set $\{ e^{ix_n t }, n\in \mathbb Z \}$ can be complemented by $k_\alpha$ exponentials to form a Riesz basis of $L^2 (-\pi,\pi)$.Consequently, the set $\{ e^{i\omega_{\alpha,n} t }, n\in \mathbb Z \}$ can be complemented by $k_\alpha$ exponentials to form a Riesz basis of $L^2 (0,T_0)$ (as we have seen in the proof of Lemma \ref{lem-Riesz-T0}). This concludes the proof of Lemma \ref{cor-Riesz}. \qed


\subsubsection{Proof of Lemma \ref{lem-Riesz-T0-moment-1-2}} \hfill

As a consequence of Lemma \ref{cor-Riesz}, the set of solutions of the moment problem \eqref{syst-momentsQc-ps-ss} is an affine space generated by a vectorial space of dimension $k_\alpha$. To solve the whole moment problem \eqref{syst-momentsQc-ps}, it is sufficient to note that 
$$ \tilde s_{\alpha,0} \notin \overline{\text{Vect } \{e^{i\omega _{\alpha,n}t} , n\in \mathbb Z \}}:$$
indeed, if this was not the case, then $\overline{\text{Vect } \{e^{i\omega _{\alpha,n}t} , n\in \mathbb Z \}}$ would contain
all the polynomials (integrating, as in step 2 of section \ref{sec-surj-cas1}). Hence, it would contain $L^2(0,T_0)$. However, this is contradiction with the fact that $\overline{\text{Vect } \{e^{i\omega _{\alpha,n}t} , n\in \mathbb Z \}}$ is of codimension 
$k_\alpha$ in $L^2(0,T_0)$. Therefore, proceeding as in step 2 of section \ref{sec-surj-cas1}, we deduce that there exists a unique solution in $\overline{\text{Vect } \{\tilde s _{\alpha,0}, e^{i\omega _{\alpha,n}t} , n\in \mathbb Z \} }$ of moment problem \eqref{syst-momentsQc-ps}. Note that this solution is real-valued: indeed, we derive from \eqref{syst-momentsQc-ps} that
$$ \int _0 ^T Q(t) \, \cos \omega _{\alpha,n} t \, dt , \int _0 ^T Q(t) \, \sin \omega _{\alpha,n} t \, dt, \int _0 ^T Q(t) \, t \, dt \in \mathbb R. $$
Thus, denoting by $Q_2$ the imaginary part of $Q$, we have that
$$\int _0 ^T Q_2(t) \, \cos \omega _{\alpha,n} t \, dt = \int _0 ^T Q_2(t) \, \sin \omega _{\alpha,n} t \, dt, = \int _0 ^T Q_2(t) \, t \, dt = 0 .$$
Hence, the following conditions
$$ \begin{cases} \int _0 ^T Q_2(t) \, e^{-i \omega _{\alpha,n} t} \, dt = 0 \quad \text{for all } n \in \mathbb Z, 
 \\
 \int _0 ^T Q_2(t) \, t \, dt = 0 ,
 \end{cases} $$
imply that $Q_2$ is orthogonal to $\overline{\text{Vect } \{\tilde s _{\alpha,0}, e^{i\omega _{\alpha,n}t} , n\in \mathbb Z \} }$. However, $2i Q_2 = Q - \overline Q \in \overline{\text{Vect } \{\tilde s _{\alpha,0}, e^{i\omega _{\alpha,n}t} , n\in \mathbb Z \} }$. Therefore, $Q_2=0$ and $Q$ is real-valued. \qed


\subsubsection{Proof of Theorem \ref{thm-ctr=} second part part (inverse mapping argument)} \hfill
\label{sec-concl-T=T0-1-2}

Thanks to the previous results, we can conclude the proof Theorem \ref{thm-ctr=} with the same procedure explained in section \ref{sec-concl-T>T0}. \qed


\section{Proof of Theorem \ref{thm-ctr<}: Reachability for $T<T_0$}
\label{sec-pr-thm<}

\subsection{Proof of Theorem \ref{thm-ctr<} when $\alpha \in [0,1)$} \hfill
\label{sec-ctr<-01}

We recall that in Lemma \ref{lem-Riesz-T0} we have proved that $(e^{i\omega _{\alpha,n}t})_{n\in \mathbb Z}$ is a Riesz basis of $L^2 (0,T_0)$. 

As in sections \ref{sec-pr-thm} and \ref{sec-pr-thm-T=T0-01}, the proof of Theorem \ref{thm-ctr<} is divided in two steps:
\begin{itemize}
\item we first study the solvability of moment problem \eqref{syst-momentsQc},
\item then, we conclude by an inverse mapping argument.
\end{itemize}


\subsubsection{The moment problem \eqref{syst-momentsQc-ps} is overdetermined when $T<T_0$} \hfill

Let $T <T_0$. Following \cite[p. 100]{Avdonin-Ivanov}, we introduce
$$ \forall\, r >0, \quad n(r) := \text{ card } \{n \in \mathbb Z, \vert \omega _{\alpha,n} \vert < r \} .$$
We recall from Propositions \ref{prop-vp-w} and \ref{prop-vp-s} that the sequence
$(\omega _{\alpha,n+1} - \omega _{\alpha,n})_n$ is nonincreasing and goes to  $\kappa _\alpha \pi$ as $n\to\infty$. Hence, we deduce that
$$ \forall\, n\geq 0, \quad \omega _{\alpha,n} \geq \kappa _\alpha \pi (n-1) .$$
Therefore,
$$ n-1 \geq \frac{r}{\kappa _\alpha \pi} \quad \implies \quad \omega _{\alpha,n} \geq r .$$
This gives that
\begin{equation}
\label{n<}
n(r) \leq 2 \frac{r}{\kappa _\alpha \pi} +1  ,
\end{equation}
where the factor $2$ comes from the negatives indices.

On the other hand, given $\varepsilon >0$, there exists $n_0 \geq 0$ such that 
$$ \forall\, n \geq n_0, \quad \kappa _\alpha \pi \leq \omega _{\alpha,n+1} - \omega _{\alpha,n} \leq \kappa _\alpha \pi + \varepsilon .$$
Hence,
$$ \forall n \geq n_0, \quad \omega _{\alpha,n_0} +  \kappa _\alpha \pi (n-n_0) 
\leq  \omega _{\alpha,n}  \leq \omega _{\alpha,n_0} +  (\kappa _\alpha \pi + \varepsilon) (n-n_0) .$$
Thus, given $r >0$ 
$$ 0 \leq n < n_0 + \frac{r - \omega _{\alpha,n_0}}{\kappa _\alpha \pi + \varepsilon} 
\quad \implies \quad  \omega _{\alpha,n} < r .$$
We derive that for all $r$ large enough
\begin{equation}
\label{n>}
 n(r) \geq 2 \Bigl( n_0 + \frac{r - \omega _{\alpha,n_0}}{\kappa _\alpha \pi + \varepsilon} \Bigr) -1 ,
 \end{equation}
Then, we obtain from \eqref{n<} and \eqref{n>} that
\begin{equation}
\label{n=}
\lim _{r\to +\infty} \frac{n(r)}{r} = \frac{2}{\kappa _\alpha \pi }= \frac{4}{(2-\alpha)\pi} = \frac{T_0}{\pi} .
 \end{equation}
Since $T < T_0$, we have
$$ \limsup _{r\to +\infty} \frac{n(r)}{r} > \frac{T}{\pi} ,$$
and so we can to apply \cite[Corollary II.4.2 p. 100]{Avdonin-Ivanov} and we obtain that 
the family $\{ e^{i\omega _{\alpha,n}t} , n\in \mathbb Z\}$ is not minimal in $L^2 ((0,T), \mathbb C)$. 

\subsubsection{How much overdetermined the moment problem \eqref{syst-momentsQc-ps} is when $T<T_0$} \hfill
\label{sec-overd}

As proved in Lemma \ref{lem-Riesz-T0}, $\{ e^{i\omega _{\alpha,n}t} , n\in \mathbb Z\}$ is a Riesz basis of $L^2 ((0,T_0), \mathbb C)$. From Horv\'ath-Jo\'o \cite{Horvath-Joo} (see also \cite[Theorem II.4.16 p. 107]{Avdonin-Ivanov}) we deduce that there exists a subfamily $\{ e^{i\omega _{\alpha,\varphi (n)}t} , n\in \mathbb Z\}$ which is a Riesz basis of $L^2 ((0,T), \mathbb C)$. Then, consider 
$$ \forall\, r >0, \quad n_\varphi (r) := \text{ card } \{n \in \mathbb Z, \vert \omega _{\alpha,\varphi (n)} \vert < r \} .$$
Since $\{ e^{i\omega _{\alpha,\varphi (n)}t} , n\in \mathbb Z\}$ is minimal in $L^2 ((0,T), \mathbb C)$, we derive from \cite[Corollary II.4.2 p. 100]{Avdonin-Ivanov}, that 
$$ \limsup _{r\to +\infty} \frac{n_\varphi (r)}{r} \leq \frac{T}{\pi} ,$$
hence,
\begin{equation}
\label{n-diff}
\liminf _{r\to +\infty} \frac{n(r) - n_\varphi (r)}{r} \geq \frac{T_0-T}{\pi} .
 \end{equation}
Since 
$$ n(r) - n_\varphi (r) = \text{ card } \{n \in \mathbb Z, \vert \omega _{\alpha,n} \vert < r \text{ and } n \notin \text{ Im }\varphi\} ,$$
the asymptotic behaviour \eqref{n-diff} gives an idea of how much overdetermined the moment problem \eqref{syst-momentsQc-ps} is.

\subsubsection{Solvability of the moment problem} \hfill

We consider \eqref{syst-momentsQc}, or equivalently \eqref{syst-momentsQc-ps}. First, assume that \eqref{syst-momentsQc-ps}
has a solution $Q \in L^2 (0,T; \mathbb R)$. This implies that
\begin{equation}
\label{syst-momentsQc-ps-phi}
\langle  Q, e^{i \omega _{\alpha,\varphi (n)}t} \rangle _{L^2 (0,T;\mathbb C)}   = C_{\alpha, \varphi (n)} ^f \quad \text{ for all } n \in \mathbb Z .
\end{equation}
Now, consider $m\notin \text{Im } \varphi$. Then, $e^{i \omega _{\alpha,m}t}$ can be decomposed as follows
$$ e^{i \omega _{\alpha,m}t} = \sum _{n\in \mathbb Z} \Omega ^{(m)} _{\alpha, \varphi (n)} e^{i \omega _{\alpha,\varphi (n)}t} \quad 
\text{ in } L^2 (0,T; \mathbb C) . $$
Therefore, we have that
\begin{multline*}
C_{\alpha, m} ^f = \langle  Q, e^{i \omega _{\alpha,m}t} \rangle _{L^2 (0,T;\mathbb C)}
= \langle  Q, \sum _{n\in \mathbb Z} \Omega ^{(m)} _{\alpha, \varphi (n)} e^{i \omega _{\alpha,\varphi (n)}t} \rangle _{L^2 (0,T;\mathbb C)}
\\
= \sum _{n\in \mathbb Z} \overline{\Omega ^{(m)} _{\alpha, \varphi (n)}} \langle  Q, e^{i \omega _{\alpha,\varphi (n)}t} \rangle _{L^2 (0,T;\mathbb C)}
= \sum _{n\in \mathbb Z} \overline{\Omega ^{(m)} _{\alpha, \varphi (n)}} C_{\alpha, \varphi (n)} ^f .
\end{multline*}
In the same way, $\tilde s _{\alpha,0}$ can be decomposed as follows
$$ \tilde s _{\alpha,0} = \sum _{n\in \mathbb Z} \tilde S  _{\alpha, \varphi (n)} e^{i \omega _{\alpha,\varphi (n)}t} \quad 
\text{ in } L^2 (0,T; \mathbb C) , $$
which implies that
$$ B _{\alpha,0} ^f = \sum _{n\in \mathbb Z} \overline{\tilde S _{\alpha, \varphi (n)}} C_{\alpha, \varphi (n)} ^f .$$
Hence,
$$ Q \text{ solution of \eqref{syst-momentsQc-ps}} \quad \implies \quad 
\begin{cases} 
C_{\alpha, m} ^f = \displaystyle{\sum _{n\in \mathbb Z} \overline{\Omega ^{(m)} _{\alpha, \varphi (n)}}} C_{\alpha, \varphi (n)} ^f  \quad \forall\, m \notin \text{ Im } \varphi , \\ 
B _{\alpha,0} ^f = \displaystyle{\sum _{n\in \mathbb Z} \overline{\tilde S _{\alpha, \varphi (n)}}} C_{\alpha, \varphi (n)} ^f .
\end{cases} $$
This leads to consider the space
\begin{equation}
\label{def-Halpha}
H_\alpha ^f := \left\{ (Y^f,Z^f) \in H^3 _{(0)} \times D(A),\, \begin{cases} 
C_{\alpha, m} ^f = \displaystyle{\sum _{n\in \mathbb Z} \overline{\Omega ^{(m)} _{\alpha, \varphi (n)}}} C_{\alpha, \varphi (n)} ^f  \quad \forall\, m \notin \text{ Im } \varphi , \\ 
B _{\alpha,0} ^f = \displaystyle{\sum _{n\in \mathbb Z} \overline{\tilde S _{\alpha, \varphi (n)}}} C_{\alpha, \varphi (n)} ^f \end{cases}\right\} ,
\end{equation}
where the relations between $(Y^f,Z^f)$ and $B^f_{\alpha,0}$, $C^f_{\alpha,m}$, $m\geq1$ are given in \eqref{eq-coeffs} and \eqref{syst-coeffs}. Thus, we have proved that 
\begin{equation}
\label{image-dtheta-subset}
D \Theta _T (0) (L^2 (0,T;\mathbb R) \subset H_\alpha ^f .
\end{equation}
Now, let us prove the following reverse inclusion.

\begin{Lemma}
\label{lem-descr-imageDtheta}
Let $\alpha \in [0,1)$, $T<T_0$ and $H_\alpha ^f$ be defined in \eqref{def-Halpha}. Then, the following identity holds
\begin{equation}
\label{image-dtheta-supset}
D \Theta _T (0) (L^2 (0,T;\mathbb R) = H_\alpha ^f .
\end{equation}
\end{Lemma}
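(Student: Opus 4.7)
The inclusion $\subset$ is already established in \eqref{image-dtheta-subset}, so the plan is to prove $\supset$. Given any $(Y^f, Z^f) \in H_\alpha^f$, form the coefficients $C_{\alpha,n}^f$, $n \in \mathbb Z$, and $B_{\alpha,0}^f$ according to \eqref{eq-coeffs}--\eqref{syst-coeffs}. The assumption $(Y^f, Z^f) \in H^3_{(0)}\times D(A)$ combined with \eqref{hyp-mu} guarantees $(C_{\alpha,n}^f)_{n\in\mathbb Z}\in \ell^2(\mathbb Z;\mathbb C)$, in particular the subsequence $(C_{\alpha,\varphi(n)}^f)_{n\in\mathbb Z}$ is square-summable. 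The goal, via the change of variable $q(t):=Q(T-t)$, is to construct a real-valued $Q\in L^2(0,T)$ solving the full moment problem \eqref{syst-momentsQc-ps}.

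First I would invoke the fact that $\{e^{i\omega_{\alpha,\varphi(n)}t}\}_{n\in\mathbb Z}$ is a Riesz basis of $L^2(0,T;\mathbb C)$ (Horv\'ath--Jo\'o, as recalled in Section \ref{sec-overd}) to produce a unique $Q\in L^2(0,T;\mathbb C)$ whose moments against this subfamily are precisely $C_{\alpha,\varphi(n)}^f$. Next I would expand the ``missing'' test functions in the Riesz basis,
\begin{equation*}
e^{i\omega_{\alpha,m}t} = \sum_{n\in\mathbb Z}\Omega^{(m)}_{\alpha,\varphi(n)} e^{i\omega_{\alpha,\varphi(n)}t}\quad(m\notin\mathrm{Im}\,\varphi),\qquad \tilde s_{\alpha,0} = \sum_{n\in\mathbb Z}\tilde S_{\alpha,\varphi(n)} e^{i\omega_{\alpha,\varphi(n)}t},
\end{equation*}
and pair them with $Q$; the two equalities defining $H_\alpha^f$ in \eqref{def-Halpha} then translate exactly into
\begin{equation*}
\langle Q, e^{i\omega_{\alpha,m}t}\rangle_{L^2(0,T;\mathbb C)} = C_{\alpha,m}^f,\qquad \langle Q, \tilde s_{\alpha,0}\rangle_{L^2(0,T;\mathbb C)} = B_{\alpha,0}^f,
\end{equation*}
so the full system \eqref{syst-momentsQc-ps} is satisfied.

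To establish that $Q$ is real-valued, I would use the symmetries $\omega_{\alpha,-n}=-\omega_{\alpha,n}$ and $\overline{C_{\alpha,n}^f}=C_{\alpha,-n}^f$ (the latter being immediate from \eqref{eq-coeffs}--\eqref{syst-coeffs} since $\mu_{\alpha,n}$, $Y_{\alpha,n}^f$, $Z_{\alpha,n}^f$ are all real). Applying complex conjugation to the full moment problem for $Q$ gives $\langle \overline Q, e^{i\omega_{\alpha,\varphi(n)}t}\rangle = \overline{C_{\alpha,-\varphi(n)}^f} = C_{\alpha,\varphi(n)}^f$, so $\overline Q$ solves the same Riesz-basis moment problem as $Q$; uniqueness forces $\overline Q=Q$, i.e.\ $Q\in L^2(0,T;\mathbb R)$. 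Setting $q(t):=Q(T-t)$ produces the desired preimage in $L^2(0,T;\mathbb R)$.

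The main obstacle lies in the middle step: one must legitimately expand both $e^{i\omega_{\alpha,m}t}$ for $m\notin\mathrm{Im}\,\varphi$ and the non-exponential function $\tilde s_{\alpha,0}(t)=t$ in the Riesz basis of exponentials and then interchange the resulting series with the inner product against $Q$. This is possible because $\{e^{i\omega_{\alpha,\varphi(n)}t}\}_{n\in\mathbb Z}$ is a Riesz basis of the whole space $L^2(0,T;\mathbb C)$ (which contains both $\tilde s_{\alpha,0}$ and each $e^{i\omega_{\alpha,m}t}$), so the expansion coefficients $(\Omega^{(m)}_{\alpha,\varphi(n)})_n$ and $(\tilde S_{\alpha,\varphi(n)})_n$ are in $\ell^2(\mathbb Z)$ and the pairings converge absolutely; the compatibility conditions built into the definition \eqref{def-Halpha} of $H_\alpha^f$ are designed precisely to close this loop.
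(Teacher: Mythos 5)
Your proposal is correct and follows essentially the same route as the paper: existence and uniqueness of $Q$ from the Riesz basis property of the subfamily $\{e^{i\omega_{\alpha,\varphi(n)}t}\}_{n\in\mathbb Z}$, then verification of the remaining moment equations by expanding $e^{i\omega_{\alpha,m}t}$ ($m\notin\mathrm{Im}\,\varphi$) and $\tilde s_{\alpha,0}$ in that basis and invoking the compatibility conditions defining $H_\alpha^f$. The only (cosmetic) difference is in the real-valuedness step, where you conjugate the full system and appeal to uniqueness, while the paper shows directly that the imaginary part of $Q$ is orthogonal to the Riesz basis and hence vanishes; both arguments rest on the same symmetries $\omega_{\alpha,-n}=-\omega_{\alpha,n}$ and $\overline{C^f_{\alpha,n}}=C^f_{\alpha,-n}$.
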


\begin{proof}[Proof of Lemma \ref{lem-descr-imageDtheta}] 
Since we already proved \eqref{image-dtheta-subset}, it is sufficient to prove that $H_\alpha ^f \subset D \Theta _T (0) (L^2 (0,T;\mathbb R) $.
Let $ (Y^f,Z^f) \in H_\alpha ^f$. Since $(e^{i\omega _{\alpha,\varphi (n)}t})_{n\in \mathbb Z}$ is a Riesz basis of $L^2 (0,T)$, the moment problem \eqref{syst-momentsQc-ps-phi} has one and only one solution $Q$ (which can be expressed using the unique biorthogonal family to $(e^{i\omega _{\alpha,\varphi (n)}t})_{n\in \mathbb Z}$).
Then, for all $m\notin \text{ Im } \varphi$, we have
\begin{multline*}
\langle  Q, e^{i \omega _{\alpha,m}t} \rangle _{L^2 (0,T;\mathbb C)}
= \langle  Q, \sum _{n\in \mathbb Z} \Omega ^{(m)} _{\alpha, \varphi (n)} e^{i \omega _{\alpha,\varphi (n)}t} \rangle _{L^2 (0,T;\mathbb C)}
\\
= \sum _{n\in \mathbb Z} \overline{\Omega ^{(m)} _{\alpha, \varphi (n)}} \langle  Q, e^{i \omega _{\alpha,\varphi (n)}t} \rangle _{L^2 (0,T;\mathbb C)}
= \sum _{n\in \mathbb Z} \overline{\Omega ^{(m)} _{\alpha, \varphi (n)}} C_{\alpha, \varphi (n)} ^f = C_{\alpha, m} ^f ,
\end{multline*}
where the last equality derives from the fact that $(Y^f,Z^f) \in H_\alpha ^f$.
In the same way, we get
$$ \langle  Q, \tilde s _{\alpha,0} \rangle _{L^2 (0,T;\mathbb C)} = 
\sum _{n\in \mathbb Z} \overline{\tilde S _{\alpha, \varphi (n)}} \langle  Q, e^{i \omega _{\alpha,\varphi (n)}t} \rangle _{L^2 (0,T;\mathbb C)}
= \sum _{n\in \mathbb Z} \overline{\tilde S _{\alpha, \varphi (n)}} C_{\alpha, \varphi (n)} ^f = B_{\alpha, 0} ^f .$$
Hence, $Q$ solves the whole moment problem \eqref{syst-momentsQc-ps}. It remains to prove that $Q$ is real-valued: this follows easily from the fact that
$$ \forall\, n \geq 0, \quad 
\begin{cases}
\int _0 ^T Q(t) \, e^{-i\omega _{\alpha,n} t} \, dt = C _{\alpha,n} ^f = \frac{Z_{\alpha,n} ^f - i \sqrt{\lambda _{\alpha,n}} \, Y_{\alpha,n} ^f}{\mu _{\alpha,n}} ,\\
\int _0 ^T Q(t) \, e^{i\omega _{\alpha,n} t} \, dt = \overline{C _{\alpha,n} ^f} = \frac{Z_{\alpha,n} ^f + i \sqrt{\lambda _{\alpha,n}} \, Y_{\alpha,n} ^f}{\mu _{\alpha,n}}.
\end{cases} $$
By adding (subtracting) one to each other the above equations, we obtain
$$ \forall\, n \geq 0, \quad 
\begin{cases}
\int _0 ^T Q(t) \, \cos \omega _{\alpha,n} t \, dt = 2\frac{Z_{\alpha,n} ^f }{\mu _{\alpha,n}} ,\\
\int _0 ^T Q(t) \, 2i \sin \omega _{\alpha,n} t \, dt = \frac{2i \sqrt{\lambda _{\alpha,n}} \, Y_{\alpha,n} ^f}{\mu _{\alpha,n}}.
\end{cases} $$
Thus, the real part of $Q$ solves \eqref{syst-momentsQ}, and its imaginary part $Q_2$ satisfies
$$ \forall\, n \geq 0, \quad \int _0 ^T Q_2 (t) \, \cos \omega _{\alpha,n} t \, dt = 0 = \int _0 ^T Q_2 (t) \, \sin \omega _{\alpha,n} t \, dt .$$
Therefore,
$$ \forall\, n \in \mathbb Z, \quad \langle  Q_2, e^{i \omega _{\alpha, \varphi (n)}t} \rangle _{L^2 (0,T;\mathbb C)} =0,$$
which implies $Q_2 =0$ since $(e^{i \omega _{\alpha, \varphi (n)}t})_{n\in \mathbb Z}$ is a Riesz basis of $L^2(0,T)$. So, we have proved that $Q$ is real-valued and this completes the proof of \eqref{image-dtheta-supset}. \end{proof}

We conclude by proving the following 
\begin{Lemma}
\label{lem-Halpha-codim}
$H_\alpha ^f$ is a closed vectorial space of $H^3 _{(0)} \times D(A)$ of infinite dimension and infinite codimension.
\end{Lemma}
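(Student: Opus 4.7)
The plan is to verify the three claims (closedness, infinite codimension, infinite dimension) separately, all by working in Fourier coordinates with respect to the eigenbasis $(\Phi_{\alpha,n})_{n\geq 0}$ and exploiting the Riesz basis structure of $\{e^{i\omega_{\alpha,\varphi(n)}t}\}_{n\in\mathbb Z}$ in $L^2(0,T;\mathbb C)$.

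\smallskip

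\textbf{Closedness.} First I would rewrite the defining equations as continuous linear functionals vanishing on $(Y^f,Z^f)$. Using \eqref{eq-coeffs}--\eqref{syst-coeffs} together with the admissibility assumption \eqref{hyp-mu} (i.e.\ $|\mu_{\alpha,n}|\geq c/\lambda^*_{\alpha,n}$), one checks that the map
$$ (Y^f,Z^f)\in H^3_{(0)}\times D(A) \;\longmapsto\; (C^f_{\alpha,n})_{n\in\mathbb Z}\in \ell^2(\mathbb Z) $$
is bounded, with norm estimate
$\sum_n |C^f_{\alpha,n}|^2 \leq C\sum_n\bigl(\lambda_{\alpha,n}^2|Z^f_{\alpha,n}|^2+\lambda_{\alpha,n}^3|Y^f_{\alpha,n}|^2\bigr)$. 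Since $\{e^{i\omega_{\alpha,\varphi(n)}t}\}_{n\in\mathbb Z}$ is a Riesz basis of $L^2(0,T;\mathbb C)$, the Fourier coefficients $(\Omega^{(m)}_{\alpha,\varphi(n)})_{n\in\mathbb Z}$ of $e^{i\omega_{\alpha,m}t}$ and $(\tilde S_{\alpha,\varphi(n)})_{n\in\mathbb Z}$ of $\tilde s_{\alpha,0}$ lie in $\ell^2(\mathbb Z)$. Cauchy--Schwarz then makes each defining relation a continuous linear constraint on $H^3_{(0)}\times D(A)$, so $H^f_\alpha$ is closed as the intersection of countably many closed hyperplanes.

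\smallskip

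\textbf{Infinite codimension.} By the density estimate \eqref{n-diff} from section \ref{sec-overd}, the set $I^c:=\mathbb Z\setminus\mathrm{Im}\,\varphi$ is infinite. Pick a sequence of distinct indices $(m_i)_{i\geq 1}\subset I^c$. For each $i$, I would construct $(Y^{(i)},Z^{(i)})\in H^3_{(0)}\times D(A)$ having exactly one nonzero Fourier coefficient, namely the one at index $|m_i|$, chosen so that $C^f_{\alpha,m_i}=1$ and $C^f_{\alpha,n}=0$ for all $n\neq m_i$ (and $Y^f_{\alpha,0}=0$). Since only one Fourier mode is excited, $(Y^{(i)},Z^{(i)})$ clearly belongs to $H^3_{(0)}\times D(A)$. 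Denoting by $\ell_{m}$ the linear functional
$$ \ell_m(Y^f,Z^f) := C^f_{\alpha,m}-\sum_{n\in\mathbb Z}\overline{\Omega^{(m)}_{\alpha,\varphi(n)}}\,C^f_{\alpha,\varphi(n)}, \qquad m\in I^c, $$
one checks immediately that $\ell_{m_i}(Y^{(j)},Z^{(j)})=\delta_{ij}$ (because $m_j\in I^c$ is not of the form $\varphi(n)$). Hence $(\ell_{m_i})_{i\geq 1}$ is a linearly independent family of continuous functionals vanishing on $H^f_\alpha$, so $H^f_\alpha$ has infinite codimension.

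\smallskip

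\textbf{Infinite dimension.} Here I invoke Lemma \ref{lem-descr-imageDtheta}: $H^f_\alpha=D\Theta_T(0)(L^2(0,T;\mathbb R))$. Since $\{e^{i\omega_{\alpha,\varphi(n)}t}\}_{n\in\mathbb Z}$ is a Riesz basis of $L^2(0,T;\mathbb C)$, there is a biorthogonal family $(\sigma_n)_{n\in\mathbb Z}\subset L^2(0,T;\mathbb C)$, and the oddness $\omega_{\alpha,-n}=-\omega_{\alpha,n}$ gives $\overline{\sigma_n}=\sigma_{-n}$ (as in the proof of Lemma \ref{lem-Riesz-T0-moment}). Consequently, for each $n\geq 1$ the real-valued functions
$$ Q^c_n := \tfrac{1}{2}(\sigma_n+\sigma_{-n})\in L^2(0,T;\mathbb R),\qquad Q^s_n := \tfrac{1}{2i}(\sigma_n-\sigma_{-n})\in L^2(0,T;\mathbb R), $$
and the constant function $Q^0 := \sigma_0$, are designed so that $\int_0^T Q^c_n e^{-i\omega_{\alpha,k}t}dt = \tfrac{1}{2}(\delta_{k,n}+\delta_{k,-n})$ and similarly for $Q^s_n$. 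Hence $D\Theta_T(0)\cdot Q^c_n$, $D\Theta_T(0)\cdot Q^s_n$ (for each $n\geq 1$ with $n\in\mathrm{Im}\,\varphi$) produce elements of $H^f_\alpha$ whose Fourier data are concentrated on a single index $n$, hence are mutually linearly independent. Since $\mathrm{Im}\,\varphi$ is infinite, this yields infinitely many linearly independent elements in $H^f_\alpha$.

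\smallskip

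The main obstacle is bookkeeping in the closedness step: one must propagate the weighted-$\ell^2$ decay of $(C^f_{\alpha,n})$ through the admissibility bound on $(\mu_{\alpha,n})$ and couple it with the $\ell^2$ decay of the Riesz-basis coefficients $(\Omega^{(m)}_{\alpha,\varphi(n)})$ and $(\tilde S_{\alpha,\varphi(n)})$, so that each defining relation is manifestly a bounded linear form. Once the right Fourier-coordinate model is set up, the infinite-dimensionality and infinite-codimensionality are immediate consequences of the infiniteness of $\mathrm{Im}\,\varphi$ and $I^c$, respectively.
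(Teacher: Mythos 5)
Your strategy is essentially the paper's: closedness comes from writing $H^f_\alpha$ as the intersection of kernels of continuous linear forms, and infinite codimension from evaluating finitely many of these forms on states concentrated on single eigenmodes indexed outside $\mathrm{Im}\,\varphi$ and checking that the resulting finite matrix is invertible. You also supply an argument for infinite dimension (via Lemma \ref{lem-descr-imageDtheta} and the biorthogonal family), a point the paper only asserts; that addition is welcome.

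Two steps, however, do not work as written. First, in the codimension argument, a state $(Y^{(i)},Z^{(i)})$ of the \emph{real} space $H^3_{(0)}\times D(A)$ supported on the single mode $|m_i|$ automatically satisfies $C^f_{\alpha,-m_i}=\overline{C^f_{\alpha,m_i}}$ (immediate from \eqref{syst-coeffs} with $Y^f_{\alpha,|m_i|}$, $Z^f_{\alpha,|m_i|}$, $\mu_{\alpha,|m_i|}$ real), so the object you posit --- $C^f_{\alpha,m_i}=1$ and $C^f_{\alpha,n}=0$ for all $n\neq m_i$ --- does not exist: the coefficient at $-m_i$ is forced to equal $1$ as well. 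The repair is the paper's: take for instance $(\Phi_{\alpha,|m_i|},0)$ with the $|m_i|$ pairwise distinct and accept a diagonal matrix with nonzero, non-unit entries; one should then also ensure that \emph{both} $m_i$ and $-m_i$ avoid $\mathrm{Im}\,\varphi$, since both indices carry nonzero $C^f$-coefficients and would otherwise contribute to $\sum_n\overline{\Omega^{(m_j)}_{\alpha,\varphi(n)}}\,C^f_{\alpha,\varphi(n)}$ and spoil the diagonal structure (a point the paper's own proof also glosses over). Second, in the infinite-dimension step, the identity $\overline{\sigma_n}=\sigma_{-n}$ from the proof of Lemma \ref{lem-Riesz-T0-moment} uses that the index set of the Riesz basis is all of $\mathbb Z$, hence symmetric under $n\mapsto -n$; the extracted subfamily $\{e^{i\omega_{\alpha,\varphi(n)}t}\}$ has no reason to have $\mathrm{Im}\,\varphi$ symmetric, so the reality of your $Q^c_n$, $Q^s_n$ (and the constancy of $\sigma_0$) is unjustified. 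A safer route is to use $\operatorname{Re}\sigma_n$ and $\operatorname{Im}\sigma_n$ as real controls and check that their images under $D\Theta_T(0)$, being supported on distinct eigenmodes $\Phi_{\alpha,|\varphi(n)|}$, still contain infinitely many linearly independent elements of $H^f_\alpha$.
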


\noindent {\it Proof of Lemma \ref{lem-Halpha-codim}.} Let us consider
$$ \forall\, m \notin \text{Im } \varphi, \quad L_T ^m: H^3 _{(0)} \times D(A) \to \mathbb C , \quad L _T ^m (Y^f,Z^f) := C_{\alpha, m} ^f - \sum _{n\in \mathbb Z} \overline{\Omega ^{(m)} _{\alpha, \varphi (n)}} C_{\alpha, \varphi (n)} ^f ,$$
and
$$ \ell_T ^0: H^3 _{(0)} \times D(A) \to \mathbb C , \quad \ell _T ^0 (Y^f,Z^f) := B _{\alpha,0} ^f - \sum _{n\in \mathbb Z} \overline{\tilde S _{\alpha, \varphi (n)}} C_{\alpha, \varphi (n)} ^f  .$$
Observe that $\ell_T ^0$ and $L_T ^m$ are linear continuous forms, and
$$  H_\alpha ^f = \text{Ker } \ell_T ^0 \cap \Bigl( \cap _{m \notin \text{Im } \varphi} \text{Ker } L_T ^m \Bigr) .$$
Hence, $H_\alpha ^f$ is a closed vectorial space of $H^3 _{(0)} \times D(A)$, and is of infinite dimension.
To prove that $H_\alpha ^f$ has infinite codimension, we use the fact that $\mathbb Z \setminus ( \text{Im } \varphi )$
is infinite (see section \ref{sec-overd}). Then, fix $N\geq 1$, and $n_1, \cdots, n_N \notin ( \text{Im } \varphi \cup \{0\})$, and consider
$$ L_T ^{n_1, \cdots, n_N}: H^3 _{(0)} \times D(A) \to \mathbb C^N , \quad L _T ^{n_1, \cdots, n_N} (Y^f,Z^f) := 
\left( \begin{array}{c} L _T ^{n_1} (Y^f,Z^f) \\ \cdots \\ L _T ^{n_N} (Y^f,Z^f) \end{array} \right) .$$
Observe that
$$ 
L _T ^{n_1, \cdots, n_N} (\Phi _{\alpha, n_1},0) = 
-i \frac{\omega _{\alpha,n_1}}{\mu _{\alpha, n_1}} \varepsilon _1 , \quad \cdots \quad , 
L _T ^m (\Phi _{\alpha, n_N},0) = 
-i \frac{\omega _{\alpha,n_N}}{\mu _{\alpha, n_N}} \varepsilon _N ,$$
where $\varepsilon _1, \cdots, \varepsilon _N$ is the canonical basis of $\mathbb C^N$. Hence, $L_T ^{n_1, \cdots, n_N}$ is surjective, and its kernel is of codimension $N$. Since this is true for all $N\geq 1$, this implies that $H_\alpha ^f$ is a closed vectorial space of $H^3 _{(0)} \times D(A)$ of infinite codimension. \qed

\subsubsection{Proof of Theorem \ref{thm-ctr<} for $\alpha \in [0,1)$ (inverse mapping argument)} \hfill
\label{sec-concl-T<T0}

The proof follows the scheme of section \ref{sec-concl-T=T0}, replacing the hyperplane $P^f _\alpha$ by $H^f _\alpha$. Therefore,
$\Theta _T (\mathcal V (0))$ turns out to be a submanifold of infinite dimension and infinite codimension. \qed

\subsection{Proof of Theorem \ref{thm-ctr<}: modifications when $\alpha \in [1,2)$} \hfill

When $\alpha \in [1,2)$ and $\frac{1}{2-\alpha} \notin \mathbb N$, we replace Lemma \ref{lem-Riesz-T0} by Lemma \ref{cor-Riesz} to obtain that $\{e^{i\omega _{\alpha,n}t}\}_{n\in \mathbb Z}$, complemented by a finite number of exponentials $\{e^{ix'_0t},\dots,e^{ix'_{k_\alpha}t}\}$, is a Riesz basis of $L^2 (0,T_0)$. Then, one can extract a subfamily of $\{e^{i\omega_{\alpha,n}t}\}_{n\in\mathbb Z}\cup\{e^{ix'_0t},\dots,e^{ix'_{k_\alpha}t}\}$ that will be a Riesz basis of $L^2 (0,T)$. Replacing the possible elements coming from the finite set of exponentials $\{e^{ix'_0t},\dots,e^{ix'_{k_\alpha}t}\}$ by the same number of exponentials of the original family $\{e^{i\omega_{\alpha,n}t}\}_{n\in\mathbb Z}$, we have a subfamiliy of $\{e^{i\omega _{\alpha,n}t}\}_{n\in \mathbb Z}$ that is a Riesz basis of $L^2 (0,T)$, and then we can complete the proof as in section \ref{sec-ctr<-01}.

In a more general way, without assuming $\frac{1}{2-\alpha} \notin \mathbb N$, we consider $x\in \mathbb R$, $r>0$ and define
$$ N(x,r):= \text{card } \{\omega _{\alpha,n}, x \leq \omega _{\alpha,n} < x+r \}.$$
Thanks to \eqref{DAS-omega_n}, one can prove that
\begin{equation}
\label{lim-Nxr}
\frac{N(x,r)}{r} \to \frac{T_0}{2\pi} \quad \text{ as } r \to +\infty 
\end{equation}
uniformly with respect to $x\in \mathbb R$. Indeed, assume first that we are in the most simple case: 
\begin{equation}
\label{lim-Nxr-app}
\forall\, n \geq 1, \quad \omega _{\alpha,n} = \frac{\pi (2-\alpha)}{2} \Bigl( n+ \frac{\alpha}{4(2-\alpha)} \Bigr)
= \frac{\pi (2-\alpha)}{2} n + \frac{\pi \alpha}{8} ,
\end{equation}
and so the gap $\omega _{\alpha,n+1} - \omega _{\alpha,n}$ is constant with respect to $n\geq 1$:
$$  \forall\, n \geq 1, \quad \omega _{\alpha,n+1} - \omega _{\alpha,n} = \frac{\pi (2-\alpha)}{2} .$$
Then it is clear that any "window" $[x,x+r)$ contains $N(x,r)$ terms, where $N(x,r)$ behaves as $\frac{2}{\pi (2-\alpha)} r$. More precisely, there exists some computable $N_0$ such that 
$$ \forall\, x \in \Bbb R, \forall\, r >0, \quad 
\frac{2}{\pi (2-\alpha)} r - N_0 \leq N(x,r) \leq \frac{2}{\pi (2-\alpha)} r + N_0 . $$
Then, in this case we have
$$ \forall\, x \in \Bbb R, \forall\, r >0, \quad 
\frac{2}{\pi (2-\alpha)} - \frac{N_0}{r}  \leq \frac{N(x,r)}{r} \leq \frac{2}{\pi (2-\alpha)}  + \frac{N_0}{r} ,$$
and therefore 
$$ \frac{N(x,r)}{r} \to \frac{2}{\pi (2-\alpha)} = \frac{T_0}{2\pi} \quad \text{ as } r \to +\infty \quad \text{uniformly with respect to $x\in \mathbb R$,}. $$
Hence, for this particular case, \eqref{lim-Nxr} is satisfied.

The general case is not as simple as the one assumed in \eqref{lim-Nxr-app}. However, \eqref{DAS-omega_n} implies that, given $\varepsilon _1 >0$ small, there exists $N_1\geq 0$ such that
$$ \forall\, n \geq N_1, \quad \omega _{\alpha,n} \in 
 ( \frac{\pi (2-\alpha)}{2} n + \frac{\pi \alpha}{8} - \varepsilon _1, \frac{\pi (2-\alpha)}{2} n + \frac{\pi \alpha}{8} + \varepsilon _1) . $$
So, if $x \geq x_1 := \frac{\pi (2-\alpha)}{2} N_1 + \frac{\pi \alpha}{8} - \varepsilon _1$, the situation is very close to the simple one studied before, and there exists $N_2$ such that
$$ \forall\, x \geq x_1, \forall\, r >0, \quad 
\frac{2}{\pi (2-\alpha)} r - N_2 \leq N(x,r) \leq \frac{2}{\pi (2-\alpha)} r + N_2 . $$
By symmetry, the situation is similar if $x+r \leq -x_1$. And finally, if the window $[x,x+r)$ intersects $[-x_1, x_1]$, then only the location of $\omega _{-N_1 +1}$, $\cdots$ , $\omega _{N_1-1}$ can modify the counting number $N(x,r)$, and thus there exists $N_3$ (depending only on $N_1$ and $N_2$) such that
$$ \forall\, x \in \Bbb R, \forall\, r >0, \quad 
\frac{2}{\pi (2-\alpha)} r - N_3 \leq N(x,r) \leq \frac{2}{\pi (2-\alpha)} r + N_3 . $$
Therefore the conclusion \eqref{lim-Nxr} follows also for the general case.

Now that \eqref{lim-Nxr} is proved, we deduce from \eqref{lim-Nxr} and from \cite[Theorem II.4.18 p. 109]{Avdonin-Ivanov} that, given $T<T_0$, the family $(e^{i\omega _{\alpha,n}t})_{n\in \mathbb Z}$ contains a subfamily that forms a Riesz basis of $L^2 (0,T)$. And then, once again, we can proceed as in 
section \ref{sec-ctr<-01} to prove Theorem \ref{thm-ctr<}. \qed


\section{Proof of Proposition \ref{prop-mu-ex-dense}} 
\label{sec-exemple-mu}

First we check that $$\mu (x) = x^{2-\alpha}$$ satisfies all the regularity assumptions:
\begin{itemize}
\item $\alpha \in [0,1)$: first, we observe that $\mu '(x)= (2-\alpha) x^{1-\alpha} \in L^1 (0,1)$. Hence, $\mu$ is absolutely continuous on $[0,1]$. Moreover, $x^{\alpha /2} \mu '(x)= (2-\alpha) x^{1-\frac{\alpha}{2}} \in L^2 (0,1)$. Thus, $\mu \in H^1 _\alpha (0,1)$. Furthermore, $x^{\alpha} \mu ' (x) = (2-\alpha) x \in H^1 (0,1)$ and so $\mu \in H^2 _\alpha (0,1)$. 
Finally, we observe that $x^{\alpha /2} \mu '(x)= (2-\alpha) x^{1-\frac{\alpha}{2}} \in L^\infty (0,1)$ that implies 
that $\mu \in V ^{(2,\infty)} _\alpha (0,1)$;
\item $\alpha \in [1,2)$: in this case it easy to check that $\mu \in L^2 (0,1)$. Moreover, $x^{\alpha /2} \mu '(x)= (2-\alpha) x^{1-\frac{\alpha}{2}} \in L^2 (0,1)$ and therefore $\mu \in H^1 _\alpha (0,1)$. Furthermore, $x^{\alpha} \mu ' (x) = (2-\alpha) x \in H^1 (0,1)$. Thus, we have that $\mu \in H^2 _\alpha (0,1)$. Finally, we note that $x^{\alpha /2} \mu '(x)= (2-\alpha) x^{1-\frac{\alpha}{2}} \in L^\infty (0,1)$, 
and $(x^\alpha \mu ')' = 2-\alpha \in L^\infty (0,1)$. Hence, $\mu \in V ^{(2,\infty, \infty)} _\alpha (0,1)$.
\end{itemize}
We have showed that the regularity assumptions are satisfied. It remains to check the validity of \eqref{hyp-mu}. Direct computations show that
$$ \langle \mu , \Phi _{\alpha,0} \rangle _{L^2 (0,1)} = \int _0 ^1 x^{2-\alpha} \, dx = \frac{1}{3-\alpha} ,$$
and, for all $n\geq 1$, we develop the scalar product as follows
\begin{equation*}
\begin{split}
\langle \mu , \Phi _{\alpha,n} \rangle _{L^2 (0,1)}
&= \int _0 ^1 \mu (x) \Phi _{\alpha,n} \, dx = \frac{1}{\lambda _{\alpha,n}} \int _0 ^1 \mu (x) \lambda _{\alpha,n} \Phi _{\alpha,n} \, dx
\\
&= \frac{1}{\lambda _{\alpha,n}} \int _0 ^1 \mu (x) (-x^\alpha \Phi _{\alpha,n} ')' \, dx
\\
&= \frac{1}{\lambda _{\alpha,n}}  \Bigl( [-x^\alpha \mu (x) \Phi _{\alpha,n} '(x) ] _0 ^1 + \int _0 ^1 x^\alpha \mu '(x) \Phi _{\alpha,n} ' (x) \Bigr).
\end{split}
\end{equation*}
Recalling that $\mu (x)=x^{2-\alpha}$, we obtain
\begin{equation*}
\begin{split}
\int _0 ^1 x^\alpha \mu '(x) \Phi _{\alpha,n} ' (x) 
&= (2-\alpha) \int _0 ^1 x \Phi _{\alpha,n} ' (x) 
\\
&= (2-\alpha) [x \Phi _{\alpha,n}  (x)] _0 ^1 - (2-\alpha) \int _0 ^1 \Phi _{\alpha,n} (x) \, dx
\\
&= (2-\alpha) [x \Phi _{\alpha,n}  (x)] _0 ^1 - (2-\alpha) \langle \Phi _{\alpha,0} , \Phi _{\alpha,n} \rangle _{L^2 (0,1)} .
\end{split}
\end{equation*}
Since the eigenfunctions are orthogonal, we have that
$$ \langle \Phi _{\alpha,0} , \Phi _{\alpha,n} \rangle _{L^2 (0,1)} = 0 ,$$
hence,
$$ \langle \mu , \Phi _{\alpha,n} \rangle _{L^2 (0,1)}
= \frac{1}{\lambda _{\alpha,n}}  \left( \left[-x^2 \Phi _{\alpha,n} '(x) \right] _0 ^1
+ (2-\alpha) \left[x \Phi _{\alpha,n}  (x)\right] _0 ^1 \right) .$$
From the Neumann boundary conditions satisfied by $\Phi _{\alpha,n}$, we know that
$x \Phi _{\alpha,n} '(x) \to 0$ as $x\to 0$ and as $x\to 1$, thus
$$  [-x^2 \Phi _{\alpha,n} '(x) ] _0 ^1 = 0 .$$
We have also proved (Lemmas \ref{lem-prop-phi-n-0-1-w} and \ref{lem-prop-phi-n-0-1-s}) that $\Phi _{\alpha,n}$ has a finite limit as $x\to 0$, therefore
$$ x \Phi _{\alpha,n}  (x) \to 0, \quad \text{ as } x \to 0 .$$
Finally, once again from Lemmas \ref{lem-prop-phi-n-0-1-w} and \ref{lem-prop-phi-n-0-1-s} we have that $\vert \Phi _{\alpha,n} (1)\vert =\sqrt{2-\alpha}$ that yields
$$ \vert (2-\alpha) [x \Phi _{\alpha,n}  (x)] _0 ^1 \vert = (2-\alpha)^{3/2} ,$$
and
$$ \vert \langle \mu , \Phi _{\alpha,n} \rangle _{L^2 (0,1)} \vert = \frac{(2-\alpha)^{3/2}}{\lambda _{\alpha,n}} .$$
Hence, \eqref{hyp-mu} is satisfied. \qed

Now, let us prove that the set of functions $\mu$ satisfying \eqref{hyp-mu} is dense in $V^2 _\alpha$. By integrating by parts, we get
\begin{multline*}
\langle \mu , \Phi _{\alpha,n} \rangle _{L^2 (0,1)}
= \frac{1}{\lambda _{\alpha,n}}  \Bigl( [-x^\alpha \mu (x) \Phi _{\alpha,n} '(x) ] _0 ^1 + \int _0 ^1 x^\alpha \mu '(x) \Phi _{\alpha,n} ' (x) \, dx \Bigr)
\\
= \frac{1}{\lambda _{\alpha,n}}  \Bigl( [-x^\alpha \mu (x) \Phi _{\alpha,n} '(x) ] _0 ^1 + [x^\alpha \mu '(x) \Phi _{\alpha,n}  (x)] _0 ^1 - \int _0 ^1 (x^\alpha \mu '(x))' \Phi _{\alpha,n}  (x) \, dx \Bigr) .
\end{multline*}
Then, since $\mu \in L^\infty (0,1)$, we have
$$ [-x^\alpha \mu (x) \Phi _{\alpha,n} '(x) ] _0 ^1 = 0 .$$
Moreover, since $x^{\alpha /2} \mu' \in L^\infty (0,1)$ and $\Phi _{\alpha,n}$ has a finite limit as $x\to 0$, we deduce
$$ x^\alpha \mu '(x) \Phi _{\alpha,n}  (x) \to 0 \quad \text{ as } x \to 0 .$$
Thus, we obtain that
$$ [x^\alpha \mu '(x) \Phi _{\alpha,n}  (x)] _0 ^1 = \mu ' (1) \Phi _{\alpha,n}  (1).$$
Finally, since $(x^\alpha \mu '(x))' \in L^2 (0,1)$, we get
$$ \int _0 ^1 (x^\alpha \mu '(x))' \Phi _{\alpha,n}  (x) \, dx = \langle  (x^\alpha \mu '(x))', \Phi _{\alpha,n} \rangle _{L^2 (0,1)} \to 0 ,\quad \text{ as } n \to \infty .$$
So, recalling that $|\Phi_{\alpha,n}(1)|=\sqrt{2-\alpha}$, we infer
$$ \mu \in V_\alpha ^2(0,1) \quad \implies \quad \vert \lambda _{\alpha,n} \langle \mu , \Phi _{\alpha,n} \rangle _{L^2 (0,1)} \vert 
\to  \sqrt{2-\alpha} \, \vert \mu '(1) \vert, \quad \text{ as } n \to \infty .$$
We define the spaces 
$$ \mathcal V _n := 
\begin{cases} 
\{ \mu \in V_\alpha ^2(0,1),\, \langle \mu , \Phi _{\alpha,n} \rangle _{L^2 (0,1)} \neq 0 \} \quad &\text{ for } n \geq 0 , \\
\{ \mu \in V_\alpha ^2(0,1),\, \mu '(1) \neq 0 \}  \quad &\text{ for } n =-1 ,
\end{cases} $$
and 
$$ \mathcal V _\alpha ^2 := \cap _{n=-1} ^\infty \mathcal V_n .$$ 
Every $\mathcal V_n$ is open and dense in $V _\alpha ^2$. Indeed, consider $\tilde \mu \in V^2 _\alpha(0,1)$ such that $\tilde \mu \notin  \mathcal V_n$ for some $n\geq -1$, and define
$$ \tilde \mu _\varepsilon (x) := \tilde \mu (x) + \varepsilon x^{2-\alpha} $$
where $\varepsilon \in \mathbb R^*$. Then, 
if $n\geq 0$, we have
$$ \langle \tilde \mu _\varepsilon, \Phi _{\alpha,n} \rangle _{L^2 (0,1)}
= \varepsilon \langle x^{2-\alpha}, \Phi _{\alpha,n} \rangle _{L^2 (0,1)} \neq 0 ,$$
and if $n=-1$, we have
$$ \tilde \mu _\varepsilon '(1) = \varepsilon (2-\alpha) \neq 0.$$
Therefore, $\tilde \mu _\varepsilon \in \mathcal V_n$ and it is close to $\tilde \mu$ in $V^2 _\alpha$ if $\varepsilon$ is sufficiently small. This proves that $\mathcal V_n$ is dense in $V _\alpha ^2$. Thus, $\mathcal V _\alpha ^2$ is the intersection of a sequence of open and dense subsets and, thanks to Baire Theorem, it is dense in $V _\alpha ^2$. \qed


\medskip

\noindent {Acknowledgements.} The authors would like to thank A. Duca and V. Komornik for interesting discussions.


\bibliographystyle{plain}
\bibliography{references}

\begin{thebibliography}{10}

\bibitem{ACL}
F.~Alabau-Boussouira, P.~Cannarsa, and G.~Leugering.
\newblock Control and stabilization of degenerate wave equations.
\newblock {\em SIAM J. Control Optim.}, 55(3):2052--2087, 2017.

\bibitem{acu-exa}
F.~Alabau-Boussouira, P.~Cannarsa, and C.~Urbani.
\newblock Exact controllability to eigensolutions for evolution equations of
  parabolic type via bilinear control.
\newblock {\em arXiv preprint arXiv:2105.05732}, 2021.

\bibitem{acu-sup}
F.~Alabau-Boussouira, P.~Cannarsa, and C.~Urbani.
\newblock Superexponential stabilizability of evolution equations of parabolic
  type via bilinear control.
\newblock {\em J. Evol. Equ.}, 21(1):941--967, 2021.

\bibitem{Avdonin-Ivanov}
S.~A. Avdonin and S.~A. Ivanov.
\newblock {\em Families of exponentials}.
\newblock Cambridge University Press, Cambridge, 1995.
\newblock The method of moments in controllability problems for distributed
  parameter systems, Translated from the Russian and revised by the authors.

\bibitem{BMS}
J.~M. Ball, J.~E. Marsden, and M.~Slemrod.
\newblock Controllability for distributed bilinear systems.
\newblock {\em SIAM J. Control Optim.}, 20(4):575--597, 1982.

\bibitem{B-bi}
K.~Beauchard.
\newblock Local controllability and non-controllability for a 1{D} wave
  equation with bilinear control.
\newblock {\em J. Differential Equations}, 250(4):2064--2098, 2011.

\bibitem{B-L}
K.~Beauchard and C.~Laurent.
\newblock Local controllability of 1{D} linear and nonlinear {S}chr\"{o}dinger
  equations with bilinear control.
\newblock {\em J. Math. Pures Appl. (9)}, 94(5):520--554, 2010.

\bibitem{BdPDM}
A.~Bensoussan, G.~da~Prato, M.C. Delfour, and S.K. Mitter.
\newblock {\em Representation and control of infinite dimensional systems}.
\newblock Systems \& Control: Foundations \& Applications. Birkh\"{a}user
  Boston, Inc., Boston, MA, second edition, 2007.

\bibitem{CanFloKha}
P.~Cannarsa, G.~Floridia, and A.Y. Khapalov.
\newblock Multiplicative controllability for semilinear reaction-diffusion
  equations with finitely many changes of sign.
\newblock {\em J. Math. Pures Appl. (9)}, 108(4):425--458, 2017.

\bibitem{sicon2008}
P.~Cannarsa, P.~Martinez, and J.~Vancostenoble.
\newblock Carleman estimates for a class of degenerate parabolic operators.
\newblock {\em SIAM J. Control Optim.}, 47(1):1--19, 2008.

\bibitem{memoire}
P.~Cannarsa, P.~Martinez, and J.~Vancostenoble.
\newblock {\em Global {C}arleman estimates for degenerate parabolic operators
  with applications}, volume 239.
\newblock 2016.

\bibitem{CMV-MCRF}
P.~Cannarsa, P.~Martinez, and J.~Vancostenoble.
\newblock The cost of controlling weakly degenerate parabolic equations by
  boundary controls.
\newblock {\em Math. Control Relat. Fields}, 7(2):171--211, 2017.

\bibitem{CMV-strong}
P.~Cannarsa, P.~Martinez, and J.~Vancostenoble.
\newblock The cost of controlling strongly degenerate parabolic equations.
\newblock {\em ESAIM Control Optim. Calc. Var.}, 26:Paper No. 2, 50, 2020.

\bibitem{cu}
P.~Cannarsa and C.~Urbani.
\newblock Superexponential stabilizability of degenerate parabolic equations
  via bilinear control.
\newblock In {\em International Conference on Inverse Problems}, pages 31--45.
  Springer, 2018.

\bibitem{Duca}
A.~Duca.
\newblock Controllability of bilinear quantum systems in explicit times via
  explicit control fields.
\newblock {\em Internat. J. Control}, 94(3):724--734, 2021.

\bibitem{Epstein}
C.L. Epstein and R.~Mazzeo.
\newblock {\em Degenerate diffusion operators arising in population biology},
  volume 185 of {\em Annals of Mathematics Studies}.
\newblock Princeton University Press, Princeton, NJ, 2013.

\bibitem{Floridia}
G.~Floridia.
\newblock Approximate controllability for nonlinear degenerate parabolic
  problems with bilinear control.
\newblock {\em J. Differential Equations}, 257(9):3382--3422, 2014.

\bibitem{flo2}
G.~Floridia.
\newblock Nonnegative multiplicative controllability for semilinear
  multidimensional reaction-diffusion equations.
\newblock {\em Minimax Theory Appl.}, 6(2):341--352, 2021.

\bibitem{Flo-Trom}
G.~Floridia, C.~Nitsch, and C.~Trombetti.
\newblock Multiplicative controllability for nonlinear degenerate parabolic
  equations between sign-changing states.
\newblock {\em ESAIM Control Optim. Calc. Var.}, 26:Paper No. 18, 34, 2020.

\bibitem{Ghil76}
M.~Ghil.
\newblock Climate stability for a {S}ellers-type model.
\newblock {\em J. Atmospheric Sci.}, 33(1):3--20, 1976.

\bibitem{Gueye}
M.~Gueye.
\newblock Exact boundary controllability of 1-{D} parabolic and hyperbolic
  degenerate equations.
\newblock {\em SIAM J. Control Optim.}, 52(4):2037--2054, 2014.

\bibitem{Haraux}
A.~Haraux.
\newblock S\'{e}ries lacunaires et contr\^{o}le semi-interne des vibrations
  d'une plaque rectangulaire.
\newblock {\em J. Math. Pures Appl. (9)}, 68(4):457--465 (1990), 1989.

\bibitem{Horvath-Joo}
M.~Horv{\'a}th and I.~Jo{\'o}.
\newblock On {R}iesz bases. {II}.
\newblock {\em Ann. Univ. Sci. Budapest. E\"{o}tv\"{o}s Sect. Math.},
  33:267--271 (1991), 1990.

\bibitem{Joo}
I.~Jo{\'o}.
\newblock On {R}iesz bases.
\newblock {\em Ann. Univ. Sci. Budapest. E\"{o}tv\"{o}s Sect. Math.},
  31:141--153 (1989), 1988.

\bibitem{Kadec}
M.I. Kadec.
\newblock The exact value of the {P}aley-{W}iener constant.
\newblock {\em Dokl. Akad. Nauk SSSR}, 155:1253--1254, 1964.

\bibitem{Khapalov}
A.Y. Khapalov.
\newblock {\em Controllability of partial differential equations governed by
  multiplicative controls}, volume 1995 of {\em Lecture Notes in Mathematics}.
\newblock Springer-Verlag, Berlin, 2010.

\bibitem{Kom-Lor}
V.~Komornik and P.~Loreti.
\newblock {\em Fourier series in control theory}.
\newblock Springer Monographs in Mathematics. Springer-Verlag, New York, 2005.

\bibitem{Lebedev}
N.N. Lebedev.
\newblock {\em Special functions and their applications}.
\newblock Prentice-Hall, Inc., Englewood Cliffs, N.J., english edition, 1965.
\newblock Translated and edited by Richard A. Silverman.

\bibitem{Cristina-these}
C.~Urbani.
\newblock {\em Bilinear control of evolution equations}.
\newblock Thesis. Gran {S}asso {S}cience {I}nstitute \& {S}orbonne
  {U}niversit{\'e}, 2020.

\bibitem{Watson}
G.N. Watson.
\newblock {\em A treatise on the theory of {B}essel functions}.
\newblock Cambridge Mathematical Library. Cambridge University Press,
  Cambridge, 1995.
\newblock Reprint of the second (1944) edition.

\bibitem{Young}
R.M. Ypung.
\newblock {\em An introduction to nonharmonic {F}ourier series}, volume~93 of
  {\em Pure and Applied Mathematics}.
\newblock Academic Press, Inc. [Harcourt Brace Jovanovich, Publishers], New
  York-London, 1980.

\bibitem{ZG3}
M.~Zhang and H.~Gao.
\newblock Null controllability of some degenerate wave equations.
\newblock {\em J. Syst. Sci. Complex.}, 30(5):1027--1041, 2017.

\bibitem{ZG2}
M.~Zhang and H.~Gao.
\newblock Persistent regional null controllability of some degenerate wave
  equations.
\newblock {\em Math. Methods Appl. Sci.}, 40(16):5821--5830, 2017.

\bibitem{ZG1}
M.~Zhang and H.~Gao.
\newblock Interior controllability of semi-linear degenerate wave equations.
\newblock {\em J. Math. Anal. Appl.}, 457(1):10--22, 2018.

\end{thebibliography}

\end{document}